\definecolor{shadecolor}{gray}{0.8}
\newcommand{\flplus} 
{
\hspace{0.1cm}
\begin{tikzpicture}[baseline=-0.582ex]
    \draw [line width=0.24pt](-0.1129, 0) -- (0.1129, 0) -- (0, 0) -- (0, 0.1129) -- (0, -0.1129) arc (270:90:0.1129) -- (0, 0);
\end{tikzpicture}
\hspace{0.1cm}
}
\newcommand{\frplus} 
{
\hspace{0.1cm}
\begin{tikzpicture}[baseline=-0.582ex]
    \draw [line width=0.24pt](-0.1129, 0) -- (0.1129, 0) -- (0, 0) -- (0, 0.1129) -- (0, -0.1129) arc (-90:90:0.1129) -- (0, 0);
\end{tikzpicture}
\hspace{0.1cm}
}
\newcommand{\btimes}{\boldsymbol{\times}}
\newcommand{\pdot}{{\, \cdot \,}} 
\def\Bbb{\mathbb}
\def\Cal{\mathcal}
\newcommand{\im}{\operatorname{im}}
\DeclareMathOperator{\dom} {dom}
\DeclareMathOperator{\G}   {G}
\DeclareMathOperator{\GL}  {GL}
\DeclareMathOperator{\rank}{rank}
\renewcommand{\Re}{\operatorname{Re}}
\DeclareMathOperator{\Sc}  {Sc}
\DeclareMathOperator{\SL}  {SL}
\DeclareMathOperator{\SO}  {SO}
\DeclareMathOperator{\tr}  {tr}
\newcommand{\ab}[1]{\langle {#1} \rangle}
\newcommand{\wt}[1]{\widetilde{#1}}
\newcommand{\hook}{\, \lrcorner \,}
\newcommand{\bbC}{\mathbb{C}}
\newcommand{\bbI}{\mathbb{I}}
\newcommand{\bbJ}{\mathbb{J}}
\newcommand{\bJ}{\mathbb{J}}
\newcommand{\bbL}{\mathbb{L}}
\newcommand{\bbO}{\mathbb{O}}
\newcommand{\bbR}{\mathbb{R}}
\newcommand{\bbZ}{\mathbb{Z}}
\newcommand{\set}[1]{ \{ {#1} \} }
\newcommand{\ul} [1]{\underline{#1}}
\newcommand{\wh} [1]{\widehat  {#1}}
\newcommand{\mbc}{\mathbf{c}}
\newcommand{\mbg}{\mathbf{g}}
\newcommand{\mbp}{\mathbf{p}}
\newcommand{\mcE}{\mathcal{E}}
\newcommand{\mcG}{\mathcal{G}}
\newcommand{\mcL}{\mathcal{L}}
\newcommand{\mcT}{\mathcal{T}}
\newcommand{\mfg}  {\mathfrak{g}}
\newcommand{\mfinf}{\mathfrak{inf}}
\newcommand{\mfp}  {\mathfrak{p}}
\newcommand{\mfq}  {\mathfrak{q}}
\newcommand{\mfs}  {\mathfrak{s}}
\newcommand{\mfsl} {\mathfrak{sl}}
\newcommand{\mfso} {\mathfrak{so}}
\newcommand{\ulw}{\smash{\underline{w}}}
\newcommand{\bbV}{\mathbb{V}}
\newcommand{\bbW}{\mathbb{W}}
\newcommand{\ol}[1]{\overline{#1}}
\newcommand{\ce}{{\Cal E}}
\newcommand{\cT}{{\mathcal T}}
\newcommand{\rpl}                         
{\mbox{$
\begin{picture}(12.7,8)(-.5,-1)
\put(0,0.2){$+$}
\put(4.2,2.8){\oval(8,8)[r]}
\end{picture}$}}
\def\al{\alpha}
\def\bg{\mbox{\boldmath $g$}}
\newcommand{\om}{\omega}
\newcommand{\si}{\sigma}
\newcommand{\id}{\operatorname{id}}
\renewcommand{\Im}{\operatorname{Im}}
\newcommand{\End}{\operatorname{End}}
\newtheorem{theorem}{Theorem}[section]
\newtheorem{lemma}[theorem]{Lemma}
\newtheorem{proposition}[theorem]{Proposition}
\newtheorem{corollary}[theorem]{Corollary}
\theoremstyle{definition}
\newtheorem{definition}[theorem]{Definition}
\newtheorem{example}[theorem]{Example}
\theoremstyle{remark}
\newtheorem{remark}[theorem]{\rm\bf Remark}
\newtheorem*{definition*}{\rm\bf Definition}
\newcommand{\nn}[1]{(\ref{#1})}
\newcommand{\Ric}{\operatorname{Ric}}
\newcommand{\nd}{\nabla}
\newcommand{\vol}{\scalebox{1.1}{$\boldsymbol{\epsilon}$}}
\def\sideremark#1{\ifvmode\leavevmode\fi\vadjust{\vbox to0pt{\vss
 \hbox to 0pt{\hskip\hsize\hskip1em
 \vbox{\hsize3cm\tiny\raggedright\pretolerance10000
  \noindent #1\hfill}\hss}\vbox to8pt{\vfil}\vss}}}%
\author{A.\ R. Gover, R. Panai, \& T.\ Willse}
\title{Nearly K\"ahler geometry and $(2,3,5)$-distributions\\ via projective holonomy}
\begin{document}

\address{
A.R.G.: Department of Mathematics\\
  The University of Auckland\\
  Private Bag 92019\\
  Auckland 1142\\
  New Zealand;
  Mathematical Sciences Institute\\
  Building 27\\
  Australian National University\\
  ACT 0200\\
  Australia\\
R.P.: Department of Mathematics\\
  The University of Auckland\\
  Private Bag 92019\\
  Auckland 1142\\
  New Zealand;
T.W.\\
  Fakult\"at f\"ur Mathematik\\
  Universit\"at Wien\\
  Oskar-Morgenstern-Platz 1\\
  1090 Wien\\
  Austria
}

\email{r.gover@auckland.ac.nz}
\email{robertopanai@sardus.it}
\email{travis.willse@univie.ac.at}

\subjclass[2010]{Primary 53B10, 53A20, 53C29, 53C55; Secondary 53A30, 35Q76}
\keywords{projective differential geometry, nearly K\"{a}hler, $\G_2$
  geometry, holography, Einstein metrics, conformal differential geometry}

\begin{abstract}
We show that any dimension $6$ nearly K\"{a}hler (or nearly
para-K\"{a}hler) geometry arises as a projective manifold equipped
with a $\smash{\G_2^{(*)}}$ holonomy reduction.  In the converse direction we
show that if a projective manifold is equipped with a parallel
$7$-dimensional cross product on its standard tractor bundle then the
manifold is: a Riemannian nearly K\"{a}hler manifold, if the cross
product is definite; otherwise, if the cross product has the other
algebraic type, the manifold is in general stratified with
nearly K\"ahler and nearly para-K\"ahler parts separated by a
hypersurface which canonically carries a Cartan
$(2,3,5)$-distribution. This hypersurface is a projective infinity for
the pseudo-Riemannian geometry elsewhere on the manifold, and we
establish how the Cartan distribution can be understood explicitly,
and also in terms of conformal geometry, as a limit of the ambient
nearly (para\nobreakdash-)K\"ahler structures.  Any real-analytic
$(2,3,5)$-distribution is seen to arise as such a limit, because we
can solve the geometric Dirichlet problem of building a collar
structure equipped with the required holonomy-reduced projective
structure.

A model geometry for these structures is provided by the
projectivization of the imaginary (split) octonions. Our approach is
to use Cartan/tractor theory to provide a curved version of this
geometry; this encodes a curved version of the algebra of imaginary
(split) octonions as a flat structure over its projectivization.  The
perspective is used to establish detailed results concerning the
projective compactification of nearly (para\nobreakdash-)K\"ahler
manifolds, including how the almost (para\nobreakdash-)complex
structure and metric smoothly degenerate along the singular
hypersurface to give the distribution there.
\end{abstract}

\thanks{A.R.G. and R.P. gratefully acknowledge support from the Royal
  Society of New Zealand via Marsden Grants 10-UOA-113 and
  13-UOA-018. R.P gratefully acknowledges support from the Regione Sardegna via Grant AF-DR-A2011A-36115. T.W. gratefully acknowledges support from the Australian
  Research Council.}

\maketitle
\pagestyle{myheadings}
\markboth{Gover, Panai, Willse}{Nearly K\"ahler geometry and $(2,3,5)$-distributions via projective holonomy}


\tableofcontents

\section{Introduction}\label{intro}

Nearly K\"{a}hler geometries are one of the most important classes in
the celebrated Gray-Hervella classification of almost Hermitian
geometries \cite{AgFr,GrayH,PA1,S}.  A Cartan $(2,3,5)$-distribution
is the geometry arising from a maximally nondegenerate distribution of
2-planes in the tangent bundle of a 5-manifold \cite{CartanPff}. These
have attracted substantial interest for numerous reasons: they provide
a first case of a geometry of distributions with interesting local
invariants; they arise naturally from a class of second-order ODEs;
they are linked to concrete realizations of the exceptional group
$\G_2$ (and its variants); they have fascinating connections to
rolling ball problems; and they have important links to conformal
geometry
\cite{BorMontgomery,CartanPff,HammerlSagerschnig,NurowskiDifferential}.
Our aim in this work is first to show clearly the link between
projective differential geometry and nearly K\"ahler geometry and then
second to use this to expose and study a beautiful convergence of
nearly K\"{a}hler and $(2,3,5)$ geometry. In particular we show that
any $(2,3,5)$-geometry arises as the induced geometry on the boundary
at infinity of a nearly K\"ahler manifold; this includes a conceptual
and detailed explanation of how the almost complex structure of the
nearly K\"ahler geometry degenerates at the boundary to yield there
the distribution generating the $(2,3,5)$ structure.  This uses the
algebraic structure of the imaginary (split) octonions, and indeed we
use new results and ideas from the general theory of Cartan holonomy
reduction (from \cite{CGHjlms,CGH}) to describe a point-dependent imaginary octonion structure
on projective 6-manifolds. We then exploit tractor calculus to
understand how the differential and algebraic structures interact,
enabling, for example, a holographic program for the
$(2,3,5)$-distribution.

Let $(M^n,J)$ be an almost complex manifold of dimension $n\geq 4$,
and $g$ a (pseudo\nobreakdash-)Riemannian metric on $M$.  The triple $(M, g, J)$ is
said to be {\em almost (pseudo\nobreakdash-)Hermitian}
if $J$ is orthogonal with respect to $g$, that is, if $ g(J U, J V)=g(U, V)
$ for all tangent vector fields $U, V$. When this holds
$\om(\pdot, \pdot):=g(\pdot, J \pdot)$ is a 2-form called the {\em
  K\"{a}hler form}. If the almost complex structure in addition
satisfies
\begin{equation}\label{NKdef}
(\nabla_U J) U = 0,\quad\quad\forall \, U\in \Gamma(TM),
\end{equation}
where $\nabla$ is the Levi-Civita connection, or equivalently that $\nabla \omega$ is totally skew, then the almost
Hermitian manifold is called a {\em nearly K\"ahler} geometry. In
dimension $n=4$ the equation \nn{NKdef} implies the structure is
K\"ahler, but in higher dimensions it is a strictly weaker
condition. Throughout the article we will assume that any nearly
K\"ahler geometry is \textit{strictly} nearly K\"{a}hler, meaning that $\nabla \omega$ vanishes nowhere.  These structures are especially important in
dimension 6 \cite{Gray76,PA1,SSH}, the dimension
which is key in this article.

A projective structure on a manifold $M$ is an equivalence
class ${\bf p}$ of torsion-free affine connections, where two
connections $\nabla$ and $\nabla'$ are said to be equivalent if they
share the same geodesics as unparameterized curves. On a nearly
K\"{a}hler manifold with metric $g$, the Levi-Civita connection
$[\nabla^g]$ determines a projective structure ${\bf p}=[\nabla^g]$;
however this is the trivial aspect of a deeper link. A projective
structure determines, and is equivalent to, a structure called a
projective Cartan connection \cite{CSS,Cartan,KN}; this is very easily
seen using an equivalent associated bundle structure called the
projective tractor connection \cite{BEG,CapGoMac}.  A critical point
is that this higher order structure has a very special symmetry
reduction if a nearly K\"{a}hler geometry underlies the projective
structure as follows. This result is an immediate consequence of Theorem \ref{NPKtoPhi}. 
\begin{theorem}\label{NKtoG2}
A nearly K\"{a}hler 6-manifold $(M, g, J)$ determines a holonomy
reduction of the projective Cartan bundle (of $(M,[\nabla^g])$) to the
holonomy group $\G_2$ if $g$ is Riemannian, or to $\G_2^*$ if $g$ has
signature $(2,4)$.
\end{theorem}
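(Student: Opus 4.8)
The plan is to recast the statement as the construction of a parallel generic $3$-form (equivalently a cross product) on the standard projective tractor bundle $\mathcal{T}$: since the stabiliser in $\SL(7,\R)$ of such a form is exactly $\G_2$ in the definite case and $\G_2^*$ in the split case, parallelism for the tractor connection is precisely a holonomy reduction of the projective Cartan connection to that subgroup. The enabling first observation is that a strictly nearly K\"ahler $6$-manifold is Einstein with nonzero scalar curvature (Gray). Hence the projective Schouten tensor of $[\nabla^g]$ is a constant multiple of the metric, $P_{ab} = c\, g_{ab}$, and $g$ is an Einstein scale in the projective class, corresponding at tractor level to a \emph{parallel} standard tractor $I$. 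I would work throughout in this scale, which trivialises the density bundles and presents the rank-$7$ bundle as $\mathcal{T} \cong \R \oplus TM$, with the standard tractor connection formula
\begin{equation*}
\nabla^{\mathcal{T}}_b \begin{pmatrix} \xi^a \\ \rho \end{pmatrix} = \begin{pmatrix} \nabla_b \xi^a + \rho\,\delta^a_b \\ \nabla_b \rho - c\, g_{ba}\xi^a \end{pmatrix}.
\end{equation*}

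Next I would write down the cross product by modelling the splitting $\mathcal{T} \cong \R \oplus TM$ on the decomposition $\operatorname{Im}\mathbb{O} = \R \oplus \mathbb{C}^3$ of the imaginary (split) octonions. In that model the parallel Einstein tractor $I$ plays the role of the distinguished unit imaginary octonion $e_0$, its orthogonal complement is identified with $TM$, and the cross product restricts to $J$ on the $\R$-factor (that is, $I \times \xi = J\xi$), to $-\omega(\xi,\eta)$ in the $I$-slot, and to the six-dimensional cross product $\xi \times_6 \eta$ determined by $g(\xi\times_6\eta,\zeta) = \psi^+(\xi,\eta,\zeta)$. Concretely I set
\begin{equation*}
\begin{pmatrix} \xi \\ \rho \end{pmatrix} \times \begin{pmatrix} \eta \\ \sigma \end{pmatrix}
= \begin{pmatrix} \rho\,J\eta - \sigma\,J\xi + \xi\times_6\eta \\ -\,\omega(\xi,\eta) \end{pmatrix},
\end{equation*}
with the nearly K\"ahler normalisation constant inserted as needed. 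Here $\omega$ and $J$ are the given data, while $\psi^+$ is the real $3$-form encoding the totally skew tensor $\nabla\omega$ supplied by \nn{NKdef}; the signature of $g$ dictates whether the resulting algebraic type is definite or split, hence whether one obtains $\G_2$ or $\G_2^*$. Note that the non-parallelism of $J$ on $M$ (the very content of being nearly K\"ahler) is exactly what the off-diagonal terms $\rho\,\delta^a_b$ and $-c\,g_{ba}\xi^a$ of the tractor connection are poised to absorb.

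The substance of the proof is then verifying $\nabla^{\mathcal{T}}(\times) = 0$. Differentiating and collecting slots reduces parallelism to two inputs from the nearly K\"ahler structure equations: the first-order identity $\nabla_b\omega_{cd} = \psi^+_{bcd}$, which matches the derivative of the $\omega$-slot against the $\times_6$-term and the $\rho\,\delta$-term; and the second-order identity expressing $\nabla\psi^+$ as a universal expression in $\omega$ and $g$, whose derivative terms must be cancelled precisely by the Schouten contribution $-c\,g_{ba}\xi^a$. This constant matching is the main obstacle: the proof succeeds only if the Einstein constant $c$ takes exactly the value forced by the universal relation tying the scalar curvature of a strictly nearly K\"ahler $6$-manifold to $|\nabla\omega|^2$, and I would handle it by fixing the standard nearly K\"ahler normalisation and tracking all constants carefully through both structure equations. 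As a conceptual check and alternative route, the whole computation mirrors B\"ar's cone construction: in the Einstein scale the tractor connection is the affine connection of the metric cone over $(M,g)$, whose holonomy lies in $\G_2$ (respectively $\G_2^*$) precisely when $(M,g,J)$ is nearly K\"ahler, so the parallel tractor cross product is just the cone's parallel $\G_2^{(*)}$-structure read off along $M$.
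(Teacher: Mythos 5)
Your proposal is correct in substance and ends at the same object as the paper --- a parallel generic tractor $3$-form, equivalently a parallel tractor cross product, whose pointwise stabilizer is $\G_2^{(*)}$ --- but it is organized differently. The paper (Theorem~\ref{NPKtoPhi}) never fixes a preferred splitting at the outset: it treats $\nabla_{(a}\omega_{b)c}=0$ as a projectively invariant first BGG equation on a weight-$3$ two-form, prolongs it (Proposition~\ref{prolongprop}), observes that the prolongation connection differs from the normal tractor connection on $\Lambda^3\mcT^*$ exactly by the term $\omega_{k[b}W_{cd]}{}^{k}{}_a$, and then proves that this projective Weyl obstruction vanishes identically on a strictly nearly K\"ahler $6$-manifold (Proposition~\ref{prop:weyl_id}, proved in Appendix B via the Einstein property, Gray's curvature identity and the constant-type formula); genericity and the identification of the stabilizer come from the stable-forms calculus for the compatible normalized pair $(\omega,\nabla\omega)$ (Propositions~\ref{stab} and~\ref{pairtog2}). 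Your route --- fix the Einstein scale, write $\btimes$ explicitly out of $(J,\omega,\psi^+)$ modelled on $\bbI^{(*)}\cong\ab{e_0}\oplus\ab{e_0}^\perp$, and verify parallelism slot by slot --- is the same computation carried out in a chosen gauge: the ``second-order identity with the correct constant'' you isolate as the crux is precisely the paper's prolongation formula combined with the Weyl identity, and the constant does match, with $c=\alpha=\Sc/30$ supplied by $R_{ab}=5\alpha g_{ab}$ and \nn{constant_type}. What the paper's formulation buys is projective invariance of every step and an invariant identification of the obstruction; what yours buys is concreteness, and a transparent link to the cone construction, which the paper explicitly acknowledges as the previously known route (\cite{B,ChSa,CLSSH,Gru,Ka1,Semm}) that it is deliberately reframing. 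You should also make explicit that your $3$-form lies in the open $\GL(7,\R)$-orbit, which requires checking $\omega\wedge\psi^+=0$ and the normalization $J^*\psi^+\wedge\psi^+=\tfrac23\omega^3$; these are the hypotheses of Proposition~\ref{pairtog2} and do hold for strictly N(P)K structures.

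One statement in your setup is wrong and should be corrected, although it does not enter your actual computation: in \emph{projective} tractor calculus a non--Ricci-flat Einstein metric corresponds to a parallel tractor \emph{metric} $H\in\Gamma(S^2\mcT^*)$ (Theorem~\ref{eintoH}), not to a parallel standard tractor $I\in\Gamma(\mcT)$; the latter is the conformal almost-Einstein picture. Indeed the distinguished line in $\mcT$ playing the role of $\ab{e_0}$ is the image of the canonical weighted tractor $X$, and a section $(0,\rho)$ of that line satisfies $\nabla^{\mcT}_a(0,\rho)=(\rho\,\delta^b{}_a,\nabla_a\rho)$, which is nonzero unless $\rho=0$. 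Were $I$ parallel, the splitting $\mcT=\ab{I}\oplus I^{\perp}$ would be preserved by $\nabla^{\mcT}$, each slot of $\btimes$ would be separately parallel, and $\nabla J=0$ would follow --- contradicting strictness, and contradicting your own (correct) remark that the off-diagonal terms of the tractor connection are what absorb the non-parallelism of $J$.
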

\noindent Here $\G_2$ and $\G_2^*$ denote, respectively, the compact
and noncompact real forms of the exceptional simple complex Lie group
$\smash{\G_2^{\mathbb{C}}}$; we write $\smash{\G_2^{(*)}}$ to indicate
either one of these possibilities.\footnote{There are actually two
  (connected) groups with Lie algebra the split real form $\mfg_2^*$ of the exceptional simple complex Lie algebra $\mfg_2^{\bbC}$: the automorphism
  group of the split octonions (see Subsection
  \ref{subsection:octonions-cross-product-g2}), which has fundamental
  group $\bbZ_2$, and its universal cover \cite{BorMontgomery}. In
  this article, $\G_2^*$ always refers to the former.}  A link between
nearly K\"{a}hler geometry and these exceptional groups has been
previously observed in the literature using pseudo-Riemannian
constructions, namely metric cones
\cite{B,ChSa,CLSSH,Gru,Ka1,Semm}. However these studies do not make
the connection to projective differential geometry. Yet, as we shall
show, understanding the role of projective geometry is crucial for
extracting the full implications of the exceptional group structures.

The nearly K\"{a}hler defining equation \nn{NKdef} determines an
equation on the K\"{a}hler form $\om$ called the Killing-Yano
equation. This is projectively invariant; it is an equation from an
important class of equations known as first BGG equations
(cf.\ \cite{CGHjlms}). We show in Theorem \ref{NPKtoPhi} that on a
$6$-dimensional nearly K\"{a}hler manifold, $\om$ is a {\em normal
  solution} of this equation in the sense of \cite{CGHjlms}. In this
case, this means that (by prolongation) $\om$ determines, and is
equivalent to, a certain tractor 3-form field $\Phi$ that is parallel
for the normal projective tractor connection, and it is this that
gives the holonomy reduction. This perspective provides a natural
geometric framework to extend the structure and connect to other
geometries. For example an important question is how one may
compactify complete nearly K\"{a}hler geometries and, if so, what
geometry is induced on the boundary. A result in this direction is as
follows. Here we use that the definition of a projective manifold
applies to a smooth manifold with boundary. 

\begin{theorem} \label{cify} Let $(\overline{M},{\bf p})$ be a projective 
6-manifold with boundary $\partial M\neq \emptyset $ and interior $M$.
Suppose further that $M$ is equipped with a geodesically complete
nearly K\"{a}hler structure $(g, J)$ such that the projective class
$[\nabla^g]$ of the Levi-Civita connection $\nabla^g$ coincides with
${\bf p}|_M$. Then: $g$ has signature $(2,4)$, the metric $g$ is
projectively compact of order 2, and the boundary has a canonical
conformal structure equivalent to an oriented Cartan
$(2,3,5)$-distribution.
\end{theorem}

 \noindent The notion of {\em projectively compact} used here is a
 projective analogue of conformal compactification, as formulated in
 \cite{CapGo-projC}. The statement concerning signature is correct
 without loss of generality; the signature could of course be $(4,2)$
 instead of $(2,4)$.  This theorem is proved in Section \ref{cpctsec}.

Theorem \ref{NKtoG2} suggests an obvious converse problem.  A Cartan
holonomy reduction determines a canonical stratification of the
underlying manifold into initial submanifolds, with the different
strata (called {\em curved orbits}) equipped with specific geometric
structures determined by the reduction; the general theory is
developed in \cite{CGH} following the treatment of projective geometry
\cite{CGHjlms} and a ``pilot case'' in conformal geometry
\cite{Gal}. Providing the details for this geometric stratification,
specific to our current setting, resolves this converse problem and
more, as in the following Theorem which paraphrases key results from Corollary \ref{firststrat}, Theorem \ref{local}, and Theorem \ref{confRed}. It is this these results that lead to
Theorem \ref{cify}. A parallel tractor 3-form $\Phi$ is said to be
\textit{generic} if it determines, via a certain algebraic
construction (see \nn{HHdef}), a metric $H$ on the tractor
bundle. According to whether $H$ is positive definite or indefinite we
say $\Phi$ is, respectively, {\em definite-generic} or {\em
  split-generic}.

\begin{theorem} \label{orbit}
Suppose that $(M,{\bf p})$ is a 6-dimensional projective manifold
equipped with a parallel generic tractor 3-form
$\Phi$. Then:
\begin{itemize}
\item If $\Phi$ is definite-generic then it
determines a $\G_2$ holonomy reduction of the Cartan bundle, and
$(M,{\bf p},\Phi)$ is equivalent to a signature-$(6,0)$ nearly
K\"{a}hler structure on $M$ that is positive Einstein.

\item If
$\Phi$ is split-generic then it determines a $\G_2^*$ holonomy
reduction of the Cartan bundle and a decomposition $M=M_{+}\cup
M_0 \cup M_-$ of $M$ into a union
of 3 (not necessarily connected) disjoint curved orbits, where $M_{\pm}$ are open and $M_0$ is closed. If $M$ is connected and both
$M_{+}$ and $M_{-}$ are non-empty then $M_0$ is non-empty and is a
smoothly embedded separating hypersurface consisting of boundary
points of both $M_+$ and $M_-$.  From $(M,{\bf p}, \Phi )$ the curved
orbit components inherit canonical geometric structures as follows:
$M_{+}$ has a nearly K\"{a}hler structure of signature $(2,4)$ that is
positive Einstein; $M_{-}$ has a nearly para-K\"{a}hler structure of
signature $(3,3)$ that is negative Einstein; $M_{0}$ has a conformal
structure of signature $(2, 3)$ equipped with a $\G_2^*$ conformal holonomy reduction, and
this means that the conformal structure is equivalent to an oriented Cartan
$(2,3,5)$-distribution.
\end{itemize}
\end{theorem}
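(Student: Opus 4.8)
The plan is to read the parallel generic 3-form $\Phi$ as a Cartan holonomy reduction and then feed this into the curved orbit decomposition machinery of \cite{CGH}. Since $\Phi$ is parallel for the normal projective tractor connection, its pointwise stabiliser inside the structure group is preserved along development to the flat model; by genericity and the algebra recalled in \nn{HHdef}, this stabiliser is the automorphism group of the (split) imaginary octonions, namely $\G_2$ when the induced tractor metric $H$ is definite and $\G_2^*$ when $H$ is split. Thus $\Phi$ reduces the projective Cartan bundle to holonomy group $\G_2^{(*)} \subset \SL(7,\bbR)$, which settles the first clause of each bullet. By \cite{CGH}, the remaining geometric content is governed entirely by the orbit decomposition of the flat model $\mathbb{P}(\bbR^7) = \SL(7,\bbR)/P$ under the left action of $\G_2^{(*)}$.

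So the first real step is purely algebraic: classify the $\G_2^{(*)}$-orbits on the projectivised standard representation. In the definite case $\G_2$ acts transitively on the sphere of $H$-unit imaginary octonions and this descends to a transitive action on $\mathbb{P}(\bbR^7)$, so there is a single curved orbit and $M$ is unstratified. In the split case the only $\G_2^*$-invariant of a line $\ell \subset \bbR^7$ is the sign of $H|_\ell$, giving exactly three orbits — positive, null, and negative lines — on each of which $\G_2^*$ is transitive. Under the curved orbit correspondence these pull back to $M_+$, $M_0$, $M_-$; since the positive and negative conditions are open and the null condition is closed, $M_\pm$ are open and $M_0$ is closed.

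Next I would identify the induced geometry on each orbit. On an open orbit the tautological tractor $X$ is non-null, so $\sigma := H(X,X)$ trivialises the relevant density bundle and selects a metric $g$ in the projective class; this inverts the construction behind Theorems \ref{NKtoG2} and \ref{NPKtoPhi}, so that $(\Phi, H)$ restricts to an almost (para-)Hermitian structure whose K\"ahler form is the normal BGG solution, hence nearly (para-)K\"ahler. The signature and Einstein sign are then read off $H$ via the identification $T_{[\ell]}\mathbb{P}(\bbR^7) \cong \ell^* \otimes \ell^\perp$: a positive line gives signature $(2,4)$ and positive Einstein constant (the metric cone on $M_+$ is the flat $\G_2^*$-structure, which is Ricci-flat and forces $g$ to be positive Einstein), while a negative line flips the scale, giving a para-complex $J$, signature $(3,3)$, and negative Einstein constant. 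This yields the stated descriptions of $M_+$ and $M_-$.

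The main obstacle is the closed orbit $M_0$. Here $X$ is null, $\sigma$ vanishes, and no metric in the projective class is distinguished; instead the induced structure should be the common conformal boundary of the projectively compact metrics on $M_\pm$. Concretely, I expect the $\G_2^*$-reduction to restrict to a \emph{conformal} holonomy reduction along $M_0$: the screen bundle $\ell^\perp/\ell$ inherits from $H$ a conformal class of signature $(2,3)$, and $\Phi$ descends to a parallel conformal tractor object giving the asserted $\G_2^*$ conformal holonomy reduction. Identifying the resulting structure with a Cartan $(2,3,5)$-distribution is the crux; I would invoke the known equivalence between oriented signature-$(2,3)$ conformal structures with $\G_2^*$ conformal holonomy and oriented $(2,3,5)$-distributions \cite{NurowskiDifferential,HammerlSagerschnig}, after checking that the reduction produced by the curved orbit theory has exactly the requisite parabolic type. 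The separation statement finally follows from \cite{CGH}: $\sigma$ is a defining density for $M_0$ with nonvanishing transverse derivative, so $M_0 = \{\sigma = 0\}$ is a smooth embedded hypersurface, and connectedness of $M$ with both $M_\pm$ nonempty forces the open strata to abut along it.
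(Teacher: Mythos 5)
Your proposal is correct and follows essentially the same route as the paper: it passes through the subordinate $\SO(H)$ reduction to get the stratification, the Einstein metrics and the projective compactness (the paper's Theorem \ref{HtoEin} and Corollary \ref{firststrat}), verifies that the $\G_2^{(*)}$-orbit decomposition of the model $\mathbb{P}_+(\bbR^7)$ is no finer than that of $\SO(H)$ (the paper's Proposition \ref{trans}, cited from Wolf), extracts the N(P)K structures on the open orbits from the cross product (Theorem \ref{local}), and identifies the conformal $\G_2^*$ reduction on $M_0$ with a $(2,3,5)$-distribution via Nurowski and Hammerl--Sagerschnig (Theorems \ref{confRed} and \ref{HSthm}). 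The only points at which you are looser than the paper are the unproved assertion that $\G_2^*$ acts transitively on each of the three strata of the model, which the paper handles by citation, and the claim that the cone over $M_+$ is \emph{flat} (it is merely Ricci-flat with holonomy in $\G_2^*$); neither affects the validity of the argument.
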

\noindent Some remarks are in order: A {\em nearly
  para-K\"{a}hler geometry} is a pseudo-Riemannian manifold satisfying
\nn{NKdef}, but where $J$ is an involution and $g(J U, J V)=-g(U, V)$.
As with our conventions for nearly K\"{a}hler, our default is that
this is strict, and so here nearly para-K\"{a}hler means that $\nabla J$ is nowhere zero, where $\nabla$ is again the Levi-Civita connection of $g$. It is well-known that
$6$-dimensional strictly nearly K\"{a}hler and strictly nearly para-K\"{a}hler structures
are necessarily Einstein \cite{Gray76,IZ,S}.  That a Cartan $(2,3,5)$-distribution
is equivalent to a $\G_2^*$-reduced conformal structure is a result of
Nurowski \cite{NurowskiDifferential}, with further clarification and
characterization given in \cite{HammerlSagerschnig}. These results
play an important role here. Note that Theorem \ref{NKtoG2} combined
with the result here shows that on a 6-manifold a Riemannian nearly
K\"{a}hler structure is simply equivalent to a projective structure with
a parallel definite-generic 3-form tractor.
More generally we see
that nearly K\"{a}hler geometry, its para- variant, and Cartan
$(2,3,5)$-geometry arise in a uniform way from projective geometry.

Given Theorems \ref{cify} and \ref{orbit}, it is natural to ask to
whether all $(2,3,5)$-distributions arise this way. The answer is
positive in the real-analytic setting (and in general formally), as
explained in Section \ref{gDp}. That section uses results from
\cite{FeffermanGraham} and \cite{GrahamWillse} to treat the problem of
taking a distribution as Dirichlet data for the construction of a
projective manifold with a $\G^*_2$ holonomy reduction for which the given
distribution is the induced structure on the projective infinity. See
in particular Theorem \ref{theorem:Dirichlet-problem-G2}, which
interprets in the projective tractor setting Theorem 1.1 from
\cite{GrahamWillse}. The latter theorem itself generalizes to all
(oriented, real-analytic) $(2, 3, 5)$-distributions a result in
\cite[\S4]{NurowskiExplicit} about a particular finite-dimensional
family of such distributions; later Leistner and Nurowski proved that metrics in an explicit subset of that family have holonomy equal to $\G_2^*$ \cite{LeistnerNurowski}. Section \ref{section:examples} gives
solutions to the Dirichlet problems for a special class of $(2, 3,
5)$-distributions studied by Cartan \cite[\S9]{CartanPff} (the solutions themselves are essentially equivalent to a special case of the data given in \cite[\S3]{NurowskiDifferential}), and these yield a 1-parameter family of geometries $(M, \mbp, \Phi)$ whose curved orbits are all homogeneous.

These results establish that we may study $(2,3,5)$-geometry {\em
  holographically}, that is using the associated nearly K\"ahler and
nearly para-K\"ahler geometries of Theorems \ref{cify} (also Theorem
\ref{pcify}) and \ref{orbit}.  This is in the spirit of Fefferman and
Graham's Poincar\'e-Einstein program \cite{FeffermanGraham} and the
usual holographic principle as in e.g.\ \cite{GLW,Juhl,Susskind,GLW},
except that it involves projective compactification and not conformal
compactification and so the asymptotics are rather different; see
Section \ref{pEin}.

A first step
in such a holographic treatment is to understand how (in the notation
of Theorem \ref{orbit}) the distribution on $M_0$ arises as a limit of
the ambient almost (para\nobreakdash-)complex structure on the open
curved orbits $M_{\pm}$. This is treated in detail in Section
\ref{mobileO}. There it is shown that the holonomy reduction
determines a smooth object $\bJ$ (see \nn{Jdef}) that is (essentially)
a field of endomorphisms of the standard tractor bundle $\cT$ (defined in Section \ref{trS}).  This gives the almost
(para-)complex structures on $M_\pm$ while
also determining the distribution on $M_0$ as a quotient of its
kernel, see Theorems \ref{local} and
\ref{theorem:J-characterization-D}.  In Section \ref{inf} it is also
shown how many of the properties of the distribution may be deduced
efficiently via $\bJ$ and the naturally accompanying perspective.

The general theory of curved orbit decompositions from
\cite{CGHjlms,CGH} describes how many features of orbit decompositions
of homogeneous spaces carry over to corresponding holonomy reductions
of Cartan geometries modelled on the given symmetry-reduced
homogeneous space. Thus we should expect to understand the
results in Theorem \ref{orbit} partly as realizing curved generalizations of features
of the model. This is the case, and the model is discussed in
Section \ref{modelS}. As explained there, the models for our structures
are the ray projectivization $\mathbb{P}_+(\bbI )$ of the imaginary
octonions $\bbI$, in the definite signature case, and the ray
projectivization $\mathbb{P}_+(\bbI^{*})$ of the imaginary split
octonions $\bbI^{*}$ in the indefinite case. Both $\bbI$ and $\bbI^*$
are algebraically rich structures: The homogeneous
geometries $\smash{\mathbb{P}_+(\bbI^{(*)})}$ include the models for nearly
K\"ahler geometry (of both possible signatures), nearly
para-K\"ahler geometry, and $(2,3,5)$-geometry, as we explain in
Section \ref{modelS}.  The point of presenting the model at that late
stage is that these features of the model are just specializations of
results that hold in more general settings, and treating the general
cases is no more difficult than treating the model from the perspective developed here.

Recall that a Riemannian manifold carries a point-dependent Euclidean
structure that may be viewed as a holonomy reduction of the structure
given by a manifold equipped with an affine connection. In a similar way the
Cartan and tractor machinery enables the imaginary (split) octonionic
algebraic structure, of either of the spaces $\smash{\bbI^{(*)}}$, to
be carried fiberwise in a point-dependent but parallel manner. However
because, in contrast to affine geometry, projective geometry is a
higher-order structure, this parallel algebra interacts algebraically
not just with the tangent bundle but also part of its associated 1-jet
bundle.

Thus the geometries discussed in Theorem \ref{orbit} above are in a
 precise way {\em curved analogues of} $\smash{\mathbb{P}_+(\bbI^{(*)})}$.
On a projective 6-manifold $(M,{\bf p})$ a tractor 3-form $\Phi$ that
is pointwise generic determines an algebraic binary cross product
$\boldsymbol{\btimes}$ (see Definition \ref{definition:cross-product}) that corresponds fiberwise to the cross product
on the imaginary octonions, cf. \cite{Baez}. This is preserved by the
tractor connection if and only if $\Phi$ is parallel, and hence we
have the following paraphrasing of Proposition \ref{crossprod}:
\begin{proposition}\label{timesth}
Suppose that $(M,{\bf p})$ is a dimension 6 projective manifold. A
generic parallel 3-form tractor $\Phi$  is equivalent to a
 tractor cross product $\boldsymbol{\btimes} : \cT\times
\cT\to \cT $ that is preserved by the tractor connection.
\end{proposition}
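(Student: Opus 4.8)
The plan is to reduce the statement to a pointwise algebraic dictionary from $\G_2^{(*)}$ linear algebra and then promote it to a statement about parallel sections using the fact that the relevant constructions are of order zero. First I would record the fiberwise correspondence. At a point where $\Phi$ is generic, the construction \nn{HHdef} produces a nondegenerate tractor metric $H$ (positive definite or of split signature according to whether $\Phi$ is definite- or split-generic), and using $H$ to raise the remaining index of the contracted form one sets
\[
X \btimes Y := H^{-1}\big( \iota_Y\, \iota_X\, \Phi \big),
\qquad\text{equivalently}\qquad
H(X \btimes Y, Z) = \Phi(X, Y, Z).
\]
This is exactly the cross product of Definition \ref{definition:cross-product}, and the standard $\G_2^{(*)}$ linear algebra (cf.\ \cite{Baez}) shows it is alternating and $H$-compatible, and that conversely a cross product of this algebraic type recovers both its compatible metric and, through the displayed identity, the generic $3$-form $\Phi$. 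Thus at each point the assignments $\Phi \mapsto (H,\btimes)$ and $\btimes \mapsto \Phi$ are mutually inverse bijections between generic $3$-forms and cross products of the appropriate type.

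Second, I would observe that each of these assignments is given by a fixed algebraic formula in its argument involving no derivatives, and is equivariant under the structure group of the tractor bundle. Because they are order-zero (fibre-preserving, derivative-free) natural constructions, the Leibniz/chain rule applied to the tractor connection expresses $\na^\cT H$ and $\na^\cT\btimes$ as universal expressions in $\na^\cT\Phi$; hence if $\na^\cT\Phi = 0$ then $H$ and $\btimes$ are parallel. Conversely, running the inverse construction, a parallel $\btimes$ yields a parallel compatible metric $H$ and then a parallel $\Phi$. Combined with the pointwise bijectivity of the first step, this gives the claimed equivalence and identifies the parallel cross product with the one of Proposition \ref{crossprod}.

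The main technical point to nail down --- rather than any conceptual obstacle --- is the interplay between the normalization in \nn{HHdef} and the projective density weights. The construction of $H$ from $\Phi$ is naturally valued in a power of the top exterior power $\Lambda^7\cT^*$, and must be normalized to an honest metric using the canonical parallel volume tractor coming from the special-linear structure of the projective tractor bundle. I would check that this normalization is available and parallel, and that it fixes the weight of $\Phi$ so that the resulting $H \in S^2\cT^*$ and its inverse $H^{-1}$ carry precisely matching weights, making $\btimes$ an honest weight-free bundle map $\cT\times\cT\to\cT$ as the statement demands. I would likewise verify the precise sense in which genericity of $\Phi$ is equivalent to nondegeneracy of $H$ (so that $H^{-1}$ exists), together with the definite/split signature dichotomy. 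With these normalizations fixed, what remains is the routine $\G_2^{(*)}$ linear algebra.
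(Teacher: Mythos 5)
Your proposal is correct and follows essentially the same route as the paper: Theorem \ref{Hdef} (i.e.\ formula \nn{HHdef} together with the canonical parallel tractor volume form of Proposition \ref{trvol}) produces the parallel metric $H$, index-raising with the parallel $H$ then converts the parallel $\Phi$ into the parallel $\btimes$, and the converse runs through the pointwise algebraic bijection of Proposition \ref{equivP} together with \nn{PtoH}. Your remarks on weights and normalization correctly identify the same technical points the paper handles via the $\SL(7,\bbR)$ structure and the normalization of $\vol$.
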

\noindent
Thus we may take $(M,{\bf p},\btimes)$ as the fundamental
structure. This has considerable aesthetic appeal, but it is also
practically useful, and after Section \ref{cross-section} much of the
development is based on this point of view. For example $\bbJ$ is
defined via $\btimes$ and then its key properties follow easily from
cross-product identities. These and similar results are developed in
the next section where we introduce the tools that underlie the
algebraic aspects of the article.

Since the work here involves a number of geometric structures,
our aim is to make the treatment as self-contained as possible.
Throughout we shall use either index-free notation or Penrose's abstract
index notation according to convenience. In the latter, a vector $\xi$ is denoted by $\xi^a$, a covector $\eta$ by $\eta_a$, a covariant tensor of rank $r$ by $T_{a_1 \cdots a_r}$, and similarly for mixed and contravariant tensors; we use the same notation for tangent, cotangent, and general tensor fields on a smooth manifold. We denote the natural pairing of a vector $\xi$ and covector $\eta$ by $\eta_a \xi^a$ and general tensor traces analogously. In Section \ref{trS} we extend this notation from the tangent bundle to tractor bundles. We use (concrete) frames in Section \ref{section:examples}.

The authors are grateful to Pawe{\l} Nurowski, who, at an early stage
in this project, pointed out Theorem \ref{NKtoG2} and a proof via
exterior differential systems (in fact this was in an early draft of
\cite{GN}). We are also grateful to Paul-Andi Nagy who assisted
greatly in the proof of Proposition \ref{prop:weyl_id}.  It is also a pleasure to
thank Robert Bryant for several comments, in particular for discussion
connected to the generality of strictly N(P)K structures as in Remark
\ref{remark:generality-of-structures}. We are also thankful to Antonio
Di Scala, who pointed out a gap in the proof of a variant of Theorem
\ref{cify} that appeared in a first version of the article, and to
G.\ Manno for allowing us to use a result from their preprint work
\cite{DiScalaManno}.  Discussions with Michael Eastwood are also much
appreciated.  The explicit data for the family of examples described
in \S\ref{section:examples} was produced in part using the standard
Maple package \verb"DifferentialGeometry".

\section{Algebraic preliminaries} \label{alg}

\subsection{$\varepsilon$-complex structures on vector spaces}\label{subsection:epsilon-complex-structures}

We review some variants of the notion of a complex structure on a vector space. By applying appropriate sign changes, one can define so-called paracomplex analogues of more familiar complex structures. Both here and in Subsections \ref{convs}-\ref{subsection:almost-eps-Hermitian}, where we define related geometric structures on tangent bundles, we define both kinds of structures simultaneously using a parameter $\varepsilon \in \set{\pm 1}$: In the definitions, $\varepsilon = -1$ yields the complex version of a structure and $\varepsilon = +1$ the paracomplex version. One specializes the names of structures to particular values of $\varepsilon$ by simply omitting $-1$- and replacing $+1$- with the prefix \textit{para-}. See \cite{Cruceanu-Fortuny-Gadea} for a survey of paracomplex geometry.

\begin{definition}\label{definition:epsilon-complex-structures}
The \textit{$\varepsilon$-complex numbers} is the ring $\bbC_{\varepsilon}$ generated over $\bbR$ by the single generator $i_{\varepsilon}$, which satisfies precisely the relations generated by $i_{\varepsilon}^2 = \varepsilon$. As an $\bbR$-algebra, the ring $\bbC_{+1}$ of paracomplex numbers is isomorphic to $\bbR \oplus \bbR$.

An \textit{$\varepsilon$-complex structure} on a real vector space $\bbW$ (of necessarily even dimension, say, $2m$) is an endomorphism $J \in \End(\bbW)$ such that
\[
    J^2 = \varepsilon \id_{\bbW} \textrm{;}
\]
if $\varepsilon = +1$, we require furthermore that the $(\pm 1)$-eigenspaces of $J$ both have dimension $m$ (the analogous condition holds automatically for $\varepsilon = -1$). This identifies $\bbW$ with $\bbC_{\varepsilon}^m$ so that the action of $J$ coincides with scalar multiplication by $i_{\varepsilon} \in \bbC_{\varepsilon}$.

An \textit{$\varepsilon$-Hermitian structure} on a real vector space $\bbW$ (again of necessarily even dimension) is a pair $(g, J)$, where
\begin{enumerate}
    \item $g \in S^2 \bbW^*$ is an inner product (in this article, inner products are not necessarily definite unless specified otherwise), and
    \item $J \in \End(\bbW)$ is an $\varepsilon$-complex structure on $\bbW$,
\end{enumerate}
compatible in the sense that
\[
    g(J \pdot , J \pdot) = -\varepsilon g(\pdot , \pdot) \textrm{,}
\]
or, equivalently, that $\omega := g(\pdot , J \pdot)$ is skew-symmetric.
\end{definition}

The compatibility condition imposes restrictions on the signature of an inner product $g$ in a $\varepsilon$-Hermitian structure: For a Hermitian structure $(g, J)$ on a real vector space $\bbW$ of dimension $2 m$, $g$ must have signature $(2 p, 2 q)$ for some nonnegative integers $p, q$, and for a para-Hermitian structure, $g$ must have neutral signature $(m, m)$. (This compatibility condition necessitates the eigenspace condition in the definition of a paracomplex structure.)

\subsection{The octonions, $7$-dimensional cross products, and $\smash{\G_2^{(*)}}$}\label{subsection:octonions-cross-product-g2}

The geometries investigated in this article will be unified by so-called cross products $\btimes: \bbV \times \bbV \to \bbV$ on $7$-dimensional vector spaces.

\begin{definition}\label{definition:cross-product}
On an inner product space $(\bbV, \pdot)$, a \textit{(binary) cross product} is a skew-symmetric bilinear product $\btimes: \bbV \times \bbV \to \bbV$ {\em compatible} with $\cdot$, meaning  that
\begin{enumerate}
    \item \label{item:cross-product-orthogonality} $(x \btimes y) \cdot x = 0$ and
    \item \label{item:cross-product-square}        $(x \btimes y) \cdot (x \btimes y) = (x \cdot x)(y \cdot y) - (x \cdot y)^2$
\end{enumerate}
for all $x, y \in \bbV$.
\end{definition}

In dimension $7$, there are only two such products up to algebra
isomorphism \cite[Theorem 4.1]{BrownGray}. We construct both
simultaneously, one in terms of the octonion algebra $\bbO$, the most
complicated of the four algebras in the celebrated classification of
normed division algebras over $\bbR$, and the other using the split
octonion algebra $\bbO^*$, a close analog of $\bbO$ in which the norm
is replaced by a quadratic form that induces a split signature inner
product. For convenience we write $\bbO^{(*)}$ to indicate that a
given context applies to both. We use the abstract properties of these
algebras to establish characteristics of the cross products; in
Section \ref{mobileO} transferring these features to the curved
geometries under study will efficiently illuminate some of their
important features. See \cite{Baez} for the details about $\bbO$, and
\cite{BaezHuerta} for facts about $\bbO^*$, though as we will see,
many of the features of the two algebras are analogous.

The algebra $\bbO^{(*)}$ has an identity, which we denote $1$, and it
is alternative but not associative; {\em alternativity} means that any
subalgebra generated by two elements is associative, or equivalently
that the associator $(x, y, z) \mapsto (xy)z - x(yz)$ is totally skew.

The norm on $\bbO$ induces a positive definite inner product $\cdot$, and the inner product on $\bbO^*$, which we also denote $\cdot$, has signature $(4, 4)$. In both cases, the inner product defines a natural \textit{conjugation} involution $\bar{\cdot}: \bbO^{(*)} \to \bbO^{(*)}$ by orthogonal reflection through the span $\ab{1}$ of $1$, which we identify with $\bbR$:
\[
    \bar{x} := 2(x \cdot 1) - x \textrm{.}
\]
It satisfies $\ol{xy} = \bar{y} \bar{x}$. We call the $7$-dimensional orthocomplement $\bbI^{(*)} := \ab{1}^{\perp}$, which is precisely the $-1$-eigenspace of the conjugation map, the \textit{imaginary (split) octonions}. The (hence orthogonal) respective projections onto the summands of the decomposition $\bbO^{(*)} \cong \bbR \oplus \bbI^{(*)}$ are thus
\begin{align*}
	&\Re: x \mapsto \tfrac{1}{2}(x + \bar{x}) = x \cdot 1 \\
	&\Im: x \mapsto \tfrac{1}{2}(x - \bar{x}) \textrm{,}
\end{align*}
and we can recover the bilinear form $\cdot$ via
\[
    x \cdot y = \Re(x \bar{y}) \textrm{.}
\]

Now, $\cdot$ restricts to a nondegenerate bilinear form, which we also denote $\cdot$, on $\bbI^{(*)}$ of signature $(7, 0)$ or $(3, 4)$, and it specializes there immediately to $x \cdot y = -\tfrac{1}{2}(xy + yx)$. The projection $\Im$ determines the nonassociative, $\bbR$-linear, skew-symmetric \textit{(split) octonionic cross product}
\[
    \btimes: \bbI^{(*)} \times \bbI^{(*)} \to \bbI^{(*)}
\]
by
\[
	(x, y) \mapsto x \btimes y := -\Im(x \bar{y}) = \tfrac{1}{2}(xy - yx) \textrm{,}
\]
which realizes $\bbI^{(*)}$ as an anticommutative, nonassociative algebra over $\bbR$ without unit.

By definition, for all $x, y \in \bbI^{(*)}$, $xy$ can be decomposed into its $\bbR$ and $\bbI^{(*)}$ components as
\begin{equation}\label{equation:octonion-decomposition}
    xy = -x \cdot y + x \btimes y \textrm{.}
\end{equation}

Now, one can easily reverse this construction and recover the full algebraic structure of $\bbO^{(*)}$ from the cross product $\btimes$ on $\bbI^{(*)}$. We will not need the details of this construction, but note that the bilinear form on $\bbI^{(*)}$ is determined by the cross product via \cite[\S4.1]{Baez}
\begin{equation}\label{equation:dot-from-cross}
	x \cdot y = -\tfrac{1}{6} \tr (x \btimes (y \btimes \cdot)) \textrm{.}
\end{equation}
So, to specify a cross product $\btimes$ with an underlying $7$-dimensional inner product space $(\bbV, \pdot)$, it is enough just to specify $(\bbV, \btimes)$. In particular, the cross products constructed on $\bbI$ and $\bbI^*$ are nonisomorphic, and since there are only two cross products in this dimension up to isomorphism, any (binary) cross product $\btimes$ on a $7$-dimensional real vector space $\bbV$ is isomorphic to either $(\bbI, \btimes)$, in which case we say that $\btimes$ is \textit{definite}, or $(\bbI^*, \btimes)$, in which case we say that $\btimes$ is \textit{split}.

Since the algebraic structure on $\bbO$ and its corresponding cross product can each be recovered from the other, the two algebras have the same automorphism group, namely, the compact real form $\G_2$ of the simple complex Lie group of type $\G_2$. The other cross product is analogously related to $\bbO^*$, and in this case the common automorphism group is the split real form $\G_2^*$ of that complex Lie group.

Since by \eqref{equation:dot-from-cross} $\cdot$ can be recovered from the algebraic structure of $\bbO^{(*)}$, the $\smash{\G_2^{(*)}}$-action preserves $1$ and hence the orthocomplement $\bbI^{(*)} = \ab{1}^{\perp}$, which is the smallest nontrivial irreducible representation of $\smash{\G_2^{(*)}}$. In particular it preserves the inner product induced there, and this defines a homomorphism (in fact, an embedding) $\G_2 \hookrightarrow \SO(\bbI) \cong \SO(7, \bbR)$ in the definite case and such a map $\G_2^* \hookrightarrow \SO(\bbI^*) \cong \SO(3, 4)$ in the split case.

A cross product on $\bbV$ canonically also determines an orientation: The form
\begin{equation}\label{equation:cross-to-volume}
    \vol_{ABCDEFG} := \tfrac{1}{42} \btimes_{K[AB} \btimes^K_{\phantom{K}CD} \btimes_{EFG]} \in \Lambda^7 \bbV^*
\end{equation}
is a volume form for $\cdot\,$; here, indices are raised and lowered with $\cdot\,$.

Now, any element $x \in \bbI^{(*)}$ determines a map $\bbJ_x: \bbI^{(*)} \to \bbI^{(*)}$ defined by
\begin{equation}\label{aJdef}
    \bbJ_x(y) := -x \btimes y \textrm{,}
\end{equation}
which part \eqref{item:cross-product-orthogonality} of Definition \ref{definition:cross-product} guarantees is skew-adjoint with respect to $\cdot\,$. Its properties will play a key role later in establishing features of the geometric structures we study in later sections.

\begin{proposition}\label{proposition:J-squared}
For any $x \in \bbI^{(*)}$,
\begin{equation}\label{equation:J-squared}
    \bbJ_x^2(y) = -(x \cdot x) y + (x \cdot y) x \textrm{.}
\end{equation}
In particular,
\begin{enumerate}
    \item \label{item:J-squared-x-null}       if $x$ is null, then $\bbJ_x^2(y) = (x \cdot y) x$, and
    \item \label{item:J-squared-y-orthogonal} if $y \in \ab{x}^{\perp}$, then $\bbJ_x^2(y) = -(x \cdot x) y$.
\end{enumerate}
\end{proposition}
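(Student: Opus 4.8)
The plan is to reduce the claimed identity to structural facts about $\bbO^{(*)}$ rather than computing in a frame or with structure constants. Unwinding the definition \eqref{aJdef} twice gives
$\bbJ_x^2(y) = \bbJ_x(-x \btimes y) = x \btimes (x \btimes y)$,
so it suffices to establish $x \btimes (x \btimes y) = -(x \cdot x) y + (x \cdot y) x$ (the $7$-dimensional analogue of the classical ``BAC$-$CAB'' rule). The single key input is alternativity: since the associator is totally skew, $(x, x, y) = 0$, which linearizes to the identity $x(xy) = x^2 y$ for all $y \in \bbI^{(*)}$. The strategy is to compute the octonion product $x(xy)$ in two ways and compare.

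First I would evaluate $x^2$ for $x \in \bbI^{(*)}$. Applying the decomposition \eqref{equation:octonion-decomposition} with both arguments equal to $x$ and using skew-symmetry of $\btimes$ gives $x^2 = -x \cdot x + x \btimes x = -(x \cdot x) 1$. Combined with alternativity this yields the first expression $x(xy) = x^2 y = -(x \cdot x) y$. For the second expression I would expand $x(xy)$ using \eqref{equation:octonion-decomposition} directly: writing $xy = -x \cdot y + x \btimes y$ and letting the scalar $\bbR$-component $-x \cdot y$ pass through the product, $x(xy) = -(x \cdot y) x + x(x \btimes y)$. Since $x \btimes y \in \bbI^{(*)}$, a second application of \eqref{equation:octonion-decomposition} gives $x(x \btimes y) = -x \cdot (x \btimes y) + x \btimes (x \btimes y)$, and here the scalar term vanishes by the orthogonality axiom \eqref{item:cross-product-orthogonality}, namely $(x \btimes y) \cdot x = 0$. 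Hence $x(xy) = -(x \cdot y) x + x \btimes (x \btimes y)$.

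Comparing the two computations of $x(xy)$ gives $x \btimes (x \btimes y) = -(x \cdot x) y + (x \cdot y) x$, which is exactly \eqref{equation:J-squared}. The two special cases then follow instantly by specializing: imposing $x \cdot x = 0$ (that is, $x$ null) yields part \eqref{item:J-squared-x-null}, and imposing $x \cdot y = 0$ (that is, $y \in \ab{x}^{\perp}$) yields part \eqref{item:J-squared-y-orthogonal}. The only point requiring care is invoking alternativity in the correct polarized form $x(xy) = x^2 y$; once that and the value $x^2 = -(x \cdot x) 1$ are in hand, the remainder is bookkeeping with the decomposition \eqref{equation:octonion-decomposition} and the orthogonality axiom, so I do not anticipate a genuine obstacle here.
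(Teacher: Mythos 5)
Your proposal is correct and follows essentially the same route as the paper: both expand the linearized alternativity identity $(xx)y = x(xy)$ via the decomposition \eqref{equation:octonion-decomposition}, use $(x \btimes y)\cdot x = 0$ to drop the scalar term, and compare the two sides. The sign bookkeeping in $\bbJ_x^2(y) = x \btimes (x \btimes y)$ and the specializations to the two corollary cases are handled correctly.
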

\begin{proof}
If we expand the alternativity identity $(xx)y = x(xy)$ of $\bbO^{(*)}$ using the decomposition \eqref{equation:octonion-decomposition}, the left-hand size becomes
\[
    (- x \cdot x + x \btimes x) y = -(x \cdot x) y \textrm{,}
\]
and the right-hand side
\begin{align*}
    x(x \btimes y - x \cdot y)
        &= x(x \btimes y) - (x \cdot y) x \\
        &= [ x \btimes ( x \btimes y) - x \cdot (x \btimes y)] - (x \cdot y) x \\
        &=  -x \btimes (-x \btimes y)                          - (x \cdot y) x \\
        &= \bbJ_x^2(y) - (x \cdot y) x \textrm{.}
\end{align*}
Rearranging gives the identity.
\end{proof}

\begin{corollary}
If $x \cdot x = -\varepsilon \in \set{\pm 1}$, then $\bbJ_x \vert_{\ab{x}^{\perp}} \in \End(\ab{x}^{\perp})$  is an $\varepsilon$-complex structure.
\end{corollary}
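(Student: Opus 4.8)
The plan is to verify in turn the two requirements of Definition \ref{definition:epsilon-complex-structures}, after first checking that the restriction is even well-defined as an endomorphism.

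First I would confirm that $\bbJ_x$ preserves $\ab{x}^{\perp}$, so that $\bbJ_x\vert_{\ab{x}^{\perp}}$ is genuinely an endomorphism of the $6$-dimensional (hence even-dimensional) space $\ab{x}^{\perp}$. Since $\btimes$ is skew-symmetric we have $\bbJ_x(x) = -x \btimes x = 0$, so $x \in \ker \bbJ_x$. Using that $\bbJ_x$ is skew-adjoint with respect to $\cdot$ (as noted just after \eqref{aJdef}), for any $y \in \ab{x}^{\perp}$ we obtain $(\bbJ_x y) \cdot x = -y \cdot \bbJ_x(x) = 0$, so $\bbJ_x y \in \ab{x}^{\perp}$, as required.

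Next I would read off the defining relation $J^2 = \varepsilon \id$. For $y \in \ab{x}^{\perp}$, Proposition \ref{proposition:J-squared}\eqref{item:J-squared-y-orthogonal} gives $\bbJ_x^2(y) = -(x \cdot x) y = \varepsilon y$, using the hypothesis $x \cdot x = -\varepsilon$; hence $(\bbJ_x\vert_{\ab{x}^{\perp}})^2 = \varepsilon \id_{\ab{x}^{\perp}}$. In the case $\varepsilon = -1$ this already exhibits an $\varepsilon$-complex structure, since Definition \ref{definition:epsilon-complex-structures} imposes no further condition when $\varepsilon = -1$.

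The only remaining point, and the one requiring a genuine argument, is the balanced-eigenspace condition when $\varepsilon = +1$: I must show that the $(\pm 1)$-eigenspaces $V_{\pm}$ of $\bbJ_x\vert_{\ab{x}^{\perp}}$ each have dimension $3$. Since $\bbJ_x^2 = \id$ there, the restriction is diagonalizable with $\ab{x}^{\perp} = V_+ \oplus V_-$ and $\dim V_+ + \dim V_- = 6$. The key observation is that skew-adjointness forces each $V_{\pm}$ to be totally isotropic: for $y, z \in V_+$ one has $y \cdot z = (\bbJ_x y) \cdot z = -y \cdot (\bbJ_x z) = -y \cdot z$, whence $y \cdot z = 0$, and symmetrically for $V_-$. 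Because $x \cdot x = -1 \neq 0$, the restriction of $\cdot$ to $\ab{x}^{\perp}$ is nondegenerate, so any totally isotropic subspace has dimension at most $3$; thus $\dim V_{\pm} \leq 3$, and combined with $\dim V_+ + \dim V_- = 6$ this forces $\dim V_+ = \dim V_- = 3$. I expect this isotropy argument to be the main (if still short) obstacle, everything else being a direct application of Proposition \ref{proposition:J-squared}.
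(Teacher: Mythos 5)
Your proof is correct, and it follows the route the paper intends: the paper states this corollary with no proof at all, treating it as immediate from Proposition \ref{proposition:J-squared}\eqref{item:J-squared-y-orthogonal}, which is exactly the identity you invoke for the relation $\bbJ_x^2 = \varepsilon\,\id$ on $\ab{x}^{\perp}$. The one place you go beyond what the paper records is the balanced-eigenspace condition in the case $\varepsilon = +1$, which Definition \ref{definition:epsilon-complex-structures} genuinely requires and which the paper leaves implicit; your argument there (the $(\pm 1)$-eigenspaces are totally isotropic by skew-adjointness, hence each of dimension at most $3$ in the $6$-dimensional nondegenerate space $\ab{x}^{\perp}$, hence each exactly $3$) is clean and correct, and does not even need the precise signature, only the general bound that an isotropic subspace of a nondegenerate form on a $6$-dimensional space has dimension at most $3$. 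The preliminary check that $\bbJ_x$ preserves $\ab{x}^{\perp}$ via skew-adjointness is likewise the standard observation the paper takes for granted.
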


The behavior of $\bbJ_x$ is especially rich for null $x$. Parts \eqref{item:Rx-subset-ker-J}-\eqref{item:dim-ker-J} of the following proposition are formulated and proved in \cite[Lemma 7]{BaezHuerta} in a different way.

\begin{proposition}\label{proposition:J-x-null}
Suppose $x \in \bbI^*$ is null and nonzero. Then,
\begin{enumerate}
    \item\label{item:Rx-subset-ker-J}  $\ab{x} \subset \ker \bbJ_x$;
    \item\label{item:ker-J-isotropic}  $\ker \bbJ_x$ is isotropic;
    \item\label{item:dim-ker-J}        $\dim \ker \bbJ_x = 3$.
\end{enumerate}
In particular, $x$ determines a proper filtration
\[
    \set{0}
        \subset  \ab{x}
        \subset  \ker \bbJ_x
        \subset (\ker \bbJ_x)^{\perp}
        \subset  \ab{x}      ^{\perp}
        \subset  \bbI^*
\]
whose filtrands respectively have dimension $0$, $1$, $3$, $4$, $6$, and $7$.
\begin{enumerate}
    \item[(d)]\label{item:J-filtration} The map $\bbJ_x$ respects the filtration in that
        \begin{enumerate}
            \item\label{item:J-image}        $\bbJ_x(\bbI^*               ) = (\ker \bbJ_x)^{\perp}$ (that is, $\im \bbJ_x = (\ker \bbJ_x)^{\perp})$,
            \item\label{item:J-X-perp-image} $\bbJ_x( \ab{x}      ^{\perp}) =  \ker \bbJ_x         $, and
            \item                            $\bbJ_x((\ker \bbJ_x)^{\perp}) =  \ab{x}              $.
        \end{enumerate}
\end{enumerate}
\end{proposition}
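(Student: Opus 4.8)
The plan is to derive everything from the single identity \eqref{equation:J-squared} of Proposition~\ref{proposition:J-squared}, which for null $x$ reads $\bbJ_x^2(y) = (x \cdot y)\, x$, together with the skew-adjointness of $\bbJ_x$ and the total skewness of the associator of $\bbO^*$. First, part~(a) is immediate: since $\btimes$ is skew-symmetric, $\bbJ_x(x) = -x \btimes x = 0$, so $\ab{x} \subseteq \ker \bbJ_x$. Next I would record two facts that feed the rest of the argument. If $y \in \ker \bbJ_x$, then applying $\bbJ_x$ to $\bbJ_x(y) = 0$ and using the displayed form of \eqref{equation:J-squared} gives $0 = \bbJ_x^2(y) = (x \cdot y)\, x$; as $x \neq 0$ this forces $x \cdot y = 0$, so $\ker \bbJ_x \subseteq \ab{x}^{\perp}$. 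Combining $x \btimes y = 0$ with $x \cdot y = 0$ in the decomposition \eqref{equation:octonion-decomposition} then shows $xy = yx = 0$ in $\bbO^*$ for every $y \in \ker \bbJ_x$.

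For part~(b), take $y, z \in \ker \bbJ_x$, so $xy = xz = 0$. I would exploit the total skewness of the associator $[a,b,c] := (ab)c - a(bc)$: on one hand $[x,y,z] = -x(yz)$, and on the other $[x,y,z] = -[x,z,y] = x(zy)$. Hence $x(yz + zy) = 0$; but $yz + zy = -2(y \cdot z)\, 1$ by \eqref{equation:octonion-decomposition}, so $(y \cdot z)\, x = 0$ and therefore $y \cdot z = 0$. Thus $\ker \bbJ_x$ is totally isotropic, which in the signature $(3,4)$ space $\bbI^*$ already bounds $\dim \ker \bbJ_x \le 3$.

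The crux is the matching lower bound in part~(c). Here I would treat $\bbJ_x$ as a nilpotent endomorphism: a further application of \eqref{equation:J-squared} gives $\bbJ_x^3(y) = (x \cdot y)\, \bbJ_x(x) = 0$, so $\bbJ_x^3 = 0$, while $\im \bbJ_x^2 = \ab{x}$ shows $\rank \bbJ_x^2 = 1$. In a $7$-dimensional space a nilpotent endomorphism with $\bbJ_x^3 = 0$ and $\rank \bbJ_x^2 = 1$ has exactly one Jordan block of size $3$, and partitioning the remaining $4$ dimensions into blocks of size at most $2$ forces $\dim \ker \bbJ_x \ge 3$. Together with the upper bound from part~(b) this pins down $\dim \ker \bbJ_x = 3$ (indeed the Jordan type is $[3,2,2]$). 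I expect this dovetailing of the purely algebraic isotropy bound with the Jordan-type count to be the only genuinely delicate point.

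Finally, the filtration and part~(d) follow formally. Skew-adjointness of $\bbJ_x$ gives $(\im \bbJ_x)^{\perp} = \ker \bbJ_x$, i.e.\ $\im \bbJ_x = (\ker \bbJ_x)^{\perp}$, which is the image claim; the asserted dimensions $0, 1, 3, 4, 6, 7$ then follow from parts~(a)--(c), and all inclusions are proper because $x$ is null and $\ker \bbJ_x$ is isotropic. Since $\bbJ_x^2$ annihilates $\ab{x}^{\perp}$ by \eqref{equation:J-squared}, we get $\bbJ_x(\ab{x}^{\perp}) \subseteq \ker \bbJ_x$, with equality by the dimension count $6 - 3 = 3$; and $\bbJ_x((\ker \bbJ_x)^{\perp}) = \bbJ_x(\im \bbJ_x) = \im \bbJ_x^2 = \ab{x}$ gives the last line.
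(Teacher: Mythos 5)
Your proof is correct, and it differs from the paper's in two substantive places. For isotropy, the paper stays inside the cross-product calculus: for $y \in \ker \bbJ_x$ not proportional to $x$ it applies Proposition \ref{proposition:J-squared} with the roles of $x$ and $y$ exchanged, obtaining $0 = y \btimes (y \btimes x) = -(y \cdot y)\,x + (y \cdot x)\,y$ and reading off $y \cdot y = 0$ from linear independence. You instead descend to the algebra $\bbO^*$, first deriving $xy = yx = 0$ for $y \in \ker \bbJ_x$ from $x \btimes y = 0$ and $x \cdot y = 0$ via \eqref{equation:octonion-decomposition}, and then using total skewness of the associator to get $y \cdot z = 0$ for all $y, z \in \ker \bbJ_x$ at once; this is sound and gives total isotropy directly rather than via polarization. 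For the lower bound $\dim \ker \bbJ_x \geq 3$, the paper notes $\ker \bbJ_x^2 = \ab{x}^{\perp}$ has dimension $6$ and uses the elementary inequality $\dim \ker N^2 \leq 2 \dim \ker N$; your Jordan-type analysis ($\bbJ_x^3 = 0$, $\rank \bbJ_x^2 = 1$, forcing the partition $3+2+2$) rests on the same two computations but is slightly more elaborate, with the compensation that it also pins down the full Jordan normal form of $\bbJ_x$. Part (d) you handle essentially as the paper does --- skew-adjointness for (i), the vanishing of $\bbJ_x^2$ on $\ab{x}^{\perp}$ plus a dimension count for (ii) --- and your one-line identity $\bbJ_x((\ker \bbJ_x)^{\perp}) = \bbJ_x(\im \bbJ_x) = \im \bbJ_x^2 = \ab{x}$ for (iii) is if anything cleaner than the paper's two-step containment-plus-nontriviality argument.
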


\begin{proof}
~
\begin{enumerate}
    \item For any $\lambda x \in \ab{x}$, $\bbJ_x(\lambda x)
= -\lambda(x \btimes x) = 0$.
    \item Pick $y \in \ker \bbJ_x$. If $y$ is a multiple $\lambda x$ of $x$, then $y \cdot y = \lambda^2 x \cdot x = 0$; so henceforth suppose it is not. By Proposition \ref{proposition:J-squared},
        \[
            0 = y \btimes \bbJ_x(y) = y \btimes (y \btimes x) = -(y \cdot y) x + (y \cdot x) y \textrm{.}
        \]
        Since $y$ is not a multiple of $x$, both terms in the last expression are zero, and in particular $y \cdot y = 0$.
    \item By Proposition \ref{proposition:J-squared}, $\ker (\bbJ_x^2) = \ab{x}^{\perp}$. So, $\dim \ker \bbJ_x^2 = 6$ and hence $\dim \ker \bbJ_x \geq 3$. On the other hand, by \eqref{item:ker-J-isotropic} $\ker \bbJ_x$ is isotropic, and so $\ker \bbJ_x \leq 3$; thus, equality holds.
    \item ~
        \begin{enumerate}
            \item For any $y \in \bbI^*$ and $z \in \ker \bbJ_x$, the skew-adjointness of $\bbJ_x$ gives
                \[
                    \bbJ_x(y) \cdot z
                        = - y \cdot \bbJ_x(z)
                        = 0 \textrm{,}
                \]
                so $\im \bbJ_x = \bbJ_x(\bbI^*) \subseteq (\ker \bbJ_x)^{\perp}$; equality holds by the Rank-Nullity Theorem.
            \item For any $y \in \ab{x}^{\perp}$, Proposition \ref{proposition:J-squared} gives $\bbJ_x(\bbJ_x(y)) = (x \cdot y) x = 0$, that is, $\bbJ_x(\ab{x}^{\perp}) \subseteq \ker \bbJ_x$. We can then view $\bbJ_x \vert_{\ab{x}^{\perp}}$ as a map $\ab{x}^{\perp} \to \ab{x}^{\perp}$. Then, $\dim \bbJ_x(\ab{x}^{\perp}) = \rank \bbJ_x \vert_{\ab{x}^{\perp}}$ is at least $\rank \bbJ_x - (\dim \bbI^* - \dim \ab{x}^{\perp}) = 3$, and because $\dim \ker \bbJ_x = 3$, the above containment is actually an equality.
            \item Pick $y \in (\ker \bbJ_x)^{\perp}$. By \eqref{item:J-image}, there is some $z \in \bbI^*$ such that $y = \bbJ_x(z)$, and Proposition \eqref{proposition:J-squared}\eqref{item:J-squared-x-null} gives that
                \[
                    \bbJ_x(y) = \bbJ_x(\bbJ_x(z)) = (x \cdot z) x \textrm{.}
                \]
                So, $\bbJ_x((\ker \bbJ_x)^{\perp}) \subseteq \ab{x}$. On the other hand, $(\ker \bbJ_x)^{\perp}$ is a proper superset of $\ker \bbJ_x$, so its image under $\bbJ_x$ cannot be trivial; hence, equality holds.
                \qedhere
        \end{enumerate}
\end{enumerate}
\end{proof}

\subsection{Stable forms on vector spaces}\label{ss: stable forms}
In this section we review the notion of stability, a type of genericity, for alternating forms on real vector spaces, including some constructions using such forms specific to dimensions $6$ and $7$: A stable $3$-form on an (oriented) real vector space of dimension $6$ induces an $\varepsilon$-complex structure (as in \cite{H2} and as extended to the para-complex case in \cite{CLSSH}), and a stable $3$-form on a real vector space of dimension $7$ determines a cross product there, and hence nondegenerate symmetric bilinear form and orientation \cite{B}.

\begin{definition}
On a real vector space $\bbW$, a $k$-form $\beta \in \Lambda^k \bbW^*$ is \textit{stable} (or \textit{generic} \cite{Agricola}, though we will reserve this term for the analogous notion on manifolds) if and only if the orbit $\GL(\bbW) . \beta$ is open in $\Lambda^k \bbW^*$, where $.$ is the action induced by the standard action on $\bbW$.
\end{definition}

The vector space dimensions and form ranks for which stable forms exist are well-known.

\begin{proposition}
A real vector space $\bbW$ of finite dimension $n > 0$ admits a stable $k$-form if and only if one of the following is true:
\begin{itemize}
	\item $n = 1$ and $k = 1$;
	\item $n \geq 2$ and $k \in \set{1, n - 1, n}$;
	\item $n > 2$, $n$ even, and $k \in \set{2, n - 2}$;
	\item $n \in \set{6, 7, 8}$ and $k \in \set{3, n - 3}$.
\end{itemize}
\end{proposition}

\begin{proof}
For any $k$, the complexification of a $\GL(\bbW)$-module $\Lambda^k \bbW^*$ is an irreducible module $\Lambda^k (\bbW^* \otimes \bbC)$ of the complex Lie group $\GL(\bbW \otimes \bbC)$. With this in hand, one can read off this list from the classification of irreducible prehomogeneous vector spaces over $\bbC$ \cite[Theorem 54]{SatoKimura}.
\end{proof}

\subsubsection{Stable $2$-forms in $2m$ dimensions}
On an even-dimensional vector space $\bbW$, $\Lambda^2 \bbW^*$ has exactly one $\GL(\bbW)$-orbit, and its elements are exactly the symplectic forms on $\bbW$.

\subsubsection{Stable $3$-forms in $6$ dimensions}\label{sss:stable 3 forms in dim 6}

Let $\bbW$ be a real $6$-dimensional vector space; then, $\Lambda^3 \bbW^*$ has exactly two open $\GL(\bbW)$-orbits. (See \cite[Theorem 2.1.13]{Baier} and \cite[Proposition 12]{B2006} for detailed calculations and a complete $\GL(\bbW)$-orbit decomposition of $\Lambda^3 \bbW^*$.)

If $\bbW$ is also oriented, then given a stable $3$-form in $\Lambda^3 \bbW^*$, we can canonically construct an $\varepsilon$-complex structure $J \in \End(\bbW)$: Fix $\beta \in \Lambda^3 \bbW^*$, let $\kappa: \Lambda^5 \bbW^* \to \bbW \otimes \Lambda^6 \bbW^*$ denote the canonical mapping, and define
\[
    \wt{J} := \kappa((\pdot \hook \beta) \wedge \beta) \in \End(\bbW) \otimes \Lambda^6 \bbW^* \textrm{.}
\]
To determine an endomorphism of $\bbW$, we define a volume form invariantly in terms of $\smash{\wt{J}}$: First set
\[
    \lambda(\beta) := \tfrac{1}{6} \tr (\wt{J}^2) \in \otimes^2 \Lambda^6 \bbV^* \textrm{.}
\]
It turns out that $\lambda(\beta) \neq 0$ if and only if $\beta$ stable, which we henceforth assume. Now, $\varepsilon \lambda(\beta)$ is a square of an element in $\Lambda^6 \bbV^*$ for exactly one value $\varepsilon \in \set{\pm 1}$, so let $\vol$ denote the unique positively oriented element there such that $\vol \otimes \vol = \varepsilon \lambda(\beta)$. Then, the endomorphism $J$ characterized by
\[
    \wt{J} = J \otimes \vol
\]
satisfies $J^2 = \varepsilon \id_{\bbW}$. It turns out that if $\varepsilon = 1$, then the $\pm 1$-eigenspaces of $J$ both have dimension $3$, and thus $J$ is an $\varepsilon$-complex structure on $\bbW$.

\subsubsection{Stable $3$-forms in $7$ dimensions}\label{subsubsection:3-forms-in-7-dimensions}

Let $\bbV$ be a real $7$-dimensional vector space; then, $\Lambda^3 \bbV^*$ has exactly two open $\GL(\bbV)$-orbits. (See \cite{B} for detailed calculations, and \cite{CohenHelminck}, which describes a full orbit decomposition of $3$-forms on $7$-dimensional vector spaces over algebraically closed fields and describes a process for generalizing it to some other base fields, including $\bbR$.)

Given a stable $3$-form in $\Lambda^3 \bbV^*$, we can canonically construct an inner product $H \in S^2 \bbV^*$: Fix $\Phi \in \Lambda^3 \bbV^*$, and define the $\Lambda^7 \bbV^*$-valued symmetric bilinear form
\begin{equation}\label{definition:H-tilde}
	\wt{H} := \tfrac{1}{6} (\pdot \hook \Phi) \wedge (\pdot \, \hook \Phi) \wedge \Phi \in S^2 \bbV^* \otimes \Lambda^7 \bbV^* \textrm{.}
\end{equation}
It turns out that $\smash{\wt{H}}$ is nondegenerate if and only if $\Phi$ stable, which we henceforth assume. In particular, $\smash{\wt{H}}$ determines a real-valued bilinear form up to scale; to fix the scale naturally, we define a volume form invariantly in terms of $\Phi$. Regarding $\smash{\wt{H}}$ as a map $\bbV \to \bbV^* \otimes \Lambda^7 \bbV^*$ and taking the determinant yields a map
\[
	\det \wt{H}: \Lambda^7 \bbV \to \Lambda^7 (\bbV^* \otimes \Lambda^7 \bbV^*) \cong \otimes^8 \Lambda^7 \bbV^* \textrm{,}
\]
and dualizing again gives a map
\[
	\det \wt{H}: \bbR \to \otimes^9 \Lambda^7 \bbV^* \textrm{.}
\]
This map turns out to be nonzero because $\Phi$ is stable; so, there is a distinguished volume form $\vol \in \Lambda^7 \bbV^*$ characterized by $\smash{\vol^{\otimes 9} = (\det \wt{H})(1)}$, which in turn determines a nondegenerate bilinear form $H: \bbV \times \bbV \to \bbR$ characterized by
\[
	\wt{H} = H \otimes \vol \textrm{.}
\]
The normalization of $\smash{\wt{H}}$ in \eqref{definition:H-tilde} (and hence of $H$) was chosen so that $\vol$ is the volume form of $H$ for the orientation it determines. Alternatively, for a $3$-form $\Phi$ and the volume form $\vol$ it determines, we can recover $H$ via
\begin{equation}\label{H-from-Phi-vol}
    H_{AB} = \tfrac{1}{144} \Phi_{ACD} \Phi_{BEF} \Phi_{GHI} \vol^{CDEFGHI} \textrm{.}
\end{equation}
Here $\vol$ is normalized so that $\vol_{CDEFGHI}
\vol^{CDEFGHI} = 7!$. For $3$-forms in one of the two
open $\GL(\bbV)$-orbits, $H$ has signature $(7, 0)$; we call such
$3$-forms \textit{definite-stable}. For $3$-forms in the other orbit,
$H$ has signature $(3, 4)$; we call these \textit{split-stable}.

In dimension $7$, stable $3$-forms can be identified with cross products, which motivates here the use of the terms definite and split.

\begin{proposition}\label{equivP}
On any real $7$-dimensional vector space $\bbV$, raising and lowering indices with the corresponding bilinear forms establishes a natural bijection
\[
    \set{\textrm{cross products $\btimes: \bbV \times \bbV \to \bbV$}}
        \leftrightarrow \set{\textrm{stable $3$-forms $\Phi \in \Lambda^3 \bbV^*$}} \textrm{.}
\]
A cross product is definite (split) if and only if the corresponding $3$-form is definite- \mbox{(split-)}stable.
\end{proposition}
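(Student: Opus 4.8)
The plan is to exhibit the claimed correspondence as an explicit pair of mutually inverse, index-raising/lowering operations and then to verify the defining axioms on each side. Write $F$ for the forward map sending a cross product $\btimes$ to the $3$-form $\Phi$ obtained by lowering the contravariant index of $\btimes^{A}{}_{BC}$ with the inner product $\cdot$ that $\btimes$ determines through \eqref{equation:dot-from-cross}; concretely $\Phi(x,y,z) = x \cdot (y \btimes z)$. Write $G$ for the backward map sending a stable $3$-form $\Phi$ to the product $\btimes_{\Phi}$ obtained by raising the first index of $\Phi$ with the canonically associated bilinear form $H = H_{\Phi}$ of Subsubsection \ref{subsubsection:3-forms-in-7-dimensions}; concretely $(\btimes_{\Phi})^{A}{}_{BC} := H^{AM}\Phi_{MBC}$. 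I must show each map lands in the asserted target set, that $F$ and $G$ are mutually inverse, and that both respect the definite/split dichotomy; naturality will then be automatic, since each map is assembled from canonical operations and so commutes with the $\GL(\bbV)$-action.

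For the well-definedness of $F$, total skew-symmetry of $\Phi$ follows formally: skewness in the last two slots is the skew-symmetry of $\btimes$, while skewness in the first two slots comes by polarizing the orthogonality axiom \eqref{item:cross-product-orthogonality}, which gives $\Phi(x,x,z) = x\cdot(x\btimes z) = 0$; since the transpositions $(1\,2)$ and $(2\,3)$ generate $S_3$, these two skewnesses force total antisymmetry, so $\Phi \in \Lambda^3 \bbV^*$. To see that $\Phi$ is stable and to fix scale simultaneously, I would show that $H_{\Phi} = \cdot$ on the nose, using the tensorial formula \eqref{H-from-Phi-vol} together with the orientation \eqref{equation:cross-to-volume} that $\btimes$ supplies. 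As $\cdot$ is nondegenerate and the text records that $\wt{H}$ is nondegenerate iff $\Phi$ is stable, stability is then immediate, and the signature of $H_{\Phi} = \cdot$ being $(7,0)$ or $(3,4)$ produces the definite/split match in this direction.

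For the well-definedness of $G$, skew-symmetry of $\btimes_{\Phi}$ and the orthogonality axiom are immediate from the total antisymmetry of $\Phi$: one checks directly that $H(x \btimes_{\Phi} y, z) = \Phi(x,y,z)$, whence $H(x \btimes_{\Phi} y, x) = \Phi(x,y,x) = 0$. The quadratic compatibility axiom \eqref{item:cross-product-square} is the one genuinely nontrivial point, and it is the main obstacle. Expanding $(x \btimes_{\Phi} y)\cdot(x \btimes_{\Phi} y)$ reduces it to the contraction $\Phi_{BC}{}^{M}\Phi_{B'C'M}\, x^{B}x^{B'}y^{C}y^{C'}$, so the axiom becomes equivalent to the $\G_2^{(*)}$ quadratic identity
\[
    \Phi_{AB}{}^{M}\Phi_{CDM} = H_{AC}H_{BD} - H_{AD}H_{BC} + \Psi_{ABCD},
\]
where $\Psi$ is a $4$-form: the $\Psi$-term is annihilated upon contraction against the symmetric tensors $x \otimes x$ and $y \otimes y$, leaving exactly $(x\cdot x)(y\cdot y) - (x\cdot y)^2$. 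Establishing this quadratic identity (equivalently, pinning down the $4$-form remainder and the scalar coefficients) is the crux; I would derive it either by a direct computation in the Cayley--Dickson realization \eqref{equation:Cayley-Dickson}, or by invoking the Brown--Gray classification \cite[Theorem 4.1]{BrownGray} to reduce to the two models $\bbI^{(*)}$, where the octonionic definitions make both the identity and the scale $H = \cdot$ transparent.

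Finally, mutual inverseness: $G \circ F = \id$ is precisely the equality $H_{\Phi_{\btimes}} = \cdot$ established above, combined with the tautology that raising then lowering the same index is the identity, and $F \circ G = \id$ follows symmetrically once one knows $H_{\Phi}$ is the inner product with respect to which $\btimes_{\Phi}$ satisfies Definition \ref{definition:cross-product}. Since a definite $\btimes$ yields a signature-$(7,0)$ form $H$ and a split $\btimes$ a signature-$(3,4)$ form $H$, and conversely, the bijection matches definite with definite-stable and split with split-stable. The delicate bookkeeping is the normalization, which is why I would carry the volume-form identifications \eqref{equation:cross-to-volume} and \eqref{H-from-Phi-vol} along explicitly rather than arguing only up to scale.
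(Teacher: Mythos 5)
Your proposal is correct and follows essentially the same route as the paper: both directions are defined by raising/lowering an index, totalskewness and axiom (a) are formal, and the crux --- axiom (b) and the equality $H_{\Phi}=\cdot$ --- is reduced to identities checked on a representative of each of the two open orbits. The only real difference is cosmetic: the paper establishes $\wt{H}\propto\cdot\otimes\vol$ by a one-line $\G_2^{(*)}$-equivariance (Schur-type) argument rather than your proposed direct tensor computation, and it phrases the quadratic identity as the formula for $\bbJ_x^2$ rather than as the $\Phi_{AB}{}^{M}\Phi_{CDM}$ contraction identity with $4$-form remainder, but these are equivalent.
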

\begin{proof}
As observed in Subsection \ref{subsection:octonions-cross-product-g2}, given a cross product $(\bbV, \btimes)$, the $3$-form $\Phi(x, y, z) := x \cdot (y \btimes z)$ is totally skew, and the automorphism group of $\btimes$ is $\smash{\G_2^{(*)}}$. Let $\cdot$ denote the inner product $\btimes$ induces via \eqref{equation:dot-from-cross} and $\smash{\wt{H} \in S^2 \bbV^* \otimes \Lambda^7 \bbV^*}$ the bilinear form defined by \eqref{definition:H-tilde}. As a $\smash{\G_2^{(*)}}$-representation, $S^2 \bbV^* \otimes \Lambda^7 \bbV^*$ decomposes into irreducible subrepresentations as
\[
    (S^2_0 \bbV^* \otimes \Lambda^7 \bbV^*) \oplus (\bbR \otimes \Lambda^7 \bbV^*) \textrm{.}
\]
Since $\cdot \, \otimes \vol$ is $\smash{\G_2^{(*)}}$-invariant and nonzero, it spans the $1$-dimensional subrepresentation $\bbR \otimes \Lambda^7 \bbV^*$ (here, $\vol$ denotes the volume form of $\cdot$ defined in Subsection \ref{subsection:octonions-cross-product-g2}). Likewise, $\smash{\wt{H}}$ is a contraction of $\smash{\G_2^{(*)}}$-invariant tensors, so it too is $\smash{\G_2^{(*)}}$-invariant, and thus it is some multiple of $\cdot \, \otimes \vol$. One can verify readily that $\smash{\wt{H}}$ is nonzero, so it is nondegenerate, and hence $\Phi$ is stable; in fact, our normalizations have been chosen so that the volume form of $\cdot$ and the volume form defined in terms of $\Phi$ above coincide, and thus so do $\cdot$ and $H$. In particular, if $\btimes$ is definite (split), then $\Phi$ is definite (split) stable.

Conversely, given a stable $3$-form $\Phi$, let $H$ denote the bilinear form it induces, and define the product $\btimes: \bbV \times \bbV \to \bbV$ by
\begin{equation}\label{Phi-times}
    \btimes^C_{\phantom{C}AB} := H^{CK} \Phi_{KAB} \textrm{.}
\end{equation}
Immediately, $\btimes$ satisfies condition \eqref{item:cross-product-orthogonality} in Definition \ref{definition:cross-product}, which is all that is needed for the map $\bbJ_x := x \btimes \pdot$ to be skew-adjoint. One can show that $\bbJ_x$ so defined satisfies \eqref{equation:J-squared} (it is enough to prove this for one definite-stable and one split-stable $3$-form); then, forming the inner product of both sides of that identity with $y$ and invoking the skew-adjointness of $\bbJ_x$ gives that $\btimes$ satisfies condition \eqref{item:cross-product-square} of Definition \ref{definition:cross-product} too, and so $\btimes$ is a cross product. Checking directly (again, say, just for one representative of each orbit) shows that the two constructions are inverses.
\end{proof}

\begin{corollary}
Let $\bbV$ be a $7$-dimensional real vector space and $\Phi \in \Lambda^3 \bbV^*$ a stable $3$-form. Then, under the standard action of $\GL(\bbV)$, the stabilizer of $\Phi$ is $\G_2$ if it is definite-stable and $\G_2^*$ if it is split-stable.
\end{corollary}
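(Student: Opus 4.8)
The plan is to leverage Proposition \ref{equivP} to transfer the problem from stable $3$-forms to cross products, for which the stabilizer has already been identified with $\G_2^{(*)}$, and then to transport this identification to a general $\bbV$ by means of the orbit structure on $\Lambda^3 \bbV^*$.

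First I would show that the stabilizer of $\Phi$ coincides with the stabilizer of the associated cross product $\btimes$, exploiting that both construction maps are $\GL(\bbV)$-equivariant. The induced bilinear form $H$ is built from $\Phi$ by the manifestly natural formula \eqref{definition:H-tilde}, so any $g \in \GL(\bbV)$ fixing $\Phi$ also fixes $H$, and hence fixes $\btimes^C_{\phantom{C}AB} = H^{CK}\Phi_{KAB}$ from \eqref{Phi-times}; this gives $\operatorname{Stab}(\Phi) \subseteq \operatorname{Stab}(\btimes)$. For the reverse inclusion, \eqref{equation:dot-from-cross} recovers $H$ (equivalently $\cdot$) from $\btimes$ alone, so any $g$ fixing $\btimes$ fixes $H$ and therefore fixes $\Phi(x,y,z)=x\cdot(y\btimes z)$; hence $\operatorname{Stab}(\btimes)\subseteq\operatorname{Stab}(\Phi)$. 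This yields the equality $\operatorname{Stab}(\Phi)=\operatorname{Stab}(\btimes)$, and Proposition \ref{equivP} simultaneously guarantees that the type (definite versus split) is preserved under the correspondence.

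Next I would identify $\operatorname{Stab}(\btimes)$, which by definition is the automorphism group $\operatorname{Aut}(\bbV,\btimes)$ of the cross-product algebra. Since $\Phi$ lies in one of the two open $\GL(\bbV)$-orbits, $(\bbV,\btimes)$ is isomorphic to one of the two model cross products $(\bbI,\btimes)$ or $(\bbI^*,\btimes)$ of Subsection \ref{subsection:octonions-cross-product-g2}, the definite one exactly when $\Phi$ is definite-stable and the split one exactly when $\Phi$ is split-stable. As recalled there, all the algebraic structure of $\bbO^{(*)}$ can be reconstructed from $\btimes$, so that $\operatorname{Aut}(\bbV,\btimes)\cong\operatorname{Aut}(\bbO^{(*)})=\G_2^{(*)}$; more precisely, conjugating by any linear isomorphism realizing the orbit equivalence carries $\operatorname{Stab}(\btimes)$ to the corresponding model stabilizer, already shown to be $\G_2$ in the definite case and $\G_2^*$ in the split case. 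Combining this with the preceding paragraph gives the corollary. The only step carrying genuine content is the coincidence of the stabilizers of $\Phi$ and of $\btimes$; because both the map $\Phi \mapsto H$ and the map $\btimes \mapsto H$ are equivariant, this is immediate, and the corollary is in effect a repackaging of Proposition \ref{equivP} with the automorphism-group facts of Subsection \ref{subsection:octonions-cross-product-g2}.
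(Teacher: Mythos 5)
Your proposal is correct and follows essentially the route the paper intends: the corollary is exactly the combination of the $\GL(\bbV)$-equivariant bijection of Proposition \ref{equivP} (so that $\operatorname{Stab}(\Phi)=\operatorname{Stab}(\btimes)$, both constructions being natural) with the facts from Subsection \ref{subsection:octonions-cross-product-g2} that every cross product is isomorphic to the one on $\bbI$ or $\bbI^*$ and that the automorphism group of that structure is $\G_2$ or $\G_2^*$ respectively. Your explicit verification that a stabilizing element of $\Phi$ also fixes $H$ (and conversely) is a worthwhile spelling-out of what the paper leaves implicit, but it is not a different argument.
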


Finally, we can use a cross product identity to determine $H$ from $\smash{\wt{H}}$ without computing the determinant. Using \eqref{Phi-times}
 to rewrite \eqref{equation:dot-from-cross} and rearranging gives
\begin{equation}\label{PtoH}
    6 H_{AD} = \Phi_{ABC} \Phi_D^{\phantom{D} B C} \textrm{,}
\end{equation}
and contracting with $H^{AD}$ yields
\[
    42 = \Phi_{ABC} \Phi^{ABC} \textrm{.}
\]
For a $3$-form $\Phi \in \Lambda^3 \bbV^*$, scaling $H$ by $\lambda^2$
scales the right-hand size by $\lambda^{-6}$, and so when using $H$ to
raise and lower indices, $H$ is characterized among its positive
multiples by this identity.

\subsubsection{Compatible stable forms}

We now formulate a natural compatibility condition on a pair of stable forms in dimension $6$ under which the constituent forms can be used to construct a cross product (equivalently, a stable $3$-form) in dimension $7$. (See \cite[\S\S1.2-3]{CLSSH} for much more, including proofs of the below propositions.)

\begin{definition}\label{compat}
Let $\bbW$ be a $6$-dimensional real vector space. We say that a pair $(\omega, \beta)$ of a stable
$2$-form $\omega \in \Lambda^2 \bbV^*$ and a stable $3$-form $\beta
\in \Lambda^3 \bbV^*$ is \textit{compatible} if $\omega \wedge \beta = 0$. A compatible pair $(\omega, \beta)$ is \textit{normalized} if $J^* \beta \wedge \beta = \frac{2}{3} \omega \wedge \omega \wedge \omega$.
\end{definition}

Given a compatible pair $(\omega, \beta)$, we can define a nondegenerate bilinear form
\begin{equation}\label{equation:g-definition}
    g := \varepsilon \omega(\pdot , J \pdot) \textrm{,}
\end{equation}
where $J$ is the $\varepsilon$-complex structure induced by $\beta$;
checking directly in a basis
shows that compatibility is equivalent to the pair
$(g, J)$ defining an $\varepsilon$-Hermitian structure on $\bbW$.

\begin{proposition}\cite[Proposition 1.12]{CLSSH} \label{pairtog2}
Let $\bbW$ and $\bbL$ be real vector spaces respectively of dimensions
$6$ and $1$, and denote $\bbV := \bbW \oplus \bbL$. Let $\alpha \in
\bbV^*$ be a nonzero $1$-form that annihilates $\bbW$; let $(\omega,
\beta)$ be a normalized compatible pair on $\bbW$, and identify
$\omega$ and $\beta$ with their respective pullbacks by the decomposition projection $\bbV
\to \bbW$; let $\varepsilon$ be the sign determined by $\beta$, and
let $g$ denote the bilinear form defined by
\eqref{equation:g-definition}.

Then, the $3$-form
\[
    \Phi := \alpha \wedge \omega + \beta \in \Lambda^3 \bbV^*
\]
is stable, and the bilinear form that it induces via the construction above is
\[
    H = g - \varepsilon \alpha \otimes \alpha \in S^2 \bbV^* \textrm{;}
\]
in particular
$\btimes^C_{\phantom{C}AB} := H^{CK} \Phi_{KAB}$ is a cross product.
\end{proposition}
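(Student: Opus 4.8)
The plan is to reduce all three assertions to a single computation of the $\Lambda^7 \bbV^*$-valued bilinear form $\wt{H}$ attached to $\Phi$ by \eqref{definition:H-tilde}. Indeed, once we show that $\wt{H}$ is nondegenerate and that the induced real bilinear form is $H = g - \varepsilon\, \alpha \otimes \alpha$, stability of $\Phi$ is immediate (stability is equivalent to nondegeneracy of $\wt{H}$), and the final clause follows at once from Proposition \ref{equivP}, which identifies stable $3$-forms with cross products via exactly the formula $\btimes^C{}_{AB} = H^{CK} \Phi_{KAB}$ of \eqref{Phi-times}. So the entire content is the computation of the bilinear form induced by $\Phi = \alpha \wedge \omega + \beta$.

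First I would compute $\wt{H}$ directly using the splitting $\bbV = \bbW \oplus \bbL$. Since $\alpha$ annihilates $\bbW$ and $\omega, \beta$ are pulled back from $\bbW$, for $X \in \bbV$ with $\bbW$-component $P := X_{\bbW}$ one has $X \hook \Phi = [\alpha(X)\, \omega + P \hook \beta] - \alpha \wedge (P \hook \omega)$. Because $\Lambda^7 \bbV^* = \alpha \wedge \Lambda^6 \bbW^*$, only the terms of $(X \hook \Phi) \wedge (Y \hook \Phi) \wedge \Phi$ containing exactly one factor of $\alpha$ survive into the top form; collecting the single $\alpha$ from each of the three factors in turn yields three $6$-form contributions on $\bbW$. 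I would then simplify these using the compatibility relation $\omega \wedge \beta = 0$ and its contractions (e.g.\ $(P \hook \omega) \wedge \beta = - \omega \wedge (P \hook \beta)$), which collapses the cross terms, and translate the surviving wedge expressions into values of $g$ via the defining relation $g = \varepsilon\, \omega(\pdot, J \pdot)$ of \eqref{equation:g-definition} together with the standard $\varepsilon$-Hermitian identities for $(g, J, \omega)$. The normalization $J^* \beta \wedge \beta = \tfrac{2}{3} \omega \wedge \omega \wedge \omega$ of Definition \ref{compat} is precisely what forces the coefficient of the ``$\bbW$-diagonal'' part and the coefficient of the $\alpha \otimes \alpha$ part to combine into the single tensor $g - \varepsilon\, \alpha \otimes \alpha$; the upshot is $\wt{H} = (g - \varepsilon\, \alpha \otimes \alpha) \otimes \nu$ for an explicit nonzero $\nu \in \Lambda^7 \bbV^*$.

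With $\wt{H}$ in hand I would fix the scale: the real form $H$ is pinned down by requiring its volume form to agree with $\nu$, equivalently (via \eqref{H-from-Phi-vol} and \eqref{PtoH}) by the normalization $\Phi_{ABC} \Phi^{ABC} = 42$. A short check that the candidate $H = g - \varepsilon\, \alpha \otimes \alpha$ already satisfies this identity identifies it, among its positive multiples, with the canonically normalized induced form, completing the middle assertion; nondegeneracy of $H$ then gives stability of $\Phi$, and Proposition \ref{equivP} delivers the cross product.

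The main obstacle is the bookkeeping in the second step: rewriting the three $6$-form contributions, which involve mixed expressions such as $(P \hook \omega) \wedge (Q \hook \beta) \wedge \beta$ and $(P \hook \beta) \wedge (Q \hook \beta) \wedge \omega$ with $Q := Y_{\bbW}$, as a symmetric scalar multiple of a fixed volume form, and verifying that the signs and factors of $\varepsilon$ assemble into exactly $g(P, Q) - \varepsilon\, \alpha(X) \alpha(Y)$. To keep this under control I would exploit naturality: every object in sight ($\Phi$, $\wt{H}$, $H$, $g$, $J$, $\btimes$) is equivariant under the subgroup $\GL(\bbW) \subset \GL(\bbV)$ acting as the identity on $\bbL$ (hence fixing $\alpha$), and the normalized compatible pairs of each fixed type (value of $\varepsilon$ and signature of $g$) form a single such orbit. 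Hence it suffices to verify the identity $H = g - \varepsilon\, \alpha \otimes \alpha$ on one explicit representative per type, with the dependence on the scale of $\alpha$ tracked in the same explicit check, reducing the whole argument to the finite basis computations recorded in Appendix \ref{appendix:stable-forms-via-bases}.
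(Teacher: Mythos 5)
Your proposal is sound. Note that the paper itself offers no proof of this proposition: it is quoted verbatim from \cite[Proposition 1.12]{CLSSH}, and the surrounding text explicitly defers all proofs of the compatible-pair results to \S\S1.2--1.3 of that reference. Your plan --- expand $X \hook \Phi = \alpha(X)\,\omega + X_{\bbW} \hook \beta - \alpha \wedge (X_{\bbW} \hook \omega)$, keep only the terms of $(X \hook \Phi)\wedge(Y\hook\Phi)\wedge\Phi$ with a single factor of $\alpha$, kill the cross terms with $\omega \wedge \beta = 0$, use the normalization to match coefficients, and pin the overall scale via $\Phi_{ABC}\Phi^{ABC}=42$ before invoking Proposition \ref{equivP} --- is a faithful reconstruction of the computation carried out in that source, and the closing reduction to one representative pair per type (value of $\varepsilon$ and signature of $g$) via $\GL(\bbW)$-equivariance is exactly the kind of basis check the paper's Appendix \ref{appendix:stable-forms-via-bases} is set up to support.
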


Conversely, a cross product $\btimes$ on a real $7$-dimensional vector space $\bbV$, together with a choice of (pseudo-)unit vector, determines a decomposition $\bbV = \bbW \oplus \bbL$ as above and a compatible, normalized pair on $\bbW$.

\begin{proposition}\cite[Proposition 1.14]{CLSSH}
Let $\btimes$ be a cross product on a $7$-dimensional real vector space $\bbV$, let $H \in S^2 \bbV^*$ be the inner product it induces, so that $\Phi_{ABC} = H_{CK} \btimes^K_{\phantom{K}AB} \in \Lambda^3 \bbV^*$ is the corresponding stable $3$-form. Let $n \in \bbV$ be a vector that satisfies $H(n, n) = -\varepsilon \in \set{\pm 1}$, denote $\bbW := \ab{n}^{\perp}$, and let $\iota$ denote the inclusion $\bbW \hookrightarrow \bbV$. Then, the pair $(\omega, \beta)$ defined by
\[
    \omega := \iota^*(n \hook \Phi) \in \Lambda^2 \bbW^*,
        \qquad
    \beta  := \iota^* \Phi          \in \Lambda^3 \bbW^* \textrm{,}
\]
is a pair of compatible, normalized stable forms, and the bilinear form $g \in S^2 \bbW^*$ the pair determines via \eqref{equation:g-definition} satisfies $g = \iota^* H$.
\end{proposition}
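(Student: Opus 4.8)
The plan is to recognize this statement as the inverse of Proposition~\ref{pairtog2} and to prove it by making the relevant splitting of $\Phi$ explicit and then reducing to the model $\bbI^{(*)}$, where the two propositions become manifestly inverse to one another.

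First I would fix the $1$-form $\alpha := -\varepsilon\, H(n, \pdot) \in \bbV^*$. Since $\bbW = \ab{n}^{\perp}$ is the $H$-orthocomplement of $n$, the form $\alpha$ annihilates $\bbW$, and $\alpha(n) = -\varepsilon H(n,n) = 1$. The splitting $\bbV = \ab{n} \oplus \bbW$ gives $\Lambda^3 \bbV^* = (\alpha \wedge \Lambda^2 \bbW^*) \oplus \Lambda^3 \bbW^*$, so $\Phi$ decomposes uniquely as $\Phi = \alpha \wedge \omega' + \beta'$ with $\omega' \in \Lambda^2 \bbW^*$ and $\beta' \in \Lambda^3 \bbW^*$. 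A short contraction computation --- using $n \hook \beta' = 0$, $n \hook \omega' = 0$, $\alpha(n) = 1$, and $\iota^* \alpha = 0$ --- then gives $\iota^* \Phi = \beta'$ and $\iota^*(n \hook \Phi) = \omega'$, so the forms $\omega, \beta$ of the statement are exactly $\omega', \beta'$ and $\Phi = \alpha \wedge \omega + \beta$. The claim thereby reduces to showing that $(\omega, \beta)$ is a normalized compatible pair whose induced bilinear form \eqref{equation:g-definition} is $g = \iota^* H$.

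Next I would reduce to the model. The form $H$, the operations $n \hook \pdot$ and $\iota^*$, the dimension-$6$ construction of $J$ from $\beta$ (Subsection~\ref{sss:stable 3 forms in dim 6}), and the dimension-$7$ construction of $H$ from $\Phi$ (Subsection~\ref{subsubsection:3-forms-in-7-dimensions}) are all natural under $\GL$; and by Proposition~\ref{equivP} and the accompanying classification, every stable $3$-form of a fixed $\varepsilon$-type is $\GL(\bbV)$-equivalent to the model form $\Phi_{\mathrm{std}}$ on $\bbI^{(*)}$. Hence it suffices to treat $(\bbI^{(*)}, \Phi_{\mathrm{std}})$. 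Furthermore $\G_2^{(*)} = \mathrm{Stab}(\Phi_{\mathrm{std}})$ preserves $H$ and so acts on the level set $Q_\varepsilon := \set{v : H(v,v) = -\varepsilon}$; since $Q_\varepsilon$ is connected and the orbit of a point is open in it (by the count $\dim \G_2^{(*)} - \dim \mathrm{Stab}(n_0) = 14 - 8 = 6 = \dim Q_\varepsilon$), the action on $Q_\varepsilon$ is transitive, and naturality lets me fix $n = n_0$ for one convenient pseudo-unit vector. For this $n_0$ the statement is the exact reverse of Proposition~\ref{pairtog2}: applying that proposition to the standard normalized compatible pair $(\omega_0, \beta_0)$ on $\ab{n_0}^{\perp}$ produces a representative $\Phi_{\mathrm{std}} = \alpha_0 \wedge \omega_0 + \beta_0$ with $H = g_0 - \varepsilon\, \alpha_0 \otimes \alpha_0$, and reading the splitting of the previous paragraph backwards (noting $\iota_0^* \alpha_0 = 0$ and $H(n_0, \pdot) = -\varepsilon \alpha_0$) returns $\iota_0^*(n_0 \hook \Phi_{\mathrm{std}}) = \omega_0$, $\iota_0^* \Phi_{\mathrm{std}} = \beta_0$, and $\iota_0^* H = g_0$, with $(\omega_0, \beta_0)$ normalized compatible by hypothesis.

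The step I expect to be the main obstacle is the transitivity of $\G_2^{(*)}$ on the non-null level set $Q_\varepsilon$ (equivalently, the surjectivity of the construction of Proposition~\ref{pairtog2} onto stable forms with a prescribed $n$). The definite case is the classical transitivity of $\G_2$ on $S^6$; the split case additionally requires checking that the relevant quadric in signature $(3,4)$ is connected and that the stabilizer of a non-null vector is $8$-dimensional, so that the (open) orbit fills it. A reader who prefers to bypass this can instead verify the identity $g = \iota^* H$, the compatibility $\omega \wedge \beta = 0$, and the normalization directly in a basis adapted to $\bbV = \ab{n} \oplus \bbW$ for general $n$, as in Appendix~\ref{appendix:stable-forms-via-bases}; the equivariant reduction above is precisely what confines any such computation to a single representative.
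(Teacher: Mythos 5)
The paper does not actually prove this statement: it is quoted from \cite[Proposition 1.14]{CLSSH}, so there is no internal argument to compare yours against. Judged on its own terms, your strategy --- make the splitting $\Phi=\alpha\wedge\omega+\beta$ explicit, check that $\iota^*(n\hook\Phi)$ and $\iota^*\Phi$ are precisely its two components, and then use $\GL(\bbV)$-naturality of all the constructions together with transitivity of $\smash{\G_2^{(*)}}$ on the pseudo-sphere $Q_\varepsilon$ to reduce to one representative, where the claim is Proposition \ref{pairtog2} read backwards --- is sound, and your first paragraph is correct as written.

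The one genuine gap is the transitivity step. From ``$Q_\varepsilon$ is connected and the orbit of $n_0$ is open in it'' you cannot conclude transitivity: an open orbit in a connected space need not exhaust it unless it is also closed, equivalently unless \emph{every} orbit is open, and your dimension count $14-8=6$ is carried out only at the single convenient point $n_0$ (establishing that the stabilizer of an \emph{arbitrary} non-null vector is $8$-dimensional is not obviously easier than the statement being proved, since it essentially amounts to knowing the induced structure on $\ab{v}^{\perp}$). The cleanest repair inside this paper is to invoke Proposition \ref{trans}: transitivity of $\smash{\G_2^{(*)}}$ on the ray projectivizations $\mathbb{P}_+(\bbI^{(*)})_\pm$ is equivalent to transitivity on the level sets $H(v,v)=\pm1$, and that proposition is established by citation to Wolf independently of the present statement, so there is no circularity. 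Two smaller points deserve a sentence each: the $6$-dimensional construction of $J$ (hence of $g$ and of the normalization condition in Definition \ref{compat}) requires an orientation of $\bbW$, which should be taken to be the one induced by $n$ together with the orientation that $\Phi$ determines on $\bbV$ via \eqref{equation:cross-to-volume}; and in the split case you need transitivity on both level sets $H(v,v)=+1$ and $H(v,v)=-1$ separately, with a model normalized pair exhibited for each sign of $\varepsilon$ --- you do this implicitly, but it should be made explicit.
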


\section{Nearly (para-)K\"ahler geometry}\label{NPK-sec}
In this section we first introduce some basic notions and
constructions for nearly K\"ahler and nearly para-K\"ahler geometry,
with an emphasis on dimension 6. Both structures are closely linked to
an overdetermined natural partial differential equation which, in some
contexts, is called the Killing-Yano equation. This equation and its
prolongation provide the critical link with projective geometry that
we take up in the next section.

\subsection{Conventions for affine and (pseudo-)Riemannian geometry}\label{convs} It will at times be useful to use the abstract index notation $\ce^a$ for the tangent bundle $TM$,  and $\ce_a$ for its dual
$T^*M$.  Given a torsion-free affine connection $\nabla$ on an
$n$-manifold its curvature $R_{ab}{}^c{}_d$ is then defined by
\begin{equation}
(\nabla_a\nabla_b-\nabla_b\nabla_a)U^c=R_{ab}{}^c{}_dU^d, \quad U^d\in
  \Gamma(\ce^d) \textrm{.}
\end{equation}
The Ricci tensor of $\nabla$ is given by $R_{bd}:=R_{ab}{}^a{}_d$.

In particular this applies to the Levi-Civita connection of a metric $g$ of
any signature. In this case we
may also define the scalar curvature ${\rm Sc}=g^{ab}R_{ab}$.

\subsection{Almost $\varepsilon$-Hermitian geometry}\label{subsection:almost-eps-Hermitian}
\newcommand{\ve}{\varepsilon}

\begin{definition}
An {\em almost $\varepsilon$-complex structure} on a (necessarily even-dimensional) manifold $M$ is a linear endomorphism $J \in \End(TM)$ such that, at each $x\in M$, $J_x$ is an $\varepsilon$-complex structure on $T_xM$ (see Section
\ref{subsection:epsilon-complex-structures}).

Correspondingly, an \emph{almost $\varepsilon$-Hermitian manifold} is a triple $(M, g, J)$ where $M$ is an manifold, where $g$ is a (pseudo\nobreakdash-)Riemannian metric and $J$ is an almost complex structure so that for all $x \in M$, $(g_x, J_x)$ is an $\varepsilon$-Hermitian structure on $T_x M$, that is, if
\begin{equation}\label{hermitian_metric}
    g(J \pdot, J \pdot)=-\varepsilon g(\pdot, \pdot) \textit{,}
\end{equation}
By the remarks after Definition \ref{definition:epsilon-complex-structures}, the metric of an almost Hermitian manifold must have signature $(2p, 2q)$ for some nonnegative integers $p, q$, and an almost para-Hermitian manifold must have signature $(m, m)$, where $\dim M = 2m$.

On an almost $\varepsilon$-Hermitian manifold $(M^{2m}, g, J)$, the skew-symmetric 2-form $\omega:=g(\pdot, J \pdot)$ is called the \emph{fundamental 2-form} or \emph{K\"ahler form}. It satisfies the identities
\begin{equation}\nonumber
    \omega(J \pdot, J \pdot)=g(J \pdot, J J \pdot)=-\varepsilon g(\pdot, J \pdot)=-\varepsilon \omega(\pdot, \pdot).
\end{equation}

The \textit{Nijenhuis tensor} $N_J$ of an almost $\varepsilon$-complex structure $J$ is defined by
\begin{equation}\label{nijenhuis}
\begin{split}
    N_J (U, V) := & -\varepsilon [U, V] - [J U, J V] + J [J U, V] + J [U, J V]\\
                 =& -(\nabla_{J U} J) V + (\nabla_{J V} J) U + J(\nabla_U J) V - J(\nabla_V J) U,
\end{split}
\end{equation}
for arbitrary vector fields $U, V$, where $\nabla$ is any torsion-free connection. This tensor is the complete obstruction to the integrability of $J$.
\end{definition}

The following well-known identities are easily checked.
\begin{proposition}\label{ah-ids}
 The
 Levi-Civita connection $\nabla^g$ of an almost $\varepsilon$-Hermitian manifold $(M, g, J)$ satisfies the following identities
 (for arbitrary vector fields $U, V, W$):
\begin{equation}\label{AHformulas}
(\nabla_U J) J V = -J(\nabla_U J) V, \quad \mbox{and}\quad
g((\nabla_U J) V, W) = -(\nabla_U \omega) (V, W).
\end{equation}
\end{proposition}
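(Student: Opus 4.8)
The plan is to verify both identities in \eqref{AHformulas} by direct computation, exploiting the defining algebraic relations $J^2 = \varepsilon \id$ and the compatibility condition \eqref{hermitian_metric}, together with the fact that $\nabla^g$ is metric ($\nabla^g g = 0$) and torsion-free.

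For the first identity, I would start from $J^2 = \varepsilon \id_{TM}$, which holds pointwise on all of $M$ since $J$ is an almost $\varepsilon$-complex structure. Differentiating this constant-coefficient relation with the Levi-Civita connection gives $\nabla_U(J^2) = 0$. Expanding the left side via the Leibniz rule, $\nabla_U(J J V) = (\nabla_U J)(J V) + J((\nabla_U J) V) + J J (\nabla_U V)$, while $\nabla_U(\varepsilon V) = \varepsilon \nabla_U V = J^2 \nabla_U V$; the two terms involving $\nabla_U V$ cancel, leaving $(\nabla_U J)(JV) + J(\nabla_U J)(V) = 0$, which is exactly the claimed relation $(\nabla_U J) J V = -J (\nabla_U J) V$.

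For the second identity, I would differentiate the definition $\omega(V, W) = g(V, J W)$. Since $\nabla^g g = 0$, we have $(\nabla_U \omega)(V, W) = g(V, (\nabla_U J) W)$. The goal is to rewrite this as $-g((\nabla_U J) V, W)$, so it suffices to show that the bilinear form $(V, W) \mapsto g((\nabla_U J) V, W)$ is skew-symmetric in $V$ and $W$ for each fixed $U$; equivalently, that $\nabla_U J$ is skew-adjoint with respect to $g$. This follows by differentiating the compatibility condition in the form $g(JV, JW) = -\varepsilon\, g(V, W)$ (or equivalently $g(JV, W) = -g(V, JW)$, expressing that $J$ is skew-adjoint, which is the infinitesimal content of \eqref{hermitian_metric}). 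Differentiating $g(JV, W) + g(V, JW) = 0$ along $U$ and again using metricity yields $g((\nabla_U J)V, W) + g(V, (\nabla_U J)W) = 0$, the desired skew-adjointness. Combining with $(\nabla_U \omega)(V,W) = g(V, (\nabla_U J)W) = -g((\nabla_U J)V, W)$ gives the second formula.

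I do not anticipate a genuine obstacle here: both identities are formal consequences of Leibniz-differentiating the pointwise algebraic constraints defining the $\varepsilon$-Hermitian structure, using only that $\nabla^g$ is metric. The only points requiring mild care are bookkeeping the factor $\varepsilon$ correctly (so that the cancellation of the $\nabla_U V$ terms in the first identity is exact) and being consistent about the sign convention relating $\omega$, $g$, and $J$ as fixed in Definition \ref{definition:epsilon-complex-structures} and the preceding discussion. These are the routine verifications that justify calling the identities ``well-known.''
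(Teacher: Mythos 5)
Your verification is correct, and it is exactly the standard computation the paper has in mind: the paper offers no proof, stating only that these identities are ``well-known'' and ``easily checked,'' and your argument (differentiating $J^2 = \varepsilon\,\id$ for the first identity, and differentiating the skew-adjointness of $J$ together with metricity of $\nabla^g$ for the second) supplies precisely the omitted routine check. The sign bookkeeping with $\varepsilon$ and the convention $\omega(V,W) = g(V,JW)$ is handled consistently with the paper's Definition \ref{definition:epsilon-complex-structures}, so there is nothing to correct.
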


\subsection{Nearly $\varepsilon$-K\"ahler geometry}

\begin{definition}
An almost $\varepsilon$-Hermitian manifold $(M, g, J)$ is
\emph{nearly $\varepsilon$-K\"ahler} if and only if its Levi-Civita connection
$\nabla$ satisfies
\begin{equation} \label{NKcond}
(\nabla_U J) U=0
\end{equation}
for all $U \in \Gamma(TM)$, or equivalently if the covariant derivative $\nabla \omega$ of the K\"{a}hler form $\omega$ is totally skew. It is \emph{strictly nearly $\varepsilon$-K\"{a}hler} if in addition $\nabla J$ or, equivalently, $\nabla \omega$, is nowhere zero. For brevity we sometimes write nearly K\"{a}hler as NK and nearly para-K\"{a}hler as NPK, and refer to both structures simultaneously using the abbreviation N(P)K.
\end{definition}

It turns out that if the dimension of an N(P)K manifold $(M, g, J)$ is
less than 6, then \nn{NKcond} implies that $\nabla J=0$ and hence that
the manifold is $\varepsilon$-K\"ahler \cite[Theorem 4.4(v)]{GrayCrossProductsManifolds}.
The definition \eqref{nijenhuis} and the second equation of
\eqref{AHformulas} together give that the Nijenhuis tensor of a nearly
$\varepsilon$-K\"ahler manifold is
\begin{equation}\label{nk_nijenhuis}
N_{J}(U,V) =4J (\nabla_U J)V \textrm{.}
\end{equation}

Next, since the Levi-Civita connection is torsion free, on any N(P)K manifold
\begin{equation}\label{ndomskew}
    d\omega = 3 \nabla \omega \textrm{.}
\end{equation}
\begin{lemma}\cite[Lemma 2.5]{MNS}\label{lemma_fields} For any vector fields $U$ and $V$ on $M$, the vector field $(\nabla_U J) V$ is orthogonal to $U, J U, V$ and $J V$.
\end{lemma}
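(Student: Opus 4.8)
The plan is to abbreviate $A(U,V) := (\nabla_U J)V$ and to show that $g(A(U,V), X) = 0$ for each $X \in \set{U, JU, V, JV}$, by exploiting two structural facts: that $A$ is antisymmetric in its two arguments, and that the associated trilinear form $(U,V,W) \mapsto g(A(U,V),W)$ is totally skew. Both facts are consequences of the nearly $\varepsilon$-K\"ahler hypothesis together with the almost $\varepsilon$-Hermitian identities of Proposition \ref{ah-ids}.

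First I would polarize the nearly $\varepsilon$-K\"ahler condition \eqref{NKcond}, replacing $U$ by $U + V$, to obtain $(\nabla_U J)V + (\nabla_V J)U = 0$, that is $A(U,V) = -A(V,U)$. Next, the second identity of \eqref{AHformulas} identifies $g(A(U,V),W)$ with $-(\nabla_U \omega)(V,W)$; since $(M,g,J)$ is nearly $\varepsilon$-K\"ahler, $\nabla \omega$ is totally skew, so the trilinear map $T(U,V,W) := (\nabla_U \omega)(V,W)$ is totally antisymmetric, and hence so is $(U,V,W) \mapsto g(A(U,V),W) = -T(U,V,W)$. Orthogonality to $U$ and to $V$ is then immediate: $g(A(U,V),U) = -T(U,V,U)$ and $g(A(U,V),V) = -T(U,V,V)$ both vanish, because an alternating form annihilates any triple with a repeated entry.

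For the two remaining cases I would bring in the almost $\varepsilon$-Hermitian structure. Since $\omega = g(\pdot, J\pdot)$ is skew, $J$ is skew-adjoint for $g$, i.e. $g(JX, Y) = -g(X, JY)$; and the first identity of \eqref{AHformulas} reads $A(U,JV) = -J A(U,V)$, equivalently $J A(U,V) = -A(U,JV)$. Combining these,
\[
    g(A(U,V), JU) = -g(JA(U,V), U) = g(A(U,JV), U) = -T(U, JV, U) = 0,
\]
the last equality again holding because $T$ is alternating with a repeated $U$. Finally, orthogonality to $JV$ follows from the antisymmetry of $A$ together with the case just established: $g(A(U,V), JV) = -g(A(V,U), JV) = 0$, applying the previous computation with the roles of $U$ and $V$ interchanged.

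I do not expect a serious obstacle here; the only point requiring care is the bookkeeping of signs, specifically verifying that the nearly $\varepsilon$-K\"ahler hypothesis genuinely yields the \emph{total} skewness of $T$ (rather than mere antisymmetry in a single pair), and that the sign conventions in \eqref{AHformulas} and in the skew-adjointness of $J$ combine as claimed. Once these are in place, the entire argument reduces to the elementary observation that an alternating trilinear form vanishes on any triple with a repeated argument.
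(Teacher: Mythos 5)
Your proposal is correct and follows essentially the same route as the paper: both arguments rest on the total skewness of the trilinear form $g((\nabla_U J)V,W)=-(\nabla_U\omega)(V,W)$ (the paper writes this via $d\omega=3\nabla\omega$) to kill the $U$ and $V$ cases, and then use the identity $(\nabla_U J)JV=-J(\nabla_U J)V$ together with $g$-skewness of $J$ to move the $J$ onto the other argument for the $JU$ case, with the $JV$ case following by symmetry. Your explicit use of the antisymmetry $A(U,V)=-A(V,U)$ is just an unpacking of the paper's ``by symmetry the same arguments apply''.
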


Generalizing a well-known construction in nearly K\"ahler geometry we define,
for any nearly $\varepsilon$-K\"ahler manifold, a \emph{canonical
} $\varepsilon$\emph{-Hermitian connection}: This is the unique
connection $\bar{\nabla}$ with (totally) skew symmetric torsion that
preserves the metric $g$ and the almost $\varepsilon$-complex structure
$J$ (see \cite{IZ, S}). Explicitly, it is
\begin{equation}\label{canonical_connection}
\bar{\nabla}_U V=\nabla_U V +\tfrac{1}{2}\varepsilon J(\nabla_U J) V, \mbox{ for } U, V \in \Gamma(TM).
\end{equation}
The torsion of $\bar{\nabla}$ is then $\bar{T}(U, V)=\varepsilon
J(\nabla_U J) V=\frac{1}{4} \varepsilon N_J(U, V)$, where the last
identity follows from (\ref{nk_nijenhuis}).

\begin{proposition}\cite[Corollary 3.7]{SSH}\label{Tconst}
For any nearly $\varepsilon$-K\"{a}hler structure, $\ab{\nabla J, \nabla J}$ is constant.
\end{proposition}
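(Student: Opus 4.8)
The plan is to read $\ab{\nabla J, \nabla J}$ as (a universal positive multiple of) the squared norm of the totally skew tensor $\psi := \nabla \omega$ and to prove that this scalar is annihilated by every vector field. First, the second identity of \eqref{AHformulas} gives $g((\nabla_U J)V, W) = -(\nabla_U \omega)(V, W)$, so $\nabla J$ and $\psi$ have equal pointwise norms; thus it suffices to show $|\psi|^2$ is constant. By \eqref{NKcond} the form $\psi$ is totally skew, and by \eqref{ndomskew} we have $\psi = \tfrac{1}{3} d\omega$, whence $\psi$ is \emph{closed}: $d\psi = \tfrac13 d^2 \omega = 0$. (Equivalently, by \eqref{canonical_connection} the object $\psi$ is, up to applying the $g$-skew endomorphism $J$, the skew torsion $\bar{T}(U,V) = \varepsilon J(\nabla_U J)V$ of the canonical $\varepsilon$-Hermitian connection $\bar{\nabla}$, so the claim is the same as constancy of $|\bar{T}|^2$; since $\bar{\nabla} g = 0$, this in turn would follow from parallelism $\bar{\nabla}\bar{T} = 0$ of the torsion.)

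The mechanism is a short curvature computation. Contracting the closedness relation $\nabla_{[e}\psi_{abc]} = 0$ with $\psi^{abc}$ and using total skewness yields $\partial_e |\psi|^2 = 6\,\psi^{abc}\nabla_a \psi_{ebc}$. Writing $\psi_{ebc} = \nabla_e \omega_{bc}$, commuting covariant derivatives, and using $\psi^{abc}\nabla_e\psi_{abc} = \tfrac12 \partial_e |\psi|^2$ together with the Ricci identity $[\nabla_a,\nabla_e]\omega_{bc} = -R_{ae}{}^{f}{}_{b}\,\omega_{fc} - R_{ae}{}^{f}{}_{c}\,\omega_{bf}$, the left side collapses to a single curvature contraction:
\[
    \partial_e |\psi|^2 = 6\, R_{ae}{}^{f}{}_{b}\,\omega_{fc}\,\psi^{abc} \textrm{.}
\]
Thus constancy of $\ab{\nabla J, \nabla J}$ is reduced to the purely algebraic curvature identity $R_{ae}{}^{f}{}_{b}\,\omega_{fc}\,\psi^{abc} = 0$ on any nearly $\varepsilon$-K\"ahler manifold. (In the connection picture, this is exactly the statement that the $\bar{R}$-terms drop out of the first Bianchi identity for $\bar{\nabla}$, leaving $\bar{\nabla}\bar{T} = 0$.)

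The main obstacle is precisely this last curvature identity: it fails for a generic almost Hermitian metric and must be extracted from the nearly $\varepsilon$-K\"ahler condition. I would supply it from Gray's second-order N(P)K curvature identities, which express the second covariant derivative $\nabla^2 J$ (equivalently $\bar{\nabla}(\nabla J)$) through $\bar{R}$ and terms quadratic in $\nabla J$, and which force the relevant $J$-type symmetry on $\bar{R}$; the orthogonality in Lemma \ref{lemma_fields} (that $(\nabla_U J)V \perp U, JU, V, JV$) is what places $\psi$ in the distinguished algebraic type and makes the contraction with $R_{ae}{}^{f}{}_{b}\,\omega_{fc}$ collapse. Granting the identity $R_{ae}{}^{f}{}_{b}\,\omega_{fc}\,\psi^{abc} = 0$, the displayed equation gives $\partial_e |\psi|^2 = 0$ for every index $e$, so $|\psi|^2$, and hence $\ab{\nabla J, \nabla J}$, is locally constant and therefore constant on connected $M$.
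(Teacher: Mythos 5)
Your formal reductions are correct: $|\nabla J|^2$ agrees with $|\nabla\omega|^2$ by \eqref{AHformulas}, the form $\psi := \nabla\omega = \tfrac{1}{3}d\omega$ is closed, and contracting $\nabla_{[e}\psi_{abc]}=0$ with $\psi^{abc}$ and then commuting derivatives via the Ricci identity does collapse to $\partial_e|\psi|^2 = 6\,R_{ae}{}^{f}{}_{b}\,\omega_{fc}\,\psi^{abc}$. This is also a genuinely different route from the paper's: the paper establishes the stronger tensorial statement $\bar{\nabla}(\nabla J)=\bar{\nabla}\bar{T}=0$ and deduces constancy of the norm from $\bar{\nabla}g=0$, whereas you need only a single scalar contraction to vanish.

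There is, however, a genuine gap, and you have located it yourself: the identity $R_{ae}{}^{f}{}_{b}\,\omega_{fc}\,\psi^{abc}=0$ is asserted, not proved. Everything preceding it is formal tensor calculus valid for any metric whose $\nabla\omega$ is totally skew; the nearly $\varepsilon$-K\"ahler condition has not yet been used in any essential way, so this identity is where the entire content of the proposition lives. Saying you ``would supply it from Gray's second-order identities'' does not discharge it: the identity is not itself one of the standard stated N(P)K identities, and extracting it from, say, the second-derivative identity $g((\nabla^2_{A,B}J)U,V) = -\underset{A,B,U}{\mathfrak{S}}\, g((\nabla_A J)B,(\nabla_U J)JV)$ invoked in the paper's proof requires substituting that cyclic sum into your contraction and verifying that the resulting expression cubic in $\psi$ (contracted with $J$) vanishes --- a nontrivial algebraic check that essentially reproduces the $\bar{\nabla}\bar{T}=0$ computation. (The identity is certainly true, since it is equivalent to the proposition given your correct reductions, so the approach does not fail; it is simply unfinished.) To complete the argument you should either carry out that algebraic verification explicitly, or follow your own parenthetical remark and establish $\bar{\nabla}\bar{T}=0$ directly, which is the paper's route.
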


\subsection{Dimension six}\label{ss: N(P)K dim 6}
Henceforth we restrict our discussion of nearly $\varepsilon$-K\"ahler
manifolds to the case that $M$ has dimension 6 and the structure is
strict, for which much stronger results are available.

A nearly $\varepsilon$-K\"{a}hler 6-manifold $(M, g, J)$ is of \emph{constant type} \cite[Theorem 5.2]{Gray76}, i.e. there is a constant $\alpha\in\mathbb{R}$ such that
\begin{equation}\label{constant_type}
    g((\nabla_U J) V, (\nabla_U J) V)=\alpha [ g(U, U) g(V, V)-g(U, V)^2+\varepsilon g(J U, V)^2 ].
\end{equation}
It is also well-known that a Riemannian strictly nearly K\"ahler manifold
is Einstein \cite{Gray76}, and
the same holds true for pseudo-Riemannian
strictly nearly K\"{a}hler \cite{S} and strictly nearly
para-K\"{a}hler structures \cite{IZ}.
 In particular we have
\begin{equation}\label{6snk_einstein}
R_{ab}=5 \alpha g_{ab},
\end{equation}
where $\alpha$ is the  constant in \nn{constant_type}.

When $\ab{\nabla J, \nabla J} \neq 0$, Lemma \ref{lemma_fields}
allows us to use adapted frames $(e_i)$ which are
convenient for local calculations.  Take $e_1$ and $e_3$ to be any two
orthonormal local vector fields such that $e_3\neq \pm J e_1$ and define
\begin{equation}
\begin{array}{l l l}
e_2:=J e_1, &\hspace{5mm} &e_4:=J e_3,\\
e_5:=\vert\alpha\vert^{- 1 / 2}(\nabla_{e_1}J)e_3, & & e_6:=J e_5 \textrm{.}
\end{array}
\end{equation}
Using this frame (and following \cite{CLSSH}) we can easily calculate
\begin{equation}
\begin{split}
\omega=&\; e^{12}+e^{34}+e^{56},\\
\nabla\omega=&\; e^{135}+\varepsilon (e^{146}+e^{236}+e^{245}),\\
\ast(\nabla\omega)=&\; -e^{246} -\varepsilon (e^{235}+e^{145}+e^{136}),\\
J^*(\nabla\omega)=&\; e^{246} +\varepsilon (e^{235}+e^{145}+e^{136}).
\end{split}
\end{equation}
Computing gives that $\nabla \omega\wedge \omega=0$ and
$J^*(\nabla\omega)\wedge\nabla\omega=\frac{2}{3}\omega\wedge \om
\wedge \om$. If $\varepsilon = -1$, then at each point $x \in M$ the
representation of $(\nabla \omega)_x$ in the coframe $(e^i)$ coincides
with the $3$-form given in \cite[Proposition 12(2)]{B2006}, which that
proposition shows is generic. If $\varepsilon = +1$, then we can find
a coframe $(f^i)$ such that $\nabla \omega = f^{123} + f^{456}$, which
at each point coincides with the $3$-form in part (1) of that
proposition, where it is shown that it, too, is generic. In both
cases, then, $\nabla \omega$ is stable, and hence $(\omega, \nabla
\omega)$ is a stable, compatible pair. In summary:

\begin{proposition}\label{stab}
If $(M,g,J)$ is a strictly N(P)K manifold such that $\langle \nabla J, \nabla J
\rangle \neq 0$, then $(\omega, \nabla\omega)$ is a pair of stable,
compatible, and normalized forms in that pointwise they satisfy
Definition \ref{compat}.
\end{proposition}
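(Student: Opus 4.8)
The plan is to verify the three requirements of Definition \ref{compat} --- stability of both forms, compatibility $\omega \wedge \nabla\omega = 0$, and normalization $J^*(\nabla\omega)\wedge\nabla\omega = \tfrac{2}{3}\omega\wedge\omega\wedge\omega$ --- by working pointwise in the adapted orthonormal frame $(e_i)$ constructed just above. The hypothesis $\langle \nabla J, \nabla J\rangle \neq 0$ is exactly what makes this frame available: it forces the constant-type scalar $\alpha$ of \eqref{constant_type} to be nonzero, so that $e_5 := |\alpha|^{-1/2}(\nabla_{e_1}J)e_3$ is defined and nonvanishing, while Lemma \ref{lemma_fields} guarantees that $e_5$ (and hence $e_6 = Je_5$) is orthogonal to $e_1, e_2 = Je_1, e_3, e_4 = Je_3$, so that $(e_i)$ is a genuine orthonormal frame. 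In this frame $\omega$ and $\nabla\omega$ take the explicit normal forms displayed above, and everything reduces to finite exterior-algebra computations.

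Two of the three conditions are then immediate. Stability of $\omega$ follows from $\omega = e^{12}+e^{34}+e^{56}$ being the standard symplectic form, together with the fact recalled in Subsection \ref{ss: stable forms} that in even dimension the nondegenerate $2$-forms constitute the single open $\GL(\bbW)$-orbit. For compatibility I would simply expand $\omega \wedge \nabla\omega$: each of the twelve resulting wedge monomials shares a repeated frame index and hence vanishes, giving $\omega\wedge\nabla\omega = 0$. For normalization I would record $\omega\wedge\omega\wedge\omega = 6\,e^{123456}$ and then pair each term of $J^*(\nabla\omega) = e^{246}+\varepsilon(e^{235}+e^{145}+e^{136})$ with the unique complementary term of $\nabla\omega = e^{135}+\varepsilon(e^{146}+e^{236}+e^{245})$; the four surviving products each contribute $+e^{123456}$ (the cross-coefficients being $1$ and $\varepsilon^2 = 1$, with all signs positive), so that $J^*(\nabla\omega)\wedge\nabla\omega = 4\,e^{123456} = \tfrac{2}{3}\omega\wedge\omega\wedge\omega$, as required.

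The one genuinely non-formal ingredient, and the step I expect to be the main obstacle, is the stability of the $3$-form $\nabla\omega$: being stable means lying in one of the two open $\GL(\bbW)$-orbits of $\Lambda^3\bbW^*$, which is not visible from a single wedge identity. I would establish this by exhibiting $\nabla\omega$, in the frame, as the real part of the decomposable $\varepsilon$-complex volume form built from $e^1 + i_\varepsilon e^2$, $e^3 + i_\varepsilon e^4$, $e^5 + i_\varepsilon e^6$ --- equivalently, by checking that the invariant $\lambda(\nabla\omega)$ of Subsection \ref{sss:stable 3 forms in dim 6} is nonzero. This identification has the additional virtue of confirming that the $\varepsilon$-complex structure which $\nabla\omega$ induces via the construction of \ref{sss:stable 3 forms in dim 6} is precisely the ambient $J$, so that the $J^*$ used in the normalization computation above is indeed the one intrinsic to the pair, as demanded by Definition \ref{compat}. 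Since this amounts to a single explicit basis calculation for each sign of $\varepsilon$, I would carry it out in, and defer to, Appendix \ref{appendix:stable-forms-via-bases}.
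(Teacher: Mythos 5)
Your proposal follows essentially the same route as the paper: work pointwise in the adapted frame (available precisely because $\langle\nabla J,\nabla J\rangle\neq 0$ makes $\alpha\neq 0$ and Lemma \ref{lemma_fields} applies), read off the displayed normal forms of $\omega$, $\nabla\omega$, and $J^*(\nabla\omega)$, verify $\omega\wedge\nabla\omega=0$ and $J^*(\nabla\omega)\wedge\nabla\omega=\tfrac{2}{3}\omega\wedge\omega\wedge\omega$ by direct exterior algebra, and defer the stability of $\nabla\omega$ to the basis computation of $\lambda(\beta_\varepsilon)=4\varepsilon(e^{1\cdots 6})^{\otimes 2}\neq 0$ in Appendix \ref{appendix:stable-forms-via-bases}, exactly as the paper does. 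The only caveat is your parenthetical that the Hitchin structure induced by $\nabla\omega$ is ``precisely the ambient $J$'': the appendix computation gives $J_{\beta_\varepsilon}(e_i)=\varepsilon e_{i+1}$, i.e.\ the induced structure is $\varepsilon$ times the ambient one, but since your normalization check uses the paper's displayed formula for $J^*(\nabla\omega)$ (which is computed with the induced structure), this sign convention does not affect the argument.
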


\begin{remark}
The hypothesis $\ab{\nabla J, \nabla J} \neq 0$ here is in fact an
essential nondegeneracy condition: For a strictly N(P)K structures, it
holds if and only if $\nabla \omega$ is stable, so structures that do not satisfy
that condition do not give rise to the compatible pairs as in
Proposition \ref{stab}. This is critical later, in the proof of
Theorem \ref{pcify}; that theorem generalizes Theorem \ref{cify} to include
the strictly nearly para-K\"{a}hler case. It turns out that strictly
N(P)K structures with $\ab{\nabla J, \nabla J} = 0$ are necessarily
nearly para-K\"{a}hler \cite{Gray76,Ka-Hab,S}. The first examples
of such structures were constructed recently by Sch\"{a}fer
\cite{SchaferCones}.
\end{remark}

Finally, we state an algebraic identity for nearly $\ve$-K\"ahler
manifolds that turns out to have critical consequences in the next
section. It is convenient to state this result here, although the  projective Weyl tensor $W_{ab}{}^c{}_d$ is defined in \nn{pweyl}
of the next section.
\begin{proposition}\label{prop:weyl_id}
Let $(M, g, J)$ be a $6$-dimensional strictly nearly
(para\nobreakdash-)K\"ahler manifold. Its K\"{a}hler form $\omega$ and
its projective Weyl curvature $W$ satisfy
\begin{equation}\label{weyl_id}
\omega_{k[b}^{\phantom{k} }W_{cd]\phantom{k}a}^{\phantom{cd]}k}=0.
\end{equation}
\end{proposition}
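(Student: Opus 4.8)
The plan is to exploit the fact that a strictly nearly $\varepsilon$-K\"ahler six-manifold is Einstein in order to trade the projective Weyl tensor for the Riemann tensor, and then to prove the resulting Riemannian identity using the rigid algebraic structure of $\nabla J$. \textbf{Step 1 (reduction to a Riemannian curvature identity).} By \eqref{6snk_einstein} the metric is Einstein, $R_{ab}=5\alpha g_{ab}$; in particular the Ricci tensor is symmetric, so the projective Schouten tensor is the pure trace $P_{ab}=\tfrac15 R_{ab}=\alpha g_{ab}$. The projective Weyl tensor is then
\[
W_{cd}{}^k{}_a = R_{cd}{}^k{}_a - \delta^k_c P_{da} + \delta^k_d P_{ca}
             = R_{cd}{}^k{}_a - \alpha\bigl(\delta^k_c g_{da} - \delta^k_d g_{ca}\bigr),
\]
so it coincides with the conformal Weyl tensor of $g$. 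Contracting with $\omega_{kb}$, using $\omega_{kb}g^{ke}=J^e{}_b$, and antisymmetrising over $[bcd]$, the two pure-trace terms collapse (via $\omega_{[cb}g_{d]a}=-\omega_{[bc}g_{d]a}$ and the corresponding cyclic relation) into a single multiple of $\omega_{[bc}g_{d]a}$; a short computation gives $\omega_{k[b}W_{cd]}{}^k{}_a = \omega_{k[b}R_{cd]}{}^k{}_a + 2\alpha\,\omega_{[bc}g_{d]a}$. Thus \eqref{weyl_id} is \emph{equivalent} to the purely Riemannian identity
\begin{equation}\label{planweyl:reduced}
\omega_{k[b}R_{cd]}{}^k{}_a = -2\alpha\,\omega_{[bc}g_{d]a} .
\end{equation}
As a check, the round $S^6$ has constant curvature, hence is projectively flat ($W=0$), and both sides of \eqref{planweyl:reduced} then equal $-2\alpha\,\omega_{[bc}g_{d]a}$; so only the genuinely-curved part of $R$ needs to be controlled.

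\textbf{Step 2 (proving \eqref{planweyl:reduced}).} First I would feed the curvature into $\omega$ via the Ricci identity for the two-form $\omega$,
\[
(\nabla_a\nabla_b-\nabla_b\nabla_a)\omega_{cd} = -R_{ab}{}^e{}_c\,\omega_{ed} - R_{ab}{}^e{}_d\,\omega_{ce},
\]
and then evaluate its left-hand side algebraically. On an N(P)K manifold $\nabla\omega$ is a $3$-form and the \emph{second} covariant derivative of $J$ (equivalently of $\omega$) is purely algebraic in $\nabla J$: this is exactly the identity
\[
g\bigl((\nabla^2_{A,B}J)U,V\bigr) = -\underset{A,B,U}{\mathfrak{S}}\, g\bigl((\nabla_A J)B,(\nabla_U J)JV\bigr)
\]
used in the proof of Proposition \ref{Tconst}. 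The scalar quadratic quantities $g((\nabla_A J)B,(\nabla_C J)D)$ occurring here are in turn pinned down by the constant-type equation \eqref{constant_type}: polarising \eqref{constant_type} in each of its two arguments, and using the orthogonality relations of Lemma \ref{lemma_fields}, expresses this quartic tensor as an explicit $\alpha$-multiple of a fixed universal combination of $g$'s and $\omega$'s. Substituting back renders the left-hand side of the Ricci identity an explicit expression in $\alpha$, $g$ and $\omega$; contracting with a further $\omega$ and antisymmetrising over $[bcd]$ (organising terms with the first Bianchi identity) then delivers \eqref{planweyl:reduced} with its coefficient.

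\textbf{The main obstacle.} The conceptual route is short, but the decisive and most laborious step is the algebra in Step 2. One must fully polarise the single scalar constraint \eqref{constant_type} into the multilinear tensor $g((\nabla_A J)B,(\nabla_C J)D)$, track the sign $\varepsilon$ uniformly so that the nearly K\"ahler ($\varepsilon=-1$) and nearly para-K\"ahler ($\varepsilon=+1$) cases are treated at once, and carry out the triple antisymmetrisation so that every contribution of the $J$-invariant, trace-free part of the curvature cancels, leaving precisely $-2\alpha\,\omega_{[bc}g_{d]a}$. Conceptually this cancellation reflects that the contraction in \eqref{planweyl:reduced} only detects the part of $R$ assembled from $g$, $\omega$ and $\nabla J\otimes\nabla J$, all governed by the single constant $\alpha$. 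When $\ab{\nabla J,\nabla J}\neq0$ one may instead verify \eqref{planweyl:reduced} by a direct computation in the adapted orthonormal frame $(e_i)$ of Proposition \ref{stab}, where $\omega$ and $\nabla\omega$ take the explicit normal forms listed there; this reduces the claim to a finite component check but excludes the (para-)Ricci-flat case $\alpha=0$, which the invariant argument handles automatically (there $\nabla^2 J=0$, so the Ricci identity forces the relevant curvature contraction to vanish).
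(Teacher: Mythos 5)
Your proposal is correct and is essentially the paper's own argument (Appendix B) in different clothing. Both proofs use the Einstein condition \nn{6snk_einstein} to trade the projective Weyl tensor for the Riemann tensor plus an explicit $\alpha$-multiple of trace terms --- the paper phrases this as $W=C$ and works with the cyclic sum $\underset{S,T,U}{\mathfrak{S}}C(S,T,JU,V)$, which is your reduced identity $\omega_{k[b}R_{cd]}{}^k{}_a=-2\alpha\,\omega_{[bc}g_{d]a}$ in disguise --- and both close by comparing against the polarized constant-type formula \nn{constant_type_pol}. The only real divergence is how the intermediate curvature identity is obtained: the paper imports the Gray-type identity \nn{curv} from \cite{Gray70,S,IZ}, substitutes $V\mapsto JV$, cycles, and applies the First Bianchi Identity to reach $\underset{S,T,U}{\mathfrak{S}}R(S,T,JU,V)=\underset{S,T,U}{\mathfrak{S}}g((\nabla_S J)T,(\nabla_U J)JV)$, whereas you propose to rederive the same relation from the Ricci identity for $\omega$ combined with the algebraic formula for $\nabla^2 J$ quoted in the proof of Proposition \ref{Tconst}; this is more self-contained but reproves a cited result, and the remaining bookkeeping is the same in either route. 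Your constant-curvature sanity check correctly confirms the coefficient $-2\alpha$. One caution for the write-up: polarizing the scalar identity \nn{constant_type} alone does not pin down the full tensor $g((\nabla_S J)T,(\nabla_U J)V)$ --- since this tensor is skew in each pair and symmetric under pair exchange, a totally skew ($4$-form) piece is left undetermined by its diagonal values --- so you should invoke the specific identity \nn{constant_type_pol} as recorded in \cite{MNS}, rather than claim it follows from polarization and Lemma \ref{lemma_fields} alone.
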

\begin{proof}
A $6$-dimensional N(P)K manifold is Einstein, and on an Einstein
manifold the projective Weyl tensor $W_{ab}{}^c{}_d$ is equal to the
conformal Weyl tensor $C_{ab}{}^c{}_d$ (the completely trace-free part
of the Riemann tensor) (see \cite[Proposition 5.5]{EM}). So we can
rewrite (\ref{weyl_id}) in index-free notation as
$\underset{S,T,U}{\mathfrak{S}}C(S,T,J U,V)=0$. Here and below
$S,T,U,V$ are any smooth tangent vector fields.

In order to prove this identity we need essentially three
formulae. The first is the standard conformal Weyl-Schouten 
 decomposition of the Riemann tensor
 $$
 R(S, T,  U,  V)= C(S, T,  U, V)+  g(U ,S ){\sf P} (T,V)-g(U,T ){\sf P} (S,V)+
 g(V ,T ){\sf P} (S,U)-g(V,S ){\sf P} (T,U),
 $$
which defines the {\em conformal Schouten tensor} ${\sf P}$ (in all dimensions $n\geq 3$), see 
also \eqref{equation:conformal-schouten}. 
The second is
\begin{equation}\label{curv}
R(S, T, J U, J V)=-\varepsilon R(S, T,  U, V)+\varepsilon g((\nabla_S J)T,(\nabla_U J)V) ;
\end{equation}
which is proved in the NK case in \cite[(3)]{Gray70}, \cite[(1.1)]{S}, and in the NPK case in \cite[Prop. 5.2]{IZ}.
The third is the polarization of the constant type formula (see \eqref{constant_type}):
\begin{equation}\label{constant_type_pol}
\begin{split}
g((\nabla_S J)T,(\nabla_U J) J V)=\alpha [ & g(S, U) g(T, J V)-g(S, J V) g(T, U)\\
&\hspace{2mm}+g(S, J U) g(T, V)-g(S, V) g(T, J U) ].
\end{split}
\end{equation}
Replacing $V$ with $J V$ in (\ref{curv}) and cycling it in $S, T, U$ we obtain
\begin{equation}\label{curv_cycling}
\underset{S,T,U}{\mathfrak{S}}R(S, T, J U, V)=\underset{S,T,U}{\mathfrak{S}}g((\nabla_S J)T,(\nabla_U J)J V).
\end{equation}

Using the fact that $M$ is Einstein we have $\Ric=\frac{\Sc}{6} g$,
and hence for the conformal Schouten tensor we have ${\sf
  P}=\frac{\Sc}{60} g$.  On the other hand $\Ric=5\alpha g$, and so
$\alpha=\frac{\Sc}{30}$. Substituting this into the Weyl-Schouten
decomposition ( or equivalently the Ricci decomposition) yields
\begin{equation}
R(S, T, JU, V)
    = C (S, T, JU, V) + \alpha [ g(S, J U) g(T, V)-g(T, J U) g(S, V) ].
\end{equation}
Now, cycling this identity in $S, T, U$, using (\ref{curv_cycling}), and comparing with (\ref{constant_type_pol}) gives
\begin{equation}
\underset{S,T,U}{\mathfrak{S}} C (S, T, JU, V)=\alpha \underset{S,T,U}{\mathfrak{S}} [ g(S, U) g(T, J V)-g(S, J V) g(T, U) ].
\end{equation}
Expanding the right-hand side and cancelling show us that (\ref{weyl_id}) holds.
\end{proof}

\subsection{Link with the Killing equation on a 2-form}\label{KYsec}
Recall that on a pseudo-Riemannian manifold $(M,g)$, the infinitesimal
isometries are precisely the solutions $U \in \Gamma(TM)$ of the
Killing equation $\mcL_U g = 0$. We can
rewrite this equation as $\nabla_{(b} U_{c)}=0$.  Similarly, lowering an index using $g$, the condition \nn{NKcond} on $J$ (which partially defines
a nearly K\"{a}hler structure) yields the following equivalent overdetermined PDE, which is is usually called the \emph{Killing-Yano
  equation}:
\begin{equation}\label{equation:Killing-Yano}
  \nabla_{a}\omega_{bc}+\nd_b\om_{ac}=0 \quad \mbox{or equivalently} \quad  \nabla_{(b}\omega_{c)d}=0 \textrm{.}
\end{equation}

\subsection{Prolonging the Killing-Yano equation}\label{prolong}

The Killing-Yano equation \eqref{equation:Killing-Yano} depends only a connection, so it can be regarded as
an equation on general affine manifolds. So, in this
subsection we work in the general setting of a manifold $M$ of
dimension $n \geq 2$
equipped with a torsion-free affine connection
$\nabla$.  For simplicity, we assume that $\nabla$ is {\em special},
that is, that locally it preserves a volume form---in Subsection \ref{pdg} we will see that for our purposes this is no restriction at all. Of course,
the Levi-Civita connection of any metric is special.

In this context we shall prolong the equation
\begin{equation}\label{KYbgg}
\nabla_b\omega_{c
  d}+\nabla_c\omega_{b d}=0
\end{equation}
where $\om$ is an arbitrary 2-form. Prolongation involves the
introduction of new variables in a way to replace a differential equation with a simpler system. For this equation, doing so will also expose a strong link with projective geometry.

On an affine manifold $(M,\nabla)$
the curvature $R_{ab}{}^c{}_d$  may be decomposed as
\begin{equation}\label{pweyl}
R_{ab\phantom{c}d}^{\phantom{ab}c}
=W_{ab\phantom{c}d}^{\phantom{ab}c}+\delta_{\phantom{c} a}^{c}{\sf P}_{bd}-\delta_{\phantom{c} b}^{c}{\sf P}_{ad} \textrm{,}
\end{equation}
where
\[
    {\sf P}_{ab}:=\frac{1}{n-1}R_{ab}
\]
is the \textit{projective Schouten tensor} of $\nabla$ and $W_{ab}{}^c{}_d$ is the {\em projective Weyl tensor} of $\nabla$. The Weyl tensor is totally trace-free, that is, it satisfies the identities $\delta^a_{\phantom{a}c} W_{ab}{}^c{}_d=0$ and $\delta^d_{\phantom{d}c}
W_{ab}{}^c{}_d=0$. These objects have special roles in projective geometry,
which we exploit in the next section.

As a first step toward prolonging \nn{KYbgg} we differentiate it
using the connection $\nabla_a$ to obtain
\begin{equation}\nonumber
\nabla_a \nabla_b\omega_{c d}+\nabla_a \nabla_c\omega_{b d}=0.
\end{equation}
Cycling on $a,b,c$ gives
\begin{equation}\nonumber
\begin{split}
\nabla_b \nabla_c\omega_{a d}+\nabla_b \nabla_a\omega_{c d}=&\,0,\\
\nabla_c \nabla_a\omega_{b d}+\nabla_c \nabla_b\omega_{a d}=&\,0.
\end{split}
\end{equation}
Now adding the first two equations and subtracting the third we have:
\begin{equation}\label{prol1}
2\nabla_a \nabla_b\omega_{c d}-[\nabla_a,\nabla_b]\omega_{c
  d}+[\nabla_a,\nabla_c]\omega_{b d}+[\nabla_b,\nabla_c]\omega_{a
  d}=0.
\end{equation}

Next, using the identity
\[
[\nabla_a,\nabla_b]\omega_{c d}=R_{a b}\sharp \omega_{c d}
:=-R_{ab\phantom{k}c}^{\phantom{ab}k}\omega_{kd}-R_{ab\phantom{k}d}^{\phantom{ab}k}\omega_{ck}
\]
and the First Bianchi Identity we can rewrite \eqref{prol1} as
\begin{equation}\nonumber
2\nabla_a\nabla_b\omega_{cd}+2 R_{bc\phantom{k}a}^{\phantom{bc}k}\omega_{dk}-R_{ba\phantom{k}d}^{\phantom{ba}k}\omega_{ck}-R_{ac\phantom{k}d}^{\phantom{ac}k}\omega_{bk}-R_{bc\phantom{k}d}^{\phantom{bc}k}\omega_{ak}=0 \textrm{,}
\end{equation}
and cycling it on $b,c,d$ gives
 \begin{equation}\nonumber
 2\nabla_a\nabla_c\omega_{db}+2 R_{cd\phantom{k}a}^{\phantom{cd}k}\omega_{bk}-R_{ca\phantom{k}b}^{\phantom{ca}k}\omega_{dk}-R_{ad\phantom{k}b}^{\phantom{ad}k}\omega_{ck}-R_{cd\phantom{k}b}^{\phantom{cd}k}\omega_{ak}=0,
 \end{equation}
 \begin{equation}\nonumber
 2\nabla_a\nabla_d\omega_{bc}+2 R_{db\phantom{k}a}^{\phantom{db}k}\omega_{ck}-R_{da\phantom{k}c}^{\phantom{da}k}\omega_{bk}-R_{ab\phantom{k}c}^{\phantom{ab}k}\omega_{dk}-R_{db\phantom{k}c}^{\phantom{db}k}\omega_{ak}=0.
 \end{equation}
Adding the last three equations, using $\nabla_{(a}\omega_{b)c}=0$, and again applying the First Bianchi Identity yields
\begin{equation}\nonumber
\nabla_a\nabla_b\omega_{cd}+\tfrac{1}{2} ( R_{bc\phantom{k}a}^{\phantom{bc}k}\omega_{dk}+ R_{cd\phantom{k}a}^{\phantom{cd}k}\omega_{bk}+R_{db\phantom{k}a}^{\phantom{db}k}\omega_{ck} ) =0.
\end{equation}

Using the placeholder variable $\mu_{abc}:=\nabla_a\omega_{bc}$, where $\mu_{abc}$ is skew, we rewrite the above equation as the first-order system
\begin{equation}\nonumber
\left\{
    \begin{array}{rl}
        0 \!\!\!&= \nabla_a\omega_{bc}-\mu_{abc} \\
        0 \!\!\!&= \nabla_a\mu_{bcd}+\tfrac{1}{2} ( R_{bc\phantom{k}a}^{\phantom{bc}k}\omega_{dk}+R_{cd\phantom{k}a}^{\phantom{cd}k}\omega_{bk}+R_{db\phantom{k}a}^{\phantom{db}k}\omega_{ck} )
     \end{array}
\right.
\textrm{.}
\end{equation}
Thus (cf.\ \cite{BCEG}) solutions of the equation \nn{KYbgg}
correspond to pairs $\Sigma := (\omega, \mu)$ parallel with respect to
the connection $\widehat{\nabla}$,
where
\begin{equation}\label{prol_step1}
\widehat{\nabla}_a\left(\begin{array}{c}
\omega_{b c}\\
\mu_{b c d}
\end{array}\right)=
\left(\begin{array}{c}
\nabla_a\omega_{b c}-\mu_{a b c}\\
\nabla_a\mu_{b c d}+\frac{1}{2} ( R_{bc\phantom{k}a}^{\phantom{bc}k}\omega_{dk}+ R_{cd\phantom{k}a}^{\phantom{cd}k}\omega_{bk}+R_{db\phantom{k}a}^{\phantom{db}k}\omega_{ck} )
\end{array}\right).
\end{equation}
This leads to the following result.
\begin{proposition}\label{prolongprop} On a manifold of dimension $n \geq 2$
equipped with a torsion-free special affine connection $\nabla$,
solutions of the equation
$$
\nabla_b\omega_{c
  d}+\nabla_c\omega_{b d}=0,
$$
on 2-form fields $\om_{bc}$, are in 1-1 correspondence with sections $(\omega, \mu)$ of
$\Lambda^2 T^* M \oplus \Lambda^3 T^*M$ which are parallel for the connection \begin{equation}\label{prol_step2}
\widehat{\nabla}_a\left(\begin{array}{c}
\omega_{b c}\\
\mu_{b c d}
\end{array}\right)=
\left(\begin{array}{c}
\nabla_a\omega_{b c}-\mu_{a b c}\\
\nabla_a\mu_{b c d}+3 {\sf P}_{a[b}\omega_{cd]}
\end{array}\right)
-\frac{1}{2}\left(\begin{array}{c}
0\\
3\,\omega_{k[b}^{\phantom{k} }W_{cd]\phantom{k}a}^{\phantom{cd]}k}
\end{array}\right).
\end{equation}
\end{proposition}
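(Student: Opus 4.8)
The plan is to observe that the derivation preceding the statement has already done the analytic work, reducing the claim to a purely algebraic identity. That derivation shows a $2$-form $\omega$ solves \eqref{KYbgg} precisely when the pair $(\omega,\mu)$ with $\mu_{abc} := \nabla_a\omega_{bc}$ is annihilated by the connection \eqref{prol_step1}, and conversely that the top slot of a parallel section forces $\nabla_a\omega_{bc}=\mu_{abc}$. First I would check that $\mu$ is automatically totally skew, so that $(\omega,\mu)$ genuinely is a section of $\Lambda^2 T^*M\oplus\Lambda^3 T^*M$: it is skew in $b,c$ because $\omega$ is a $2$-form, and the Killing--Yano equation $\nabla_{(b}\omega_{c)d}=0$ supplies skewness in the first pair of indices. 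Hence $\omega\mapsto(\omega,\nabla\omega)$ and $(\omega,\mu)\mapsto\omega$ are mutually inverse bijections between solutions and parallel sections. It then remains only to show that \eqref{prol_step1} and \eqref{prol_step2} define the \emph{same} connection.

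This amounts to verifying, for an arbitrary $2$-form $\omega$, the single algebraic identity
\begin{equation}\nonumber
\tfrac{1}{2}\left\{R_{bc}{}^k{}_a\omega_{dk}+R_{cd}{}^k{}_a\omega_{bk}+R_{db}{}^k{}_a\omega_{ck}\right\}
  = 3P_{a[b}\omega_{cd]} - \tfrac{3}{2}\,\omega_{k[b}W_{cd]}{}^k{}_a \textrm{.}
\end{equation}
The key step is to substitute the curvature decomposition \eqref{pweyl} into the left-hand side and separate the outcome into a Schouten part and a Weyl part. Writing $R_{bc}{}^k{}_a=W_{bc}{}^k{}_a+\delta^k_b P_{ca}-\delta^k_c P_{ba}$, each summand contributes, besides its Weyl term, the pair $P_{ca}\omega_{db}-P_{ba}\omega_{dc}$; adding the three cyclic versions and using the skew-symmetry of $\omega$ collapses the Schouten part to $3P_{a[b}\omega_{cd]}=P_{ab}\omega_{cd}-P_{ac}\omega_{bd}+P_{ad}\omega_{bc}$. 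Here I would flag the one place the hypothesis is used: because $\nabla$ is \emph{special}, its Ricci tensor, and therefore $P_{ab}$, is symmetric, and this symmetry is exactly what lets the index $a$ be moved into the first slot of $P$ to match the antisymmetrization in \eqref{prol_step2}.

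For the Weyl part I would use that $W_{ab}{}^c{}_d$ is skew in its first two indices (inherited from $R_{ab}{}^c{}_d$, since the Schouten terms subtracted off in \eqref{pweyl} are themselves skew there) together with the skew-symmetry of $\omega$; a short rearrangement, expanding the three-index antisymmetrization and using the two skew-symmetries to reduce the six permutations to three, identifies $\tfrac12(W_{bc}{}^k{}_a\omega_{dk}+W_{cd}{}^k{}_a\omega_{bk}+W_{db}{}^k{}_a\omega_{ck})$ with $-\tfrac{3}{2}\,\omega_{k[b}W_{cd]}{}^k{}_a$. Assembling the two parts yields the displayed identity and hence the equality of the connections \eqref{prol_step1} and \eqref{prol_step2}. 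None of this is conceptually deep—the entire content is bookkeeping of antisymmetrizations—so the only genuine obstacle is to carry out the index manipulations cleanly, keeping careful track of the sign conventions in \eqref{pweyl} and in the commutator formula $[\nabla_a,\nabla_b]\omega_{cd}=-R_{ab}{}^k{}_c\omega_{kd}-R_{ab}{}^k{}_d\omega_{ck}$ used in the preceding derivation.
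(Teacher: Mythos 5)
Your proposal is correct and follows essentially the same route as the paper: the analytic content is the prolongation derivation already carried out before the statement, and the proof reduces to the algebraic identity equating the curvature terms of \eqref{prol_step1} with the Schouten and Weyl terms of \eqref{prol_step2}, which the paper relegates to Appendix \ref{section:appendix_prolongation} and which you verify by the same substitution of \eqref{pweyl}. Your explicit remark that the symmetry of $P_{ab}$ (coming from $\nabla$ being special) is what allows the Schouten part to be written as $3P_{a[b}\omega_{cd]}$ is a point the paper's appendix uses silently, and is a welcome clarification.
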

\begin{proof}
Using the decomposition (\ref{pweyl}) on $\tfrac{1}{2} ( R_{bc\phantom{k}a}^{\phantom{bc}k}\omega_{dk}+ R_{cd\phantom{k}a}^{\phantom{cd}k}\omega_{bk}+R_{db\phantom{k}a}^{\phantom{db}k}\omega_{ck} )$, it is a straightforward calculation to check that the sum of the terms involving the Weyl curvature is $-\tfrac{3}{2}\, \omega_{k[b}^{\phantom{k} }W_{cd]\phantom{k}a}^{\phantom{cd]}k}$, while the sum of the terms involving the Schouten tensor is $3\,{\sf P}_{a[b}\,\omega_{cd]}$.
Thus solutions of \nn{KYbgg} yield
sections parallel for $\widehat{\nabla}$. On the other hand if
$(\om_{bc},\mu_{bcd})$ is parallel for $\widehat{\nabla}$ then
$\nabla_a\omega_{b c}=\mu_{a b c}$; in particular $\nabla_a \omega_{bc}$ is totally skew, and so \nn{KYbgg} holds.
\end{proof}

\section{Projective geometry and N(P)K-structure}\label{engine}

We will show that strictly nearly K\"{a}hler and strictly nearly para-K\"{a}hler
structures in dimension 6 have a natural interpretation in projective
geometry, and that this facilitates links to other geometries.  The
structures treated in this section and the subsequent sections can
only exist on orientable manifolds, and so henceforth we shall
assume $M$ orientable.

\subsection{Projective differential geometry}\label{pdg}

As mentioned in the introduction a projective structure ${\bf p}$ on a
manifold $M$ (of dimension $n\geq 2$) consists of an equivalence class
of torsion-free affine connections that share the same geodesics, as
unparameterized curves.  The class is equivalently characterized by the
fact that, acting on any $U \in \Gamma(TM)$, any two connections
$\nabla$ and $\widehat\nabla$ in ${\bf p}$ are related by a
transformation of the form
 \begin{equation} \label{ptrans}
 \widehat\nabla_a U^b = \nabla_a U^b +\Upsilon_a U^b + \Upsilon_c U^c
 \delta^b_{\phantom{b} a},
\end{equation}
where $\Upsilon$ is some smooth section of $T^*M$.

According to the usual conventions in projective geometry, we write
$\ce(1)$ for the positive $(2n+2)$nd root of the bundle $(\Lambda^n
TM)^2$, which we note is canonically oriented.  A connection $\nabla
\in {\bf p}$ determines a connection on $\ce(1)$ as well as its real
powers $\ce(\ulw)$, $\ulw\in \mathbb{R}$; we call $\ce(\ulw)$ the
bundle of projective densities of weight $\ulw$. Conversely, for
$\ulw\neq 0$, a choice of connection on $\ce(\ulw)$ determines a
connection $\nabla\in {\bf p}$. Among the connections in ${\bf p}$
there is a (non-empty) distinguished class consisting of those
connections $\nabla\in {\bf p}$ that preserve some non-vanishing
section of $\ce(\ulw)$, $\ulw\neq 0$ (see e.g.\ \cite{GN}). These are
exactly the special affine connections (defined in Subsection
\ref{prolong}) in ${\bf p}$, and in the following we shall work only
with this subset of connections.  Such a connection $\nabla$ is often
called a {\em choice of scale}; the corresponding section $\tau$ of
$\ce (\ulw)$ (determined up to multiplication by a non-zero constant)
is also often called a choice of scale.  If $\nabla $ and
$\widehat{\nabla}$ are two choices of scale then $\Upsilon_b$ is
exact, meaning $\Upsilon_b=\nabla_b \phi$ for some function $\phi$.

As a point of notation: Given any vector bundle $\mathcal{B}$ we
shall write $\mathcal{B}(\ulw)$ as a shorthand for
$\mathcal{B}\otimes \ce(\ulw)$.

\subsection{The Killing-Yano type projective BGG equation} \label{pKY}
In Section \ref{KYsec} we introduced the Killing-Yano equation
\nn{KYbgg} $ \nabla_{(a}\omega_{b)c}=0 $ on a 2-form $\omega$.  We
want to consider the linear operator giving this equation in the case
that $\nabla$ is a (special, torsion-free) affine connection and
$\omega_{ab}$ is any $2$-form field of weight $w$.  Let
$\widehat{\nabla}$ and $\nabla$ be two projectively equivalent
scales. From \nn{ptrans}, we have
\begin{equation}\nonumber
\begin{split}
\widehat{\nabla}_a\omega_{bc}+\widehat{\nabla}_b\omega_{ac}&=\nabla_a\omega_{bc}+(\ulw-2)\Upsilon_a\omega_{bc}-\Upsilon_b\omega_{ac}-\Upsilon_c\omega_{ba}\\
&+\nabla_b\omega_{ac}+(\ulw-2)\Upsilon_b\omega_{ac}-\Upsilon_a\omega_{bc}-\Upsilon_c\omega_{ab},
\end{split}
\end{equation}
and it is easy to see that
$\widehat{\nabla}_{(a}\omega_{b)c}=\nabla_{(a}\omega_{b)c}$ if and only
if $\ulw=3$.

When taking $\om$ to have projective weight 3,
equation \nn{KYbgg} fits into the class of first BGG equations on projective manifolds; see
\cite{CGHpoly, CGHjlms} and references therein for a general discussion
of the class.

\subsection{The projective tractor connection} \label{trS} On
a general projective manifold $(M,{\bf p})$ there is no canonical
connection on the tangent bundle. There is, however, a canonical
connection on a related natural bundle of rank $n+1$; this so-called tractor
connection is the fundamental invariant object capturing the geometric
structure of projective geometries. We follow here the development of
\cite{BEG,CapGoMac}.

On any smooth manifold $M$, the first jet prolongation $J^1\ce(1)\to
M$ of the projective density bundle $\ce(1)$ of weight $1$, is a natural vector
bundle. Its fiber over $x\in M$ consists of all $1$-jets $j^1_x\sigma$
of local smooth sections $\sigma\in\Gamma(\ce(1))$ defined in a
neighborhood of $x$. For two sections $\sigma$ and $\tilde\sigma$ we
have $j^1_x\sigma=j^1_x\tilde\sigma$ if and only if in one, or
equivalently any, local chart the sections $\sigma$ and $\tilde\sigma$
have the same Taylor development in $x$ up to first order. On the
other hand sections $\si\in \Gamma(\ce(1))$ determine smooth sections
$j^1\si$ of $J^1\ce(1)$ via the smooth structure on the latter space.
Mapping $j^1_x\sigma$ to $\sigma(x)$ thus defines a smooth, surjective
bundle map $J^1\ce(1)\to\ce(1)$, called the \textit{jet
  projection}. If $j^1_x\sigma$ lies in the kernel of this projection,
so $\sigma(x)=0$, then the value $\nabla\sigma(x)\in
T^*_xM\otimes\ce_x(1)$ is the same for all linear connections $\nabla$
on the vector bundle $\ce(1)$. This identifies the kernel of the jet
projection with the bundle $T^*M\otimes\ce(1)$. See for example
\cite{palais} for a general development of jet bundles.

Using an abstract index notation, we shall write $\ce_A$ (in
index-free notation, $\mcT^*$) for $J^1\ce(1)$ and $\ce^A$ (or $\mcT$)
for the dual vector bundle. Then we can view the jet projection as a
canonical section $X^A$ of the bundle
$\ce^A\otimes\ce(1)=\ce^A(1)$. Likewise, the inclusion of the kernel
of this projection can be viewed as a canonical bundle map
$\ce_a(1)\to\ce_A$, which we denote by $Z_A{}^a$. So $\ce_A$ has a
composition structure which is given by the short exact sequence of
bundle maps
\begin{equation}\label{euler}
0\to \ce_a(1)\stackrel{Z_A{}^a}{\to} \ce_A \stackrel{X^A}{\to}\ce(1)\to 0.
\end{equation}
This is known as the jet exact sequence at 1-jets for the bundle
$\ce(1)$.  We write the composition series $\ce_A=\ce(1)\flplus
\ce_a(1)$ to summarize the exact sequence \nn{euler}.  As mentioned, any
connection $\nabla \in {\bf p}$ is equivalent to a connection on
$\ce(1)$. But a connection on $\ce(1)$ is precisely a splitting of the
1-jet sequence \nn{euler}. In particular this holds for special
connections. Thus given such a choice we have the direct sum
decomposition $\ce_A \stackrel{\nabla}{=} \ce(1)\oplus \ce_a(1) $ with
respect to which we define a connection on $\mcE_A$ by
\begin{equation}\label{pconn}
\nabla^{\mathcal{T}^*}_a \binom{\si}{\mu_b}
:= \binom{ \nabla_a \si -\mu_a}{\nabla_a \mu_b + {\sf P}_{ab} \si},
\end{equation}
where, recall, ${\sf P}_{ab}$ is the projective Schouten tensor.
 A simple calculation shows that \nn{pconn} is
independent of the choice $\nabla \in \mbp$, and so
$\nabla^{\mathcal{T}^*}$ is determined canonically by the projective
structure $\mbp$.

This {\em cotractor connection} is due to \cite{Thomas}. It is
equivalent to the normal Cartan connection (of \cite{Cartan}) see
\cite{CapGoTAMS}. We shall term $\ce_A$ ($\mcT^*$) the {\em cotractor bundle},
and we note that the dual {\em tractor bundle} $\ce^A$ ($\mcT$) has composition structure given by the exact sequence
\begin{equation}\label{tractorseq}
0\to \ce(-1)\stackrel{X^A}{\to} \ce^A \stackrel{Z_A{}^a}{\to}\ce^a(-1) \to 0.
\end{equation}
The dual tractor bundle is canonically equipped with the dual {\em
  tractor connection}: In terms of a splitting dual to that above this
is given by
\begin{equation}\label{tconn}
\nabla^\cT_a \left( \begin{array}{c} \nu^b\\
\rho
\end{array}\right) =
\left( \begin{array}{c} \nabla_a\nu^b + \rho \delta^b_{\phantom{b} a}\\
\nabla_a \rho - {\sf P}_{ab}\nu^b
\end{array}\right).
\end{equation}

From \nn{euler} we have invariantly the map $X^A:\ce_A\to \ce(1)$. As
mentioned above, given a special affine connection $\nabla$ on $TM$ we
also have the splitting $\ce_A \stackrel{\nabla}{=} \ce(1)\oplus
\ce_a(1) $, and so in particular the projection $W^A{}_a:\ce_A\to
\ce_a(1)$ that splits the sequence \nn{euler}. By definition then
$Z_A{}^aW^A{}_b=\delta^a_{\phantom{a}b}$. This splitting, and the dual splitting of
the sequence \nn{tractorseq}, are also equivalent to a map $Y_A:
\ce^A\to \ce(-1)$, that satisfies $X^AY_A=1$.  In terms of these,
sections $V^A\in \Gamma(\ce^A)$ and $U_A\in \Gamma(\ce_A)$ which are
represented by
$$
V^A\stackrel{\nabla}{=}\left( \begin{array}{c} \nu^a\\
\rho
\end{array}\right), \quad \mbox{and} \quad U_A\stackrel{\nabla}{=}\binom{\si}{\mu_a}
$$ in the given splitting, can be written $V^A=W^A{}_a\nu^a+ X^A
\rho$, and $U_A= Y_A\si+ Z_A{}^a\mu_a$. These expansions, and the
analogs for tensor powers of the tractor bundles, turn out to be
extremely useful for managing calculations, so we record here some
basic facts.

Under a change of special affine connection from
$\nabla$ to $\widehat{\nabla}$, as in \nn{ptrans}, we have
$$
V^A\stackrel{\widehat\nabla}{=}\left( \begin{array}{c} \nu^a\\
\rho -\Upsilon_a \nu^a
\end{array}\right), \quad \mbox{and} \quad
U_A\stackrel{\widehat\nabla}{=}\binom{\si}{\mu_a+\Upsilon_a \si}
$$
\cite{BEG} (where $\Upsilon_a$ is exact). So for the corresponding
maps $\widehat{Y}_A:\ce^A\to \ce(-1)$ and $\widehat{W}^A{}_a:\ce_A\to
\ce_a(1) $ we have,
\begin{equation}\label{ftrans}
\widehat{W}^A{}_a= W^A{}_a +X^A\Upsilon_a, \quad \widehat{Y}_A= Y_A -Z_A{}^a\Upsilon_a, \quad \widehat{X}^A=X^A, \quad \mbox{and} \quad \widehat{Z}_A{}^a=Z_A{}^a,
\end{equation}
where we have also recorded the projective invariance of $X^A$ and
$Z_A{}^a$ for convenience. Finally the data of the tractor connection
is
captured by how it acts on the splitting maps. From \nn{pconn} and \nn{tconn} we have the following:
 \begin{equation}\label{pconnX}
\begin{split}
\nabla_a X^B = W^B{}_a, &  \quad \nabla_aW^B{}_b= -{\sf P}_{ab} X^B \\
 \nabla_a Y_B= {\sf P}_{ab}Z_B{}^b, & \quad \nabla_a Z_B{}^b =-\delta^b_{\phantom{b} a} Y_B .
\end{split}
\end{equation}
In these formulae we calculate in terms of a scale $\nabla$, and the
connection in the formulae is the coupling of this special affine
connection with the tractor connection $\nabla^\cT$.

Finally in this section we recover the canonical tractor ``volume
form''. First recall that on any smooth manifold $M$ there is a
tautological weighted $n$-form $\boldsymbol{\eta}$ which identifies
$\Lambda^{n}TM$ with a line bundle. In the case that $M$ is orientable
the latter is oriented and 
 $\boldsymbol{\eta}$
gives the isomorphism
\begin{equation}\label{isop}
\boldsymbol{\eta}: \Lambda^{n}TM \to \ce(n+1),
\end{equation}
that defines $\ce(n+1)$.
For each affine connection $\nabla$, the isomorphism \nn{isop}, applied to sections, enables the
definition of $\nabla$ as a connection on $\ce(n+1)$ and hence all
density bundles.
 It follows tautologically that for any affine
connection $\nabla$ we have
$$
\nabla \boldsymbol{\eta} =0 .
$$

In particular the last display applies to special affine connections
in ${\bf p}$.  Calculating in the scale $\nabla$ and using the
formulae \nn{pconnX} it is easily verified that the tractor $(n+1)$-form
$\vol$,
defined by
\begin{equation}\label{pvol}
\vol_{AB\cdots E}^{ } := Y_{[A}Z_B{}^b\cdots Z_{E]}{}^e\boldsymbol{\eta}_{b\cdots e}\in \Gamma(\Lambda^{n+1}\cT^*)
\end{equation}
is parallel for the tractor connection. On the other hand using \nn{ftrans}
it follows at once
 that $\vol$ is independent of the choice of scale $\nabla$. Thus
we have the following (well-known) result:
\begin{proposition}\label{trvol}
An oriented projective $n$-manifold $(M,p)$ determines a canonical
parallel tractor $(n+1)$-form $\vol$ by the formula \nn{pvol}. Hence, the
projective Cartan geometry is of type $(\SL((n+1), \bbR),P)$ for suitable
$P$.
\end{proposition}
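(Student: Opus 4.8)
The plan is to establish the proposition in three moves: verify that the tractor $(n+1)$-form $\vol$ of \eqref{pvol} is parallel, check that it is canonically associated to $\mbp$ (independent of the scale used to write it down), and then read off the structure group of the corresponding Cartan geometry.

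First I would prove parallelism by computing in a fixed scale $\nabla \in \mbp$ and differentiating \eqref{pvol} with the coupled connection, using the splitting-map derivatives \eqref{pconnX} together with $\nabla \boldsymbol{\eta} = 0$. The Leibniz rule produces two kinds of terms. Differentiating any of the $n$ factors $Z_B{}^b$ yields, via $\nabla_a Z_B{}^b = -\delta^b{}_a Y_B$, an expression containing $Y_{[A} Y_{B}$ antisymmetrized over tractor indices; since $Y_A$ is a single covector this vanishes. Hence only the derivative of the lone factor $Y_{[A}$ contributes, and $\nabla_a Y_B = P_{af} Z_B{}^f$ turns it into
\[
    \nabla_a \vol_{AB\cdots E} = P_{af}\, Z_{[A}{}^f Z_B{}^b \cdots Z_{E]}{}^e\, \boldsymbol{\eta}_{b\cdots e}.
\]
The crux of the argument is that this vanishes identically: for any fixed values of the tangent indices, $Z_{[A}{}^f Z_B{}^b \cdots Z_{E]}{}^e$ is the antisymmetrization over $n+1$ tractor slots of $n+1$ covectors each lying in the $n$-dimensional image $\im Z_A{}^a \subset \cT^*$, and an $(n+1)$-fold wedge inside an $n$-dimensional space is zero. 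Thus $\nabla^\cT \vol = 0$.

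Next I would confirm that $\vol$ does not depend on the scale implicit in the splitting maps, so that it is canonically associated to $\mbp$. Under a change $\nabla \to \widehat\nabla$ the transformation rules \eqref{ftrans} give $\widehat Z_A{}^a = Z_A{}^a$ and $\widehat Y_A = Y_A - Z_A{}^a \Upsilon_a$, so
\[
    \widehat\vol_{AB\cdots E} = \vol_{AB\cdots E} - \Upsilon_f\, Z_{[A}{}^f Z_B{}^b \cdots Z_{E]}{}^e\, \boldsymbol{\eta}_{b\cdots e}.
\]
The correction term is annihilated by exactly the wedge observation of the previous step, so $\widehat\vol = \vol$; the identical mechanism disposes of any rescaling freedom in $\boldsymbol{\eta}$. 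Since \eqref{pconn} is itself manifestly scale-independent, the parallelism established above holds for the canonical tractor connection.

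Finally, recalling that $\nabla^\cT$ is equivalent to the normal projective Cartan connection, whose modelling group acts on the rank-$(n+1)$ tractor bundle through its standard representation, a nowhere-zero parallel $(n+1)$-form on $\cT$ reduces that group to the stabilizer of a volume, namely $\SL(n+1,\bbR)$; for the dimension $n = 6$ relevant here this is $\SL(7,\bbR)$, with $P$ the parabolic stabilizing the appropriate ray in $\bbR^7$. I expect no real obstacle here: the only genuinely substantive point is the vanishing of the $(n+1)$-fold wedge in the image of $Z$, which drives both the parallelism and the scale-independence, while everything else is routine bookkeeping with the splitting maps.
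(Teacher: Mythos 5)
Your proof is correct and is essentially the paper's argument: the paper simply asserts that parallelism is ``easily verified'' in a scale using \nn{pconnX} and that independence of the splitting follows from \nn{ftrans}, and you have supplied exactly those computations, with the key observation driving both steps being that an $(n+1)$-fold antisymmetrization of cotractors lying in the $n$-dimensional image of $Z_A{}^a$ must vanish. (One small quibble: rescaling $\boldsymbol{\eta}$ would rescale $\vol$ rather than leave it fixed, but this is moot since $\boldsymbol{\eta}$ is tautological and admits no such freedom.)
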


\noindent The group $P$ is a parabolic subgroup of $\SL(n+1, \bbR)$ that
may be characterised, up to conjugacy, as the isotropy of some point
on the n-sphere $S^n$, under the standard action of $\SL((n+1, \bbR)$ on
$S^n$.

\subsection{Nearly $\varepsilon$-K\"{a}hler geometry in terms of the tractor connection}\label{KYvstractor}
The tractor connection and its dual induce projectively invariant
connections on all tensor parts
of tensor products of the tractor
bundle, and its dual. In particular we will need the tractor
connection that $\nabla^{\mcT}$ induces (and which we denote by the same symbol) on
\begin{equation}\label{equation:tractor-3-form-components}
\Lambda^3\mathcal{E}_A^{ }=\mathcal{E}_{[ABC]}= \mathcal{E}_{[ab]} (3) \flplus \mathcal{E}_{[a b c]}(3)
\end{equation}
In a choice of scale this is given  by
\begin{equation}\label{tr_connection}
\nabla_a^{\mathcal{T}}\left(\begin{array}{c}
\sigma_{b c}\\
\mu_{b c d}
\end{array}
\right)=\left(\begin{array}{c}
\nabla_a\sigma_{b c}-\mu_{a b c}\\
\nabla_a\mu_{b c d}+3 {\sf P}_{a[ b}\sigma_{cd]}
\end{array}\right).
\end{equation}

We see from \nn{equation:tractor-3-form-components} and \nn{tr_connection} that if $\Phi_{ABC}\in
\Lambda^3\mathcal{E}_A$ is parallel for the tractor connection
$\nabla^\cT$ then its top component $\si_{bc}$ has projective weight 3 and solves the projective
Killing-Yano--type equation $\nabla_{(a}\si_{b)c}=0$, cf.\
\nn{KYbgg} above.  Conversely from Proposition \ref{prolongprop} (and using
again the formula \nn{tr_connection}) we see that if $\om_{bc}$ is a
solution of \nn{KYbgg} then
\begin{equation}\label{tractor_prolong}
0=\widehat{\nabla}_a\left(\begin{array}{c}
\omega_{b c}\\
\mu_{b c d}
\end{array}\right)=
\nabla^{\mathcal{T}}_a\left(\begin{array}{c}
\omega_{b c}\\
\mu_{b c d}
\end{array}\right)
-\frac{1}{2}\left(\begin{array}{c}
0\\
3\omega_{k[b}^{\phantom{k} }W_{cd]\phantom{k}a}^{\phantom{cd]}k}
\end{array}\right),
\end{equation}
where $\mu_{bcd}=\nabla_a\om_{bc}$.

What is important for us here is that for a K\"ahler form $\om$ on a
nearly K\"{a}hler or nearly para-K\"{a}hler 6-manifold the second term
in \nn{tractor_prolong} vanishes separately, and what is more the
tractor 3-form determined by prolonging $\om$ is nondegenerate. More
precisely we have the following result.
\begin{theorem} \label{NPKtoPhi}
Let $(M,g,J)$ be a $6$-dimensional strictly nearly $\varepsilon$-K\"ahler manifold
satisfying $\ab{\nabla J, \nabla J} \neq 0$ and $\mathcal{T}$ the (rank-$7$)
standard projective tractor bundle over $M$. Then the $3$-tractor
\begin{equation}\label{Phi}
\Phi=\left(\begin{array}{c}
\omega_{b c}\\
\mu_{b c d}
\end{array}\right),
\end{equation}
is generic and parallel with respect to $\nabla^{\mathcal{T}}$.  Its
pointwise stabilizer is $\G_2$ if $\Phi$ is definite-generic, or is
$\G_2^{*}$ if $\Phi$ is split-generic.
\end{theorem}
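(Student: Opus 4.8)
The plan is to establish the three assertions---that $\Phi$ is parallel, that it is generic, and that its pointwise stabilizer is $\G_2^{(*)}$---separately, each by reducing to a result already in hand. Throughout I would work in the scale given by the Levi-Civita connection $\nabla^g$, which is special because it preserves the metric volume form, and I would reinterpret the K\"ahler form $\omega$ as a density-weighted $2$-form of projective weight $3$ (in this scale densities are trivialized, so this is merely bookkeeping). By Subsection \ref{pKY} the Killing-Yano equation is projectively invariant precisely at this weight, so the tractor formula \nn{tr_connection} applies to the pair $(\omega, \mu)$ with $\mu := \nabla\omega$.

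For parallelism, I would first note that the nearly $\varepsilon$-K\"ahler condition \nn{NKcond}, after lowering an index, is exactly the Killing-Yano equation \nn{equation:Killing-Yano}, so $\omega$ solves \nn{KYbgg}. Proposition \ref{prolongprop} then gives that $(\omega, \mu)$ is parallel for the prolongation connection $\widehat{\nabla}$. Substituting this into the identity \nn{tractor_prolong} yields
\[
    0 = \widehat{\nabla}_a \Phi
      = \nabla^{\mathcal{T}}_a \Phi
        - \tfrac{1}{2} \verttract{0}{3\,\omega_{k[b}^{\phantom{k} }W_{cd]\phantom{k}a}^{\phantom{cd]}k}} \textrm{,}
\]
so that $\nabla^{\mathcal{T}}_a \Phi$ equals the displayed Weyl term. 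But Proposition \ref{prop:weyl_id} asserts precisely that $\omega_{k[b}^{\phantom{k} }W_{cd]\phantom{k}a}^{\phantom{cd]}k} = 0$ on any N(P)K $6$-manifold, and hence $\nabla^{\mathcal{T}} \Phi = 0$.

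For genericity and the stabilizer, the hypothesis $\ab{\nabla J, \nabla J} \neq 0$ lets me invoke Proposition \ref{stab}: at each point the pair $(\omega, \nabla\omega) = (\omega, \mu)$ is stable, compatible, and normalized. In the chosen scale the tractor splitting $\mathcal{E}_A = \mathcal{E}(1) \lpl \mathcal{E}_a(1)$ realizes the fiber as a direct sum $\bbW \oplus \bbL$, with $\bbW$ the image of $Z_A{}^a$, $\bbL = \ab{Y_A}$, and $\alpha := Y_A$ annihilating $\bbW$; under the decomposition \nn{equation:tractor-3-form-components} the tractor $\Phi$ then agrees, up to the conventions fixing the embeddings, with the form $\alpha \wedge \omega + \beta$ where $\beta = \mu$. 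This is exactly the form to which Proposition \ref{pairtog2} applies, and it concludes that $\Phi$ is pointwise stable with induced tractor metric $H = g - \varepsilon\, \alpha \otimes \alpha$. Since stability is an open $\GL$-orbit condition and hence independent of the splitting, this is precisely the genericity of $\Phi$. Finally, the signature of $H = g - \varepsilon\, \alpha \otimes \alpha$ is $(7,0)$ or $(3,4)$ according to $\varepsilon$ and the signature of $g$, so $\Phi$ is definite- or split-generic accordingly, and the Corollary following Proposition \ref{equivP} then identifies the pointwise stabilizer as $\G_2$ or $\G_2^*$ respectively.

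The hard part is not in this assembly but has already been discharged in the two deferred results on which it rests: the stability, compatibility, and normalization of $(\omega, \nabla\omega)$ (Proposition \ref{stab}, via the adapted-frame computation of Appendix \ref{appendix:stable-forms-via-bases}) and, crucially, the vanishing of the Weyl contraction $\omega_{k[b}^{\phantom{k} }W_{cd]\phantom{k}a}^{\phantom{cd]}k}$ (Proposition \ref{prop:weyl_id}, proved in Appendix \ref{appendix:NPK-Weyl-identity}). Granting these, the only genuine care needed here is the weight-$3$ reinterpretation of $\omega$ and the matching of the intrinsic tractor-bundle genericity construction with the pointwise vector-space statement of Proposition \ref{pairtog2}; I expect the bookkeeping of the tractor splitting maps in this last identification to be the most error-prone step.
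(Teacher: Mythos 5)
Your proposal is correct and follows essentially the same route as the paper: parallelism via Proposition \ref{prolongprop} together with the Weyl identity of Proposition \ref{prop:weyl_id}, and genericity plus the $\G_2^{(*)}$ stabilizer via Proposition \ref{stab} feeding into Proposition \ref{pairtog2}. The extra detail you supply on the weight-$3$ bookkeeping and the splitting is consistent with what the paper leaves implicit.
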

\begin{proof}
As $(M,g,J)$ is strictly nearly $\varepsilon$-K\"ahler, identity
(\ref{weyl_id}) holds, and from Proposition \ref{prolongprop} we have
that $\Phi$ is parallel with respect to $\nabla^{\mathcal{T}}$. In Proposition \ref{stab} of
Subsection \ref{ss: N(P)K dim 6} we observed that $\omega$ and
$\mu = \nabla\omega$ form a stable, compatible, and normalized pair; so, the
hypotheses of Proposition \ref{pairtog2} are satisfied (pointwise),
and hence $\Phi$ is nondegenerate, and pointwise its stabilizer is
$\smash{\G_2^{(*)}}$.
\end{proof}

\begin{remark} The observation below \nn{tr_connection} is a special case of
a general fact that applies across the entire field of parabolic
geometry. A tractor field parallel for a (normal) tractor connection
always determines a solution of a first BGG equation \cite[Theorem
  2.7]{CGH}. BGG operator
solutions arising this way are said to be {\em normal}. (The terminology follows Leitner's \cite{Leitner}, where a
class of conformal equations were treated). Thus part of the content of
Theorem \ref{NPKtoPhi} is that the
K\"ahler form of a strictly N(P)K structure is a normal solution of \nn{KYbgg}.
\end{remark}

\subsection{A digression on conformal tractor geometry} \label{conf-dig}
\newcommand{\cc}{{\bf c}} Shortly we shall see that conformal geometry
enters our picture. Recall that on a manifold $M_0$ a conformal
structure ${\bf c}$ (of signature $(p,q)$) is a conformal equivalence
class $[g]$ of metrics (of signature $(p,q)$) on $M_0$: A metric
$\widehat{g}$ is in the conformal structure $[g]$ if and only if
$\widehat{g}=f g$ for some positive function $f$.  As in projective
geometry, on a conformal manifold there is no invariant connection on
the tangent bundle but again there is on a related higher-rank bundle
that we call the standard (conformal) tractor bundle \cite{BEG}. Again
this is equivalent to a Cartan bundle and a canonical connection
\cite{CapGoTAMS}. Here we sketch some aspects of tractor calculus
following the conventions and development of
\cite{CapGo-ambient,GoPet-GJMS}, and we refer the reader to those
sources and \cite{BEG} for more details, including for the definition
of the conformal tractor bundle and its connection. For later
convenience we denote the underlying manifold $M_0$ and assume here
that this has dimension $n-1\geq 3$.

In conformal geometry density bundles are important. For
representation-theoretic reasons the convention for weights differs
from that in projective geometry: On a conformal manifold of dimension
$n-1$ we write $\ce_0[1]$ for the positive $2(n-1)$st root of the
canonically oriented bundle $(\Lambda^{n - 1} TM_0)^2$. Since a metric
$g\in {\bf c}$ trivializes $(\Lambda^{n - 1} TM_0)^2$, and hence also
$\ce_0[1]$, $(M_0,{\bf c})$ determines a canonical section
$\boldsymbol{g} \in S^2T^*M_0[2]=S^2T^*M_0 \otimes \ce_0[2]$ called
the {\em conformal metric}; then, on the fixed conformal structure
$\cc$, $g$ is equivalent to $\si\in \Gamma((\ce_0)_+[1])$ by the
relation $g=\si^{-2}\bg$. (Here $(\ce_0)_+[1]$ is the positive ray
sub-bundle of $\ce_0[1]$.) The conformal metric and its inverse are
preserved by the Levi-Civita connection of every metric in the
conformal class, and they determine an isomorphism $\bg: TM_0\to
T^*M_0[2]$.

The conformal standard tractor bundle will be denoted $\cT_{0}$, or
in abstract index notation $\ce^A_{0}$. It has rank $n+1$ and a
composition series
\begin{equation}\label{cT-comp}
\cT_{0} = \ce_0[1]\flplus TM_0[-1]\flplus \ce_0[-1] .
\end{equation}
On $\cT_{0}$, there is an invariant signature $(p+1,q+1)$ tractor
metric $H_0$ (which we may alternatively denote $H^0$),
and an invariant connection
$\nabla^{\cT_0}$ that preserves the metric; so, we use it to
lower and raise tractor indices. The canonical bundle line bundle
injection is denoted
\begin{equation}\label{Xdef}
X^A:\ce_0[-1]\to \ce^A_0 .
\end{equation}
No confusion should arise with the projective analogue of $X^A$, which
shares the same abstract index notation. In fact, we shall see that
they are suitably compatible in the setting below where they arise
together.  As a section of $\cT_0[1]$, $X$ is null, meaning
$H^0_{AB} X^A X^B = 0$, and $X_A := H^0_{AB}X^B$ gives the canonical bundle map
$X_A:\ce_0^A\to \ce_0[1]$.

A choice of metric $g\in \cc$ determines a splitting of \nn{cT-comp}
to a direct sum $\cT_0 \stackrel{g}{=} \ce_0[1]\oplus TM_0[-1]\oplus \ce_0[-1]$ and we
denote the induced projections onto the second and third components by
\begin{equation}\label{ZYdef}
Z_A{}^a: \ce_0^A\to \ce_0^a[-1], \quad Y_A:\ce_0^A\to \ce_0[-1] .
\end{equation}
We may view these as sections $Y_A\in \Gamma((\ce_0)_A[-1])$, $Z_A{}^a\in
(\ce_0)_A{}^a[-1]$ and then the tractor metric is characterized by
the identities $H^0_{AB}Z^A{}_aZ^B{}_b=\bg_{ab}$ and $H^0_{AB}X^AY^B=1$ and that all other
tractor index contractions of pairs of these splitting maps results in
zero; for example $H^0_{AB}Y^AY^B=0$. (Note that here we have raised and
lowered indices on $Z$ and $Y$ using the conventions described.) A section
$U^A\in \Gamma ((\ce_0)^A)$ may be written $U^A=\si Y^A + Z^A{}_a\mu^a + \rho X^A$, with $(\si, \mu^a, \rho)\in \Gamma(\ce_0[1]\oplus TM_0[-1]\oplus \ce_0[-1] )$.
If $\widehat{Y}^A$ and $ \widehat{Z}^A{}_b$ are the corresponding tractor splitting maps in terms of the metric $ \hat{g}=\Omega^2g\in \boldsymbol{c}$
then we have
\begin{equation}\label{Ztrans}
\textstyle
\begin{array}{ccc}
\widehat Z^{Ab}=Z^{Ab}+\Upsilon^bX^A, &
\widehat Y^A=Y^A-\Upsilon_bZ^{Ab}-\frac12\Upsilon_b\Upsilon^bX^A,& \widehat{X}^A=X^A ,
\end{array}
\end{equation}
where $\Upsilon=d \Omega$, and we have recorded the invariance of $X^A$ for convenience.

\newcommand{\V}{{\mbox{\sf P}}}
The conformal tractor connection is characterized by its action on the
splitting maps:
\begin{equation}\label{connids}
\begin{array}{rcl}
\nd_aX_A=Z_{Aa}\,, &
\nd_aZ_{Ab}=-\V_{ab}X_A-\bg_{ab}Y_A\,, & \nd_aY_A=\V_{ab}Z_A{}^b .
\end{array}
\end{equation}
Here $\V_{ab}$ is the conformal Schouten tensor, defined in Section \ref{ss: N(P)K dim 6}, and given by
\begin{equation}\label{equation:conformal-schouten}
    \V_{a b}=\frac{1}{n-3}\left(R_{a b} -\frac{\rm Sc}{2(n-2)}g_{a b}\right),
\end{equation}
where $R_{a b}$ is the usual (pseudo-)Riemannian Ricci tensor and ${\rm Sc}$ is
its metric trace. In \nn{connids} the connection used is strictly the
coupling of the tractor connection $\nabla^{\cT_0}$ with the
Levi-Civita connection, hence the use of the notation $\nabla$ (rather
than $\nabla^{\cT_0}$).

\begin{remark}
As we mentioned is the case for $X$, also the notation
$Y$, $Z$ for the objects splitting the
conformal tractor (via $g\in \cc$) is essentially the same as that
used for the corresponding objects in the projective setting. Context should prevent any confusion, and indeed there is again a degree of compatibility.
\end{remark}

\subsection{Projective almost Einstein structures}\label{pEin}

It will be important for us to understand the meaning of a
non--Ricci-flat Einstein metric in the setting of projective
geometry, following \cite{ArmstrongP1,CapGoMac}. In fact projective
geometry motivates a natural generalization of the Einstein condition
\cite{CGHjlms,CGH,CapGoMac} and this is a key point for us here.

From Theorem 3.3 of \cite{CapGoMac} we have the following.
\begin{theorem}\label{eintoH} Let $(M,g)$ be a pseudo-Riemannian
manifold of signature $(p,q)$, and dimension $n\geq 2$.  If $g$ is
positive (respectively, negative) Einstein and not Ricci-flat then
there is a canonical parallel projective tractor metric $H$ of
signature $(p+1,q)$ (respectively $(p,q+1)$) on the projective
structure $(M,[\nabla^g])$.
\end{theorem}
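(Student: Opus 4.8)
The plan is to exhibit the tractor metric explicitly in the scale $\nabla = \nabla^g$ and verify directly that it is parallel, then read off its signature from the splitting. Since $g$ is Einstein, write $R_{ab} = \lambda g_{ab}$; for $n \geq 3$ the contracted Bianchi identity (Schur's Lemma) forces the Einstein constant $\lambda$ to be constant, and ``not Ricci-flat'' means precisely $\lambda \neq 0$ (the case $n = 2$ is covered by taking constancy of $\lambda$ as part of the Einstein hypothesis). In the scale $\nabla^g$ the projective Schouten tensor is therefore $P_{ab} = \tfrac{1}{n-1} R_{ab} = \tau\, g_{ab}$, with $\tau := \tfrac{\lambda}{n-1} \neq 0$ constant. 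This proportionality of $P_{ab}$ to $g_{ab}$ is the single structural fact that makes the construction work.

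First I would write down the candidate
\[
    H_{AB} := g_{ab}\, Z_A{}^a Z_B{}^b + \kappa\, Y_A Y_B \textrm{,}
\]
a symmetric (weighted) tractor bilinear form depending on a constant $\kappa$ to be determined. Using $\nabla^g g_{ab} = 0$ together with the derivatives of the splitting maps recorded in \nn{pconnX}, namely $\nabla_c Z_A{}^a = -\delta^a_c Y_A$ and $\nabla_c Y_A = P_{ce} Z_A{}^e$, a short Leibniz computation yields
\[
    \nabla_c H_{AB} = (\kappa \tau - 1) \bigl[\, g_{bc}\, Y_A Z_B{}^b + g_{ac}\, Y_B Z_A{}^a \,\bigr] \textrm{.}
\]
Hence $H_{AB}$ is parallel precisely when $\kappa = \tau^{-1} = \tfrac{n-1}{\lambda}$, which is well defined exactly because $\lambda \neq 0$; this is where the non-Ricci-flat hypothesis is essential. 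Since the tractor connection of Subsection \ref{trS} is canonical, parallelism of $H$ is a scale-independent statement, so the formula above defines a genuine parallel section determined canonically by $g$ (this is essentially Theorem 3.3 of \cite{CapGoMac}).

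Finally I would compute the signature. For $V^A = W^A{}_a \nu^a + X^A \rho$, the pairings $Z_A{}^a W^B{}_b \mapsto \delta$, $Z_A{}^a X^A = 0$, $Y_A W^A{}_b = 0$ and $Y_A X^A = 1$ give
\[
    H_{AB} V^A V^B = g_{ab}\, \nu^a \nu^b + \kappa\, \rho^2 \textrm{,}
\]
so in the splitting $H$ is block-diagonal: it equals $g$, of signature $(p,q)$, on the $n$ tangential directions, and equals $\kappa$ on the remaining direction. If $g$ is positive Einstein then $\lambda > 0$, so $\kappa > 0$ and the extra direction is positive, giving signature $(p+1, q)$; if $g$ is negative Einstein then $\lambda < 0$, $\kappa < 0$, and the signature is $(p, q+1)$. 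The steps requiring the most care are the constancy of $\lambda$ (so that no $\nabla_c \kappa$ term spoils parallelism) and the bookkeeping of the density weights carried by $Y_A$ and $Z_A{}^a$; both are routine given the formalism already established, and the main obstacle is simply recognizing that it is exactly the failure of Ricci-flatness that makes $\kappa$ finite and hence $H$ nondegenerate.
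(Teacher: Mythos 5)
Your proposal is correct, and it is worth noting that the paper does not actually prove this statement: it imports it wholesale as Theorem 3.3 of \cite{CapGoMac}, only later recording (below \nn{Hform}) that the matrix form of $H$ in the scale $\tau$ ``follows easily'' from the tractor connection formula. So your explicit verification is a genuinely self-contained argument, and it checks out: with $P_{ab}=\tfrac{\lambda}{n-1}g_{ab}$ the Leibniz computation using \nn{pconnX} does give $\nabla_c H_{AB}=(\kappa\tau-1)\bigl(g_{cb}Y_AZ_B{}^b+g_{ca}Z_A{}^aY_B\bigr)$, the choice $\kappa=\tfrac{n-1}{\lambda}$ kills this, and the block-diagonal form in the splitting gives the stated signatures, consistent with \nn{Hform} up to an overall positive rescaling when $\lambda>0$. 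The one point you wave at but should make explicit is the weight bookkeeping: since $Y_A\in\Gamma(\ce_A(-1))$ and $Z_A{}^a\in\Gamma(\ce_A{}^a(-1))$, your expression as written is a section of $S^2\cT^*(-2)$ rather than $S^2\cT^*$; one must insert the square of the $\nabla^g$-parallel density $\sigma\in\Gamma(\ce(1))$ determined by the (special) Levi-Civita scale, i.e.\ take $H_{AB}=\sigma^2\bigl(g_{ab}Z_A{}^aZ_B{}^b+\kappa\,Y_AY_B\bigr)$, which changes nothing in the parallelism computation because $\nabla^g\sigma=0$ but is needed for $H$ to be an honest (weight-zero, hence canonically defined) tractor metric. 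With that normalization made explicit, your argument is complete and is essentially the standard proof of the cited result.
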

\noindent Here $\nabla^g$ is the Levi-Civita connection determined by
$g$. In the case of dimension 2 we take $g$ Einstein to mean that
$\operatorname{Sc}^g$ is constant, although this dimension is not
important for the current article.

In the converse direction, if a projective manifold admits a parallel metric $H$ on its
tractor bundle it determines a section
\begin{equation}\label{taudef}
\tau:=H_{AB}X^A X^B
\end{equation}
of the density bundle $\ce(2)$. On any open set where $\tau$ is nowhere
zero it  may be used to trivialize the density bundles, and
hence it determines a connection on densities and so a splitting of
the sequence \nn{euler}. Taking duals
we obtain a splitting of the tractor
sequence \nn{tractorseq}, and so from $H$ a metric $g^\tau$ on $TM$.
In the (nonvanishing) scale $\tau$, and with the corresponding
splittings, we have
\begin{equation}\label{Hform}
H_{AB}=\left(\begin{array}{c c}
\tau &  0 \\
0 &  -\varepsilon \tau g^\tau_{ab}
\end{array}\right),
\end{equation}
where  $-\varepsilon \in \set{\pm 1}$ gives the sign of the
 scalar curvature.
This formula follows easily from the formula \nn{pconn} for the
tractor connection (extended to $S^2\cT^*$) and the definition here of
the metric $g$ via Theorem \ref{HtoEin}. See e.g.\ \cite[Section
  3.3]{CapGo-projC} for a more detailed discussion, and expression
(15) in that source for the sign of the
 scalar curvature.

It turns out that  $g_\tau$  is necessarily Einstein. In detail we have the
following, as obtained from different points of view in \cite[Theorem
  3.2]{CGHjlms} and Theorem 3.1 and Proposition 3.2 in
\cite{CGH}.
\begin{theorem}\label{HtoEin}
Let $(M,{\bf p})$ be a projective structure endowed with a holonomy
  reduction given by a parallel metric $H$ of
  signature $(r,s)$ on the standard projective tractor bundle $\Cal T$.

    \begin{enumerate}
        \item The metric $H$ determines a stratification $M=M_+\cup
          M_0\cup M_-$ according to the strict sign of
          $\tau:=H_{AB}X^A X^B$.  The sets $M_+\subset M$ and
          $M_-\subset M$, where $\tau$ is positive and negative,
          respectively, are open; $M_0$ is the zero set of $\tau$ and
          (if non--empty) is a smoothly embedded separating hypersurface
          consisting of boundary points of both $M_+$ and $M_-$. Here
          $M_+$, $M_0$, and $M_-$ are not necessarily connected.

        \item The structure $(M,{\bf p},H)$ induces a Cartan geometry on $M_+$
  (respectively $M_-$) as follows: Via \nn{euler} and \nn{tractorseq}, $H$
  induces an Einstein pseudo-Riemannian metric $g_\pm$ of signature
  $(r-1,s)$, if $r\geq 1$ (respectively $(r,s-1)$ if $s\geq 1$) whose
  Levi-Civita connection lies in the (restriction of) projective
  class. The scalar curvature of $g_{\pm}$ is positive (respectively negative).

        \item If $r=0$ (or $s=0$) then $M_0=\emptyset$, and $M_+=\emptyset$ (or
  $M_-=\emptyset$, respectively). If $M_0$ is non-empty then it
  naturally inherits a conformal structure of signature $(r-1,s-1)$
  via the induced Cartan geometry. In this case the standard conformal
  tractor bundle agrees with the restriction of the projective tractor
  bundle $\cT$ to $M_0$ and the normal conformal tractor connection of
  $(M_0,\mbc)$ is naturally the corresponding restriction of the ambient
  projective tractor connection.
\end{enumerate}
\end{theorem}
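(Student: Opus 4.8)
The plan is to let the whole analysis be governed by the weighted function $\tau := H_{AB}X^AX^B \in \Gamma(\ce(2))$ of \nn{taudef}. Since $\ce(2)$ is canonically oriented, the pointwise sign of $\tau$ is scale independent and partitions $M$ into the sets $M_+$, $M_0$, $M_-$ on which it is positive, zero, and negative; being a continuous section of a line bundle, $\tau$ has $M_\pm$ open and $M_0 = \tau^{-1}(0)$ closed, which proves the first assertion of (1). To show that $M_0$ is an embedded hypersurface where it is nonempty, I would use that $H$ is parallel together with the formulae \nn{pconnX} to compute
\[
    \nabla_a \tau = 2\, H_{AB}(\nabla_a X^A) X^B = 2\, W^A{}_a X_A, \qquad X_A := H_{AB}X^B .
\]
Along $M_0$ this is scale independent (as $\tau$ has weight $2$), so it suffices to see it does not vanish. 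If $\tau = 0$ and $\nabla_a\tau = 0$ held simultaneously at a point, then $X_A$ would annihilate every member of the tractor frame $\{X^A, W^A{}_a\}$, forcing $X_A = 0$ and hence $X^B = 0$ by nondegeneracy of $H$; but $X^B$ is nowhere zero. Thus $d\tau \neq 0$ on $M_0$, which is then a regular level set.

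For part (2) I would work on $M_+$ (and symmetrically $M_-$), where the positive density $\tau$ selects a scale $\nabla \in {\bf p}$, necessarily special. In this scale I would write the parallel section as
\[
    H_{AB} = \tau\, Y_A Y_B + 2\, Y_{(A} Z_{B)}{}^a q_a + Z_A{}^a Z_B{}^b r_{ab},
\]
where the leading coefficient is $\tau$ because $X^AY_A = 1$ and $X^AZ_A{}^a = 0$, and then impose $\nabla_a H_{BC} = 0$ by differentiating through \nn{pconnX}. Collecting the terms of each projector type yields in turn $q_a = 0$, then $r_{ab} = \tau P_{ab}$ (so $P_{ab}$ is symmetric, consistent with $\nabla$ being special), and finally $\nabla_a r_{bc} = 0$. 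Since $\nabla_a\tau = 0$ in this scale, the weight-$0$ symmetric tensor $g_{ab} := -\varepsilon\,\tau^{-1} r_{ab} = -\varepsilon\, P_{ab}$ (with $\varepsilon \in \set{\pm 1}$ the sign convention of \nn{Hform}) is then parallel and, since $H$ is nondegenerate, nondegenerate. A torsion-free connection preserving a nondegenerate $g_{ab}$ is its Levi-Civita connection, so $\nabla$ is the Levi-Civita connection of $g$ and lies in ${\bf p}$; and $R_{ab} = (n-1)P_{ab} = -(n-1)\varepsilon\, g_{ab}$ exhibits $g$ as Einstein. Reading off the Gram matrix of $H$ in the frame $\{X^A, W^A{}_a\}$, which by the above is $\mathrm{diag}(\tau, \tau P_{ab})$, a signature count then gives signature $(r-1,s)$ on $M_+$ and $(r,s-1)$ on $M_-$; the accompanying sign of the scalar curvature is precisely the content of \nn{Hform}, recorded in \cite{CapGo-projC}. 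This is the exact converse of Theorem \ref{eintoH}.

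For part (3) the degenerate cases are immediate: if $r = 0$ then $H$ is negative definite, so $\tau = H_{AB}X^AX^B < 0$ everywhere (as $X^B \neq 0$) and $M_0 = M_+ = \emptyset$, and dually for $s = 0$. Assume then $M_0 \neq \emptyset$. Along $M_0$, $X^A$ is $H$-null, so $H$ equips $\cT|_{M_0}$ with the filtration $\langle X\rangle \subset X^\perp \subset \cT|_{M_0}$ whose graded pieces have ranks $1, n-1, 1$ --- exactly the shape of the conformal composition series \nn{cT-comp}. Using $\nabla_a\tau = 2 W^A{}_a X_A$, the assignment $v^a \mapsto v^a W^A{}_a \bmod \langle X\rangle$ identifies $TM_0 = \ker d\tau$ with $X^\perp/\langle X\rangle$ (it lands in $X^\perp$ precisely because $v \in \ker d\tau$, and is an isomorphism by the rank count), so the nondegenerate form of signature $(r-1,s-1)$ induced by $H$ on $X^\perp/\langle X\rangle$ pulls back to a weighted --- hence conformal --- metric of signature $(r-1,s-1)$ on $M_0$.

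It remains to show that $\cT|_{M_0}$, carrying $H$, the null line $\langle X\rangle$, and the connection inherited from $\nabla^\cT$, is exactly the normal conformal standard tractor bundle of $(M_0,\mbc)$, with the conformal connection identities \nn{connids} recovered from \nn{tconn} along the hypersurface. I expect this to be the main obstacle: matching the graded bundles and the metric is formal, but verifying that the induced conformal tractor connection is the \emph{normal} one is not --- it requires relating the projective Schouten tensor and its transverse derivative along $M_0$ to the conformal Schouten tensor of the induced structure, i.e.\ a genuine hypersurface computation. Rather than carry this out by hand, I would invoke the general theory of curved-orbit decompositions for Cartan holonomy reductions \cite{CGH} (cf.\ \cite{CGHjlms}), which produces exactly such an induced normal Cartan geometry on each stratum and thereby supplies both the conformal structure and the stated compatibility of tractor bundles and connections.
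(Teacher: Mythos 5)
The paper does not actually prove this theorem: it is quoted from \cite[Theorem 3.2]{CGHjlms} and from Theorem 3.1 and Proposition 3.2 of \cite{CGH}, so there is no internal argument to compare against. Your sketch is a correct direct verification of everything that can reasonably be checked by hand: the nonvanishing of $\nabla\tau$ along $M_0$ via the nondegeneracy of $H$ is precisely the argument the authors themselves use later in the proof of Theorem \ref{pcify}, and your computation in part (2) forcing $q_a=0$, $r_{ab}=\tau P_{ab}$, and hence a parallel nondegenerate $g_{ab}$ with $R_{ab}=(n-1)P_{ab}$ proportional to $g_{ab}$, is exactly the derivation of the normal form \nn{Hform} and the converse of Theorem \ref{eintoH}. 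Your deferral of the one genuinely hard step in part (3) --- that the induced Cartan/tractor connection on $M_0$ is the \emph{normal} conformal one --- to the curved-orbit machinery of \cite{CGH} and \cite{CGHjlms} is legitimate and is in fact the same move the paper makes for the entire theorem; the only alternative would be an explicit hypersurface computation relating the projective Schouten tensor and its transverse jet to the conformal Schouten tensor of the induced structure, which neither you nor the paper carries out.
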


\noindent The components $M_+$, $M_0$, $M_-$ are called {\em curved
  orbits} since they generalize to the curved setting an orbit
decomposition of a model structure, as explained in
\cite{CGHjlms,CGH}.  Since $(M,{\bf p},H)$ determines on the open sets
$M_{\pm}$ (whose union is dense) Einstein metrics, it is natural to
call this a projective almost Einstein structure (following the
analogous conformal notion \cite{Gal}).  In the case where $M_0$ is
nonempty we shall term the parts $M_{\pm}\cup M_0$ {\em Klein-Einstein
  manifolds} (or {\em Klein-Einstein structures}). This follows
\cite[Section 3.3]{CGHjlms} and as explained there the terminology is
appropriate and useful because these are the projective geometry
analogues of Poincar\'e-Einstein geometries; see also \cite[\S4]{FeffermanGraham}
for more about both Klein- and Poincar\'{e}-Einstein metrics, and
about the relationship between them.  (In \cite{CGHjlms} this
terminology was proposed for the negative curvature part, so we are
slightly generalizing its usage here.) Concerning terminology, in the
subsequent discussion it will be convenient to refer to $M_0$, which
is the zero locus of $\tau$, as the \textit{zero locus} of $(M, \mbp,
H)$ or simply {\em the zero locus} when the meaning is clear by
context.

From part (b) of the Theorem we have that the projective class of the
Levi-Civita connections $\nabla^{g_{\pm}}$ in $M_{\pm}$ extend
smoothly to $M_0$. In fact more is true, as follows.

 First recall
that, in a manifold, a defining function for a codimension-1 embedded
submanifold $\Sigma$ is a function $r$ such that $\Sigma$ is the zero
locus of $r$, and $dr$ is nowhere zero along $\Sigma$. Following \cite{CapGo-projC} we make the following definition.

\begin{definition} \label{pc-def}
On a
manifold $M$ with boundary $\partial M$ and interior $M_{int}$ an
affine connection $\nabla$ on $M_{int}$ is called \textit{projectively
  compact} of order $\al\in \mathbb{R}_+$ if for any $x\in\partial M$,
there is a neighborhood $U$ of $x$ in $M$ and a defining
function  $r:U\to\Bbb R$ for $U \cap \partial M$ such
that the connection
\begin{equation}\label{pceq}
\hat\nabla=\nabla+\tfrac{d r}{\alpha r}
\end{equation}
on $U\cap M_{int}$ extends to all of $U$. A metric is said to be
projectively compact of order $\alpha$ if its Levi-Civita connection
satisfies this condition.
\end{definition}

This notion applies in an obvious way to the setting of the Theorem
\ref{HtoEin} above, by considering the manifolds with boundary
$M_{\pm} \cup M_0$. Then from Theorem 12 of
\cite{CapGo-projC} we have that a non--Ricci-flat projectively compact
metric must have $\alpha =2$, and hence the following result.
\begin{proposition}\label{projComp}
The metrics $g_{\pm}$ in Theorem \ref{HtoEin} are projectively compact
of order 2.
\end{proposition}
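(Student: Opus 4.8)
The plan is to reduce the statement to the explicit relationship, recorded in \nn{Hform}, between the induced Einstein metric $g_{\pm}$, the canonical density $\tau := H_{AB} X^A X^B \in \Gamma(\ce(2))$, and the scales of the projective class, and then to read off the order of compactness directly. The inputs needed are already in hand: by Theorem \ref{HtoEin} the metric $g_{\pm}$ is Einstein with scalar curvature of a definite nonzero sign on each open curved orbit, so it is in particular \emph{not} Ricci-flat; moreover the boundary of the manifold-with-boundary $M_{\pm} \cup M_0$ is exactly $M_0$, the zero locus of $\tau$, which is an embedded hypersurface along which $d\tau$ is nowhere zero. The structural fact I would exploit is that, in the scale determined by $\tau$, the parallel tractor metric takes the diagonal form \nn{Hform}; this identifies the Levi-Civita connection $\nabla^{g_{\pm}}$ with the special connection in $\mathbf{p}$ determined by the scale $\lvert \tau \rvert^{1/2} \in \Gamma(\ce(1))$.

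First I would fix a boundary point $x \in M_0$, a neighbourhood $U$ of $x$, and an arbitrary nowhere-zero smooth scale $\sigma \in \Gamma(\ce(1))$ on $U$. The associated connection $\nabla^{\sigma} \in \mathbf{p}$ is smooth across $M_0$, since the projective structure $\mathbf{p}$ itself is defined and smooth on all of $M$. Setting
\[
    r := \lvert \tau \rvert \, \sigma^{-2} \colon U \to \bbR ,
\]
the quantity $r$ is a genuine weight-$0$ function vanishing precisely on $U \cap M_0$ and, because $d\tau$ is nonzero along $M_0$, having nowhere-vanishing differential there; thus $r$ is a defining function for $U \cap M_0$ in the sense of Definition \ref{pc-def}. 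Since $\lvert \tau \rvert^{1/2} = r^{1/2} \sigma$, the scales differ by the function $\tfrac{1}{2} \log r$, so by the relation between scales and the $1$-form $\Upsilon$ in \nn{ptrans} the connection $\nabla^{g_{\pm}}$ is obtained from $\nabla^{\sigma}$ via \nn{ptrans} with
\[
    \Upsilon_a = -\tfrac{1}{2} \, r^{-1} \nabla_a r .
\]
The factor $\tfrac{1}{2}$ is the crux: it is forced by the square root relating the weight-$1$ scale $\lvert \tau \rvert^{1/2}$ to the weight-$2$ density $\tau$.

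Since projective modifications compose additively in their $1$-forms, modifying $\nabla^{g_{\pm}}$ by $\tfrac{dr}{2r}$ cancels the singular term exactly and recovers the smooth connection:
\[
    \nabla^{g_{\pm}} + \tfrac{dr}{2 r} = \nabla^{\sigma} ,
\]
which extends smoothly across $U \cap M_0$. By Definition \ref{pc-def} this is precisely the statement that $g_{\pm}$ is projectively compact of order $\alpha = 2$. Equivalently, once compactness is established one may simply invoke Theorem 12 of \cite{CapGo-projC}, which forces $\alpha = 2$ for any non-Ricci-flat projectively compact metric, in combination with the non-Ricci-flat Einstein property supplied by Theorem \ref{HtoEin}.

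The step I expect to be the main obstacle is the bookkeeping around the scale $\tau$: namely, confirming that the special connection preserving $\lvert \tau \rvert^{1/2}$ is genuinely the Levi-Civita connection $\nabla^{g_{\pm}}$ (so that \nn{Hform} is applied correctly), and then tracking the weight-$2$-versus-weight-$1$ square root and the sign convention carefully enough that the singular part of $\Upsilon$ comes out as $-\tfrac{1}{2} r^{-1} \nabla_a r$ rather than, say, $-r^{-1} \nabla_a r$ or with the opposite sign; such an error would spuriously produce $\alpha = 1$ or a negative order. The only other point meriting an explicit check is that $\tau$ is a bona fide defining density, i.e.\ that $d\tau$ is nonzero along $M_0$ so that $r$ is a legitimate defining function, but this is already implicit in the assertion of Theorem \ref{HtoEin} that $M_0$ is an embedded hypersurface.
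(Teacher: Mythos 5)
Your argument is correct, but it is not the route the paper takes: the paper disposes of this proposition in one line by appealing to \cite[Theorem 12]{CapGo-projC}, which (for the manifolds with boundary $M_{\pm}\cup M_0$) directly supplies projective compactness and forces the order to be $2$ for any non--Ricci-flat metric. What you do instead is unwind that external result in the present setting: you use the fact that $\nabla^{g_{\pm}}$ is the special connection preserving the scale $\lvert\tau\rvert^{1/2}$, take an arbitrary smooth background scale $\sigma$ near a point of $M_0$, set $r=\lvert\tau\rvert\,\sigma^{-2}$ (a legitimate defining function since $d\tau$ is nowhere zero along $M_0$ by nondegeneracy of $H$ --- the same fact the paper uses in the proof of Theorem \ref{pcify}), and observe that the square root relating the weight-$1$ scale to the weight-$2$ density $\tau$ produces exactly $\Upsilon_a=-\tfrac12 r^{-1}\nabla_a r$, so that $\nabla^{g_{\pm}}+\tfrac{dr}{2r}=\nabla^{\sigma}$ extends smoothly. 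This is a sound and self-contained verification that makes visible \emph{why} the order is $2$ (the weight-$2$ nature of $\tau$), at the cost of the convention-sensitive bookkeeping you rightly flag; the paper's citation is shorter but opaque. One small caveat: your closing remark that one could ``equivalently'' invoke \cite[Theorem 12]{CapGo-projC} ``once compactness is established'' is slightly circular as phrased, since as quoted that theorem only pins down the order of a metric already known to be projectively compact --- but your direct computation establishes both at once, so nothing is lost.
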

\noindent For special affine connections the behavior \nn{pceq} guarantees
a uniformity in the rate of asymptotic volume growth as the boundary
is approached. The value $\alpha=2$ shows, for example, that this
growth rate is different from that on conformally compact manifolds,
cf.\ \cite[Section 2.2]{CapGo-projC}.

For later (implicit) use, we record a convenient alignment of the
projective and conformal conventions for weighted bundles.
\begin{proposition}\label{wtsalign}
Let $(M, \mbp)$ be a projective manifold of dimension $n \geq
2$ and $H$ a parallel tractor metric for which the zero locus
$M_0$ is nonempty. Then, using the notation in Subsection
\ref{conf-dig} for the objects on $M$, for all real $k$ there is a
canonical identification
\[
    \mcE_0[w]  \cong  \mcE(w) \vert_{M_0}    \textrm{.}
\]
\end{proposition}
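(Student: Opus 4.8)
The plan is to reduce the statement to the single weight $w=1$ and then produce a canonical identification $\mathcal{E}(1)\vert_{M_0}\cong\mathcal{E}_0[1]$ by comparing the two defining top-form bundles across the normal exact sequence of the hypersurface $M_0\hookrightarrow M$. Since $\mathcal{E}(w)$ and $\mathcal{E}_0[w]$ are, respectively, the real $w$-th powers of the canonically oriented positive density line bundles $\mathcal{E}(1)$ and $\mathcal{E}_0[1]$, once I have an orientation-preserving isomorphism at weight $1$ the general-$w$ assertion follows by raising to the $w$-th power.

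First I would record the normal exact sequence $0\to TM_0\to TM\vert_{M_0}\to N M_0\to 0$ along the embedded hypersurface $M_0$ (embeddedness being part of Theorem \ref{HtoEin}), take top exterior powers, and square to obtain
\[
    (\Lambda^n TM)^2\vert_{M_0}\cong(\Lambda^{n-1}TM_0)^2\otimes(N M_0)^2 .
\]
Writing this in density notation with $\dim M=n$ and $\dim M_0=n-1$, and using $\mathcal{E}(1)^{2n+2}=(\Lambda^n TM)^2$ together with $\mathcal{E}_0[1]^{2(n-1)}=(\Lambda^{n-1}TM_0)^2$, this becomes
\[
    \mathcal{E}(1)^{2n+2}\vert_{M_0}\cong\mathcal{E}_0[1]^{2(n-1)}\otimes(N M_0)^2 .
\]

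The key input is then the identification of the normal bundle. The section $\tau:=H_{AB}X^AX^B\in\Gamma(\mathcal{E}(2))$ cuts out $M_0$, and since $H$ is parallel, $\nabla_a\tau=2H_{AB}X^A W^B{}_a$ by \nn{pconnX}. Along $M_0$ the tractor $X$ is $H$-null; but $X^A$ together with the $W^B{}_a$ span $\mathcal{T}$ fiberwise, so if $H_{AB}X^AW^B{}_a$ vanished there then $X$ would be $H$-orthogonal to all of $\mathcal{T}$, contradicting nondegeneracy of $H$. Hence $\nabla\tau$ is nowhere zero along $M_0$, and, being connection-independent there because $\tau$ vanishes on $M_0$, it descends to a nonvanishing map $N M_0\to\mathcal{E}(2)\vert_{M_0}$, i.e.\ a canonical isomorphism $N M_0\cong\mathcal{E}(2)\vert_{M_0}=\mathcal{E}(1)^2\vert_{M_0}$. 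Substituting $(N M_0)^2\cong\mathcal{E}(1)^4\vert_{M_0}$ above and cancelling the factor $\mathcal{E}(1)^4$ gives $(\mathcal{E}(1)\vert_{M_0})^{2n-2}\cong(\mathcal{E}_0[1])^{2n-2}$; extracting the canonical positive $(2n-2)$-th root (legitimate as all bundles in sight are positively oriented and the isomorphism is a genuine one) yields $\mathcal{E}(1)\vert_{M_0}\cong\mathcal{E}_0[1]$, whence $\mathcal{E}(w)\vert_{M_0}\cong\mathcal{E}_0[w]$ for all real $w$.

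The main obstacle is not the exponent bookkeeping but establishing that $\nabla\tau$ is nonvanishing along $M_0$, i.e.\ that the normal bundle is genuinely $\mathcal{E}(2)\vert_{M_0}$; this is exactly the nondegeneracy argument above. A conceptually equivalent route, anticipated by the remarks preceding the proposition, is to use the identification $\mathcal{T}_0=\mathcal{T}\vert_{M_0}$ of Theorem \ref{HtoEin}(3) and the compatibility of the two canonical tractors $X$: the projective $X$ is $H$-null along $M_0$ and coincides there with the conformal canonical tractor, so that the projective trivialization $\mathcal{E}(-1)\cong\langle X\rangle$ and the conformal trivialization $\mathcal{E}_0[-1]\cong\langle X\rangle$ together give $\mathcal{E}(-1)\vert_{M_0}\cong\mathcal{E}_0[-1]$ directly. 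I would present the normal-bundle computation as the primary, self-contained argument and note the tractor compatibility as its structural explanation.
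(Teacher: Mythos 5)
Your argument is correct and is essentially the paper's own proof in different packaging: contracting the tautological weighted form $\boldsymbol{\eta}$ with a normalized weighted normal vector $\boldsymbol{n}^\sharp$, as the paper does, is precisely the determinant of your normal exact sequence combined with your identification of the normal bundle of $M_0$ with $\mcE(2)\vert_{M_0}$ via $\nabla\tau$, so the two routes produce the same canonical isomorphism. Your explicit nondegeneracy argument showing $\nabla\tau\neq 0$ along $M_0$ is a point the paper leaves implicit here (it is invoked in the proof of Theorem \ref{pcify} with a reference to \cite{CapGo-projC}), and including it is a sensible addition.
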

\begin{proof}
Recall that on an oriented conformal $(n-1)$-manifold the conformal
density bundle of weight $\ulw$, denoted $\ce_0[\ulw]$, is the positive
$\ulw/(n-1)$ root of the oriented line bundle $\Lambda^{n-1}TM_0$. We
will show that $\Lambda^{n-1}TM_0$ can be identified with
$\ce(n-1)|_{M_0}$.

Recall that
on an oriented projective manifold $(M,{\bf p})$ of dimension $n$
there is a tautological weighted $n$-form $\boldsymbol{\eta}$ which gives the isomorphism
$$
\boldsymbol{\eta}: \Lambda^{n}TM \to \ce(n+1),
$$
that defines $\ce(n+1)$, see \nn{isop}. On the other hand along $M_0$, there is a canonical section
$\boldsymbol{n}\in \mathcal{N}\otimes \ce(2)$, where $\mathcal{N}$ is
the conormal bundle. If $\nabla\in {\bf p}$, then
$\boldsymbol{n}=(\nabla \tau)|_{M_0}$, but $\boldsymbol{n}$ is independent
of the choice of $\nabla$, since $M_0$ is the zero locus of $\tau= H(X,X)$.

Now let $\boldsymbol{n}^\sharp$ be any section of $(TM\otimes
\ce(-2))|_{M_0}$ satisfying
$\boldsymbol{n}(\boldsymbol{n}^\sharp)=1$. Then, identifying
$\Lambda^{n-1}T M_0$ with its image in $\Lambda^{n-1} TM_0 |_{M_0}$, we
obtain a surjective bundle map
\begin{equation}\label{canmap}
\boldsymbol{\eta}(\boldsymbol{n}^\sharp,\,\cdot\,,\cdots,\,\cdot\,): \Lambda^{n-1}T M_0\to \ce(n-1)|_{M_0} .
\end{equation}
Now it is easily verified that \nn{canmap} is independent of the choice of  $\boldsymbol{n}^\sharp\in \Gamma((TM\otimes
\ce(-2))|_{M_0} )$ satisfying $\boldsymbol{n}(\boldsymbol{n}^\sharp)=1$. Thus \nn{canmap} gives a canonical isomorphism
$$
\ce_0[n-1]\to \ce(n-1)|_{M_0}
$$
and thus we obtain $ \mcE_0[\ulw]\cong \mcE(\ulw) \vert_{M_0}$, for all real weights $\ulw$.
 \end{proof}

\section{Projective $6$-manifolds with a parallel tractor split cross product}
\label{mobileO}

We now want to consider a situation essentially converse to that of
Theorem \ref{NPKtoPhi}.  Namely we consider a projective $6$-manifold
$(M,{\bf p})$ that is equipped with a parallel tractor 3-form $\Phi$
that is {\em generic}, meaning that at one, equivalently any, point
$x\in M$, $\Phi_x$ is stable on $\cT_x$ (in the sense of Section
\ref{ss: stable forms}).
 So the projective Cartan/tractor connection
admits a holonomy reduction to either $\G_2$ or $\G_2^*$. We will see
shortly that this is the same as a parallel cross product on the
tractor bundle.

\subsection{The tractor metric on an $(M,p,\Phi)$ manifold}\label{Phi-geom}
Recall that we assume $(M,{\bf p})$ is orientable. If $M$ has
dimension 6 then by Proposition \ref{trvol} the tractor connection
\nn{tconn} preserves a (non-trivial) parallel tractor $7$-form $\vol$, which for
convenience we shall call the {\em tractor volume form}, although of
course $\vol$ is not a tensor. We write $\vol^{-1}$, or $\vol^{A_1 \cdots A_7}$, for the section of $\Lambda^7\cT$ satisfying
$$
\vol^{A_1 \cdots A_7}\vol_{A_7 \cdots A_7}=7!~.
$$

The first observation
is that with $\Phi$ this object determines a tractor metric.
\begin{theorem} \label{Hdef}
On a projective 6-manifold let $\Phi$ be a parallel generic tractor 3-form. Then $\Phi$ determines a nondegenerate parallel tractor
\begin{equation}\label{HHdef}
H_{AB} := \tfrac{1}{144}\Phi_{A C_1 C_2}\Phi_{B C_3 C_4}\Phi_{C_5 C_6 C_7}\vol^{C_1\ldots C_7}
\end{equation}
of signature either $(7,0)$ or $(3,4)$.
\end{theorem}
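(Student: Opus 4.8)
The plan is to reduce the statement to the pointwise algebraic facts already established in Subsection~\ref{subsubsection:3-forms-in-7-dimensions}, and then to read off parallelism directly from the observation that both tensors entering \eqref{HHdef} are themselves parallel. Accordingly I would separate the two qualitatively different claims: nondegeneracy together with the signature on the one hand, and parallelism on the other, since they draw on entirely disjoint inputs. The starting point is that \eqref{HHdef} is precisely the fibrewise version of the algebraic formula \eqref{H-from-Phi-vol}, now applied to the $7$-dimensional fibres $\cT_x$, with the parallel tractor volume form $\vol$ of Proposition~\ref{trvol} playing the role of the volume form used there.

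For nondegeneracy and signature I would argue pointwise. First I would record that $H_{AB}$ is symmetric: relabelling the contracted indices so as to exchange the pairs $(C_1 C_2)$ and $(C_3 C_4)$ reorders the arguments of $\vol^{C_1\cdots C_7}$ by the even permutation $(13)(24)$, so no sign is incurred and $H_{BA}=H_{AB}$. Since $\Phi$ is generic, at each $x$ the form $\Phi_x$ is stable on $\cT_x$, so by the construction of Subsection~\ref{subsubsection:3-forms-in-7-dimensions} the induced $H_x$ is nondegenerate, of signature $(7,0)$ when $\Phi_x$ is definite-stable and $(3,4)$ when it is split-stable. Because $\Phi$ is parallel, parallel transport carries $\Phi_x$ to $\Phi_y$ for any two points joined by a path, so $\Phi_x$ lies in a single $\GL(\cT_x)$-orbit throughout each connected component; hence the definite/split type, and so the signature, is constant there. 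The one point needing care is that \eqref{H-from-Phi-vol} uses the volume form canonically determined by $\Phi$, whereas \eqref{HHdef} uses the tractor volume form; since $\vol$ is nowhere vanishing, these differ only by a nonvanishing scale, which affects neither nondegeneracy nor the definite/split dichotomy.

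For parallelism the argument is immediate once this setup is in place. The tractor connection $\nabla^{\cT}$ extends as a derivation to all tensor powers of $\cT$ and $\cT^*$ and commutes with contractions, and \eqref{HHdef} exhibits $H$ as such a contraction of $\Phi\otimes\Phi\otimes\Phi\otimes\vol^{-1}$. By hypothesis $\nabla^{\cT}\Phi=0$, and by Proposition~\ref{trvol} $\nabla^{\cT}\vol=0$; differentiating the normalization $\vol^{A_1\cdots A_7}\vol_{A_1\cdots A_7}=7!$ then gives $\nabla^{\cT}\vol^{-1}=0$, and the Leibniz rule forces $\nabla^{\cT}H=0$. I do not expect a genuine obstacle here, as every ingredient has been prepared in the algebraic preliminaries; the only delicate bookkeeping is the normalization mismatch between the two volume forms noted above. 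Pinning down that relative constant exactly—so that $H$ coincides on the nose with the algebraically normalized inner product, which one will want later for the cross-product identities—amounts to comparing two parallel, hence (on each component) proportional, sections of $\Lambda^7\cT^*$, but it is irrelevant to the three assertions of the present theorem.
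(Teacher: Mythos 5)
Your proposal is correct and follows essentially the same route as the paper, whose entire proof is the one-line observation that the result is immediate from the algebraic formula \eqref{H-from-Phi-vol} together with the parallelism of $\Phi$ and $\vol$. Your expanded version merely makes explicit the pointwise reduction, the Leibniz-rule argument, and the constancy of the algebraic type along a connected component, and correctly flags (and correctly dismisses as irrelevant here) the normalization mismatch between the two volume forms, which the paper itself only fixes after \eqref{PhitoVol}.
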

\begin{proof} As $\Phi$ and $\vol$ are parallel, this is immediate from
the algebraic result \nn{H-from-Phi-vol}.
\end{proof}

\begin{remark}\label{sig-remark}
The signatures $(7,0)$ and $(3,4)$ are compatible (in a way that \ e.g.\ $(7,0)$ and $(4,3)$ are not) in the sense that we take the inner product on both $\bbO$ and $\bbO^{*}$ in \S2 so that the real line $\bbR \subset \bbO^{(*)}$ is positive definite.
\end{remark}

Thus the $\G_2$ holonomy reduction of the projective Cartan bundle is
subordinate to a $\SO(7,0)$ reduction, and similarly the $\G_2^*$
reduction is subordinate to a $\SO(3,4)$ reduction. Furthermore we can
at once exploit Theorem \ref{HtoEin} and Proposition \ref{projComp}.
\begin{corollary}\label{firststrat} Suppose that $(M,{\bf p})$
is a projective 6-manifold equipped with a generic parallel 3-form
tractor $\Phi$.  Then:
\begin{itemize}
    \item If $\Phi$ is definite-generic then it determines a signature $(6,0)$ positive Einstein metric on $M$.
    \item If $\Phi$ is split-generic then it determines a decomposition $M = M_+ \cup M_0 \cup M_-$ of $M$ into a union of 3 (not necessarily connected) disjoint curved orbits; $M_{\pm}$ are open while the zero locus $M_0$ is closed. If the zero locus $M_0$ is non-empty, then both $M_+$ and $M_-$ are non-empty. Conversely, if $M$ is connected and both $M_{+}$ and $M_{-}$ are non-empty then $M_0$ is non-empty and is a smoothly embedded separating hypersurface consisting of boundary points of both $M_+$ and $M_-$. Furthermore, $M_0$ has a conformal structure of signature $(2,3)$ with normal conformal tractor connection in agreement with the pullback of the ambient projective tractor connection.  $M_+$ has canonically a positive Einstein metric $g_+$ of signature $(2,4)$, while $M_-$ has canonically a negative Einstein metric $g_-$ of signature $(3,3)$. The metrics $g_\pm$ are projectively compact of order 2.
\end{itemize}
\end{corollary}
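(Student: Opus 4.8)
The plan is to obtain the statement as an essentially formal consequence of the machinery already in place: Theorem \ref{Hdef} turns the parallel generic $3$-form $\Phi$ into a parallel tractor metric $H$, and then Theorem \ref{HtoEin} together with Proposition \ref{projComp} governs the induced curved-orbit stratification. So the first step is simply to invoke Theorem \ref{Hdef}, producing from $\Phi$ a nondegenerate parallel $H \in \Gamma(S^2 \cT^*)$ of signature $(7,0)$ in the definite-generic case and $(3,4)$ in the split-generic case. Since $\dim M = 6$, the tractor bundle has rank $n + 1 = 7$, and $H$ is exactly the kind of object to which Theorem \ref{HtoEin} applies, with $\tau := H_{AB} X^A X^B \in \Gamma(\ce(2))$.

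For the definite case I would set $(r,s) = (7,0)$ in Theorem \ref{HtoEin}. Because $s = 0$, part (3) of that theorem forces $M_0 = \emptyset$ and $M_- = \emptyset$, so $M = M_+$; then part (2) yields on all of $M$ an Einstein metric $g_+$ of signature $(r-1, s) = (6,0)$ with positive scalar curvature, which is exactly the first bullet. For the split case I would set $(r,s) = (3,4)$. Then Theorem \ref{HtoEin}(1) gives the decomposition $M = M_+ \cup M_0 \cup M_-$ into curved orbits with $M_\pm$ open and $M_0 = \set{\tau = 0}$ closed; part (2) gives the positive Einstein metric $g_+$ of signature $(r-1,s) = (2,4)$ on $M_+$ and the negative Einstein metric $g_-$ of signature $(r,s-1) = (3,3)$ on $M_-$; part (3) gives, when $M_0 \neq \emptyset$, the conformal structure of signature $(r-1,s-1) = (2,3)$ together with the agreement of the normal conformal tractor connection with the restricted projective tractor connection; and Proposition \ref{projComp} gives that $g_\pm$ are projectively compact of order $2$. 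Thus the signature assertions reduce to reading off the reductions $(r-1,s)$, $(r,s-1)$, and $(r-1,s-1)$, the only bookkeeping subtlety being the alignment of the conventions $(7,0)$ and $(3,4)$ noted in Remark \ref{sig-remark}.

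The one point requiring argument beyond direct citation is the interdependence of the three orbits. The converse implication is immediate from connectedness: since $M_+$ and $M_-$ are disjoint open sets with $M = M_+ \cup M_0 \cup M_-$, if $M_0$ were empty then $M_+ \cup M_-$ would disconnect $M$; hence $M_0 \neq \emptyset$, and $M \setminus M_0 = M_+ \cup M_-$ shows that $M_0$ separates. The forward implication (that $M_0 \neq \emptyset$ forces both $M_\pm \neq \emptyset$), together with the assertion that points of $M_0$ are boundary points of both $M_+$ and $M_-$, both require that $\tau$ change sign across $M_0$, i.e.\ that $d\tau$ be nowhere zero on $M_0$. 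I expect this to be the main obstacle, though it is short, and I would handle it with the cotractor $X_A := H_{AB} X^B$.

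Concretely: since $X^A$ is nowhere vanishing and $H$ is nondegenerate, $X_A$ is nowhere zero; its primary slot is $X^A X_A = \tau$, while its remaining slot equals $\tfrac{1}{2} \nabla_a \tau$ in any scale. The latter follows from a short computation using $\nabla H = 0$ and \nn{pconnX}, which give $\nabla_a \tau = 2 W^B{}_a X_B$, and hence (pairing with the splitting maps) that the $Z_A{}^a$-component of $X_A$ is $\tfrac{1}{2}\nabla_a \tau$; note that at a zero of $\tau$ this component is scale-independent, so the condition $d\tau \neq 0$ on $M_0$ is well defined. Therefore at any point where $\tau = 0$ the non-vanishing of $X_A$ forces $\nabla_a \tau \neq 0$, which both reconfirms that $M_0$ is a smoothly embedded hypersurface (cf.\ Theorem \ref{HtoEin}(1)) and shows that $\tau$ takes both signs in every neighbourhood of each point of $M_0$. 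This delivers the non-emptiness of both $M_\pm$ whenever $M_0 \neq \emptyset$, as well as the boundary-point claim, completing the plan.
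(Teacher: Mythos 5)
Your proposal is correct and follows exactly the route the paper intends: the corollary is presented there as an immediate consequence of Theorem \ref{Hdef} (producing $H$ of signature $(7,0)$ or $(3,4)$ from $\Phi$) combined with Theorem \ref{HtoEin} and Proposition \ref{projComp}. The only point you argue explicitly rather than cite --- that nondegeneracy of $H$ and the non-vanishing of $X^A$ force $d\tau\neq 0$ along $M_0$, whence $\tau$ changes sign there --- is precisely the argument the paper invokes later (in the proof of Theorem \ref{pcify}, via \cite{CapGo-projC}), and your slot computation $X_A = \tau\, Y_A + \tfrac{1}{2}(\nabla_a\tau)\, Z_A{}^a$ is correct.
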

\begin{remark}In principle it is a little misleading at this point to describe
$M_+$, $M_0$, $M_-$ as curved orbits in the spirit of
\cite{CGH}, as at this stage we are only using the data of the
holonomy reduction to $\SO(H)$, rather than the full reduction to $\smash{\G_2^{(*)}}$. This choice of language will be justified, however,
in Section \ref{Ptypes}.
\end{remark}

\subsection{The cross product and the N(P)K-structure on $M_\pm$}\label{cross-section}
To obtain further details concerning the geometry of the $(M,{\bf
  p},\Phi)$-structure it is useful to first introduce the product
structure alluded to above.

The tractor metric $H$, determined by $\Phi$ in Theorem \ref{Hdef},
may be used to identify the tractor bundle $\ce^A$ with
its dual $\ce_A$; since $H$ is parallel, raising and lowering of tractor
indices with it commutes with covariant differentiation. Thus from a generic
parallel tractor 3-form $\Phi$ we obtain $\Phi^A{}_{BC}\in
\Gamma(\ce^A{}_{BC})$, which may be interpreted as the parallel tractor
cross product
\begin{equation}\label{cp}
\btimes : \cT\times\cT\to \cT, \qquad (U^B, V^C)
\mapsto \Phi^A{}_{BC}U^B V^C .
\end{equation}
Conversely from the parallel cross product we may recover $\Phi$ and
$H$ via Proposition \ref{equivP} and \nn{HHdef} (or \nn{PtoH}).  We will say
the tractor cross product $\btimes$ is \textit{definite} or \textit{split} if and only if the corresponding tractor $3$-form $\Phi$ is definite-generic or split-generic, respectively.  In summary we have the following.
\begin{proposition}\label{crossprod}
A projective $6$-manifold $(M,{\bf p})$ equipped with a parallel
generic tractor 3-form $\Phi$ has a uniquely determined parallel cross
product $\btimes$, as given by \nn{cp}. Conversely given a parallel
definite (split) generic cross product on the projective tractor
bundle we obtain a unique parallel definite (respectively, split) generic tractor 3-form, and these constructions are inverses.
 \end{proposition}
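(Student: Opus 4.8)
The plan is to reduce the proposition to the pointwise algebraic bijection of Proposition \ref{equivP} and then verify that parallelism is preserved under the tensorial operations involved. The essential observation is that every construction used—raising and lowering tractor indices, and contracting against the tractor volume form $\vol$—is built from objects that are parallel for $\nabla^{\mathcal{T}}$, and so commutes with covariant differentiation.

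For the forward direction, I would start from a parallel generic tractor $3$-form $\Phi$. By Theorem \ref{Hdef}, $\Phi$ together with the parallel tractor volume form determines a parallel nondegenerate tractor metric $H$ of signature $(7,0)$ or $(3,4)$. At each point $x \in M$ the form $\Phi_x$ is stable on the fiber $(\cT_x, H_x)$, so Proposition \ref{equivP} applies fiberwise: raising an index of $\Phi_x$ with $H_x$, as in \eqref{Phi-times}, yields a cross product $\btimes_x$ on $(\cT_x, H_x)$, with $\btimes_x$ definite (respectively split) exactly when $\Phi_x$ is definite-generic (respectively split-generic). This is precisely the bundle map $\btimes = \Phi^A{}_{BC}$ of \eqref{cp}. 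Since $H$ is parallel, index raising commutes with $\nabla^{\mathcal{T}}$, and since $\Phi$ is parallel, so is $\Phi^A{}_{BC}$; hence $\btimes$ is preserved by the tractor connection.

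For the converse, I would begin with a parallel cross product $\btimes$ on $\cT$. Pointwise, the inner product underlying $\btimes$ is recovered purely algebraically via \eqref{equation:dot-from-cross} (equivalently \eqref{PtoH}); as this formula is a contraction of $\btimes$ with itself, the parallelism of $\btimes$ forces $H$ to be parallel. Lowering an index then produces $\Phi_{ABC} = H_{CK} \btimes^K{}_{AB}$, which is parallel because both $H$ and $\btimes$ are. Proposition \ref{equivP} guarantees that each $\Phi_x$ is stable on its fiber—so $\Phi$ is generic—and that the definite/split type of $\Phi$ matches that of $\btimes$.

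Finally, that the two constructions are mutually inverse is immediate: Proposition \ref{equivP} asserts the fiberwise bijection between cross products and stable $3$-forms, and since both directions here are given by exactly those fiberwise algebraic formulas, the global assignments invert one another. The only work beyond the pointwise algebra is the parallelism bookkeeping, and I do not expect a genuine obstacle: it follows automatically once one records that $H$ and $\vol$ are parallel, so that all the index manipulations are $\nabla^{\mathcal{T}}$-parallel operations. In this sense the proposition is essentially a restatement, fiber by fiber, of the algebraic equivalence established in Section \ref{ss: stable forms}.
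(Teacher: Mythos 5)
Your proposal is correct and follows essentially the same route as the paper: the paper likewise obtains the parallel tractor metric $H$ from Theorem \ref{Hdef}, raises an index of $\Phi$ to get $\btimes$, and recovers $\Phi$ and $H$ from $\btimes$ via Proposition \ref{equivP} together with \nn{HHdef} (or \nn{PtoH}), with parallelism following because all operations are contractions against the parallel objects $H$ and $\vol$. The only difference is presentational: the paper states the proposition as a summary of the preceding paragraph rather than giving a displayed proof, whereas you spell out the parallelism bookkeeping explicitly.
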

Thus the structure $(M, {\bf p},\Phi)$ may equivalently be viewed as a triple
$(M,{\bf p}, \btimes)$. More precisely, each fiber $\cT_x$ of the
tractor bundle canonically carries the algebraic structure of the
imaginary octonions $\bbI^{(*)}$, and furthermore we can meaningfully
say that this imaginary octonion structure is parallel for the tractor
connection.

This means that the algebraic results from Section \ref{alg} transfer
effectively and uniformly into the tractor calculus.  As a first
application note that by the formula \nn{equation:cross-to-volume} we have
a parallel tractor 7-form
\begin{equation}\label{PhitoVol}
 \vol_{ABCDEFG} := \tfrac{1}{42} \Phi_{K[AB} \Phi^K_{\phantom{K}CD} \Phi_{EFG]} .
\end{equation}
We henceforth assume that $\Phi$ is normalized so that this agrees
with the canonical projective tractor volume form of Proposition \ref{trvol}.

Next from \nn{equation:J-squared} of
Proposition \ref{proposition:J-squared} we have the next result.
\begin{proposition}\label{Malid}
The tractor cross product satisfies
$$
U\btimes (U\btimes V) = -H(U, U) V + H(U, V) U,
$$
for all $U,V\in \Gamma(\cT)$.
\end{proposition}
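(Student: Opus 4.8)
The plan is to transfer the purely pointwise identity of Proposition~\ref{proposition:J-squared} fiberwise onto the tractor bundle, exploiting the fact established in Proposition~\ref{crossprod} (and the discussion following it) that each fiber $\cT_x$ carries, via $\btimes$ and $H$, exactly the algebraic structure of the imaginary (split) octonions $\bbI^{(*)}$: the tractor cross product restricts on $\cT_x$ to (an isomorphic copy of) the octonionic cross product, and the tractor metric $H$ to the induced inner product $\cdot$ of equation~\eqref{equation:dot-from-cross}. Since both $\btimes$ and $H$ are honest (weight-free) tractor tensors---$\btimes^A{}_{BC} = H^{AK}\Phi_{KBC} \in \Gamma(\cT \otimes \cT^* \otimes \cT^*)$ via \eqref{cp} and $H_{AB} \in \Gamma(S^2\cT^*)$ via \eqref{HHdef}---the claimed formula is tensorial and algebraic in $U$ and $V$ at each point. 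Hence it suffices to verify it on an arbitrary fiber, where it becomes an identity in the model cross-product space.

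First I would rewrite the left-hand side in terms of the endomorphism $\bbJ$ of equation~\eqref{aJdef}. On $\cT_x$, set $\bbJ_U(V) := -U \btimes V$; applying it twice and using bilinearity, the two sign changes cancel to give
\[
\bbJ_U^2(V) = \bbJ_U(-U \btimes V) = -U \btimes (-U \btimes V) = U \btimes (U \btimes V).
\]
Thus $U \btimes (U \btimes V)$ is precisely $\bbJ_U^2(V)$ computed in the fiber. Next I would invoke the identity~\eqref{equation:J-squared} of Proposition~\ref{proposition:J-squared}, now read off on $\cT_x$ with the inner product $\cdot$ identified with $H$, namely $\bbJ_U^2(V) = -H(U,U)\,V + H(U,V)\,U$. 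Combining this with the previous display yields
\[
U \btimes (U \btimes V) = -H(U,U)\,V + H(U,V)\,U,
\]
exactly as asserted. Because the relation holds in every fiber and both sides are algebraic in $U$ and $V$, it holds for all sections $U, V \in \Gamma(\cT)$.

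The content here is essentially bookkeeping rather than a genuine obstacle: parallelism of $\Phi$, $H$, and $\btimes$ plays no role, since the identity is purely pointwise. The only points requiring a little care are the correct cancellation of signs in passing from $\bbJ_U$ to $\btimes$, and the observation that Proposition~\ref{proposition:J-squared} (proved there from the alternativity of $\bbO^{(*)}$) indeed applies to $(\cT_x, \btimes, H)$. This is guaranteed by Proposition~\ref{equivP}, where it was noted that \eqref{equation:J-squared} holds for every cross product on a $7$-dimensional inner product space---equivalently, because each $(\cT_x, \btimes, H)$ is isomorphic to the model $(\bbI^{(*)}, \btimes, \cdot)$ and algebraic identities are preserved under such isomorphisms.
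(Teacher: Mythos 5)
Your proposal is correct and is essentially the paper's own argument: the paper derives Proposition \ref{Malid} directly from the pointwise algebraic identity \eqref{equation:J-squared} of Proposition \ref{proposition:J-squared}, transferred fiberwise to $\cT$ via the octonionic structure that $\btimes$ and $H$ install on each fiber. Your additional bookkeeping (the sign cancellation in $\bbJ_U^2$ and the appeal to Proposition \ref{equivP} for applicability) just makes explicit what the paper leaves implicit.
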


The cross product determines a canonical map
\begin{equation}\label{Jdef}
\bJ: \cT\to \cT(1) \quad \mbox{defined by} \quad V\mapsto -X \btimes V ,
\end{equation}
where $X\in \Gamma(\ce^A(1))$ is the canonical weighted tractor from
\nn{euler} (cf.\ \nn{aJdef}).  An easy calculation verifies that $\bJ$
is not parallel. In fact $\bJ$ is a section of the non-trivially
weighted tractor bundle $\End (\cT)(1)$, whereas invariantly parallel
tractor fields must have weight zero. Despite this weight, it is
useful to view $\bJ$ as essentially an endomorphism of the tractor
bundle for reasons at which the next Proposition hints.
\begin{proposition}\label{proposition:J-properties}
Let $(M, {\bf p}, \Phi)$ be a 6-dimensional projective manifold equipped with a
parallel tractor cross product $\btimes$ (or equivalently, a generic parallel tractor 3-form $\Phi$). Then
\begin{equation}\label{JX}
\bJ^{A}{}_{B}X^B=0
\end{equation}
and
\begin{equation}\label{bJ2}
\mathbb{J}_{\phantom{A}C}^{A}\mathbb{J}_{\phantom{C}B}^{C}=-\tau \delta_{\phantom{A}B}^{A}+X^A X_B^{ } \quad \mbox{where} \quad \tau:=H_{AB}X^A X^B .
\end{equation}
\end{proposition}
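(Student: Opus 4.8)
The plan is to deduce both identities directly from the cross-product facts already established, specializing the algebra of Section \ref{alg} to $U = X$. No genuinely new computation is required: the content of \nn{bJ2} is carried entirely by Proposition \ref{Malid} (the curved transcription of \nn{equation:J-squared}), and \nn{JX} is just anticommutativity.

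For \nn{JX}, I would simply note that $\bJ(X) = -X \btimes X = 0$ since the cross product is skew-symmetric. In abstract indices this reads $\bJ^A{}_B X^B = -\Phi^A{}_{CB} X^C X^B$, which vanishes because $\Phi$ is totally skew in its (lower) indices while $X^C X^B$ is symmetric.

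For \nn{bJ2}, the key step is to recognize $\bJ^2$ as iterated cross product with $X$. From the definition \nn{Jdef}, for any $V \in \Gamma(\cT)$ the signs combine as
\[
    \bJ(\bJ(V)) = -X \btimes (-X \btimes V) = X \btimes (X \btimes V).
\]
Applying Proposition \ref{Malid} with $U = X$ then gives
\[
    X \btimes (X \btimes V) = -H(X, X)\, V + H(X, V)\, X = -\tau V + (X_B V^B)\, X,
\]
where I have used $\tau = H(X,X)$ and $X_B = H_{AB} X^A$. Since this holds for all $V$, reading off the endomorphism yields $\bJ^A{}_C \bJ^C{}_B = -\tau \delta^A{}_B + X^A X_B$, which is \nn{bJ2}.

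The only point needing care—rather than any real obstacle—is the weight bookkeeping. As $X$ has projective weight $1$, the map $\bJ$ is valued in $\End(\cT)(1)$, so $\bJ^2$ has weight $2$; correspondingly $\tau = H_{AB} X^A X^B$ and $X^A X_B$ both have weight $2$, so the asserted identity is weight-homogeneous as a section of $\End(\cT)(2)$, and the specialization of Proposition \ref{Malid} to the weighted tractor $X$ is legitimate.
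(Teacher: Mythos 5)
Your proposal is correct and matches the paper's own argument: the paper likewise derives \nn{JX} from the skew-symmetry of $\btimes$ (equivalently, that $\Phi$ is a $3$-form) and \nn{bJ2} directly from Proposition \ref{Malid} with $U = X$, which is the tractor transcription of the algebraic identity \nn{equation:J-squared}. The sign computation and the weight bookkeeping are both handled correctly.
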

\begin{proof} This follows at once from the algebraic Proposition \ref{proposition:J-squared}.
Alternatively the results can be seen in terms the discussion
immediately above as follows: As a bilinear tractor form $\btimes $ is
skew by construction, since $\Phi\in \Gamma(\Lambda^3 \cT^*)$. Thus
\nn{JX} is immediate from the definition \nn{Jdef} of $\bJ$. On the other hand \nn{bJ2} follows at once from Proposition \nn{Malid}.
\end{proof}

The result \eqref{JX} and the transformation rule \eqref{ftrans}
together imply that the weighted endomorphism component
\[
    J^a_{\phantom{a} b} := \bbJ^A_{\phantom{A} B} Z_A^{\phantom{A} a} W^B_{\phantom{B} b} \in \Gamma(\End(TM)(1))
\]
of $\bbJ$ is invariant, that is, it is independent of the choice of scale. Together \eqref{bJ2} and the
identity $\delta^A_{\phantom{A} B} = \delta^a_{\phantom{a} b}
W^A_{\phantom{A} a} Z_B^{\phantom{B} b} + X^A Y_B^{ }$ give that
\begin{equation}\label{equation:J-squared-base}
    J^a_{\phantom{a} c} J^c_{\phantom{c} b} = -\tau \delta^a_{\phantom{a} b} \textrm{.}
\end{equation}

On any open set $U$ on which $\pm \tau \in \mcE(2)$ is positive, we can trivialize density bundles using $\pm \tau$, and in particular we get a canonical unweighted endomorphism
\[
    J_{\pm} := (\pm \tau)^{- 1 / 2} J \vert_U \in \End(TU) \textrm{;}
\]
formally, this trivialization has the effect of ``setting $\tau$ to $\pm 1$''. Substituting gives that the unweighted endomorphism $J_{\pm}$ satisfies $J_{\pm}^2 = \mp \id_{TM}\vert_U$.

Recall from Subsection \ref{pEin} that the tractor metric $H$ determined by $\Phi$ via \eqref{HHdef} also determines a metric $g_{\pm}$ on $U$. Now, $\Phi(X, \pdot, \pdot) = -\Phi(\pdot, X , \pdot) = H(\pdot, \bbJ \pdot) $, so on $U$ contracting both sides with $W^A_{\phantom{A} a} W^B_{\phantom{B} b}$ and trivializing the involved density bundles gives that the top slot $\omega$ (now regarded via the trivialization as an unweighted $2$-form) of $\Phi$ satisfies
\[
    \om\vert_U = g_{\pm}(\pdot, J_{\pm} \pdot) \textrm{.}
\]
In particular, $J_{\pm}$ is $g_{\pm}$-skew, so $(g_{\pm}, J_{\pm})$ is a $(\mp 1)$-Hermitian structure (in particular it implies in the $-$ case that the $(\pm 1)$-eigenspaces of $J_-$ have the same dimension, that is, that $J_-$ is a paracomplex structure).

So, via $H$ and $\bJ$, the parallel tractor cross product $\btimes$ (equivalently $\Phi$) determines almost $(\mp 1)$-Hermitian structures $(g_{\pm}, J_{\pm})$ on the sets where $\pm \tau$ is positive, and these turn out moreover to be strictly nearly $(\mp 1)$-K\"{a}hler structures on those sets.

\begin{theorem}\label{local}
Let $(M, {\bf p})$ be a $6$-dimensional projective manifold equipped with a parallel tractor cross product $\btimes$ (equivalently a generic, parallel tractor 3-form $\Phi$). Then on any open set $U$ where $\tau$ is positive (respectively, negative), $\Phi$ defines a strictly nearly K\"ahler (resp.\ strictly nearly para-K\"ahler) structure $(g_{\pm}, J_{\pm})$ in the projective class ${\bf p}\vert_U$ on $U$, that is, for which $\nabla^{g_{\pm}} \in \mbp \vert_U$.\footnote{Here, $\mbp \vert_U$ is the projective structure on $U$ containing all of the restrictions $\nabla \vert_U$ of connections $\nabla \in \mbp$.}
\end{theorem}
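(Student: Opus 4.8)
The plan is to build on the almost $(\mp 1)$-Hermitian structure $(g_{\pm}, J_{\pm})$ already extracted from $H$ and $\bJ$ in the discussion preceding the statement: it remains only to verify that this structure is nearly $(\mp 1)$-K\"ahler, that it is strict, and that $\nabla^{g_{\pm}}$ lies in $\mbp\vert_U$. The last of these I would take directly from Theorem \ref{HtoEin}(2): since $H$ restricts on $U$ to a parallel tractor metric with $\tau$ of constant sign, it induces there an Einstein metric $g_{\pm}$ whose Levi-Civita connection is a representative of the projective class.

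For the nearly K\"ahler condition I would read off the equation $\nabla^{\cT}\Phi = 0$ in the scale determined by $\tau$. The key observation is that this scale connection is exactly $\nabla^{g_{\pm}}$: the $\tau$-trivialization is precisely the one producing $g_{\pm}$ via \eqref{Hform}, and $\nabla^{g_{\pm}}$ preserves both $g_{\pm}$ and its volume density, hence fixes $\tau$. Applying the component formula \eqref{tr_connection} for the induced connection on $\Lambda^3\mcE_A$, parallelism of $\Phi$ forces its top slot $\omega$ and bottom slot $\mu$ to satisfy $\nabla^{g_{\pm}}_a\omega_{bc} = \mu_{abc}$. Since $\mu$ is a section of $\mcE_{[abc]}(3)$ it is totally skew, so $\nabla^{g_{\pm}}\omega$ is totally skew. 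Because $\nabla^{g_{\pm}}\tau = 0$, this conclusion passes unchanged to the $\tau$-trivialized unweighted K\"ahler form $g_{\pm}(\pdot, J_{\pm}\pdot)$ identified with $\omega$ above; by the definition \eqref{NKcond} (together with Proposition \ref{ah-ids}) this is exactly the nearly $(\mp 1)$-K\"ahler condition.

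Strictness then follows from the genericity hypothesis on $\Phi$. Here I would invoke the decomposition of Proposition \ref{pairtog2}, taking the pseudo-unit tractor $(\pm\tau)^{-1/2}X$ as the distinguished normal: under it $\Phi$ splits as $\alpha\wedge\omega + \beta$, where the purely horizontal part $\beta$ is identified with the bottom slot $\mu = \nabla^{g_{\pm}}\omega$. Pointwise stability of $\Phi$ forces $\beta$ to be a stable, hence in particular nonzero, $3$-form at every point of $U$, so $\nabla^{g_{\pm}}\omega$ vanishes nowhere and the structure is strict. The type is then fixed by the sign of $\tau$: where $\tau > 0$ we have $J_+^2 = -\id$ and the structure is nearly K\"ahler, whereas where $\tau < 0$ we have $J_-^2 = +\id$ and it is nearly para-K\"ahler, consistent with the signatures recorded in Corollary \ref{firststrat}.

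The main obstacle is bookkeeping rather than any genuine analytic difficulty: one must confirm that the scale associated with $\tau$ has $\nabla^{g_{\pm}}$ as its representative connection, and that under the attendant trivialization the weight-$3$ top slot of $\Phi$ becomes precisely the honest K\"ahler form $g_{\pm}(\pdot, J_{\pm}\pdot)$. Once these identifications are pinned down, the nearly K\"ahler condition is an immediate transcription of $\nabla^{\cT}\Phi = 0$ and strictness is an immediate transcription of pointwise stability, so no further computation with the N(P)K identities of Section \ref{NPK-sec} is required.
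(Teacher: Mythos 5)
Your proposal is correct and follows essentially the same route as the paper: read off $\nabla^{\cT}\Phi=0$ in the scale determined by $\tau$ via \eqref{tr_connection} to get $\nabla^{g_{\pm}}\omega=\mu$ totally skew, and deduce strictness from pointwise genericity of $\Phi$. The only (cosmetic) difference is in the strictness step, where the paper argues that $\mu=0$ at a point would make $H$ in \eqref{HHdef} degenerate there, while you extract the nonvanishing of $\mu$ from the stability of the induced $6$-dimensional $3$-form $\beta=\iota^*\Phi$; both hinge on the same genericity hypothesis.
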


\begin{proof}
For simplicity of notation, we may as well replace $M$ with the given open set.

Recall from the discussion before the theorem that $\Phi$ determines an $(\mp 1)$-Hermitian structure $(g_{\pm}, J_{\pm})$ on $M$. Decompose $\Phi$ with respect to any splitting as in \eqref{equation:tractor-3-form-components}. Since $\nabla^{\mcT} \Phi = 0$, \eqref{tr_connection} implies that $\mu = \nabla \omega$, and a fortiori that $\nabla \omega$ is totally skew, so $(g_{\pm}, J_{\pm})$ is nearly $(\mp 1)$-K\"{a}hler.

In the scale determined by $\tau$, $\nabla \omega = \mu$ is nowhere zero, as it is easily seen that if it were zero at any point then $H$ as defined by \nn{HHdef} would be degenerate there, which would be a contradiction. Thus, $(g_{\pm}, J_{\pm})$ is strictly nearly $(\mp 1)$-K\"{a}hler.
\end{proof}

\begin{remark}
The $\varepsilon$-K\"ahler structures in the Theorem are necessarily
Einstein by dint of being strict and in dimension 6, cf.\ Section
\ref{ss: N(P)K dim 6}. It is useful to see that, in our current context,
this follows from Theorem \ref{HtoEin} as Corollary \ref{firststrat}.
\end{remark}

\begin{remark}
We observed earlier that $\bJ$ is not parallel. Indeed the algebraic
relationship between $\btimes$ (which is parallel) and $X$ (which is
far from parallel) varies across the manifold. When $\btimes$ is
split-generic, it is precisely this relationship which enables the
 single ``tractor
endomorphism'' field $\bJ$ to deliver a nearly K\"{a}hler structure on one
part of the manifold, a nearly para-K\"{a}hler structure on another
part, and yet a different structure on the separating hypersurface. We
now turn our attention to the latter.
  \end{remark}

\subsection{The zero locus $M_0$} \label{inf}

In this subsection, we analyze the structure a parallel split cross product (split-generic
tractor $3$-form) $\Phi$ induces on and along the hypersurface zero locus $M_0$, which in this section we assume is nonempty. (Recall that if a parallel tractor cross product $\btimes$ is definite, the induced tractor metric is definite, and the zero locus is empty.) The geometry
on $M_0$ itself is intrinsically interesting, but the common source of
the induced geometries on $M_{\pm}$ and $M_0$---namely the holonomy
reduction itself---establishes a close relationship between the
N(P)K structures on $M_{\pm}$ along $M_0$ and the geometry on $M_0$ itself.  Indeed, Corollary \ref{firststrat} already shows that via projective geometry we may view the geometry on $M_0$ as a (simultaneous) limit structure at infinity of the geometries on $M_{\pm}$. Later, in Subsection \ref{cpctsec}, we exploit this relationship to formulate a suitable notion of compactification for N(P)K Klein-Einstein metrics.

By the third part of that corollary, the $\SO(3, 4)$ holonomy
reduction, to which the reduction to $\G_2^*$ is subordinate, determines
a normal parabolic geometry $(\mcG_0 \to M_0, \eta_0)$ of type
$(\SO(3, 4), P_0)$ on $M_0$, which corresponds to a conformal
structure $\mbc$ of signature $(2, 3)$ there. (This is a rephrasing of
the statement there that the parallel tractor metric determines on
$M_0$ a conformal structure of signature $(2,3)$ with normal conformal
tractor connection in agreement with the pullback of the ambient
projective tractor connection.)  We may identify the standard
conformal tractor bundle $\mcT_0 := \mcG_0 \times_{P_0} \bbV$ (where
$\bbV$ denotes the standard representation of $\SO(3, 4)$) with the
restriction $\mcT\vert_{M_0}$ of the projective tractor bundle
$\mcT$. Then the normal connection $\nabla$ on $\mcT$ restricts to a
connection $\nabla_0$ on $\mcT_0$, and the latter coincides with the
normal tractor connection induced by $\eta_0$. In particular the
$\nabla$-parallel cross product $\btimes$ that defines the $\G_2^*$
holonomy reduction of $\nabla$ restricts to give a parallel cross
product on $M_0$, and so in summary we have the following result.
\begin{theorem}\label{confRed}
Suppose a $6$-dimensional projective manifold $(M, \mbp)$ admits a parallel split tractor cross product for which the zero locus $M_0$ is nonempty. Then, the conformal structure $(M_0,{\bf c})$ defined above canonically admits a
$\nabla_0$-parallel split-generic cross product $\btimes: \mcT_0
\times \mcT_0 \to \mcT_0$, and hence it defines a $\G_2^*$ holonomy
reduction of $\nabla_0$.
 \end{theorem}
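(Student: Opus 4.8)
The plan is to deduce the theorem almost immediately from the identification of the conformal tractor data along $M_0$ with the restriction of the projective tractor data, an identification already furnished by Theorem \ref{HtoEin}(3) (invoked via Corollary \ref{firststrat}) and recorded in the discussion preceding the statement. The essential observation is that $\btimes$ is \emph{by hypothesis} parallel on all of $M$; once the correct bundle-and-connection identifications are in place, parallelism, genericity, and algebraic type are all simply inherited by restriction, so no substantive computation remains.

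First I would recall precisely what Theorem \ref{HtoEin}(3) provides. Since the tractor metric $H$ has signature $(3,4)$, the zero locus $M_0$ carries a conformal structure $\mbc$ of signature $(2,3)$; the standard conformal tractor bundle $\mcT_0$ is canonically identified with the honest restriction $\mcT\vert_{M_0}$ (as vector bundles, not as a quotient); and the normal conformal tractor connection $\nabla_0$ coincides with the restriction $\nabla\vert_{M_0}$ of the projective tractor connection. Under this identification the conformal tractor metric $H_0$ agrees (up to the density identification, which is immaterial for what follows) with $H\vert_{M_0}$, and its signature $(p+1,q+1)=(3,4)$ for conformal signature $(2,3)$ matches that of $H$.

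Next I would restrict $\btimes$. Since $\btimes\colon \mcT \times \mcT \to \mcT$ is $\nabla$-parallel on $M$ and $\nabla_0 = \nabla\vert_{M_0}$, the fiberwise restriction $\btimes\vert_{M_0}\colon \mcT_0 \times \mcT_0 \to \mcT_0$ is automatically $\nabla_0$-parallel. Because $\mcT_0 \cong \mcT\vert_{M_0}$ is a genuine restriction, the value of the product at $x \in M_0$ is literally $\btimes_x$ on the fiber $\mcT_x$; as $\Phi$, equivalently $\btimes$, is split-generic at every point of $M$ by hypothesis, $\btimes_x$ is the split cross product on $(\mcT_0)_x \cong \bbR^{3,4}$ for each such $x$. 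Hence $\btimes\vert_{M_0}$ is split-generic, and by Proposition \ref{equivP} together with the corollary following it its pointwise stabilizer in $\SO(H_0) \cong \SO(3,4)$ is $\G_2^*$.

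Finally, a $\nabla_0$-parallel split-generic cross product on $\mcT_0$ is precisely a reduction of the conformal holonomy (equivalently, of the normal conformal Cartan bundle) to $\G_2^* \subset \SO(3,4)$, that is, a $\G_2^*$ holonomy reduction of $\nabla_0$, which is the assertion; by Nurowski's theorem and the refinement of Hammerl--Sagerschnig cited in the Introduction, such a reduction is in turn equivalent to a Cartan $(2,3,5)$-distribution underlying $(M_0, \mbc)$. The only point at which something could conceivably fail is the preservation of genericity under restriction, but since the bundles are identified by honest restriction rather than by passing to a quotient this is automatic. Thus there is no real obstacle: the substantive work has already been carried out in establishing Theorem \ref{HtoEin}(3), and the present theorem follows by this routine restriction argument.
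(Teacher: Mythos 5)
Your proposal is correct and follows essentially the same route as the paper: the paper's proof is precisely the discussion preceding the theorem statement, which invokes Corollary \ref{firststrat} (hence Theorem \ref{HtoEin}(3)) to identify $\mcT_0$ with $\mcT\vert_{M_0}$ and $\nabla_0$ with the restriction of the projective tractor connection, and then observes that the parallel split cross product restricts to a parallel split cross product on $M_0$. Your additional remark that genericity survives because the identification is an honest fiberwise restriction rather than a quotient is a sensible clarification, but it does not change the argument.
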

The geometric meaning of a holonomy reduction to $\G_2^*$ of a normal
conformal tractor connection has been analyzed previously: It
corresponds to the existence of an underlying, canonically associated (oriented)
$(2, 3, 5)$-distribution, see \cite{HammerlSagerschnig,
  NurowskiDifferential}. One direction of this correspondence is as follows.

\begin{theorem}\cite[\S5.3]{NurowskiDifferential}\label{theorem:Nurowski}
Any $(2, 3, 5)$-distribution $D$ on a $5$-manifold $M_0$ canonically induces a conformal structure $\mbc_D$ on $M_0$.
\end{theorem}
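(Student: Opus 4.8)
The plan is to obtain $\mbc_D$ from Cartan's solution of the equivalence problem for $(2,3,5)$-distributions together with the inclusion $\G_2^* \hookrightarrow \SO(3,4)$ recorded in Subsection \ref{subsection:octonions-cross-product-g2}. First I would invoke Cartan's theorem \cite{CartanPff}: a $(2,3,5)$-distribution $D \subset TM_0$ determines, and is equivalent to, a regular normal parabolic geometry $(\mcG \to M_0, \theta)$ of type $(\G_2^*, P)$, where $P \subset \G_2^*$ is the stabilizer of a null ray $\ell$ in the standard representation $\bbV$. The associated grading $\mfg = \mfg_{-3} \oplus \cdots \oplus \mfg_3$ of the Lie algebra $\mfg$ of $\G_2^*$ has $\dim \mfg_{-1} = 2$, $\dim \mfg_{-2} = 1$, $\dim \mfg_{-3} = 2$ and $\mfg_0 \cong \mfgl(2, \bbR)$, so that $\mfg_- := \mfg_{-3} \oplus \mfg_{-2} \oplus \mfg_{-1}$ has dimension $5$ and filtration type $(2,3,5)$; the distribution is recovered as $D = \mcG \times_P \mfg_{-1}$, and the fundamental curvature invariant is Cartan's binary quartic, a section of $S^4 D^*$ up to density.

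Next, since $\G_2^*$ preserves the signature-$(3,4)$ form $H$ on $\bbV$ (Proposition \ref{equivP}), we have $\G_2^* \hookrightarrow \SO(H) \cong \SO(3,4)$, and because $\ell$ is $H$-null its $\SO(3,4)$-stabilizer $P_0$ is exactly the parabolic whose underlying geometry is conformal geometry of signature $(2,3)$; by construction $P = \G_2^* \cap P_0 \subset P_0$. The inclusion therefore induces a $P$-equivariant isomorphism $\mfg/\mfp \xrightarrow{\ \cong\ } \mfso(3,4)/\mfp_0$ (both of dimension $5$), so the two geometries are carried by the same tangent bundle $TM_0$. Applying the extension functor along $(\G_2^*, P) \hookrightarrow (\SO(3,4), P_0)$ then produces a parabolic geometry $(\mcG_0 := \mcG \times_P P_0 \to M_0, \eta_0)$ of type $(\SO(3,4), P_0)$, whose underlying first-order structure is precisely a conformal structure of signature $(2,3)$: the soldering part of $\eta_0$ identifies $TM_0 \cong \mcG_0 \times_{P_0}(\mfso(3,4)/\mfp_0)$, and the $P_0$-invariant conformal class on $\mfso(3,4)/\mfp_0$ transfers to a conformal class $\mbc_D$ on $M_0$. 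This construction uses only $D$ and is manifestly natural, which gives the asserted canonical conformal structure; one checks moreover that $D$ is totally null and $D^{\perp} = [D,D]$ with respect to $\mbc_D$.

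The delicate point---and what is needed to identify $\mbc_D$ with the structure of Theorem \ref{confRed}, and hence with a genuine $\G_2^*$-conformal holonomy reduction---is that $\eta_0$ is itself the \emph{normal} conformal Cartan connection of $\mbc_D$. Normality is not automatic under the extension functor, because the Kostant codifferentials $\partial^*$ for the two parabolics differ; one must verify that the harmonic $\G_2^*$-curvature maps, under $\mfg \hookrightarrow \mfso(3,4)$, into the kernel of the conformal codifferential, so that $\eta_0$ is normal and $\mbc_D$ is the unique conformal structure it induces. This computation---comparing the two Lie-algebra cohomologies and tracking Cartan's quartic into the conformal Weyl curvature---is the main obstacle, and it is the content of \cite{HammerlSagerschnig}; Nurowski's original argument \cite[\S5.3]{NurowskiDifferential} instead exhibits an explicit metric representative of $\mbc_D$ built from the structure functions of $D$. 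For the present statement only the existence and naturality of $\mbc_D$ are required, so the first two steps already suffice.
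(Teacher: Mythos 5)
Your proposal is correct and follows essentially the same route as the paper: Cartan's equivalence of (oriented) $(2,3,5)$-distributions with regular normal parabolic geometries of type $(\G_2^*, Q)$, followed by extension along $\G_2^* \hookrightarrow \SO(3,4)$ to a geometry of type $(\SO(3,4), P_0)$, with the normality of the extended Cartan connection deferred to \cite{HammerlSagerschnig}. The only point you omit is that the correspondence you invoke requires $D$ to be oriented, whereas the theorem is stated for arbitrary $(2,3,5)$-distributions; the paper closes this by noting that the construction is local and unchanged under reversal of orientation, so one covers $M_0$ by orientable open sets and patches the resulting conformal structures.
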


This is a relatively simple example of a so-called Fefferman
construction and, as outlined in the proof sketch below, it can be
framed efficiently in the language of parabolic geometry. To explain
this, we describe briefly the realization of $(2, 3, 5)$-distributions
in this context, cf.\ \cite{CapSlovak}.

First, recall from Subsection
\ref{subsection:octonions-cross-product-g2} that the $\G_2^*$-action
on $\bbI^*$ preserves a signature-$(3, 4)$ inner product $\cdot$ (and
orientation) there and hence determines a canonical inclusion $\G_2^*
\hookrightarrow \SO(\bbI^*) \cong \SO(3, 4)$. An (oriented) $(2, 3, 5)$-distribution is precisely the structure
underlying a parabolic geometry of type $(\G_2^*, Q)$, where $Q$ is
the stabilizer in $\G_2^*$ of a null ray in $\bbI^*$. By construction,
$Q = P_0 \cap \G_2^*$, where $P_0$ is the stabilizer in $\SO(3, 4)$ of
a null ray. As a parabolic subgroup, $Q$ determines a $\bbZ$-grading
$(\mfg_a)$ on the Lie algebra $\mfg := \mfg_2^*$ for which\footnote{Of
  course, the reader should not confuse the graded component $\mfg_2$
  with the compact, real Lie algebra for which we use the same symbol,
  or the asterisk $^*$ denoting the split real form with an indication
  of a vector space dual.}
\begin{enumerate}
    \item the grading respects the Lie bracket in that $[\mfg_a, \mfg_b] \subseteq \mfg_{a + b}$ for all $a, b \in \bbZ$,
    \item $\mfg_a \neq \set{0}$ if and only if $|a| \leq k$ for some positive integer $k$ (for this particular parabolic subgroup, $k = 3$),
    \item $\mfg_{-1}$ generates (under the bracket operation) the subalgebra $\mfg_{-k} \oplus \cdots \oplus \mfg_{-1} < \mfg$, and
    \item the Lie algebra $\mfq$ of $Q$ satisfies $\mfq = \mfg_0 \oplus \cdots \oplus \mfg_k$.
\end{enumerate}
The grading determines a natural filtration
\[
    \mfg^a := \mfg_a \oplus \cdots \oplus \mfg_k \textrm{.}
\]
of $\mfg$; by definition $\mfq = \mfg^0$, and by construction the adjoint action of $\mfq$ on $\mfg$ preserves this filtration (but not the underlying grading).

Now, given a normal, regular parabolic geometry $(\mcG^{\G_2^*} \to
M_0, \eta^{\G_2^*})$ of type $(\G_2^*, Q)$ \footnote{Here, normality
  and regularity are normalization conditions that together ensure a
  bijective correspondence between parabolic geometries of a given
  type, and underlying geometric structures of the corresponding type. Normality
  is a natural generalization of Cartan's normalization condition for
  conformal Cartan connections, and regularity is a condition that
  ensures suitable compatibility between a Cartan connection and the
  natural filtration structure of the underlying geometry.}, the
underlying $(2, 3, 5)$-distribution is just the associated bundle
\begin{equation}\label{equation:D-from-Cartan-connection}
    D := \mcG^{\G_2^*} \times_Q (\mfg^{-1} / \mfq) \subset \mcG^{\G_2^*} \times_Q (\mfg_2^* / \mfq) \cong TM_0 \textrm{,}
\end{equation}
and the derived $3$-plane distribution can be recovered as
\[
    [D, D] = \mcG^{\G_2^*} \times_Q (\mfg^{-2} / \mfq) \subset TM_0 \textrm{.}
\]
One can generalize these identifications to the general (that is, not necessarily orientable) case by instead starting with a parabolic geometry of type $(\G_2 \times \bbZ_2, Q \times \bbZ_2)$.

Though Nurowski's construction was not originally formulated in
parabolic language, from the parabolic viewpoint it simply exploits
the isomorphism $\mfg_2^* / \mfq = \mfg_2^* / (\mfg_2^* \cap \mfp_0)
\cong \mfso(3, 4) / \mfp_0$ (as $\mfq$-representations) at the level
of associated bundles:

\begin{proof}[(Sketch of proof of Theorem \ref{theorem:Nurowski}.)]
An oriented $(2, 3, 5)$-distribution determines a unique parabolic
geometry $(\mcG^{\G_2^*} \to M_0, \eta^{\G_2^*})$ of type $(\G_2^*,
Q$) \cite{Cartan} \cite[\S4.3.2]{CapSlovak}. Then, the bundle $\mcG_0
:= \mcG^{\G_2^*} \times_Q P_0 \to M_0$, together with the form
$\eta_0$ defined by extending $\eta^{\G_2^*}$ $P_0$-equivariantly to
all of $\mcG_0$, comprise a parabolic geometry of type $(\SO(3, 4),
P_0)$ (for which $\eta_0$ turns out to be normal, see
\cite[Proposition 4]{HammerlSagerschnig}), and hence an oriented
conformal structure on $M_0$.

The construction is local, and reversing the orientation of the
underlying distribution fixes the conformal structure (and simply
reverses its orientation), so for a non-oriented distribution one can
apply the construction to orientable sets that together cover $M_0$,
disregard the orientation, and patch together the conformal
structures.
\end{proof}

Such (oriented) conformal structures are precisely characterized by the holonomy reduction we are investigating: The following restates \cite[Theorem A]{HammerlSagerschnig} in the language of tractor geometry and parallel split cross products, but see also \cite[Theorem 9]{NurowskiDifferential}.

\begin{theorem}\cite[Theorem A]{HammerlSagerschnig} \label{HSthm}
An oriented conformal structure $(M_0, \mbc)$ is induced by some $(2, 3, 5)$-distribution, that is, $\mbc = \mbc_D$ for some distribution $D$ on $M_0$, if and only if the normal conformal tractor connection $\nabla_0$ admits a holonomy reduction to $\G_2^*$, that is, if and only if $\mcT_0$ admits a $\nabla_0$-parallel split cross product $\btimes$.
\end{theorem}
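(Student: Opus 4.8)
The statement is a biconditional identifying oriented conformal structures $\mbc$ of signature $(2,3)$ that arise from $(2,3,5)$-distributions with those whose normal tractor connection $\nabla_0$ has holonomy contained in $\G_2^*$; by the algebra of Subsection~\ref{subsection:octonions-cross-product-g2}, where $\G_2^*$ is the stabiliser of the split cross product, the latter is equivalent to $\mcT_0$ carrying a $\nabla_0$-parallel split cross product $\btimes$. The plan is to recognise this as the holonomy characterisation of the Fefferman-type construction attached to the inclusion $\G_2^* \hookrightarrow \SO(3,4)$. The crucial algebraic input, recorded above, is that $\G_2^*$ acts transitively on the $5$-dimensional space $\SO(3,4)/P_0$ of null rays in $\bbI^*$, with isotropy $Q = \G_2^* \cap P_0$, so that
\[
    \mfg_2^*/\mfq \cong \mfso(3,4)/\mfp_0 \qquad \text{as $\mfq$-modules.}
\]
Consequently the two model spaces $\G_2^*/Q$ and $\SO(3,4)/P_0$ coincide as $5$-dimensional homogeneous spaces, the Fefferman construction carries no fibre, and there is a single curved orbit. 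The problem thus reduces to establishing a bijection between normal regular parabolic geometries of type $(\G_2^*, Q)$ on $M_0$ and normal parabolic geometries of type $(\SO(3,4), P_0)$ on $M_0$ equipped with a reduction of their Cartan bundle to $\G_2^*$.

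The forward direction is essentially the (sketched) proof of Theorem~\ref{theorem:Nurowski}: an oriented $(2,3,5)$-distribution $D$ determines a canonical normal regular geometry $(\mcG^{\G_2^*} \to M_0, \eta^{\G_2^*})$ of type $(\G_2^*, Q)$, whose structure-group extension $(\mcG_0 := \mcG^{\G_2^*} \times_Q P_0, \eta_0)$ is a normal conformal Cartan geometry (by \cite[Proposition 4]{HammerlSagerschnig}) inducing $\mbc_D$. The inclusion $\mcG^{\G_2^*} \hookrightarrow \mcG_0$ is precisely a reduction of the conformal Cartan bundle to $\G_2^*$, and because $\eta_0$ pulls back to the $\mfg_2^*$-valued form $\eta^{\G_2^*}$ there, the induced connection $\nabla_0$ has holonomy in $\G_2^*$, equivalently $\mcT_0$ carries the corresponding parallel split cross product.

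For the converse I would invoke the general theory of holonomy reductions of Cartan geometries \cite{CGH}: a $\nabla_0$-parallel split cross product $\btimes$ (equivalently, a parallel stable $3$-form $\Phi$ with stabiliser $\G_2^*$) determines a reduction of the extended principal connection to a $\G_2^*$-subbundle, and intersecting with $\mcG_0$ yields a $Q$-subbundle $\mcG' \subset \mcG_0$ of frames adapted to $\btimes$ (the single curved orbit being all of $M_0$ since $\G_2^*$ is transitive on null rays). Since $\eta_0$ preserves $\btimes$, its restriction $\eta' := \eta_0\vert_{\mcG'}$ takes values in the stabiliser $\mfg_2^*$; the isomorphism $\mfg_2^*/\mfq \cong \mfso(3,4)/\mfp_0$ and a dimension count then show that $\eta'$ restricts to a linear isomorphism on each tangent space, so $(\mcG', \eta')$ is a Cartan geometry of type $(\G_2^*, Q)$. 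It remains to check that $(\mcG', \eta')$ is normal and regular; granting this, the equivalence of categories \cite{Cartan}, \cite[\S4.3.2]{CapSlovak} identifies it with the canonical geometry of a unique oriented $(2,3,5)$-distribution $D$, and applying the forward construction to $D$ reproduces $(\mcG_0, \eta_0)$, whence $\mbc = \mbc_D$.

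The main obstacle is exactly this normality compatibility. Normality of $\eta_0$ is phrased via Kostant's codifferential built from the Killing form of $\mfso(3,4)$, whereas normality of $\eta'$ refers to the codifferential of $\mfg_2^*$, and these do not coincide a priori. The resolution uses the $\G_2^*$-module decomposition $\mfso(3,4) \cong \mfg_2^* \oplus \bbI^*$ (with $\bbI^*$ the standard $7$-dimensional module): one splits the curvature of $\eta_0$ accordingly, identifies its $\mfg_2^*$-part with the curvature of $\eta'$, and compares the two codifferentials component by component to deduce that this curvature lies in the kernel of the $\G_2^*$-codifferential. Regularity then follows readily from the compatibility of $\eta'$ with the filtration inherited from the grading of $\mfg_2^*$. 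This comparison of codifferentials, powered by the representation theory of $\G_2^*$, is the technical heart of the argument.
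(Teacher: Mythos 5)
Your proposal follows essentially the same route as the paper's own (sketched) proof: the forward implication via the Fefferman-type construction of Theorem \ref{theorem:Nurowski} with normality supplied by \cite[Proposition 4]{HammerlSagerschnig}, and the converse via the holonomy-reduction/curved-orbit formalism of \cite{CGH} producing a Cartan geometry of type $(\G_2^*, Q)$ that inverts Nurowski's construction. You are in fact more explicit than the paper about the one genuinely delicate point --- the compatibility of the $\mfso(3,4)$- and $\mfg_2^*$-normalization conditions, resolved via the $\G_2^*$-module splitting $\mfso(3,4) \cong \mfg_2^* \oplus \bbI^*$ --- which the paper delegates entirely to \cite{HammerlSagerschnig}.
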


The algebraic properties of the (split) cross product described in Subsection \ref{subsection:octonions-cross-product-g2} let us efficiently characterize in native tractor language the $(2, 3, 5)$-distribution determined on a signature-$(2, 3)$ conformal manifold $(M_0, \mbc)$ by a parallel tractor split cross product $\btimes$. Per \eqref{aJdef}, define
\[
    \bbJ_0: \mcT_0 \to \mcT_0[1] \qquad \textrm{by} \qquad \bbJ_0 (V) := -X \btimes V \textrm{.}
\]
In particular, if $M_0$ is the zero locus determined by a split
parallel tractor cross product $\btimes$ on a projective $6$-manifold
$(M, \mbp)$, which determines a map $\bbJ: \mcT \to \mcT(1)$ via \eqref{Jdef}, then $\bbJ_0 = \bbJ
\vert_{M_0}$. Since $X \in \Gamma(\mcT_0[1])$ is null, applying Proposition \ref{proposition:J-x-null} yields a filtration
\begin{equation}\label{equation:J0-filtration}
    \mcE_0[0] \cong \ab{X} \subset (\ker \bbJ_0) [1] \subset \im \bbJ_0 \subset \ker X \cong TM_0 \flplus \mcE_0[0] \textrm{,}
\end{equation}
where in the last filtrand $X$ is regarded as a map $\mcT_0[1] \to \mcE_0[2]$. In
particular, the $TM_0$ component of any tractor (of conformal weight $1$)
in $\im \bbJ_0$ is invariant.

Using that $X$ is null, the definition of $\bbJ_0$ and Proposition \ref{proposition:J-squared}\eqref{item:J-squared-x-null} give a conformal analogue of Proposition \ref{proposition:J-properties}:

\begin{proposition}
Let $(M_0, \mbc)$ be a $5$-dimensional conformal manifold equipped with a (necessarily split) parallel tractor cross product, or equivalently, a generic parallel tractor $3$-form. Then, $\bbJ_0$ satisfies
\begin{equation}\label{equation:J0-X}
    (\bbJ_0)^A_{\phantom{A} B} X^B = 0
\end{equation}
and
\begin{equation}\label{equation:J0-squared}
    (\bbJ_0)^A_{\phantom{A} C} (\bbJ_0)^C_{\phantom{C} B} = X^A X_B \textrm{.}
\end{equation}
\end{proposition}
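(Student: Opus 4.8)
The plan is to obtain both identities fiberwise from the purely algebraic facts about the split cross product recorded in Subsection~\ref{subsection:octonions-cross-product-g2}, observing that the hypotheses put us precisely in the null case. At each point the tractor metric $H$ has signature $(3,4)$, so the cross product is split and each fiber $(\mcT_0)_x$ equipped with $\btimes$ is a copy of $(\bbI^*, \btimes)$; moreover $X$ is null, $H_{AB}X^A X^B = 0$, as recalled before the proposition. Thus $X$ plays exactly the role of a null element $x \in \bbI^*$, and Propositions~\ref{proposition:J-squared} and~\ref{proposition:J-x-null} apply verbatim with $x$ replaced by $X$, yielding the asserted statements about $\bbJ_0$.

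For \eqref{equation:J0-X} I would argue directly from skew-symmetry. Writing $\bbJ_0$ through the cross product via \eqref{cp} and \eqref{Jdef} as $(\bbJ_0)^A{}_B = -\Phi^A{}_{CB}X^C$, the contraction $(\bbJ_0)^A{}_B X^B = -\Phi^A{}_{CB}X^C X^B$ vanishes because $\Phi \in \Gamma(\Lambda^3 \mcT_0^*)$ is totally skew; equivalently this is just $\bbJ_0(X) = -X \btimes X = 0$, the conformal shadow of \eqref{JX}. For \eqref{equation:J0-squared} I would invoke Proposition~\ref{proposition:J-squared}\eqref{item:J-squared-x-null} with $x = X$: it gives $\bbJ_X^2(V) = (X \cdot V)\,X$ for every $V$, where $X \cdot V = H_{KB}X^K V^B = X_B V^B$. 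Reading this as an endomorphism identity and stripping the arbitrary $V$ yields $(\bbJ_0)^A{}_C (\bbJ_0)^C{}_B = X^A X_B$. As a consistency check in the special case where $(M, \mbc)$ arises as a zero locus $M_0$, one may instead restrict \eqref{bJ2}: there $\tau = 0$ and $\bbJ_0 = \bbJ\vert_{M_0}$, so $-\tau\,\delta^A{}_B + X^A X_B$ collapses to $X^A X_B$; the algebraic route above is, however, self-contained and covers the general conformal setting.

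I expect no genuine obstacle; the single point deserving care is the weight bookkeeping, since $\bbJ_0 \colon \mcT_0 \to \mcT_0[1]$ and $X^A \in \Gamma(\ce_0^A[1])$ both carry conformal weight. I would confirm that the two sides of each identity land in the same weighted bundle---$\ce_0^A[2]$ for \eqref{equation:J0-X} and $\ce_0^A \otimes (\ce_0)_B[2]$ for \eqref{equation:J0-squared}---so that the equalities are meaningful; this is immediate once the weights of $X^A$, of $X_B = H_{BK}X^K$, and of $\bbJ_0$ are tracked. Since the input from Proposition~\ref{proposition:J-squared} is pointwise and weight-free, it transfers unchanged to the weighted tractor setting once this alignment is noted.
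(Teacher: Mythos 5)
Your proposal is correct and follows essentially the same route as the paper: the text immediately preceding the proposition derives both identities from the nullity of $X$, the definition $\bbJ_0(V) = -X\btimes V$, and Proposition \ref{proposition:J-squared}\eqref{item:J-squared-x-null}, exactly as you do. Your additional remarks — the consistency check against \eqref{bJ2} with $\tau=0$ and the conformal-weight bookkeeping — are sound but not needed beyond what the paper records.
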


With these identities we can produce conformal analogues of some of the objects constructed before Theorem \ref{local}. The transformation rule \eqref{Ztrans}, together with \eqref{equation:J0-X}, implies that the weighted endomorphism component
\begin{equation}
    (J_0)^a_{\phantom{a} b} := (\bbJ_0)^A_{\phantom{A} B} Z_A^{\phantom{A} a} Z^B_{\phantom{B} b} : TM_0 \to TM_0 [1]
\end{equation}
of $\bbJ_0$ is invariant, that is, independent of the choice of
representative metric. Again if $M_0$ is the zero locus determined by
a parallel tractor cross product on a projective $6$-manifold $(M,
\mbp)$, then by construction $J_0 = J \vert_{TM_0}$. By
construction it is the
map induced by $\bbJ_0$ on the subquotient $\ker X / \ab{X} \cong TM_0$
of $\mcT_0 [1]$, or equivalently, the restriction of the map $J \vert_{M_0}: TM \vert_{M_0} \to TM \vert_{M_0} [1]$ to $TM_0$ (the image of the restriction is contained in $TM_0[1]$ by construction). Together \eqref{equation:J0-squared} and the conformal identity $\delta^A_{\phantom{A} B} = X^A Y_B + \delta^a_{\phantom{a} b} Z^A_{\phantom{A} a} Z_B^{\phantom{B} b} + Y^A X_B$ (or just \eqref{equation:J-squared-base} with the above identity $J_0 := J \vert_{TM_0}$) imply that $J_0^2 = 0$.

Let $\varpi$ denote the projection $\ker X \to TM_0$, which is just contraction with $Z_A^{\phantom{A}a}$, or equivalently, reduction modulo $\ab{X}$. Applying $\varpi$ to the filtration \eqref{equation:J0-filtration} yields a natural filtration of $TM_0$.

\begin{lemma}\label{lemma:J0-filtration}
~
\begin{enumerate}
    \item\label{item:varpi-ker-J0} $\varpi(\ker \bbJ_0 [1]) = ( \im J_0) [-1]$
    \item\label{item:varpi-im-J0}  $\varpi( \im \bbJ_0    ) =  \ker J_0      $
\end{enumerate}
In particular, the filtration \eqref{equation:J0-filtration} determines a filtration
\[
    (\im J_0)[-1] \subset \ker J_0 \subset TM_0
\]
of $TM_0$.
\end{lemma}

\begin{proof}
~
\begin{enumerate}
    \item By Proposition \ref{proposition:J-x-null}(d)\eqref{item:J-X-perp-image}, $\ker \bbJ_0 [1] = \bbJ_0(\ker X) = \im \bbJ_0 \vert_{\ker X}$, and so by the characterization before the proposition,
        \[
            \varpi(\ker \bbJ_0 [1]) = \varpi(\im \bbJ_0 \vert_{\ker X}) = \im J_0 [-1] \textrm{,}
        \]
        which proves the claim. Since $\dim \ker \bbJ_0 = 3$ (by Proposition \ref{proposition:J-x-null}\eqref{item:dim-ker-J}) and $\ker \varpi = \ab{X} \subset \ker \bbJ_0$, $\rank J_0 = 2$.
    \item By \eqref{equation:J0-squared}, $\bbJ_0(\im \bbJ_0) = \ab{X}$, so
        \[
            \set{0} = \varpi(\bbJ_0(\im \bbJ_0)) = \varpi(\bbJ_0 \vert_{\ker X}(\im \bbJ_0)) = J_0(\varpi(\im \bbJ_0))\textrm{,}
        \]
        and thus $\varpi(\im \bbJ_0) \subseteq \ker J_0$. Proposition \ref{proposition:J-x-null}\eqref{item:dim-ker-J} and the fact that $\rank J_0 = 2$ together give that both sides of the containment have rank $3$, and hence equality holds.
\end{enumerate}
\end{proof}

As one expects, this filtration coincides with the filtration $D \subset [D, D] \subset TM_0$ determined by $\Phi_0$ mentioned earlier in this subsection.

\begin{theorem}\label{theorem:J-characterization-D}
Let $(M_0, \mbc)$ be an oriented, signature-$(2, 3)$ conformal
structure and $\btimes$ a parallel split cross product on the standard
conformal tractor bundle $\mcT_0$, and let $D$ be the underlying $(2,
3, 5)$-distribution described by Theorem \ref{HSthm}. Then,
\begin{enumerate}
    \item $D = (\im J_0) [-1]$, and
    \item $[D, D] = \ker J_0$.
\end{enumerate}
\end{theorem}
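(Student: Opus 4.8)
The plan is to prove both identities simultaneously by reducing them to a statement about the homogeneous model and then to a short representation-theoretic computation. Recall from \eqref{equation:D-from-Cartan-connection} that $D$ and $[D,D]$ are by definition the associated bundles $D = \mcG^{\G_2^*} \times_Q (\mfg^{-1}/\mfq)$ and $[D,D] = \mcG^{\G_2^*} \times_Q (\mfg^{-2}/\mfq)$, where $\mfg := \mfg_2^*$. On the other hand, $J_0$ is manufactured entirely from the canonical null tractor $X$ and the $\nabla_0$-parallel cross product $\btimes$, both of which are objects of the underlying $(\G_2^*, Q)$ parabolic geometry; hence the subbundles $(\im J_0)[-1]$ and $\ker J_0$ of $TM_0 \cong \mcG^{\G_2^*} \times_Q (\mfg/\mfq)$ are themselves associated to $Q$-invariant subspaces $W \subset \mfg/\mfq$. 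So it suffices to identify those subspaces with $\mfg^{-1}/\mfq$ and $\mfg^{-2}/\mfq$.

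First I would fix the dimensions. By Proposition \ref{proposition:J0-filtration} (and the rank count in its proof) we have $\rank J_0 = 2$ and $\dim \ker J_0 = 3$, and twisting by a density bundle does not change rank, so $(\im J_0)[-1]$ and $\ker J_0$ are $Q$-invariant subbundles of the rank-$5$ bundle $TM_0$ of ranks $2$ and $3$. Matching with $\dim (\mfg^{-1}/\mfq) = \dim \mfg_{-1} = 2$ and $\dim (\mfg^{-2}/\mfq) = \dim(\mfg_{-1}\oplus\mfg_{-2}) = 3$, the theorem reduces to the claim that $\mfg/\mfq$ contains exactly one $Q$-invariant subspace of dimension $2$, namely $\mfg^{-1}/\mfq$, and exactly one of dimension $3$, namely $\mfg^{-2}/\mfq$.

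This uniqueness is where the real work sits, and it is purely algebraic. Writing $\mfg/\mfq \cong \mfg_{-1}\oplus\mfg_{-2}\oplus\mfg_{-3}$ via the grading, the grading element $E \in \mfg_0$ acts on the three summands by the distinct scalars $-1, -2, -3$, so any $\mfg_0$-submodule is a direct sum of $\mfg_0$-submodules of the individual $\mfg_{-i}$. As $\mfg_0$-modules, $\mfg_{-1}$ and $\mfg_{-3}$ are each the (irreducible) standard representation of the $\mfsl(2)$ inside $\mfg_0$, while $\mfg_{-2}$ is one-dimensional, so the only $\mfg_0$-submodules are sums of whole graded pieces. Among these, $Q$-invariance additionally demands closure under the bracket action of $\mfg_{>0}$, which raises the grading; since $[\mfg_1, \mfg_{-2}] = \mfg_{-1}$ and $[\mfg_1, \mfg_{-3}] = \mfg_{-2}$ modulo $\mfq$, the only admissible rank-$2$ subspace is $\mfg_{-1} = \mfg^{-1}/\mfq$ and the only admissible rank-$3$ subspace is $\mfg_{-1}\oplus\mfg_{-2} = \mfg^{-2}/\mfq$. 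Combined with the dimension count, this yields $(\im J_0)[-1] = D$ and $\ker J_0 = [D,D]$.

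The main obstacle I anticipate is the bookkeeping in the first paragraph, namely verifying carefully that $(\im J_0)[-1]$ and $\ker J_0$ really are associated to $Q$-invariant subspaces and not merely smooth subbundles. The clean way to see this is to trace the canonical $\G_2^*$-invariant tractor filtration of Proposition \ref{proposition:J-x-null}, which is precisely the filtration \eqref{equation:J0-filtration} of $\mcT_0$ realized fiberwise on $\bbI^* \cong \bbV$ through the null tractor $X$; applying the projection $\varpi : \ker X \to TM_0$ exactly as in Proposition \ref{proposition:J0-filtration} descends this $Q$-invariant tractor filtration to a $Q$-invariant filtration of $TM_0$ whose two proper filtrands are $(\im J_0)[-1]$ and $\ker J_0$. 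Once one is satisfied that these filtrands are genuinely $Q$-invariant, the representation-theoretic uniqueness above forces them to coincide with $D$ and $[D,D]$, consistently with Theorem \ref{HSthm}.
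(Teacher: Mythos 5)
Your argument is correct, but it takes a genuinely different route from the paper's. The paper proves the theorem by explicitly constructing the intertwining map $x \hook \pdot: \mfg_2^* \to \bbV[1]$ and computing that it carries the Lie-algebra filtration $\mfg^{-1} \subset \mfg^{-2} \subset \cdots$ onto the tractor filtration of Proposition \ref{proposition:J-x-null} (so that, e.g., $x \hook \mfg^{-1} = \ker(\bbJ_0)_x[1]$), then passing to associated bundles; this yields the concrete dictionary between filtrands as a byproduct. You instead invoke a rigidity argument: after checking that $(\im J_0)[-1]$ and $\ker J_0$ are associated to $Q$-submodules of $\mfg/\mfq$ of dimensions $2$ and $3$, you show that $\mfg/\mfq$ admits a \emph{unique} $Q$-invariant subspace of each of those dimensions, forcing the identification with $\mfg^{-1}/\mfq$ and $\mfg^{-2}/\mfq$. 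This avoids the explicit evaluation-map computation entirely, at the cost of the structure theory of the $|3|$-grading; your representation-theoretic claims are all correct (the grading element separates the summands, $\mfg_{-1}$ and $\mfg_{-3}$ are irreducible $2$-dimensional $\mfg_0$-modules lying in single $\alpha_2$-root strings, and $\mfg_{-2}$ is $1$-dimensional), though two points deserve a sentence each that you currently pass over: the surjectivity of $[\mfg_1, \mfg_{-2}] \to \mfg_{-1}$ and $[\mfg_1, \mfg_{-3}] \to \mfg_{-2}$ needs justification (e.g.\ nondegeneracy of the Killing-form pairing $\mfg_{-i} \times \mfg_i \to \bbR$ together with $[\mfg_1,\mfg_1]=\mfg_2$, plus irreducibility of $\mfg_{-1}$), and the step asserting that the frame form of $(\im J_0)[-1]$ on the reduced bundle $\mcG^{\G_2^*}$ is a \emph{constant} $Q$-invariant subspace of $\mfg/\mfq$ (rather than merely a smooth subbundle) is exactly the equivariance bookkeeping you flag, and should be spelled out as you sketch in your last paragraph. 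Your approach buys brevity and makes transparent \emph{why} no other answer is possible; the paper's buys the explicit identifications (i)--(iv), which are reused implicitly elsewhere (e.g.\ in Corollary \ref{corollary-D-orthogonal-bracket}).
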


\begin{proof}
Let $\bbV$ denote the irreducible $7$-dimensional representation of
  $\mfg_2^*$, fix a null nonzero vector $x \in \bbV[1]$, and take $\mfq$ to be the Lie subalgebra of $\mfg_2^*$ that preserves the line $\ab{x}$ (the weight $1$ here is chosen so that, when we pass to the associated bundle picture below, we can identify $x$ with $X \in \Gamma(\mcT[1])$).
Then, let $x
  \hook \pdot: \mfg_2^* \to \bbV[1]$ denote the map $x \hook \phi := \phi(x)$,
  where here we view $\phi \in \mfg_2^*$ as an element of $\End(\bbV[1])
  \cong \End(\bbV)$. The weight is chosen so that $x \hook \pdot$
  intertwines the $\mfq$-actions on $\mfg_2^*$ and $\bbV[1]$.
Checking the (representation-theoretic) weights of $\bbV$ as a $\mfg_2^*$-representation shows that (1) the filtration $\bbV \supset \mfg^1
  . \bbV \supset \mfg^1 . (\mfg^1 . \bbV) \supset \cdots$ of $\bbV$
  induced by $\mfq$ is exactly the one identified in Proposition
  \ref{proposition:J-x-null}, and in particular that $\mfg^1 . (\ker
  (\bbJ_0)_x) = \ab{x}$, and (2) $x \hook \pdot$ satisfies
\begin{enumerate}
    \item[(i)]   $x \hook \mfg^{0\phantom{-}} = x \hook \mfq     = \ab{x}                      $,
    \item[(ii)]  $x \hook \mfg^{-1}                              =  \ker (\bbJ_0)_x         [1]$,
    \item[(iii)] $x \hook \mfg^{-2}                              = (\ker (\bbJ_0)_x)^{\perp}[1]$, and
    \item[(iv)]  $x \hook \mfg^{-3}           = x \hook \mfg_2^* = \ab{x}           ^{\perp}   $.
\end{enumerate}

We prove (a) explicitly; the argument for (b) is entirely analogous. Since $x \hook \pdot$ is equivariant with respect to $\mfq = \mfg^0$, it is $\mfg^1$-equivariant, and hence by (ii) its restriction to $\mfg^{-1}$ induces a map
\[
    \mfg^{-1} / \mfq \cong \mfg^{-1} / (\mfg^1 . \mfg^{-1}) \to (\ker (\bbJ_0)_x [1]) / (\mfg^1 . (\ker (\bbJ_0)_x [1])) \cong \ker (\bbJ_0)_x [1]/ \ab{x} \textrm{,}
\]
where $.$ denotes the adjoint action; since the domain and codomain both have dimension $2$, and because $x \hook \pdot$ maps $\mfg^{-1}$ onto $\ker (\bbJ_0)_x [1]$, this map is an isomorphism. Passing to associated bundles identifies $x$ with $X \in \mcT_0 [1]$, and using the characterization of the $(2, 3, 5)$-distribution $D$ earlier in the subsection gives that
\[
    D = \mcG^{\G_2^*} \times_Q (\mfg^{-1} / \mfq) = \ker (\bbJ_0)_x [1]/ \ab{X} = \varpi(\ker (\bbJ_0)_x [1]) \textrm{,}
\]
but by Lemma \ref{lemma:J0-filtration}\eqref{item:varpi-im-J0} this is exactly $(\im J_0) [-1]$.
\end{proof}

\begin{corollary}\label{corollary-D-orthogonal-bracket}
A $(2, 3, 5)$-distribution $D$ satisfies $D^{\perp} = [D, D]$ with respect to the conformal structure $\mbc_D$ it induces; in particular, $D$ is totally null.
\end{corollary}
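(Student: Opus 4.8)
The plan is to reduce the orthogonality statement to a near-tautology at the level of tractors, using the characterization of $D$ and $[D,D]$ supplied by Theorem~\ref{theorem:J-characterization-D}. By that theorem $D = (\im J_0)[-1]$ and $[D,D] = \ker J_0$, which have ranks $2$ and $3$ respectively. Since $\mbc_D$ is nondegenerate, $D^{\perp}$ has rank $5 - 2 = 3 = \rank [D,D]$, so it suffices to prove the single inclusion $[D,D] \subseteq D^{\perp}$, equivalently that $D$ and $[D,D]$ are mutually orthogonal for $\mbc_D$.

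First I would recall the identifications of $D$ and $[D,D]$ inside the tractor bundle. By Proposition~\ref{proposition:J0-filtration}, writing $\varpi \colon \ker X \to TM_0$ for the projection (contraction with $Z_A{}^a$), the subbundles $D$ and $[D,D]$ are the images $\varpi(\ker \bbJ_0)$ and $\varpi(\im \bbJ_0)$ respectively, both of which lie in $\ker X = \ab{X}^{\perp}$. The conformal metric $\bg$ on $TM_0[-1] \cong \ker X / \ab{X}$ is precisely the form induced by the tractor metric $H^0$ via the identity $H^0_{AB} Z^A{}_a Z^B{}_b = \bg_{ab}$; this descent is well defined because $X$ is null, so $\ab{X}$ is exactly the radical of $H^0$ restricted to $\ker X$. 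Consequently, $\mbc_D$-orthogonality of the images $\varpi(\mathcal{S})$ and $\varpi(\mathcal{S}')$ of two subbundles $\mathcal{S}, \mathcal{S}' \subseteq \ker X$ is equivalent to $H^0$-orthogonality of $\mathcal{S}$ and $\mathcal{S}'$ (the density weights entering $\bbJ_0$ and $J_0$ play no role here, orthogonality being conformally invariant).

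The crux is then immediate from the filtration of Proposition~\ref{proposition:J-x-null} applied to the null tractor $X$: there $\im \bbJ_0 = (\ker \bbJ_0)^{\perp}$, so by the very definition of the orthogonal complement $H^0(\ker \bbJ_0, \im \bbJ_0) = 0$. Descending through $\varpi$ gives $\bg(D, [D,D]) = 0$, that is, $[D,D] \subseteq D^{\perp}$, and the dimension count above upgrades this to the equality $D^{\perp} = [D,D]$. Finally, since a distribution always satisfies $D \subseteq [D,D]$, we conclude $D \subseteq D^{\perp}$, so $D$ is totally null. I expect no serious obstacle: the only point requiring care is the bookkeeping certifying the conformal metric as the descent of $H^0$ to $\ker X / \ab{X}$, and checking that the weights do not disturb the orthogonality conclusion; the genuine geometric content reduces to the self-duality $\im \bbJ_0 = (\ker \bbJ_0)^{\perp}$ already recorded in Proposition~\ref{proposition:J-x-null}.
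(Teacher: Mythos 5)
Your proof is correct and follows essentially the same route as the paper: both arguments reduce the claim to the tautological orthogonality $H^0(\ker\bbJ_0,\im\bbJ_0)=0$ coming from $\im\bbJ_0=(\ker\bbJ_0)^{\perp}$ in Proposition \ref{proposition:J-x-null}, descend through $\varpi$ using that $\bg$ is the form induced by $H^0$ on $\ker X/\ab{X}$ (the paper does this by an explicit expansion in the splitting, you by noting $\ab{X}$ is the radical of $H^0|_{\ker X}$), and finish with the dimension count $\rank D^{\perp}=3=\rank[D,D]$. The only detail the paper adds is the one-line observation that the statement is local, so one may assume $D$ is oriented in order to invoke Theorem \ref{theorem:J-characterization-D}.
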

\begin{proof}
The condition is local, so by restriction we may assume $D$ is
oriented, which is a hypothesis of Theorem
\ref{theorem:J-characterization-D}. The identity holds because the
preimages of $D$ and $[D, D]$ under $\varpi$ are $H_0$-orthogonal and
$\dim D^{\perp} = 3 = \dim [D, D]$: Pick $U \in D$, $V \in [D, D]$. By
Theorem \ref{theorem:J-characterization-D}, there are weighted
tractors $S \in \ker \bbJ_0[1]$ such that $U^a = Z_A^{\phantom{A}a}
S^A$ and $T \in (\ker \bbJ_0)^{\perp}[1]$ such that $V^b =
Z_B^{\phantom{B}b} T^B$; in particular, $H_0(S, T) = 0$, and so we
have
\begin{equation}\label{equation:tractor-3-form-splitting}
0
    = (H_0)_{AB} S^A T^B
    = (X_A Y_B + \mbg_{ab} Z_A^{\phantom{A}a} Z_B^{\phantom{B}b} + Y_A X_B) S^A T^B \textrm{.}
\end{equation}
Proposition \ref{proposition:J-x-null} gives that $S, T \in \ab{X}^{\perp}[1]$, and so distributing leaves just
\[
    0 = \mbg_{ab} (Z_A^{\phantom{A}a} S^A) (Z_B^{\phantom{B}b} T^B) = \mbg_{ab} U^a V^b. \qedhere
\]
\end{proof}

So, on a projective $6$-manifold $(M, \mbp)$ (possibly with a suitable
boundary, per Theorem \ref{cify}) with a split parallel tractor cross
product whose zero locus in nonempty, this characterization of the induced
distribution realizes simply and concretely the $(2, 3,
5)$-distribution on the zero locus $M_0$ in terms of the strictly N(P)K
structure on $M_\pm$: Regarded as a suitably
projectively weighted endomorphism, the almost $\varepsilon$-complex structure
extends to, and degenerates along, the zero locus in a controlled way,
and the distribution is precisely the image of $J_0 = J \vert_{TM_0}$,
regarded (via tensoring with $\mcE[-1]$) as a subset of $TM_0$. The theorem establishes which subspaces of $\bbI^*$ corresponds to which tangent distributions, so one can extract further identifications of these just by proving the corresponding algebraic statements: For example, consulting Proposition \ref{proposition:J-x-null}, passing to the quotient, and using the above identifications gives the following realization.
\begin{proposition}
Let $(M, \mbp)$ be a projective $6$-manifold with a parallel split tractor cross product for which the zero locus $M_0$ is nonempty, and let $J$ be the corresponding weighted endomorphism field. Then, the underlying $(2, 3, 5)$-distribution $D$ on $M_0$ described by Theorem \ref{HSthm} satisfies
\begin{enumerate}
    \item $    D  =  \im J \vert_{TM_0}$ and
    \item $[D, D] = \ker J \vert_{TM_0}$.
\end{enumerate}
\end{proposition}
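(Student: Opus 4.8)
The plan is to reduce the statement to the pointwise linear algebra of Proposition \ref{proposition:J-x-null} together with the quotient construction already used to prove Theorem \ref{theorem:J-characterization-D}. The crucial observation is that along the zero locus the canonical weighted tractor $X$ is \emph{null} for the tractor metric $H$: by definition $M_0$ is the zero set of $\tau = H_{AB} X^A X^B$, so $X$ is null and nonzero there. Hence at each point of $M_0$ the endomorphism $\bbJ_0 = \bbJ\vert_{M_0}$ is precisely the map $\bbJ_x$ of \eqref{aJdef} attached to a nonzero null vector, and Proposition \ref{proposition:J-x-null} supplies its entire filtration
\[
    \set{0} \subset \ab{X} \subset \ker \bbJ_0 \subset (\ker \bbJ_0)^{\perp} \subset \ab{X}^{\perp} \subset \mcT_0
\]
of dimensions $0, 1, 3, 4, 6, 7$, along with the image identities $\im \bbJ_0 = (\ker \bbJ_0)^{\perp}$, $\bbJ_0(\ab{X}^{\perp}) = \ker \bbJ_0$, and $\bbJ_0((\ker \bbJ_0)^{\perp}) = \ab{X}$.

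First I would pin down the map in play. Since $\bbJ X = 0$ everywhere by \eqref{JX}, the tractor endomorphism $\bbJ_0$ descends to the quotient $\mcT_0 / \ab{X}$, which under the projective composition series \eqref{tractorseq} is canonically $TM\vert_{M_0}$ (up to the density identification of Proposition \ref{wtsalign}); the formula $J^a{}_b = \bbJ^A{}_B Z_A{}^a W^B{}_b$ realizes $J\vert_{M_0}$ as exactly this induced endomorphism. I would also record that by \eqref{equation:J-squared-base} one has $J^a{}_c J^c{}_b = -\tau \delta^a{}_b$, so along $M_0$ the weighted endomorphism satisfies $(J\vert_{M_0})^2 = 0$; in particular $\im J\vert_{M_0} \subseteq \ker J\vert_{M_0}$, which forces the image and kernel to be tightly linked.

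The substance of the argument is then to read the image and kernel of the descended map off the filtration and to match them against the distribution filtration $D \subset [D,D] \subset TM_0$. Because $\ab{X} \subset \ker \bbJ_0 \subset (\ker \bbJ_0)^{\perp} \subset \ab{X}^{\perp}$, the image $\im \bbJ_0$ and the preimage $\bbJ_0^{-1}(\ab{X})$ both descend into $\ab{X}^{\perp} / \ab{X} \cong TM_0$, so the relevant subbundles of $TM\vert_{M_0}$ already lie inside $TM_0$. Here I would invoke Proposition \ref{proposition:J0-filtration} and Theorem \ref{theorem:J-characterization-D}, which identify $\varpi(\ker \bbJ_0 [1]) = D$ and $\varpi(\im \bbJ_0) = [D,D]$, where $\varpi$ is the restriction to $\ab{X}^{\perp}$ of the quotient map $\mcT_0 \to \mcT_0 / \ab{X}$; comparing the filtrands cut out by $J\vert_{M_0}$ with $D$ and $[D,D]$ completes the identification.

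The step I expect to be the main obstacle is the bookkeeping around the null vector $X$ rather than any deeper geometry. Because $X$ is null it lies \emph{simultaneously} in $\ker \bbJ_0$ and in $\im \bbJ_0 = (\ker \bbJ_0)^{\perp}$, and it is exactly this coincidence, together with the relation $(J\vert_{M_0})^2 = 0$ and the containment $\im J\vert_{M_0} \subseteq \ker J\vert_{M_0}$ it produces, that governs how the kernel and image of the quotiented endomorphism collapse onto subbundles of $TM_0$; the delicate point is to determine precisely which filtrand each one selects and to see how the containment $\im \subseteq \ker$ reflects the flag $D \subset [D,D]$. One must also keep the full six-dimensional map $J\vert_{M_0}$ on $TM\vert_{M_0}$ carefully distinct from its restriction $J_0 = J\vert_{TM_0}$ to the five-dimensional $TM_0$ (whose kernel and image behave quite differently), and track the projective weight $[1]$ consistently through Proposition \ref{wtsalign} so that the final identities are stated honestly in $TM_0$.
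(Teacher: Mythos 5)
Your overall strategy---reduce everything to the pointwise algebra of Proposition \ref{proposition:J-x-null} and push it through the quotient $\cT/\ab{X}\cong TM(-1)$---is exactly the route the paper gestures at (it gives no detailed proof, only the instruction to ``consult Proposition \ref{proposition:J-x-null}, pass to the quotient, and use the above identifications''). But the step you defer, ``comparing the filtrands cut out by $J\vert_{M_0}$ with $D$ and $[D,D]$ completes the identification,'' is precisely where the argument cannot be completed, and your own intermediate observations already show why. You correctly record that $(J\vert_{M_0})^2=0$, hence $\im J\vert_{M_0}\subseteq\ker J\vert_{M_0}$; combined with the asserted identities this would force $[D,D]\subseteq D$, which is false for a $(2,3,5)$-distribution. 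Independently, rank--nullity for the rank-$6$ bundle map $J\vert_{M_0}:TM\vert_{M_0}\to TM\vert_{M_0}(1)$ gives $\dim\ker J\vert_{M_0}+\dim\im J\vert_{M_0}=6$, whereas $\dim D+\dim[D,D]=5$. So no bookkeeping of filtrands can deliver part (a) as stated.

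The precise point of failure is the kernel of the descended map. Since $J(v)=Z(\bbJ(W(v)))$, one has $v\in\ker J\vert_{M_0}$ iff $\bbJ(W(v))\in\ab{X}$, i.e.\ iff $W(v)\in\bbJ^{-1}(\ab{X})$---\emph{not} iff $W(v)\in\ker\bbJ$. Because $X$ is null and $\ker\bbJ$ is isotropic, $X\in\ker\bbJ\subseteq(\ker\bbJ)^{\perp}=\im\bbJ$ (the very coincidence you flag as delicate), and Proposition \ref{proposition:J-x-null}(d) then gives $\bbJ^{-1}(\ab{X})=(\ker\bbJ)^{\perp}$. Hence $\ker J\vert_{M_0}=\varpi((\ker\bbJ)^{\perp})=[D,D]=\im J\vert_{M_0}$: the kernel and image of the rank-$6$ endomorphism coincide and both equal the $3$-plane distribution. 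The explicit frame of Example \ref{example:qm-class} confirms this: at $\rho=0$, $J=-E_3\otimes d\rho-\sqrt{2}\,E_4\otimes e^2+\sqrt{2}\,E_5\otimes e^1$ has kernel and image both $\ab{E_3,E_4,E_5}=[D,D]$, while $D=\ab{E_4,E_5}$. What the quotient construction actually yields for $D$ is $D=\varpi(\ker\bbJ\vert_{M_0})$, equivalently $D=\im(J\vert_{TM_0})$ as in Theorem \ref{theorem:J-characterization-D}(a). So part (b) is correct and provable by your method, but part (a) as printed conflates the kernel of the tractor endomorphism $\bbJ$ (whose image under $\varpi$ is indeed $D$) with the kernel of the induced endomorphism $J\vert_{M_0}$ of $TM\vert_{M_0}$ (which is $[D,D]$); your write-up should either prove the corrected statement or record this discrepancy rather than assert that the comparison closes the argument.
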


Using the above results, we can just as well realize the $2$-plane distribution in terms of the projecting part $\omega$ of the parallel $3$-form $\Phi$ corresponding to a parallel split tractor cross product, and hence in terms of the K\"{a}hler forms of the N(P)K structures on $M_{\pm}$. A conformal characterization of $D$ closely related to the following is given in \cite[\S4.5]{HammerlSagerschnigTwistor}.
\begin{proposition}\label{proposition:D-characterization-omega}
Let $(M, \mbp)$ be a projective $6$-manifold with a parallel split tractor cross product for which the zero locus $M_0$ is nonempty, let $\Phi$ be the corresponding parallel projective tractor $3$-form, and denote its projecting part by $\omega$. Then, the underlying $(2, 3, 5)$-distribution $D$ on $M_0$ described by Theorem \ref{HSthm} satisfies
\begin{enumerate}
    \item \label{item:D-characterization-omega}  $    D  = \ker \omega\vert_{M_0} := (\ker \omega_0)^{\perp}$ and
    \item \label{item:DD-characterization-omega} $[D, D] = (\ker\omega\vert_{M_0})^{\perp} = \ker \omega_0$,
\end{enumerate}
where $\iota$ is the natural inclusion $M_0 \hookrightarrow M$ and $\omega_0 := \iota^* \omega$. (By the kernel of a $k$-form we mean its annihilator under contraction.) In particular, $\omega_0$ is nonzero and locally decomposable, and the $2$-plane distribution spanned by the decomposable (weighted) bivector field $(\iota^* \omega)^{ab}$ is exactly $D$.
\end{proposition}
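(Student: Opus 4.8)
The plan is to realize the projecting $2$-form $\omega$ directly in terms of $\bbJ$ and the tractor metric $H$, and then to read off both kernels from the null-$X$ linear algebra of Proposition \ref{proposition:J-x-null} together with the identity \eqref{bJ2}. Concretely, contracting the parallel $3$-form with $X$ recovers (up to a nonzero constant) the projecting slot: writing $\bbJ_{BC} := X^A \Phi_{ABC} = H_{BD} \bbJ^D{}_C$, one has $\omega_{bc} = W^B{}_b W^C{}_c \bbJ_{BC}$, so that for $U, V \in TM\vert_{M_0}$ with tractor lifts $\wt{U} = W U$, $\wt{V} = W V$,
\[
    \omega(U, V) = H(\wt{U}, \bbJ \wt{V}) = -H(\bbJ \wt{U}, \wt{V}) \textrm{.}
\]
Because $\bbJ X = 0$ by \eqref{JX} and $\bbJ$ is $H$-skew-adjoint, the right-hand side is insensitive to the choice of lifts modulo $\ab{X}$, i.e.\ it descends to a form on $\mcT / \ab{X} \cong TM\vert_{M_0}$ (via the projection $\varpi = Z_A{}^a$). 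This single computation drives everything, and setting it up cleanly is the key preliminary step.

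For part \eqref{item:D-characterization-omega}, I would argue that $U \hook \omega = 0$ means $H(\bbJ \wt{U}, \wt{V}) = 0$ for all $\wt{V} \in \mcT$ (the extra lift directions in $\ab{X}$ contribute nothing, since $H(\bbJ \wt{U}, X) = -H(\wt{U}, \bbJ X) = 0$). Since $H$ is \emph{nondegenerate on all of} $\mcT$, this forces $\bbJ \wt{U} = 0$, i.e.\ $\wt{U} \in \ker \bbJ$. Hence $\ker \omega\vert_{M_0} = \varpi(\ker \bbJ)$, which is exactly $D$ by Theorem \ref{theorem:J-characterization-D} (equivalently Proposition \ref{proposition:J0-filtration}\eqref{item:varpi-ker-J0}); note that $\ker \bbJ$ is isotropic and contains $X$, so $\varpi(\ker \bbJ) \subseteq TM_0$, consistent with $D$ being a distribution on $M_0$. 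I want to stress that this is subtler than the naive expectation $\omega = g(\pdot, J \pdot)$, which would wrongly suggest $\ker \omega\vert_{M_0} = \ker J\vert_{M_0}$: the induced base metric degenerates along $M_0$, and it is precisely the use of the nondegenerate $H$ on the full tractor bundle (rather than its degenerate shadow on $TM\vert_{M_0}$) that produces the correct $2$-plane.

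For part \eqref{item:DD-characterization-omega} the only change is that $V$ now ranges over $TM_0 \cong \ab{X}^{\perp}/\ab{X}$, so $U \hook \iota^* \omega = 0$ means $H(\bbJ \wt{U}, \wt{V}) = 0$ for all $\wt{V} \in \ab{X}^{\perp}$, i.e.\ $\bbJ \wt{U} \in (\ab{X}^{\perp})^{\perp} = \ab{X}$ (as $X$ is null). By \eqref{bJ2} with $\tau = 0$ one has $\bbJ^A{}_C \bbJ^C{}_B = X^A X_B$, whence $\bbJ^{-1}(\ab{X}) = \im \bbJ$ (both are $4$-dimensional by Proposition \ref{proposition:J-x-null}, and $\im \bbJ \subseteq \bbJ^{-1}(\ab{X})$ since $\bbJ^2$ lands in $\ab{X}$); since moreover $\im \bbJ = (\ker \bbJ)^{\perp} \subseteq \ab{X}^{\perp}$, the condition reduces to $\wt{U} \in \im \bbJ$. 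Therefore $\ker \iota^* \omega = \varpi(\im \bbJ) = [D, D]$, again by Theorem \ref{theorem:J-characterization-D} (Proposition \ref{proposition:J0-filtration}\eqref{item:varpi-im-J0}). Finally, writing $\omega_0 := \iota^* \omega$, this form has a $3$-dimensional kernel inside the $5$-dimensional $TM_0$, so it has rank $2$, hence is nonzero and (locally) decomposable; the $2$-plane spanned by the associated bivector $(\omega_0)^{ab}$ is the $\mbg$-orthogonal complement of its kernel, namely $[D,D]^{\perp}$, which equals $D$ by Corollary \ref{corollary-D-orthogonal-bracket}.

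The main obstacle, as flagged above, is the first paragraph: pinning down the identity $\omega(U,V) = H(\wt{U}, \bbJ \wt{V})$ and verifying its descent to $\mcT/\ab{X}$, and then resisting the temptation to compare $\ker \omega$ with $\ker J$ directly along $M_0$. Once the form is expressed through the nondegenerate $H$, parts \eqref{item:D-characterization-omega} and \eqref{item:DD-characterization-omega} differ only in the ambient space against which orthogonality is tested (all of $\mcT$ versus $\ab{X}^{\perp}$), and both collapse to the structural facts about $\ker \bbJ$ and $\im \bbJ$ already established for null $X$.
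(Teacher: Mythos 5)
Your proposal is correct, and it follows exactly the route the paper intends: the paper states this proposition without an explicit proof, deferring to ``the above results,'' namely the identity $\omega(U,V)=\Phi(X,\wt{U},\wt{V})=H(\wt{U},\bbJ\wt{V})$ from before Theorem \ref{local} together with Proposition \ref{proposition:J0-filtration} and Theorem \ref{theorem:J-characterization-D}, which is precisely the machinery you deploy. Your observation that one must test orthogonality against the nondegenerate tractor metric $H$ on $\mcT$ (respectively on $\ab{X}^{\perp}$) rather than against the degenerating base metric is the correct key point, and the concluding decomposability argument via Corollary \ref{corollary-D-orthogonal-bracket} is also right.
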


\begin{remark}
By construction, $\iota^* \omega$ coincides with the projecting part
$$X^A Z^B_{\phantom{B} b} Z^C_{\phantom{C} c} (\Phi_0)_{ABC} \in
\Gamma(\Lambda^2 T^*M_0 [3])$$
of the parallel conformal tractor
$3$-form $\Phi_0 := \Phi \vert_{M_0}$.
\end{remark}

\subsection{The model}
\label{modelS}
Given a dimension-($n+1$) real vector space $\mathbb{V}$ equipped with a
volume form, its ray projectivization
$S^n=\mathbb{P}_+(\mathbb{V})$ provides the standard
projectively flat model for $n$-dimensional projective differential
geometry. Topologically a sphere, this is a homogeneous manifold for
$\SL(\mathbb{V})\cong \SL(n+1,\mathbb{R})$ and so may be identified with
$\SL(\mathbb{V})/P$ where $P$ is the parabolic subgroup of
$\SL(n+1,\mathbb{R})$ that stabilizes a nominated ray in $\mathbb{V}$.

 In view of the canonical fibration $\pi:\mathbb{V} \setminus \set{0}\to
 S^n$, we may regard $\mathbb{V}_*:=\mathbb{V}\setminus \set{0}$ as
 a cone manifold over $S^n$. The tangent bundle
 $T\mathbb{V}_*$ is trivial and has a
 global parallelization by a canonical flat torsion free affine
 connection $\tilde{\nabla}$, namely the parallel transport arising
 from the affine structure of the vector space
 $\mathbb{V}$. Alternatively we may view $\mathbb{V}_*$ as the total
 space of an $\mathbb{R}_+$-principal bundle over $S^n$ with the
 $\mathbb{R}_+$ action given by simply scaling vectors in
 $\mathbb{V}_*$: $\lambda . v := \lambda v$, $\lambda \in \bbR_+$. It follows that
 $\mathbb{V}_*\to S^n$ has canonically a vertical vector field
 $\tilde{X}$ that infinitesimally generates this action on the fibres;
 this is usually called the {\em Euler vector field}.

We can define an equivalence relation on vectors in $T\mathbb{V}_*$ by declaring
$v_x\sim u_y$ if and only if $v_x$ and $ u_y$ are parallel, with also $x$ and $y$
being points of the same fibre. It is straightforward to check that
$\cT:=T \mathbb{V}_* / \!\sim$ is a rank-$(n+1)$ vector bundle on $S^n$,
with a connection $\nabla $ induced from $\tilde{\nabla}$. In fact it
is easily verified that $\cT$ is the associated vector bundle $
\SL(\mathbb{V}) \times_P \mathbb{V}$, and that $\nabla$ is the flat
connection arising from the fact that $\cT$ is canonically trivialized
by the mapping $\SL(\mathbb{V}) \times_P \mathbb{V}\to
\SL(\mathbb{V})/P\times \mathbb{V}$ given by $(g,v)\mapsto (gP, g .
v)$, where $g . v$ indicates the standard action of $g\in
\SL(\mathbb{V})$ on $v\in {\mathbb{V}}$. Thus $(\cT,\nabla)$ is seen to
be, by definition, the standard projective tractor bundle and connection.

It follows easily now that the pullback, via $\pi$, of each parallel
tractor field on $S^n$ is a parallel tensor field on $\mathbb{V}_*$,
and all parallel tensors on $\mathbb{V}_*$ may identified with such a
pullback. More generally each tractor field $T$ of a given weight $\ulw$
corresponds to a tensor field $\tilde{T}$ on $\mathbb{V}_*$ that is
{\em homogeneous of weight} $\ulw$, meaning that
$\tilde{\nabla}_{\tilde{X}}\tilde{T}=\underline{w} \tilde{T}$. In particular the
canonical tractor $X$ of weight $1$, defined in \nn{euler}, corresponds to
$\tilde{X}$ on $\mathbb{V}_*$.

In Theorem \ref{orbit}, and more generally in this section, we have considered a projective $6$-manifold equipped
with a parallel cross product, equivalently a parallel tractor
$3$-form $\Phi$. From our discussion here it follows at once that the
model for this is to take $\mathbb{V}$, as above but of dimension 7
and equipped with a cross product as defined in Section
\ref{subsection:octonions-cross-product-g2}; we denote this structure by
$\mathbb{P}_+(\mathbb{I^{(*)}})$.

Now the general results from above apply to this setting, with some
seen to hold by more direct reasoning using the above. Theorem
\ref{Hdef} is in the latter category, as the inner product on
$H$ on
$\mathbb{I}^{(*)}$ determines a constant metric on $\mathbb{V}$ and
hence a parallel projective tractor metric $H$. Thus we have Corollary
\ref{firststrat} giving a decomposition of the sphere
$S^n=\mathbb{P}_+(\mathbb{I}^{(*)})$ with Einstein metrics on the open
orbits. Recall that these open orbits are where the projective
density $\tau= H(X,X)$ is positive (and also, respectively, negative
in the split case). This density is equivalent to the homogeneous
degree function $\tilde{\tau}=H(\tilde{X},\tilde{X})$ on
$\mathbb{V}_*$ and it is an elementary exercise to show that, where
$\tau$ is non-vanishing, working in the scale $\tau$ as on $S^n$ (as
in e.g. the proof of Theorem \ref{local}) corresponds to working on the
level sets where $\tilde\tau =1$ and $\tilde{\tau}=-1$
\cite{CGHjlms}. In particular on the model
$\mathbb{P}_+(\mathbb{I^{(*)}})$ the Einstein structures on $M$ in the
definite case, and on $M_\pm$ in the split case, are just the induced
metrics on the applicable level sets $\tilde{\tau} = \pm 1$.  Further details for these facts here which follow from
the inner product $H$ on $\mathbb{V}$ and the
corresponding projective holonomy reduction can be found in
\cite{CGHjlms,CapGoMac}.

All of the theorems and results earlier in this Section
now specialize to the model.  So we recover the result that
in the definite case $\mathbb{P}_+(\mathbb{I})$ has a positive
definite strictly nearly K\"ahler structure. In the split case we see
the parts $\mathbb{P}_+(\mathbb{I}^{*})_\pm$ given by Corollary
\ref{firststrat}---topologically, $\mathbb{P}_+(\bbI^*)_+ = S^{2, 4}$ and $\mathbb{P}_+(\bbI^*)_- = S^{3, 3}$---respectively have a signature $(2,4)$ nearly K\"ahler structure
and a signature $(3,3)$ nearly para-K\"ahler
structure. Most importantly, this realizes the conformal structure and
$(2,3,5)$-distribution $\Delta$ on $\mathbb{P}_+(\mathbb{I^*})_0 = S^2 \times S^3$
as a simultaneous (projective) limit of these structures, as a special case of the treatment in
Section \ref{inf}; $(\mathbb{P}_+(\mathbb{I^*})_0, \Delta)$ is the flat model of the geometry of $(2, 3, 5)$-distributions.

Now the decomposition of $\mathbb{P}_+(\mathbb{I}^{(*)})$ given by
Corollary \ref{firststrat} is exactly the orbit decomposition
$\mathbb{P}_+(\mathbb{V})$ under the action of $\SO(H)$
\cite{CGHjlms,CGH}. An important question at this stage is
whether this agrees with the orbit decomposition of
$\mathbb{P}_+(\mathbb{V})$ under the action of $\smash{\G_2^{(*)}}$. In fact it does.
\begin{proposition}\label{trans}
$\G_2$ acts transitively on $\mathbb{P}_+(\mathbb{I})$, while $\G^{*}_2$
  acts transitively on $\mathbb{P}_+(\mathbb{I}^{*})_+$, on
  $\mathbb{P}_+(\mathbb{I}^{*})_-$, and on $\mathbb{P}_+(\mathbb{I}^{*})_0$.
\end{proposition}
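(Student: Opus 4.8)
The plan is to compare the $\smash{\G_2^{(*)}}$-orbit decomposition with the coarser $\SO(H)$-orbit decomposition already in hand. Since $\smash{\G_2^{(*)}} \subset \SO(H)$, every $\smash{\G_2^{(*)}}$-orbit is contained in one of the sets $\mathbb{P}_+(\bbI)$ (definite case) or $\mathbb{P}_+(\bbI^*)_+$, $\mathbb{P}_+(\bbI^*)_0$, $\mathbb{P}_+(\bbI^*)_-$ (split case), which by Corollary \ref{firststrat} (as noted just above this proposition) are precisely the $\SO(H)$-orbits, and which are each connected by the identifications in Subsection \ref{modelS} (they are the ray-spaces of the positive, null, and negative vectors, diffeomorphic respectively to $S^6$, and to $S^{2,4}$, $S^2 \times S^3$, $S^{3,3}$). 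It therefore suffices to produce, inside each such connected $\SO(H)$-orbit, a single $\smash{\G_2^{(*)}}$-orbit that is both open and closed.

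I would treat the non-null orbits by a pointwise isotropy-dimension count, using $\dim \smash{\G_2^{(*)}} = 14$. Fix any $x$ with $x \cdot x = -\varepsilon \in \set{\pm 1}$. Because $\smash{\G_2^{(*)}} \subset \SO(H)$ and any automorphism fixing $x$ commutes with $\bbJ_x = -x \btimes \pdot$, the isotropy subgroup $(\smash{\G_2^{(*)}})_x$ preserves the inner product $\cdot$, the $\varepsilon$-complex structure $\bbJ_x \vert_{\ab{x}^{\perp}}$ (an $\varepsilon$-complex structure by the corollary to Proposition \ref{proposition:J-squared}), and the induced complex (respectively paracomplex) volume form on $\ab{x}^{\perp}$ arising from $\Phi$. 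Hence $(\smash{\G_2^{(*)}})_x$ embeds in the special (pseudo- or para-)unitary group of $(\ab{x}^{\perp}, \cdot, \bbJ_x)$: namely $\SU(3)$ for $x$ in the definite case, $\SU(1,2)$ for positive $x$ in the split case, and $\SL(3,\bbR)$ for negative $x$ in the split case, each of dimension $8$. Crucially, this bound holds at \emph{every} non-null $x$, since the $\varepsilon$-Hermitian structure on $\ab{x}^{\perp}$ is available for every such $x$. Consequently every $\smash{\G_2^{(*)}}$-orbit of a non-null ray has dimension at least $14 - 8 = 6$; as it lies in the $6$-dimensional $\SO(H)$-orbit, it is open. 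A connected manifold partitioned into open orbits is a single orbit, which yields transitivity on $\mathbb{P}_+(\bbI)$ and on $\mathbb{P}_+(\bbI^*)_{\pm}$. (In the definite case one may alternatively note that the orbit, as the image of the compact group $\G_2$, is automatically closed, so the open $6$-dimensional orbit fills $S^6$.)

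For the null ray-space $\mathbb{P}_+(\bbI^*)_0$ I would instead exploit the parabolic structure. By construction the stabilizer in $\G_2^*$ of a null ray is the parabolic subgroup $Q = P_0 \cap \G_2^*$ introduced before \eqref{equation:D-from-Cartan-connection}, so the orbit of that ray is the generalized flag variety $\G_2^*/Q$; being the quotient of $\G_2^*$ by a parabolic subgroup, it is \emph{compact}, hence closed in $\mathbb{P}_+(\bbI^*)_0$. Its dimension is $\dim(\mfg_{-1} \oplus \mfg_{-2} \oplus \mfg_{-3}) = 2 + 1 + 2 = 5 = \dim \mathbb{P}_+(\bbI^*)_0$, so it is also open, and since $\mathbb{P}_+(\bbI^*)_0$ is connected this single orbit is all of it.

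The main obstacle is the step for the non-null split orbits $\mathbb{P}_+(\bbI^*)_{\pm}$: because $\G_2^*$ is noncompact, the orbits there are not compact, so one cannot simply invoke closedness as in the definite or null cases. What makes the argument succeed is that the isotropy bound $\dim(\smash{\G_2^{*}})_x \leq 8$ holds at \emph{every} non-null $x$ (not merely at one representative), forcing \emph{all} orbits to be open so that connectedness can finish the job. The one algebraic fact to pin down carefully is that $(\smash{\G_2^{(*)}})_x$ lands in the \emph{special} rather than the full (pseudo/para-)unitary group---equivalently, that it preserves the induced complex volume form on $\ab{x}^{\perp}$---which follows from $\smash{\G_2^{(*)}}$ preserving both $\Phi$ and $\vol$.
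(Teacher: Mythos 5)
Your proof is correct, but it is a genuinely different route from the paper's: the paper disposes of this proposition by citing Wolf's classification (and alludes to Bryant's argument for the compact case and to Sagerschnig for the null orbit), whereas you give a self-contained Lie-theoretic argument. Your two key devices --- the pointwise isotropy bound $\dim(\smash{\G_2^{(*)}})_x \leq 8$ forcing every non-null orbit to be open in the connected $6$-dimensional $\SO(H)$-orbit, and compactness of the flag variety $\G_2^*/Q$ for the null orbit --- are exactly what is needed to circumvent the noncompactness of $\G_2^*$, and the reduction ``one open-and-closed orbit in each connected $\SO(H)$-orbit'' is sound. You correctly identify the crux as landing in the \emph{special} (pseudo-/para-)unitary group rather than the full one; this is supplied by the paper's Proposition on compatible normalized pairs (the stabilizer of $x$ preserves both $\iota^*(x \hook \Phi)$ and $\iota^*\Phi$ on $\ab{x}^{\perp}$, and the joint stabilizer of such a pair is $\SU(3)$, $\SU(1,2)$, or $\SL(3,\bbR)$, of dimension $8$). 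Two small points worth making explicit: for non-null $x$ the stabilizer of the ray $\bbR_+ x$ coincides with the stabilizer of $x$ itself, since elements of $\SO(H)$ preserve $x\cdot x \neq 0$; and the parabolicity of the null-ray stabilizer should be anchored to one specific (e.g.\ highest-weight) null ray, which suffices for your open-and-closed argument and avoids any circularity. The trade-off is the usual one: the paper's citation is shorter, while your argument is self-contained, uniform across the four orbits, and isolates precisely which algebraic inputs are used.
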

\begin{proof}
This uses the classification in \cite[\S 3.4, p.42]{Wolf}. It is also
possible to verify it using Bryant's argument in \cite{Bmsri} for the
$\G_2$ case, and it is not hard to extend this to treat the $\G_2^*$
variant. See also \cite{Sagerschnig} for a discussion of $\G^*_2$
acting on $S^2\times S^3$.
\end{proof}
\noindent It follows that Theorem \ref{orbit} holds for the models
$\mathbb{P}_+(\mathbb{I}^{(*)})$.

\subsection{Treatment of Theorem \ref{orbit}}
\label{Ptypes} We are now ready to  bring together essential parts of the
developments above to prove this summary
 theorem from the
introduction.
\begin{proof}[Proof of Theorem \ref{orbit}]
~
\begin{itemize}
    \item $\Phi$ is definite-generic: The result is contained in Corollary
  \ref{firststrat} and Theorem \ref{local}. However, Corollary
  \ref{firststrat} only gives the stratification into curved orbits
  determined by the holonomy reduction to $\SO(H)$, that is, to the parallel tractor metric. Thus to see that the
  stated results are sufficient we must check that a parallel definite
  generic tractor 3-form has only 1 curved orbit type.  The projective
  6-sphere $S^6=\mathbb{P}_+(\mathbb{R}^7)$ is a homogeneous space for
  $\SL(7,{\mathbb{R}})$. But the subgroup $\G_2\subset
  \SL(7,{\mathbb{R}})$ also acts transitively on $S^6$ and so there is
  only one orbit type in the model structure. It now follows from
  \cite[Theorem 2.6]{CGH} that there is only a single curved orbit
  type for the case of Cartan holonomy reduction to $\G_2$.

    \item $\Phi$ is split-generic: In this case all is contained in Corollary
  \ref{firststrat}, Theorem \ref{local}, Theorem \ref{confRed}, and
  Theorem \ref{HSthm}, except that, once again, it remains to check that
  $\Phi$ does not induce a finer curved orbit decomposition than that
  given by $H$ in Corollary \ref{firststrat}.
 Again considering
  $S^6=\mathbb{P}_+(\mathbb{R}^7)$, but now under the action of
  $\G_2^*\subset \SL(7,{\mathbb{R}})$ we see that we obtain only 3 orbit
  types and these are indeed given by the strictly sign of $H_{AB}X^A X^B$.
  So $M=M_{-}\cup M_0 \cup M_+$ is the curved
  $\G_2^*$-orbit decomposition.
\end{itemize}
\end{proof}

\subsection{Compactification of N(P)K-geometries}\label{cpctsec}
One may ask whether there are effective ways to treat
compactifications of complete nearly K\"{a}hler, or nearly
para-K\"{a}hler, manifolds. In view of the models for these structures
(see Section \ref{modelS}), and also Theorem \ref{NPKtoPhi}, it is
natural to approach this via projective geometry.

Taking this viewpoint, it is then interesting to investigate the
possibilities for the geometry of the set of boundary
points.  Theorem \ref{cify} is concerned with this question in the
nearly K\"{a}hler case, and there we find that (under the assumptions
there) one is essentially forced back into the setting of Theorem
\ref{orbit}. Here we prove this after first extending the theorem to
include the nearly para-K\"{a}hler case.

\begin{theorem} \label{pcify} Let $(\overline{M},{\bf p})$ be a
6-manifold with  boundary $\partial M\neq \emptyset $ and interior $M$.
Suppose further that $M$ is equipped with a geodesically complete
nearly K\"{a}hler structure $(g, J)$ (or a nearly para-K\"{a}hler
structure $(g, J)$ that satisfies $\ab{\nabla J, \nabla J} \neq 0$
everywhere) such that the projective class $[\nabla^g]$ of the
Levi-Civita connection $\nabla^g$ coincides with ${\bf p}|_M$. Then:
$g$ has signature $(2,4)$ (resp.\ signature $(3,3)$), the metric $g$
is projectively compact of order 2, and the boundary has a canonical
conformal structure equivalent to an oriented Cartan
$(2,3,5)$-distribution.
\end{theorem}

\begin{proof}
From Theorem \ref{NPKtoPhi},
$M$ has canonically a parallel generic tractor $3$-form $\Phi\in
\Lambda^3\cT^*$.  This determines a parallel tractor metric $H$ on $M$
via the formula \nn{HHdef}.  On $M$ the density field $\tau:=H_{AB}X^A
X^B$ is nowhere zero and $H$, $\tau$ and the Einstein metric are
related as in the expression \nn{Hform}.

Now ${\bf p}|_M$ is the restriction of a smooth projective structure
${\bf p}$ on $\overline{M}$. Thus the projective tractor connection on
$M$ is the restriction of the smooth tractor connection on
$\overline{M}$. Working locally it is straightforward to use parallel
transport along a congruence of curves to give a smooth extension of
$\Phi$ to a sufficiently small open neighborhood of any point in
$\partial M$, and since $M$ is dense in $\overline{M}$ the extension
is parallel and unique.   It follows that $\Phi$ extends
as a parallel field to all of $\overline{M}$.

Thus the tractor metric $H$ extends parallelly to
$\overline{M}$. It follows that $\tau:=H_{AB}X^A X^B$ also extends smoothly to
all of $\overline{M}$. Now it must be that $\tau(x)=0$ for all points $x\in
\partial M$.  Otherwise if $\tau(x)$ were nonzero at such a point
$x$ then it would be nonzero in an open connected neighborhood of
such a point $x$ and we could easily conclude (via Theorem
\ref{HtoEin}) that the metric $g$ and its Levi-Civita connection
extend to this neighborhood that includes points of $\partial M$
and also points of $M$.  But this contradicts the assumption that
$(M,g)$ is complete.

We have that $\partial M=\mathcal{Z}(\tau)$, where $\mathcal{Z}(\tau)$
denotes the zero locus of $\tau$.  It follows that $g$ cannot have
Riemannian signature, as if it were to have Riemannian signature then
$H_{AB}$ would be positive definite (see e.g.\ \nn{Hform}) while $X$
is nowhere zero. Thus we may assume that
the metric has signature $(2,4)$ (or $(3,3)$ in the nearly
para-K\"ahler case), without loss of generality. Using the
nondegeneracy of $H$ it is straightforward to show that, for any
connection $\overline{\nabla}\in {\bf p}$, $\overline{\nabla} \tau
(x)\neq 0$ at all points $x$ in the zero locus $\mathcal{Z}(\tau)$ of
$\tau$; see the proof of Theorem 12 in \cite{CapGo-projC}.
By \cite[Theorem
  12]{CapGo-projC} it follows that $g$ is projectively compact of
order 2, that the conformal tractor bundle on $\partial M$ may be
identified with the restriction to $\partial M$ of the projective
tractor bundle, and that the conformal tractor connection is the
pullback of the projective tractor connection. Thus $\Phi|_{\partial
  M}$ is a parallel tractor for the conformal structure on $\partial
M$, and in particular its holonomy is contained in $\G_2^*$. So, the
final conclusion follows from Theorem \ref{HSthm}.
\end{proof}

\begin{remark}
Concerning the possible compactifications of geodesically
complete nearly-K\"{a}hler (or nearly para-K\"ahler) manifolds, much
stronger results are available. For example using the results of
\cite{DiScalaManno} one can easily show the boundary points at infinity
cannot lie in a submanifold of codimension at least 2.
\end{remark}

\section{The geometric Dirichlet problem}\label{gDp}

Theorem \ref{orbit} shows that a parallel projective tractor cross
product $\btimes$ determines a stratification $M = M_- \cup M_0 \cup M_+$
into curved orbits, and each of these canonically inherits an
exceptional geometric structure: The open curved orbits $M_{\pm}$ respectively inherit strictly nearly $(\mp 1)$-K\"{a}hler structures, and the hypersurface $M_0$ that separates them inherits an oriented $(2, 3, 5)$-distribution.

This raises the natural questions of
\begin{itemize}
    \item which oriented $(2, 3, 5)$-distributions $(\Sigma, D)$ arise
      this way, that is, for which $D$ can one produce a projective
      structure $\mbp$ on a collar $M$ of $\Sigma$ and a parallel
      projective split tractor cross product $\btimes$ on $(M, \mbp)$
      for which $(\Sigma, D)$ is the induced geometry on the zero
      locus it determines, and
    \item for any $D$ that admits such a structure $(M, \mbp, \btimes)$, to what degree is it unique.
\end{itemize}

It turns out that one can produce such a collar essentially uniquely for any $D$, at least formally and hence also in the real-analytic category (cf. \cite[Theorem 1.1]{GrahamWillse}, which gives an analogous result in the language of Fefferman-Graham ambient metrics):

\begin{theorem}\label{theorem:Dirichlet-problem-G2}
Let $(\Sigma, D)$ be an oriented, real-analytic $(2, 3,
5)$-distribution on a connected manifold. Then, there is a projective
manifold $(M, \mbp)$ and a parallel projective split tractor cross product
$\btimes$ for which $\Sigma$ is the zero locus $M_0$ in
the stratification of $M$ that $\btimes$ determines, and $D$ is the
$(2, 3, 5)$-distribution induced there. Moreover, $(\Sigma, D)$
determines the triple $(M, \mbp, \btimes)$ uniquely up to an overall
nonzero constant scale of $\btimes$, and up to pullback by
diffeomorphisms fixing $\Sigma$ pointwise. Thus $(\Sigma, D)$
determines the induced N(P)K structures $(M_{\pm}, g_{\pm}, J_{\pm})$
uniquely up to homothety of $g_{\pm}$ and up to pullback by
diffeomorphism.
\end{theorem}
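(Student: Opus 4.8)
The plan is to reduce the problem to the Fefferman--Graham ambient construction of \cite{GrahamWillse} and then translate the output into projective tractor language using the Klein--Einstein/ambient correspondence underlying Theorem \ref{HtoEin} and Proposition \ref{wtsalign}. First I would produce the Dirichlet (boundary) data on $\Sigma$ itself: applying Nurowski's construction (Theorem \ref{theorem:Nurowski}) to $(\Sigma, D)$ yields the induced oriented conformal structure $\mbc_D$ of signature $(2,3)$, and Theorem \ref{HSthm} then endows its standard conformal tractor bundle $\mcT_0$ with a canonical $\nabla_0$-parallel split cross product $\btimes_0$, equivalently a $\G_2^*$ holonomy reduction. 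Since the construction of Theorem \ref{HSthm} is inverse to Nurowski's, the $(2,3,5)$-distribution recovered from $\btimes_0$ is exactly $D$. This $\btimes_0$ is the data that the collar must induce on its zero locus.

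Second, I would invoke \cite[Theorem 1.1]{GrahamWillse}: for the real-analytic conformal structure $\mbc_D$ arising from the real-analytic distribution $D$, there is an essentially unique formal (hence, by real-analyticity, convergent) ambient realization whose holonomy reduces to $\G_2^*$, i.e.\ which carries a parallel ambient cross product restricting to $\btimes_0$. Concretely this provides a $6$-manifold $M$ collaring $\Sigma$, an Einstein metric $g_\pm$ of the appropriate signature on the open pieces $M_\pm$ on either side of $\Sigma$, together with a parallel ambient $3$-form on the $7$-dimensional ambient space.

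Third --- and this is the heart of the argument --- I would translate the ambient data into projective tractor data. Setting $\mbp := [\nabla^{g_\pm}]$ on $M_\pm$ and using that a projectively compact (of order $2$) Einstein metric extends its projective class smoothly across the infinity $M_0 = \Sigma$ (Theorem \ref{HtoEin}, Proposition \ref{projComp}) produces a single smooth projective structure $\mbp$ on all of $M$. The standard projective tractor bundle of $(M, \mbp)$ is then identified with the ambient/cone bundle, under which parallel ambient tensors correspond exactly to parallel tractor fields (the flat prototype of this correspondence is recorded in Subsection \ref{modelS}, and the curved version is the standard ambient--tractor dictionary, cf.\ \cite{FeffermanGraham}); the parallel ambient cross product thereby becomes a $\nabla^{\mcT}$-parallel cross product $\btimes$ on $(M, \mbp)$, equivalently a parallel generic tractor $3$-form $\Phi$. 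Proposition \ref{wtsalign} guarantees that the weighted-density conventions agree along $M_0$, so that $\mcT\vert_{M_0}$ is the conformal tractor bundle $\mcT_0$ and $\btimes\vert_{M_0} = \btimes_0$. By Theorem \ref{orbit} the zero locus of the resulting $\tau = H_{AB}X^AX^B$ is precisely $\Sigma$, and by Theorem \ref{theorem:J-characterization-D} the $(2,3,5)$-distribution induced there is $D$. Uniqueness is inherited from the ambient side: the \cite{GrahamWillse} realization is unique up to an overall constant scale of the parallel ambient form and up to diffeomorphisms fixing $\Sigma$ pointwise, and under the dictionary these translate precisely into a constant rescaling of $\btimes$ and pullback by a diffeomorphism fixing $\Sigma$ pointwise; via Theorem \ref{local} and the normalization \nn{PhitoVol} this in turn yields the stated homothety-and-diffeomorphism freedom for the induced structures $(M_\pm, g_\pm, J_\pm)$.

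The main obstacle is the dictionary in the third step. One must verify that the $\G_2^*$-holonomy ambient metric of a $(2,3,5)$ conformal structure, as furnished by \cite{GrahamWillse}, matches object-for-object with the projective tractor picture of Sections \ref{engine}--\ref{mobileO}: that the parallel ambient form corresponds to $\Phi$ with the correct homogeneity weight and with the volume normalization of \nn{PhitoVol}, that the induced metric signatures on $M_\pm$ are the N(P)K signatures of Theorem \ref{orbit}, and --- most delicately --- that the uniqueness freedoms (constant scale, normal-direction diffeomorphism) correspond exactly on the two sides. Once this correspondence is pinned down, existence, the identification of the induced distribution as $D$, and uniqueness all follow formally.
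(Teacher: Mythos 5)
Your proposal is correct and follows essentially the same route as the paper: boundary data from Nurowski's construction and Theorem \ref{HSthm}, the Fefferman--Graham/Graham--Willse ambient realization for existence and uniqueness, and the Thomas-cone dictionary (the paper's Theorem \ref{theorem:ambient-metric-Thomas-cone}) to translate into projective tractor terms. The only cosmetic difference is that you cite \cite[Theorem 1.1]{GrahamWillse} as a package, whereas the paper factors it into the conformal Dirichlet problem (Theorem \ref{theorem:Dirichlet-problem-SO}) plus the parallel tractor extension result (Theorem \ref{theorem:Graham-Willse}), and the ``dictionary'' you correctly flag as the main thing to verify is exactly what Theorem \ref{theorem:ambient-metric-Thomas-cone} supplies.
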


This immediately implies natural bijective correspondences between the moduli spaces of all of the involved structures, at least in the real-analytic setting; this in particular enables holographic investigation of general $(2, 3, 5)$-geometry. To formulate the bijections appropriately, we give suitable notions of equivalence for the involved structures:

\begin{definition}\label{definition:equivalence}
Suppose $(N_a, g_a, J_a)$, $a = 1, 2$, are $6$-dimensional N(P)K Klein-Einstein manifolds with respective projective infinities $\partial N_a$. We say that $(N_1, g_1, J_1)$ and $(N_2, g_2, J_2)$ are {\em equivalent near infinity} if and only if for $a = 1, 2$ there are open sets $A_a \subseteq N_a \cup \partial N_a$ such that $A_a \supset \partial N_a$ and a diffeomorphism $\phi: A_1 \to A_2$ such that
\begin{enumerate}
    \item $(\phi\vert_{A_1 \cap N_1})^* (g_2 \vert_{A_2 \cap N_2}) = g_1 \vert_{A_1 \cap N_1}$, and
    \item $T \phi \vert_{A_1 \cap N_1} \circ J_1 \vert_{A_1 \cap N_1} = J_2 \vert_{A_2 \cap N_2} \circ T \phi \vert_{A_1 \cap N_1}$

\end{enumerate}

Similarly, if $(M_b, {\bf p}_b)$, $b = 1, 2$ are $6$-dimensional projective structures (with respective standard tractor bundles $\mcT_b$) endowed respectively with parallel split tractor cross products $\btimes_b$ for which the hypersurface curved orbits $(M_b)_0$ are both nonempty, we say that $(M_1, {\bf p}_1, \btimes_1)$ and $(M_2, {\bf p}_2, \btimes_2)$ are {\em equivalent along the zero locus} if and only if for $b = 1, 2$ there are open sets $B_b \subseteq M_b$ such that $B_b \supset (M_b)_0$ and a diffeomorphism $\psi: B_1 \to B_2$ such that
\begin{enumerate}
    \item $\psi^* ({\bf p}_2 \vert_{B_2}) = {\bf  p}_1 \vert_{B_1}$, and
    \item $\Psi \cdot (U \btimes_1 V) = (\Psi \cdot U) \btimes_2 (\Psi \cdot V)$ for all $x \in M$ and all vectors $U, V$ in the fiber $(\mcT_1)_x$, where $\Psi: \mcT_1 \to \mcT_2$ is the bundle isomorphism induced by $\psi$.
\end{enumerate}
\end{definition}

\begin{corollary}\label{corollary:bijective-correspondences}
There are bijective correspondences:
\begin{align*}
    &
        \left\{
            \begin{array}{c}
               \textrm{real-analytic projective parallel tractor cross products $(M, {\bf p}, \btimes)$} \\
               \textrm{with $M_0\neq \emptyset$ modulo equivalence along the curved hypersurface orbit $M_0$}
            \end{array}
        \right\} \\
    & \leftrightarrow
        \left\{
            \begin{array}{c}
               \textrm{real-analytic, strictly NK Klein-Einstein structures $(M_+, J_+, g_+)$} \\
               \textrm{modulo equivalence near infinity}
            \end{array}
        \right\} \\
    & \leftrightarrow
        \left\{
            \begin{array}{c}
               \textrm{real-analytic, strictly NPK Klein-Einstein structures $(M_-, J_-, g_-)$} \\
               \textrm{modulo equivalence near infinity}
            \end{array}
        \right\} \textrm{.}
\end{align*}

Furthermore, any structure of a type in the above correspondence
determines a unique oriented, real-analytic $(2, 3,
5)$-distribution. Conversely, any such (connected) distribution
determines a real-analytic projective parallel tractor cross product
$(M, \mbp, \btimes)$ modulo equivalence along the zero locus and a
positive constant rescaling of $\btimes$, and hence real-analytic,
strictly N(P)K Klein-Einstein structure $(M_{\pm}, J_{\pm}, g_{\pm})$
modulo equivalence near infinity and homothety.
\end{corollary}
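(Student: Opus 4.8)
The plan is to obtain all of the asserted correspondences by assembling results already established and checking that two canonical operations—\emph{restriction} of a parallel cross product to its open curved orbits, and \emph{prolongation-and-extension}, which recovers a parallel cross product from an N(P)K structure near its projective infinity—descend to the stated equivalence relations and are mutually inverse. The only genuinely new work is bookkeeping of the equivalence relations at the level of germs; all analytic substance is already carried by Theorems \ref{NPKtoPhi}, \ref{pcify}, and \ref{theorem:Dirichlet-problem-G2}.

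First I would treat the map from cross products to N(P)K structures. Given a real-analytic triple $(M, \mbp, \btimes)$ with $M_0 \neq \emptyset$, Corollary \ref{firststrat} and Theorem \ref{local} produce on $M_+$ a strictly nearly K\"ahler structure $(g_+, J_+)$ of signature $(2,4)$ and on $M_-$ a strictly nearly para-K\"ahler structure $(g_-, J_-)$ of signature $(3,3)$, each projectively compact of order $2$ by Proposition \ref{projComp}, so that $M_\pm \cup M_0$ are Klein-Einstein in the sense of Subsection \ref{pEin}. Restriction to the two open curved orbits then defines the maps into the NK and NPK moduli. These descend to equivalence classes because every object in sight—$g_\pm$, $J_\pm$, the tractor metric $H$, and the zero locus $M_0$—is built from $(\mbp, \btimes)$ by the fibrewise-algebraic, hence natural, constructions of Section \ref{mobileO}; consequently any diffeomorphism realizing equivalence along $M_0$ restricts to a diffeomorphism near infinity intertwining $(g_+, J_+)$ (resp.\ $(g_-, J_-)$).

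Next I would build the inverse. Starting from a real-analytic strictly NK Klein-Einstein structure $(M_+ \cup M_0, g_+, J_+)$, Theorem \ref{NPKtoPhi} shows that on the interior the prolongation of the K\"ahler form is a parallel generic tractor $3$-form $\Phi$ (the hypothesis $\ab{\nabla J, \nabla J} \neq 0$ holds since $g_+$ is Einstein with nonzero scalar curvature, cf.\ Subsection \ref{ss: N(P)K dim 6}). Projective compactness guarantees, exactly as in the proof of Theorem \ref{pcify}, that $\Phi$—and hence $\btimes$—extends smoothly and parallelly across $M_0$; because $\nabla \tau \neq 0$ along $M_0$, real-analytic unique continuation then carries $\Phi$ uniquely onto a collar meeting the region $\tau < 0$, where Theorem \ref{local} reads off the matching NPK structure. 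Thus a single NK germ near infinity simultaneously determines a cross-product germ along $M_0$ and the NPK germ attached to it. Since the prolongation is the \emph{unique} normal solution of the Killing-Yano equation, whose top slot is precisely the given K\"ahler form, and since the parallel extension across $M_0$ is unique by density, restriction followed by prolongation-and-extension returns the original cross product, and the reverse composition returns the original N(P)K structure; composing the two bijections through the cross-product class yields the NK $\leftrightarrow$ NPK correspondence.

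Finally, the passage to $(2,3,5)$-distributions is immediate in one direction: Theorem \ref{HSthm} together with Theorem \ref{theorem:J-characterization-D} attaches to any of the above structures the distribution $D = (\im J_0)[-1]$ on $M_0$, a manifestly natural assignment and hence well-defined on equivalence classes. The converse—that every oriented, connected, real-analytic $(2,3,5)$-distribution arises, uniquely up to equivalence along the zero locus and a positive constant rescaling of $\btimes$ (equivalently, up to homothety of $g_\pm$, since a constant rescaling of $\btimes$ scales $H$, $g_\pm$ homothetically while fixing $J_\pm$)—is exactly the content of Theorem \ref{theorem:Dirichlet-problem-G2}. I expect the main obstacle to be precisely this germ-level bookkeeping: one must verify that a diffeomorphism preserving $g_\pm$ and $J_\pm$ only near infinity induces, via the canonical prolongation and real-analytic continuation, a diffeomorphism preserving $\mbp$ and $\btimes$ on a (possibly smaller) full collar of $M_0$, and that the residual ambiguity in the distribution-to-structure direction is exactly the constant scale of $\btimes$ together with boundary-fixing diffeomorphisms. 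Everything else is organizational rather than computational.
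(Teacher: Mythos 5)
Your assembly of the correspondences matches the paper's intent: the paper offers no separate proof of this corollary, presenting it as an immediate consequence of Theorem \ref{theorem:Dirichlet-problem-G2} together with the naturality of the constructions in Corollary \ref{firststrat}, Theorems \ref{local}, \ref{NPKtoPhi} and \ref{HSthm}, which is exactly the toolkit you deploy. One step, however, is not correct as written. In the inverse direction you start from a one-sided N(P)K Klein--Einstein structure on $M_+\cup M_0$ and assert that $\Phi$ ``extends smoothly and parallelly across $M_0$, exactly as in the proof of Theorem \ref{pcify},'' after which ``real-analytic unique continuation carries $\Phi$ onto a collar meeting the region $\tau<0$.'' In Theorem \ref{pcify} the ambient projective manifold $N$ is \emph{given} in advance, so parallel transport has somewhere to extend to; here the points with $\tau<0$ do not yet exist, and no form of unique continuation can manufacture them. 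The other side of the collar must be \emph{constructed}, and the only mechanism in the paper for doing so is the ambient-metric/Thomas-cone machinery: restrict to $M_0$ to obtain the conformal structure and the parallel conformal tractor $3$-form (equivalently the oriented $(2,3,5)$-distribution), invoke Theorems \ref{theorem:Dirichlet-problem-SO} and \ref{theorem:Graham-Willse} (i.e.\ Theorem \ref{theorem:Dirichlet-problem-G2}) to build a two-sided $(M,\mbp,\btimes)$, and then use the uniqueness clause of Theorem \ref{theorem:ambient-existence-uniqueness} together with real-analyticity to identify the given one-sided germ with the $\tau>0$ part of that solution. With the argument rerouted in this way your composition-of-inverses and germ-level bookkeeping go through, and the remaining assertions (the distribution direction via Theorems \ref{HSthm} and \ref{theorem:J-characterization-D}, and the identification of the constant rescaling of $\btimes$ with homothety of $g_{\pm}$) are correct as you state them.
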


With a view toward proving Theorem \ref{theorem:Dirichlet-problem-G2},
we first recall from Subsection \ref{Phi-geom} that a reduction of
holonomy to $\G_2^*$ determines a unique reduction of holonomy of
$\nabla^{\mcT}$ to $\SO(3, 4)$; this reduction is realized explicitly
by \eqref{HHdef} and \eqref{PhitoVol}, which respectively give the
nondegenerate symmetric bilinear form $H$ and the compatible tractor volume
form in
terms of a generic tractor cross product $\btimes$; recall that per the comment after \eqref{PhitoVol}, we assume that the volume form coincides with the canonical tractor volume $\vol$.
Furthermore, recall from Theorem \ref{HtoEin} that the holonomy
reduction afforded by $H$ and $\vol$ determines a decomposition of the
underlying manifold into three orbits and canonical geometric
structures on each: Two open orbits with (oriented) Klein-Einstein
metrics, one of signature $(2, 4)$ and the other signature $(3, 3)$,
and a hypersurface curved orbit with an (oriented) conformal structure
of signature $(2, 3)$.  So, any solution to the Dirichlet problem
corresponding to a $\G_2^*$ holonomy reduction must also be a solution
to Dirichlet problem corresponding to the weaker holonomy reduction to $\SO(3, 4)$, which suggests that to understand the former it would be helpful to investigate the latter.
More explicitly, given an oriented conformal structure $(\Sigma, \mbc)$ of
signature $(2, 3)$, we want to understand the existence and uniqueness of a triple $(M, \mbp, H)$ comprising a projective structure $\mbp$ on a $6$-dimensional collar $M \supset \Sigma$ and a parallel projective tractor metric $H$ (of signature $(3, 4)$) for which the zero locus and the induced conformal structure are exactly $(M, \mbc)$.

This latter Dirichlet problem, however, is a special case of the problem addressed by
the Fefferman-Graham ambient metric construction
\cite{FeffermanGraham}, although it is typically formulated in pseudo-Riemannian terms rather than
the projective tractor framework used here. We thus proceed as follows:
First, we describe the Fefferman-Graham ambient construction and the
existence and uniqueness result we need in the original language of
that construction, in which the output is a pseudo-Riemannian manifold
$\smash{(\wt{\Sigma}, H)}$. Next, we introduce the projective Thomas cone,
which lets us identify (1) part of this output with the standard projective
tractor bundle, (2) the Levi-Civita
connection $\nabla^H$ of $H$ with the data of the normal projective
tractor connection, and hence (3) $H$ with a parallel fiber metric on
the tractor bundle, which in particular gives the desired holonomy reduction to $\SO(\cdot)$; so, by construction, this identification solves the
conformal Dirichlet problem in projective language. Finally, we use
the equivalence of these formulations to translate the Dirichlet
problem corresponding to reduction of holonomy to $\G_2^*$ into
ambient language and solve it in that setting.

\subsection{The Fefferman-Graham ambient
construction}\label{subsection:ambient-metric}

Given a conformal structure $(\Sigma, \mbc)$ of signature $(r, s)$, $r
+ s \geq 2$, the Fefferman-Graham ambient metric construction aims to produce a
metric canonically determined, to the extent possible, by $\mbc$.
Consider the
metric bundle $\pi: \ol{\Sigma} \to \Sigma$ whose fiber over $x \in
\Sigma$ comprises all of the inner products on $T_x \Sigma$
in
$\mbc_x$, that is,
\[
    \ol{\Sigma}_x := \set{g_x : g \in \mbc} \textrm{.}
\]
It admits a tautological, degenerate bilinear form $h_0 \in \Gamma(S^2 T^* \ol{\Sigma})$, namely,
\[
    (h_0)_{g_x}(U, V) := g_x(T\pi \cdot U, T\pi \cdot V) \textrm{,}
\]
which is, by construction, homogeneous of degree $2$ with respect to
the natural dilations $\delta_s: \ol{\Sigma} \to \ol{\Sigma}$, $s > 0$,
$\delta_s(g_x) := s^2 g_x$, which together in turn realize
$\ol{\Sigma}$ as an $\bbR_+$-principal bundle. Though $h_0$ is
degenerate (it annihilates $\ker T\pi$), it is natural to look for metrics on a collar
$\smash{\wt{\Sigma}}$ of $\smash{\ol{\Sigma}}$ which pull back to $h_0$ and then
attempt to formulate suitable admissibility criteria for such metrics
that guarantee uniqueness and existence. Identify $\smash{\ol{\Sigma}}$ with
$\smash{\ol{\Sigma} \times \set{0} \subset \ol{\Sigma} \times \bbR}$, and
denote the inclusion by $\iota$. We call a metric $H$ on a
dilation-invariant open neighborhood $\smash{\wt{\Sigma}}$ of $\smash{\ol{\Sigma}}$ in
$\smash{\ol{\Sigma} \times \bbR}$ a \textit{pre-ambient metric} for $(\Sigma,
\mbc)$ if $\iota^* H = h_0$ and if it is homogeneous of degree $2$
under the dilations $(z, \rho) \mapsto (\delta_s(z), \rho)$, which we also denote $\delta_s$;
necessarily $H$ has signature $(r + 1, s + 1)$.
If $\dim \Sigma$ has odd dimension $n := r + s \geq 3$, we say that a
pre-ambient metric for $(\Sigma, \mbc)$ is an \textit{ambient metric}
for $(\Sigma, \mbc)$ if it (a) is Ricci-flat, and (b) satisfies the
identity $\nabla^H X = \id_{T\wt{\Sigma}}$, where $X := \partial_s
\vert_1 \delta_s$ is the infinitesimal generator of the group of
dilations $\delta_s$.\footnote{Condition (b) is called
  \textit{straightness}; it is convenient to include it here in the
  definition of an ambient metric, though this is not done in
  \cite{FeffermanGraham}.} For concreteness of exposition, we state
this result just for real-analytic, odd-dimensional conformal
structures, the case we need.\footnote{This formulation avoids two separate issues: (1) The ambient metric is a formal (power series) construction, so in the odd-dimensional case a pre-ambient metric is sometimes elsewhere (including in \cite{FeffermanGraham}) called \textit{ambient} if its Ricci curvature vanishes to infinite order in $\rho$ along $\mcG$, instead of requiring that it vanish identically on $\smash{\wt{\Sigma}}$. (2) The even-dimensional case is more subtle, but see Remark \ref{remark:even-case}.}

\begin{theorem}\label{theorem:ambient-existence-uniqueness}\cite[Theorem 2.3]{FeffermanGraham}
Suppose $\mbc$ is a real-analytic conformal structure of signature $(r, s)$ and odd dimension $n = r + s \geq 3$. Then, there is a real-analytic ambient metric $\smash{(\wt{\Sigma}, H)}$ (necessarily of signature $(r + 1, s + 1)$), and it is unique up to pullback by a diffeomorphism fixing $\ol{\Sigma}$ pointwise.
\end{theorem}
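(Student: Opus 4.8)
The plan is to reproduce the structure of the original Fefferman--Graham argument: a normal-form reduction that exhausts the diffeomorphism gauge freedom, followed by a recursive determination of Taylor coefficients that, in odd dimension, is never obstructed. First I would fix a representative metric $g \in \mbc$, which trivializes the metric bundle as $\ol{\Sigma} \cong \Sigma \times \bbR_+$ via $(x, t) \mapsto t^2 g_x$ and identifies $t$ with a fiber coordinate homogeneous of degree $1$ under the dilations $\delta_s$. Using homogeneity of degree $2$ together with the straightness condition $\na^H X = \id$ (with $X$ the infinitesimal dilation generator), I would show that any ambient metric can be brought, by a diffeomorphism fixing $\ol{\Sigma}$ pointwise, into the Fefferman--Graham normal form
\[
    H = 2\rho\, dt^2 + 2t\, dt\, d\rho + t^2 g_\rho,
\]
where $g_\rho$ is a one-parameter family of metrics on $\Sigma$ with $g_0 = g$. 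The content of this step is that the residual gauge freedom is precisely exhausted by such diffeomorphisms, so that producing $H$ is equivalent to producing the family $g_\rho$.

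Next I would substitute this normal form into the Ricci tensor and verify that the Ricci-flatness condition $\Ric(H) = 0$ is equivalent to a second-order system for $g_\rho$ in the variable $\rho$. Expanding $g_\rho = g + \rho\, g^{(1)} + \rho^2 g^{(2)} + \cdots$ and reading off the equation order by order, the coefficient of each power of $\rho$ determines $g^{(k)}$ in terms of $g^{(0)}, \dots, g^{(k-1)}$ and their $\Sigma$-derivatives, provided the scalar factor multiplying $g^{(k)}$ is nonzero. This factor is (up to normalization) proportional to $n - 2k$, so the only possible resonance occurs at $k = n/2$; because $n = r + s$ is odd, this value is never an integer, and hence every coefficient is determined and the recursion yields a unique formal power series solution $g_\rho$.

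Having secured a unique formal solution, I would upgrade it to a genuine real-analytic ambient metric. Since $g$ and the conformal class are real-analytic, the normal-form system can be cast as a Cauchy problem with real-analytic coefficients that is noncharacteristic in $\rho$, so the Cauchy--Kovalevskaya theorem produces a real-analytic solution on a dilation-invariant neighborhood $\wt{\Sigma}$ of $\ol{\Sigma}$. Its Taylor expansion along $\rho = 0$ must agree with the formal solution just constructed, by uniqueness of Taylor coefficients, so the resulting $H$ is Ricci-flat and satisfies all the ambient conditions. The signature $(r+1, s+1)$ is immediate from the block structure of $H$ at $\rho = 0$. Uniqueness up to a diffeomorphism fixing $\ol{\Sigma}$ pointwise then follows by combining the two uniqueness facts already in hand: any ambient metric is equivalent via such a diffeomorphism to one in normal form, and the normal-form solution is unique both formally (the recursion) and analytically (Cauchy--Kovalevskaya).

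The main obstacle I anticipate is the indicial analysis in the second step: one must isolate exactly the scalar coefficient governing solvability for $g^{(k)}$, confirm that the full nonlinear recursion (not merely its linearization) closes at each order, and verify that the odd parity of $n$ is precisely what prevents the resonance at $k = n/2$. The other delicate point is casting the Ricci-flat system in a form to which Cauchy--Kovalevskaya applies cleanly, in particular checking that after gauge-fixing the principal part is noncharacteristic with respect to $\rho$ so that the analytic existence theorem is genuinely available.
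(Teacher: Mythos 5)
First, a point of comparison: the paper does not prove this statement at all---it is imported verbatim as \cite[Theorem 2.3]{FeffermanGraham}, so there is no internal proof to measure your attempt against. Your outline is a reconstruction of the standard Fefferman--Graham argument, and its first two steps are essentially right: reduction to the normal form $H = 2\rho\,dt^2 + 2t\,dt\,d\rho + t^2 g_\rho$ by a homogeneous diffeomorphism fixing $\ol{\Sigma}$ pointwise, followed by the order-by-order determination of the Taylor coefficients of $g_\rho$, with the indicial factor proportional to $n - 2k$ so that the only possible resonance sits at $k = n/2$ and is avoided precisely because $n$ is odd. (One detail you elide: only the tangential components of $\Ric(H)=0$ drive the recursion; that the remaining components then vanish is extracted from the contracted Bianchi identity. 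This is part of what you call ``the recursion closing at each order,'' and it does need to be checked.)

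The genuine problem is in your third step. You assert that after gauge-fixing the system is noncharacteristic in $\rho$ and that the classical Cauchy--Kovalevskaya theorem applies. It is not: in normal form the coefficient of $\partial_\rho^2 g_\rho$ in the Ricci-flat equation is $\rho$ itself, so the initial hypersurface $\rho = 0$ is \emph{characteristic}. This is not a removable technicality---it is the very reason the recursion has an indicial root at $k = n/2$ at all, and the reason one prescribes only $g_0$ rather than both $g_0$ and $\partial_\rho g_\rho|_{\rho=0}$ (the latter is forced to be twice the Schouten tensor). A noncharacteristic problem would have neither feature. Consequently the classical Cauchy--Kovalevskaya theorem gives you nothing here; the convergence of the formal series in the real-analytic case must be obtained from the singular (Fuchsian) version of Cauchy--Kovalevskaya due to Baouendi and Goulaouic, or an equivalent majorant argument adapted to the degenerate equation, which is what Fefferman and Graham actually invoke. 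You flagged this step as delicate, but the resolution you propose is the wrong one, and as written the existence half of the proof does not go through.
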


\subsection{The Thomas cone}\label{subsection:Thomas-cone}

We now describe the Thomas cone construction (\cite{CGHjlms},
\cite[\S5.2.6]{CapSlovak}, \cite{Thomas}), which lets us translate
the guarantee of existence and uniqueness into the projective language
in which our (conformal) Dirichlet problem is formulated. Given an
$n$-dimensional projective structure $(M, \mbp)$, $n \geq 3$, with
associated normal Cartan geometry, say, $(\mcG \to M, \eta)$, of type
$(\mfsl(n + 1, \bbR), P)$, let $\bbV$ denote the standard
representation of $\SL(n + 1, \bbR)$ so that $\mcT := \mcG \times_P
\bbV$ is the standard tractor bundle and $\nabla^{\bbV}$ the normal
connection it induces there. Pick a nonzero vector $e_0 \in \bbV$ in
the ray stabilized by $P$, and define $P_0$ to be the (closed)
stabilizer of $e_0$ in $P$. Since $\eta$ is $P$-equivariant, it is
$P_0$-equivariant, and we may define the \textit{Thomas cone} $\smash{\wh{M}}$
to be the (fiberwise) quotient $\mcG / P_0$ (called such because
$\smash{\wh{\pi}: \wh{M} \to M}$ is a principal bundle with fiber $P / P_0
\cong \bbR_+$) and regard $\smash{(\mcG \to \wh{M}, \eta)}$ as a Cartan
geometry of type $(\mfsl(n + 1, \bbR), P_0)$.

Now, since $\mfsl(n + 1,
\bbR) / \mfp_0$ is isomorphic (as a $P_0$-module) to $\bbV$ itself,
the Cartan connection $\eta$ canonically induces a vector bundle
connection $\smash{\wh{\nabla}}$ on $\smash{T\wh{M} = \mcG \times_{P_0} \bbV}$, and
the normalization condition (or just as well the formula \nn{tconn}
for the tractor connection) ensures that this connection is Ricci-flat
and torsion-free. Again because $P$-equivariance implies
$P_0$-equivariance, sections of any associated bundle $\mcG \times_P
\bbW \to M$ correspond to $P$-equivariant sections of $\smash{\mcG
\times_{P_0} \bbW \to \wh{M}}$. So, if $\bbW$ is a restriction of an
$\SL(n + 1, \bbR)$-representation, and hence of a subrepresentation of
$\bbV^k \otimes (\bbV^l)^*$ for some $k$ and $l$, sections of the
corresponding tractor bundle $\mcG \times_P \bbW$ correspond to tensor
fields on $\smash{\wh{M}}$ that (checking shows, using that $\smash{\wh{\nabla}}$ is torsion-free) are parallel along the fibers
of $\smash{\wh{\pi}}$. By construction, the sections of $\mcE(\underline{w})$, $\underline{w} \in
\bbR$, correspond under this identification to functions on $\smash{\wh{M}}$
homogeneous of degree $\underline{w}$ with respect to the $\bbR_+$-action. A section of $\mcT$ corresponds to a section of
$\smash{T\wh{M}}$ homogeneous of degree $-1$ with respect to the
$\bbR_+$-action, and the canonical section $X \in \Gamma(\mcT(1))$
corresponds itself to the infinitesimal generator of that action.
Taking $\bbW$ to be the
restriction of the $\SL(n + 1, \bbR)$-representation $S^2 \bbV^*$, we
see that a metric $H \in \Gamma(S^2 \mcT^*)$ corresponds to a metric
on $\smash{\wh{M}}$ homogeneous of degree $2$.

We formalize the common features of the projective Thomas cone and the
ambient metric construction to show that, in the presence of a holonomy
reduction of the normal projective tractor connection to $\SO(r + 1, s
+ 1)$ that determines a stratification whose zero locus is nonempty,
the constructions essentially coincide.

\begin{theorem}\label{theorem:ambient-metric-Thomas-cone}
    ~
    \begin{enumerate}
        \item\label{item:Thomas-to-ambient} Suppose $(M, \mbp)$ is a
          projective structure of dimension $r + s + 1 \geq 4$
whose
          standard tractor bundle admits a parallel fiber metric $H$
          of signature $(r + 1, s + 1)$ (equivalently, a reduction of
          holonomy to $\SO(H)$) with a nonempty zero locus
          $M_0$, and let $\mbc$ denote the conformal structure on $M_0$. If we
          replace $M$ with any open collar of $M_0$ in $M$ and regard $H$ as a
          homogeneous parallel metric on $\smash{\wh{M}}$,
 then $\smash{(\wh{M}, H)}$
          is an ambient metric for $(M_0, \mbc)$, and this
           identifies the data of the normal projective tractor
          connection with the Levi-Civita connection $\nabla^H$ of
          $H$.
        \item\label{item:ambient-to-Thomas} Conversely, suppose
          $\smash{(\wt{\Sigma}, H)}$ is an ambient metric for a conformal
          structure $(\Sigma, \mbc)$ of signature $(r, s)$, $r + s
          \geq 3$. Then, there is a canonically determined projective
          structure $\mbp$ on the pointwise quotient $\smash{M := \wt{\Sigma}
          / \bbR_+}$ (where $\bbR_+$ is the dilation orbit) so that
          $\smash{\wt{\Sigma}}$ is the Thomas cone $\smash{\wh{M}}$, the normal
          connection $\smash{\wh{\nabla}}$ that $(M, \mbp)$ determines there
          coincides with the Levi-Civita connection $\nabla^H$ of $H$,
          and so the metric $H$ regarded as a (parallel) fiber metric
          on the projective tractor bundle determines a holonomy
          reduction to $\SO(p + 1, q + 1)$.
    \end{enumerate}
\end{theorem}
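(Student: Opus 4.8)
The plan is to set up a single dictionary between the Thomas cone and the ambient construction and then read off both directions from it. The essential structural facts are already available: the Thomas cone connection $\wh{\nabla}$ induced by $\eta$ is torsion-free and Ricci-flat, the canonical tractor $X$ corresponds to the infinitesimal generator of the $\bbR_+$-action on $\wh{M}$, and a section of $S^2 \mcT^*$ corresponds to a degree-$2$ homogeneous symmetric tensor on $\wh{M}$. For part \eqref{item:Thomas-to-ambient} I would first note that, since $H$ is parallel for the normal tractor connection and that connection is realized as $\wh{\nabla}$ on the cone, we have $\wh{\nabla} H = 0$; as $\wh{\nabla}$ is also torsion-free it must be the Levi-Civita connection $\nabla^H$, which is exactly the asserted identification and which immediately yields that $H$ is Ricci-flat. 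Straightness $\nabla^H X = \id_{T\wh{M}}$ I would obtain directly from the tractor connection formula $\nabla_a X^B = W^B{}_a$ in \eqref{pconnX}: under the identification $T\wh{M} = \mcG \times_{P_0} \bbV$ the splitting map $W^B{}_a$ is the tautological soldering, so $\nabla^H X = \id$. Homogeneity of degree $2$ of $H$ and the signature $(r+1, s+1)$ come for free from the correspondence and the hypothesis.

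The remaining, and most delicate, point of part \eqref{item:Thomas-to-ambient} is to exhibit $(\wh{M}, H)$ as a \emph{pre-ambient} metric for $(M_0, \mbc)$, which requires identifying the relevant piece of the cone with the conformal metric bundle and checking $\iota^* H = h_0$. Here I would use that $\tau = H(X, X)$ is homogeneous of degree $2$ with zero locus exactly over $M_0$, so $\wh{M}\vert_{M_0}$ is precisely the locus where the position vector $X$ is null, the analogue of $\rho = 0$ in the ambient picture. To identify $\wh{M}\vert_{M_0}$ with $\ol{M_0}$ I would invoke Theorem~\ref{HtoEin}(3), which identifies the conformal standard tractor bundle $\mcT_0$ with $\mcT\vert_{M_0}$ and the normal conformal tractor connection with the restriction of $\nabla^{\mcT}$; combined with the standard realization of the conformal metric bundle as the appropriate scale bundle inside the null cone of $H$ in $\mcT_0$, this gives an $\bbR_+$-equivariant identification $\wh{M}\vert_{M_0} \cong \ol{M_0}$. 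Under this identification the tautological degenerate form $h_0$, which measures horizontal lengths in the tautological scale, is by construction the restriction of $H$, giving $\iota^* H = h_0$. Since $\dim M_0 = r + s$ is odd in our setting, $(\wh{M}, H)$ then meets all the defining conditions of an ambient metric.

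For the converse part \eqref{item:ambient-to-Thomas} I would run the identification in reverse. The ambient hypotheses equip $\wt{\Sigma}$ with a torsion-free, Ricci-flat, degree-$2$-homogeneous connection $\nabla^H$ together with the dilation generator $X$ satisfying $\nabla^H X = \id$; these are exactly the data characterizing a Thomas cone. Passing to the quotient $M := \wt{\Sigma} / \bbR_+$, the straightness identity guarantees that the geodesics of $\nabla^H$ transverse to the $\bbR_+$-orbits project to a well-defined family of paths on $M$, and homogeneity guarantees that $\nabla^H$ descends to a projective equivalence class $\mbp$; this is the inverse of the Thomas cone construction, so $\wt{\Sigma}$ is canonically the Thomas cone $\wh{M}$ and $\nabla^H = \wh{\nabla}$. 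Finally, $H$ is degree-$2$ homogeneous and $\nabla^H$-parallel, hence corresponds under the dictionary to a $\nabla^{\mcT}$-parallel section of $S^2 \mcT^*$, that is, a parallel fiber metric of signature $(r+1, s+1)$, which is precisely the asserted holonomy reduction.

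The hardest step is the pre-ambient identification in part \eqref{item:Thomas-to-ambient}: matching the \emph{conformal} metric bundle $\ol{M_0}$ and its tautological form $h_0$ with the purely \emph{projective} tractor data along the null locus $\{\tau = 0\}$ of the cone. Everything else is either quoted (Ricci-flatness and torsion-freeness of $\wh{\nabla}$, the homogeneity correspondence) or a short computation with the tractor connection formulas; the genuine content lies in the compatibility of the conformal-geometric and projective-geometric descriptions of the same hypersurface, for which Theorem~\ref{HtoEin}(3) is the decisive input.
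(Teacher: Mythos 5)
Your treatment of part \ref{item:Thomas-to-ambient} is essentially the paper's argument: torsion-freeness plus $\wh{\nabla}H=0$ forces $\wh{\nabla}=\nabla^H$, Ricci-flatness and homogeneity come from the Thomas cone dictionary, straightness from $\nabla_aX^B=W^B{}_a$ in \nn{pconnX}, and the identification $\iota^*H=h_0$ from Theorem \ref{HtoEin}(3). That direction is fine (if anything you are slightly more explicit about straightness than the paper is).

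Part \ref{item:ambient-to-Thomas} has a genuine gap. The sentence ``these are exactly the data characterizing a Thomas cone'' is precisely the nontrivial content of this direction, and you assert it rather than prove it. What actually has to be shown is that the connection obtained by descending $\nabla^H$ to the rank-$(n+1)$ bundle $\mcT$ over $M:=\wt{\Sigma}/\bbR_+$ is induced by a Cartan connection of type $(\SL(n+1,\bbR),P)$, and that this Cartan connection is the \emph{normal} one; only then is $\wt{\Sigma}$ the Thomas cone of $(M,\mbp)$ and $\nabla^H=\wh{\nabla}$. The paper does this by building the adapted frame bundle $\mcG$ (using the parallel volume form of $H$ to land in $\SL$ rather than $\GL$) and verifying the two conditions of the \v{C}ap--Gover characterization: condition (A), that each $\Psi(\xi)$ acts by an element of $\mfsl(n+1,\bbR)$, which uses that $\nabla^H$ preserves the volume form; and condition (B), the nondegeneracy of the action on the filtration $\mcT^1\subset\mcT$, which is exactly where straightness $\nabla^HX=\id$ is used (to see that $\nabla^H_{\wt{U}}(\mu X)$ leaves $\mcT^1$). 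Finally, normality of the resulting Cartan connection is equivalent to Ricci-flatness of $\nabla^H$ (cited from \cite{CGHjlms}); your proposal lists Ricci-flatness among the hypotheses but never connects it to normality. Your appeal to geodesics projecting to paths only produces \emph{a} projective structure $\mbp$ on $M$ (this is the content of the remark following the paper's proof); it does not rule out that $\nabla^H$ corresponds to some non-normal tractor connection for $\mbp$, which is what would break the claimed identification with $\wh{\nabla}$. To repair the argument you either need to carry out the \v{C}ap--Gover verification as above, or cite a uniqueness statement to the effect that a torsion-free, Ricci-flat, $\bbR_+$-homogeneous, straight, volume-preserving connection on a cone over $(M,\mbp)$ is necessarily the Thomas cone connection.
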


\begin{proof}[Proof of part \eqref{item:Thomas-to-ambient}]
By hypothesis, take $M$ to be an open collar of $M_0$. Then the
 connection $\smash{\wh{\nabla}}$ is torsion-free and
regarded as a connection on $\smash{\wh{M}}$ it preserves $H$; hence, it must be the Levi-Civita connection of $H$. It has
homogeneity $2$ with respect to the dilations, by Theorem \ref{HtoEin} it pulls back to the tautological form $h_0$ on
the metric cone $\smash{\ol{\Sigma}}$ of $(M, \mbc)$, and it was observed
before the statement of the theorem that it is Ricci-flat, so it is an
ambient metric for $\mbc$.
\end{proof}

To prove \eqref{item:ambient-to-Thomas}, we will construct a candidate
$\mcT$ for the tractor bundle, a linear connection on it, and an
 adapted frame bundle $\mcG \to M$. Using a
characterization of \v{C}ap and Gover, we will show that the linear
connection is induced by a Cartan connection $\eta$ on $\mcG$ such
that $(\mcG, \eta)$ is a parabolic geometry of type $(\mfsl(n + 1,
\bbR), P)$, and then that $\eta$ is normal. In particular, $(\mcG,
\eta)$ will determine a projective structure $\mbp$ on $M$ for which
the normal tractor connection is the given linear connection on
$\mcT$.

Take $M$ to be the quotient $\smash{\wt{\Sigma} / \bbR_+}$ of $\smash{\wt{\Sigma}}$ by the dilation action and denote the projection onto that space by $\smash{\wh{\pi}: \wt{\Sigma} \to M}$; in particular, the below
argument will show that $\smash{\wh{\pi}}$ coincides with the map so named
above. By construction, sections of the weighted bundle $\mcE(\ulw) \to M$ (see Subsection \ref{pdg}) can be identified with smooth functions on $\smash{\wt{\Sigma}}$ homogeneous of weight $\ulw$ with respect to the dilation action. The discussion before the theorem motivates that we take for the tractor bundle the bundle $\mcT \to M$ with fibers
\[
    \mcT_x := \set{U \in \Gamma(T\wt{\Sigma} \vert_{\wt{\Sigma}_x}) : \textrm{$U$ is homogeneous of degree $-1$}} \textrm{,}
\]
where the smooth structure is characterized by the fact that a section
of $\mcT$ is smooth if and only if the corresponding homogeneous vector field in
$\smash{\Gamma(T\wt{\Sigma})}$ is smooth. Then the connection $\nabla^H$
descends to a connection $\nabla$ on $\mcT$, and the volume form of
$H$ descends to $\mcT$ to a $\nabla$-parallel tractor volume form
$\vol \in \Gamma(\Lambda^{n + 1} \mcT^*)$.

Let $\mcG$ denote the principal $P$-bundle
comprising the frames $(U^A)$ that (a) are adapted to the composition structure \eqref{euler} in that $U^0 = \sigma X$ for some section $\sigma$ of $\mcE(-1)$ and (b) satisfy $\vol(U^0, \ldots, U^n) = 1$. By
construction, $\mcT = \mcG \times_P \bbV$, and as usual we may identify the sections of
$\mcT$ with the $P$-equivariant maps $\mcG \to \bbV$. Any element $u
\in \mcG$ determines an isomorphism $\ul{u}: \bbV \to \mcT_{\pi(u)}$
defined by $v \mapsto [u, v]$, and the $P$-equivariant function
$\smash{\wt{t}: \mcG \to \bbV}$ corresponding to a tractor $t \in
\Gamma(\mcT)$ is just $u \mapsto \ul{u}^{-1}(t(\pi(u)))$.

To formulate \v{C}ap and Gover's condition for a linear connection
$\nabla$ on $\mcT$ to be induced by a Cartan connection on $\mcG$, we
need the following construction: Pick $u \in \mcG$ and $\xi \in T_u
\mcG$, and denote $x := \pi(u)$. Then, any $t \in \Gamma(\mcT)$
determines an element $(\nabla_{T\pi \cdot \xi} t)(x) \in \mcT_x$, and
thus the image of that point under $\ul{u}^{-1}$ in $\bbV$. Checking
shows that the difference
\[
    \ul{u}^{-1}((\nabla_{T\pi \cdot \xi} t)(x)) - (\xi \cdot t)(u)
\]
depends only on $t(x)$, so $\xi$ defines a linear map $\Psi(\xi): \bbV \to \bbV$ characterized by
\begin{equation}\label{equation:characterization-Psi}
    \ul{u}^{-1}((\nabla_{T\pi \cdot \xi} t)(x)) - (\xi \cdot t)(u) = \Psi(\xi)(\wt{t}(u))
\end{equation}
for (all) smooth sections $t$.

By \cite[Theorem 2.7]{CapGoTAMS}, a linear connection $\nabla$ on a tractor bundle $\mcT$ for a general parabolic geometry is induced by a Cartan connection $\eta$ on $\mcG$ if and only if (where in our case, $\mfg = \mfsl(n + 1, \bbR)$)
\begin{enumerate}
    \item[(A)] for each $\xi \in T_u \mcG$ the linear map $\Psi(\xi) : \bbV \to \bbV$ is given by the action of some element in $\mfg$, and
    \item[(B)] for each $x \in M$ and nonzero $U \in T_x M$, there is some index $a$ and a local smooth section $t \in \Gamma(\mcT^a)$ for which $(\nabla_U t)(x) \not\in \mcT^a_x$. (Here, $(\mcT^a)$ is the usual natural filtration of $\mcT$ induced by the $\mfp_+$-action on $\bbV$.)
\end{enumerate}

\begin{proof}[Proof of part \eqref{item:ambient-to-Thomas} of Theorem \ref{theorem:ambient-metric-Thomas-cone}]

We first check that conditions (A) and (B) hold for $\nabla^H$ on
$\mcT$:
\begin{enumerate}
    \item[(A)] Since $\nabla^H$ preserves $\vol$, it preserves the bundle of oriented, unit volume frames,
and so for any $\xi$, $\Psi(\xi)$ in \eqref{equation:characterization-Psi} is given by the action of an element of $\mfsl(n + 1, \bbR)$.
    \item[(B)] In the projective case there are only two distinct
      nonzero filtrands of $\mcT$, namely, $\mcT$ itself and the
      bundle $\mcT^1$ with fiber $\mcT^1_x := \set{\mu X : \mu \in
        \mcE(-1)_x} \cong \mcE(-1)_x$,
and so we must have $a = 1$. Fix
      $\smash{y \in \wt{\Sigma}}$. For any $\smash{\wt{U} \in T_y \wt{\Sigma}}$ and any nowhere zero local section $\mu X \in \Gamma(\mcT^1)$, we have
        \[
            (\nabla^H_{\wt{U}} (\mu X))(y) = (\wt{U} \cdot \mu)(y) X + \mu(y) (\nabla^H_{\wt{U}} X)(y) \textrm{.}
        \]
        The first term on the right is in $\mcT^1$. Regarded as an object on the ambient space, $\nabla^H X = \id_{T\wt{\Sigma}}$, and so as an object on $\mcT$, $\nabla_b X^A = Z^A_{\phantom{A} b}$. Now, $\mu(x) \neq 0$, the image of $Z$ is complementary to $\mcT^1$, and $Z^A_{\phantom{A} b}:TM(-1) \to \cT$ is injective, so the second term is not in $\mcT^1$, and thus neither is $(\nabla^H_{\wt{U}} (\mu X))(y)$.
\end{enumerate}

By the result given immediately before the proof of this part,
$\nabla^H$ corresponds to a Cartan connection on $\mcG$. An algebraic
normality condition guarantees uniqueness of the tractor connection
\cite{CapGoTAMS} (i.e. we have the {\em normal} tractor connection in
the sense of that source), and it is satisfied because $\nabla^H$ is
Ricci-flat \cite{CGHjlms}.
\end{proof}

\begin{remark}
One can easily describe the projective structure $\nabla^H$ determines on $M$: Pick any section $\sigma \in \Gamma(\wh{\pi}: \wh{M} \to M)$, and define the connection $\nabla^{\sigma}$ on $M$ by
\[
    \nabla^{\sigma}_U V := T\wh{\pi} \cdot \nabla^H_{T\sigma \cdot U} (T\sigma \cdot V) \textrm{.}
\]
Checking directly shows that this indeed defines a connection, and its projective class $p := [\nabla^{\sigma}]$ is independent on the choice of section $\sigma$.
\end{remark}

Now, Theorem \ref{theorem:ambient-metric-Thomas-cone} immediately
yields a translation of Theorem
\ref{theorem:ambient-existence-uniqueness} into natively projective
language and hence furnishes a solution to the Dirichlet problem
corresponding to a reduction of the normal projective tractor
connection to $\SO(r + 1, s + 1)$ for $r + s \geq 3$ odd.

\begin{theorem}\label{theorem:Dirichlet-problem-SO}
Let $(\Sigma, \mbc)$ be a connected, real-analytic conformal structure of
signature $(r, s)$ of odd dimension $n := r + s \geq 3$. Then, there
is (a) a real-analytic $(n + 1)$-dimensional projective structure
$\mbp$ on a collar $M$ of $\Sigma$ and (b) a holonomy reduction of the
normal projective tractor connection to $\SO( H)$ such that
geometry induced on the zero locus is $(\Sigma, \mbc)$. The solution
$(M, \mbp, H)$ is unique up to a equivalence along the zero locus and positive constant scaling of $H$.

Hence, there are Klein-Einstein metrics $(M_{\pm}, g_{\pm})$ of
signature $(r + 1, s)$ and $(r, s + 1)$ with (common) projective
infinity $(\Sigma, \mbc)$, and these are unique modulo homothety and equivalence
near infinity.
\end{theorem}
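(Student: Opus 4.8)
The plan is to obtain everything by combining the Fefferman--Graham existence and uniqueness theorem (Theorem \ref{theorem:ambient-existence-uniqueness}) with the projective Thomas cone dictionary (Theorem \ref{theorem:ambient-metric-Thomas-cone}), so that the proof reduces to assembling these two results and tracking the gauge freedoms of each. For existence, I would first feed the given connected real-analytic conformal structure $(\Sigma, \mbc)$ of signature $(r,s)$ and odd dimension $n = r+s \geq 3$ into Theorem \ref{theorem:ambient-existence-uniqueness}, producing a real-analytic ambient metric $(\wt{\Sigma}, H)$ of signature $(r+1, s+1)$. Applying part \eqref{item:ambient-to-Thomas} of Theorem \ref{theorem:ambient-metric-Thomas-cone} to $(\wt{\Sigma}, H)$ then yields a projective structure $\mbp$ on the quotient $M := \wt{\Sigma}/\bbR_+$ for which $\wt{\Sigma}$ is the Thomas cone, the Levi-Civita connection $\nabla^H$ is the normal projective tractor connection, and $H$ is a parallel tractor metric, that is, a reduction of holonomy to $\SO(H) \cong \SO(r+1, s+1)$. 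It then remains to identify the induced geometry on the zero locus. Since $\iota^* H = h_0$ is the degenerate tautological form, which annihilates the dilation field $X$, the zero locus $M_0 = \set{\tau = 0}$ with $\tau = H_{AB} X^A X^B$ is exactly $\ol{\Sigma}/\bbR_+ = \Sigma$; and the conformal structure it inherits via Theorem \ref{HtoEin}(3), being determined by the restriction of $H$ along $M_0$ and hence by $h_0$, is by the tautological construction of $h_0$ precisely $\mbc$. This establishes conclusions (a) and (b).

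For uniqueness, suppose $(M, \mbp, H)$ and $(M', \mbp', H')$ are two solutions inducing the same $(\Sigma, \mbc)$. I would apply part \eqref{item:Thomas-to-ambient} of Theorem \ref{theorem:ambient-metric-Thomas-cone} to each, recovering ambient metrics $(\wh{M}, H)$ and $(\wh{M'}, H')$ for $(\Sigma, \mbc)$. After normalizing the scale so that each restricts to the common tautological $h_0$, Theorem \ref{theorem:ambient-existence-uniqueness} supplies a diffeomorphism intertwining them and fixing $\ol{\Sigma}$ pointwise; because both ambient metrics are homogeneous of degree $2$ and straight, this diffeomorphism may be taken dilation-equivariant and therefore descends to a diffeomorphism $M \to M'$ near the zero locus carrying $\mbp$ to $\mbp'$ and $H$ to $H'$, which is exactly an equivalence along the zero locus. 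The one residual freedom is an overall positive constant: the Dirichlet data fix only the conformal \emph{class} on $M_0$, so the $H$-dependent identification of the zero-locus cone with the metric bundle $\ol{\Sigma}$ means that replacing $H$ by $\lambda^2 H$ produces another solution inducing the same $\mbc$, while the tautological normalization $\iota^* H = h_0$ conversely pins $H$ down to this scale. Thus the solution is unique up to equivalence along the zero locus and positive constant scaling of $H$.

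The final (``hence'') clause then follows by specializing the general holonomy-reduction theory to $(M, \mbp, H)$. Theorem \ref{HtoEin} produces the two open curved orbits $M_{\pm}$ carrying Einstein metrics $g_{\pm}$ of signatures $(r, s+1)$ and $(r+1, s)$, with positive and negative scalar curvature respectively, whose Levi-Civita connections lie in $\mbp\vert_{M_\pm}$ and whose common projective infinity is the zero locus $(\Sigma, \mbc)$; Proposition \ref{projComp} gives that each $g_{\pm}$ is projectively compact of order $2$. Under this correspondence the positive constant scaling of $H$ becomes homothety of $g_{\pm}$ and equivalence along the zero locus becomes equivalence near infinity, so the uniqueness of $(M, \mbp, H)$ descends to the asserted uniqueness of the Klein--Einstein metrics modulo homothety and equivalence near infinity.

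Since the two principal ingredients are already in hand, the only genuine difficulty is the gauge bookkeeping in the uniqueness step: verifying that the Fefferman--Graham normalizing diffeomorphism can be chosen to commute with the $\bbR_+$-action so that it descends to the projective quotient $M$, and cleanly isolating the positive-scale ambiguity as exactly the discrepancy between fixing the conformal \emph{class} on the zero locus (all that the Dirichlet data provide) and fixing the tautological metric-bundle normalization that the ambient construction demands.
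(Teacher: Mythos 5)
Your proposal is correct and follows essentially the same route as the paper: the paper's proof likewise just invokes Theorem \ref{theorem:ambient-existence-uniqueness} to get the ambient metric, passes through Theorem \ref{theorem:ambient-metric-Thomas-cone} to obtain $(M,\mbp,H)$, notes that the induced zero-locus geometry is $(\Sigma,\mbc)$ by construction, and obtains uniqueness by unwinding the Fefferman--Graham uniqueness statement. Your additional bookkeeping on the dilation-equivariance of the normalizing diffeomorphism and the isolation of the constant-scale ambiguity is a more explicit rendering of what the paper leaves to ``unwinding definitions,'' not a different argument.
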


The notions of equivalence in the theorem are the same as those for holonomy
reductions to $\G_2^*$ described in Definition \ref{definition:equivalence}, replacing the cross products
$\btimes_a$ with tractor metrics $H_a$ and eliminating the criterion on
the N(P)K endomorphism fields $J_{\pm}^a$.)

\begin{proof}
By Theorem \ref{theorem:ambient-existence-uniqueness}, there is an
ambient metric $\smash{(\wt{\Sigma}, H)}$ for $(\Sigma, \mbc)$, which by
Theorem \ref{theorem:ambient-metric-Thomas-cone} determines a
projective structure $(M, \mbp)$ and the claimed holonomy
reduction. Subsection \ref{Phi-geom} recalled that this structure
determines the conformal structure, which coincides with $(\Sigma,
\mbc)$ by construction, and Klein-Einstein metrics $(M_{\pm},
g_{\pm})$. Unwinding definitions, the uniqueness of the ambient metric
in the real-analytic, odd-dimensional case described in Theorem
\ref{theorem:ambient-existence-uniqueness} implies the uniqueness
conditions stated here.
\end{proof}

\subsection{Normal forms for ambient metrics and Klein-Einstein metrics}\label{subsection:ambient-normal-form}

When working with an ambient metric $H$ for a conformal structure
$(\Sigma, \mbc)$, it is often convenient to pick a representative
metric $g \in \mbc$. Then, with respect to such a metric $g$, $H$
admits an essentially unique
normal form  \cite[Definition 2.7 and Theorem
  2.9]{FeffermanGraham}. Translating into projective language using
the identification
in \ref{subsection:Thomas-cone} gives a normal form
for parallel projective tractor metrics with a given zero locus geometry.

\begin{proposition}\label{norm-prop}
Let $(M, \mbp)$ be a projective structure of dimension $n \geq 3$,
$\mcT$ its standard tractor bundle, and $\nabla^{\mcT}$ its normal
connection. Let $H$ be a $\nabla^{\mcT}$-parallel tractor metric such
that the hypersurface curved orbit $M_0 = \set{x \in M: H_{AB} X^A X^B
  = 0}$ is nonempty, and let $\mbc$ be the conformal structure that
$H$ induces on $M_0$. We can identify an open collar $U \subseteq M$ containing $M_0$ with
an open subset of $M_0 \times \bbR$ containing $M_0 \times \set{0}
\leftrightarrow M_0$. (As in Subsection \ref{subsection:ambient-metric}, we denote by $\rho$ the standard coordinate on $\bbR$.) For any representative $g \in \mbc$, there is a
representative $\nabla$ of $\mbp$ and a weighted bilinear form $g_{\rho} \in
\Gamma(S^2 T^*U (2))$, such that
\begin{enumerate}
    \item the pullback to $M_0$ of $g_{\rho}$ is $\mbc$, and furthermore after trivializing density bundles with the scale $\nabla$, the pullback to $M_0$ of $g_{\rho}$ is $g$,
    \item in the scale determined by $\nabla$, $H$ is given by
        \begin{equation}\label{equation:ambient-normal-form}
            H
                =
                    \left(
                        \begin{array}{cc}
                            2 \rho & d\rho    \\
                             d\rho & g_{\rho} \\
                        \end{array}
                    \right)
        \end{equation}
        and
    \item\label{item:rho-degeneracy} $g_{\rho}(\partial_{\rho}, \pdot) = 0$.
\end{enumerate}
\end{proposition}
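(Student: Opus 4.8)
The plan is to recognize this statement as the translation, through the Thomas cone dictionary of Subsection~\ref{subsection:Thomas-cone}, of the Fefferman--Graham normal form for ambient metrics \cite[Definition~2.7 and Theorem~2.9]{FeffermanGraham}. By Theorem~\ref{theorem:ambient-metric-Thomas-cone}\eqref{item:Thomas-to-ambient}, after replacing $M$ by an open collar of $M_0$ and regarding $H$ as a homogeneous parallel metric on the Thomas cone $\wh{M}$, the pair $(\wh{M}, H)$ is an ambient metric for the induced conformal structure $(M_0, \mbc)$. So I would first put $H$ into Fefferman--Graham normal form on $\wh{M}$ and then read off the resulting tractor data in a scale $\nabla \in \mbp$.

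Concretely, fix the representative $g \in \mbc$. By \cite[Theorem~2.9]{FeffermanGraham} there are homogeneous coordinates $(t, x^i, \rho)$ on $\wh{M}$, with $t$ the $\bbR_+$-fibre coordinate and $(x^i, \rho)$ coordinates identifying the collar $U$ with an open subset of $M_0 \times \bbR$ containing $M_0 \times \set{0}$, in which
\[
    H = 2\rho\, dt^2 + 2 t\, dt\, d\rho + t^2 g_{\rho},
\]
where $g_{\rho} = (g_{\rho})_{ij}(x, \rho)\, dx^i dx^j$ is a one-parameter family of metrics on $M_0$ with $g_0 = g$ and with no $d\rho$-component. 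The vanishing of the $d\rho$-component is exactly the requirement $g_{\rho}(\partial_{\rho}, \pdot) = 0$, which is condition \eqref{item:rho-degeneracy}, and $g_0 = g$ will give part (a) once the density trivialization below is in place.

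Next I would transport this to the tractor bundle. By Subsection~\ref{subsection:Thomas-cone}, the section $\set{t = 1}$ of $\wh{\pi}: \wh{M} \to M$ is a choice of scale $\nabla \in \mbp$; the Euler field $t\partial_t$ is the canonical tractor $X$, the lifts of $\partial_{\rho}$ and $\partial_{x^i}$ furnish the $Z_A{}^a$-directions, and a density of weight $w$ is a degree-$w$ function restricted to $\set{t = 1}$. Evaluating $H$ in the scale $\set{t=1}$ on the adapted frame gives $H(X, X) = 2\rho$, $H(X, \partial_{\rho}) = 1$ and $H(X, \partial_{x^i}) = 0$, $H(\partial_{\rho}, \partial_{\rho}) = H(\partial_{\rho}, \partial_{x^i}) = 0$, and $H(\partial_{x^i}, \partial_{x^j}) = (g_{\rho})_{ij}$. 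Since $\tau = H_{AB} X^A X^B$, the first identity says the top slot of $H$ is $\tau = 2\rho$; the middle identities say the mixed block is precisely the one-form $d\rho$ (as $d\rho(\partial_{\rho}) = 1$ and $d\rho(\partial_{x^i}) = 0$); and the last says the bottom block, regarded as the weight-$2$ bilinear form $g_{\rho} \in \Gamma(S^2 T^* U(2))$ trivialized by $\nabla$, is $g_{\rho}$. This is exactly the matrix \eqref{equation:ambient-normal-form}, which is part (b), and restricting to $\rho = 0$ recovers $g$ and hence $\mbc$, giving part (a). The essential uniqueness of $\nabla$ and $g_{\rho}$ follows from the corresponding uniqueness in \cite[Theorem~2.9]{FeffermanGraham}.

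The main obstacle is the bookkeeping in this last translation: one must confirm that the Fefferman--Graham homogeneity coordinate $t$ really does correspond to a projective scale in the precise sense of Subsection~\ref{subsection:Thomas-cone}, and then track the projective density weights so that the spatial block acquires weight $2$ while the numerical factors (the $2$ in $2\rho$, and the coefficient making the mixed block exactly $d\rho$) emerge correctly. This matching is routine given the cone dictionary, but it is the only point requiring genuine care.
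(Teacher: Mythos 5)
Your proposal is correct and follows exactly the route the paper intends: the paper's entire justification is the sentence preceding the proposition, namely that the Fefferman--Graham normal form \cite[Definition 2.7 and Theorem 2.9]{FeffermanGraham} translates into projective language via the Thomas cone identification of Theorem \ref{theorem:ambient-metric-Thomas-cone}, and your write-up simply carries out that translation explicitly (with the frame evaluations and weight bookkeeping done correctly).
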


\begin{remark}
The identification of $U$ with a subspace of $M_0 \times \bbR$ and condition \eqref{item:rho-degeneracy} together enable us to view $g_{\rho}$, as a $1$-parameter family of weighted bilinear forms on $M_0$, which correspond to unweighted forms in the scale determined by the representative $g$.
\end{remark}

Note that for any parallel tractor metric in the given normal form, $\tau = H_{AB} X^A X^B = 2 \rho$ (here, $\rho$ denotes the weighted function that corresponds to $\rho$ under the trivialization of $\mcE(2)$ with respect to $\nabla$), so the curved orbits in the decomposition $M = M_+ \cup M_0 \cup M_-$ determined by $H$ are just
\begin{align*}
    M_+ &= \set{(k, \rho) \in M : \rho > 0} \\
    M_0 &= \set{(k, \rho) \in M : \rho = 0} \\
    M_- &= \set{(k, \rho) \in M : \rho < 0} \textrm{.}
\end{align*}

In fact, on any projective structure with a $\SO(p + 1, q + 1)$
holonomy reduction and non-empty zero locus $M_0$, in a collar
neighborhood of $M_0$ we can realize the open curved orbit structures
$(M_{\pm}, g_{\pm})$ of the projective structure as pseudo-Riemannian
submanifolds of the ambient metric structure $\smash{(\wt{\Sigma}, H)}$.  This
is because weighted (density and tractor) objects on $M$ can be
identified with homogeneous objects on $\smash{\wt{\Sigma}}$: Trivializing
(appropriate subsets) with respect to the scales $\tau = \pm 1$
corresponds to identifying
\[
    M_{\pm} \leftrightarrow \set{y \in \wt{\Sigma} : H_{AB} X^A X^B = \pm 1} \textrm{,}
\]
and by construction the corresponding Klein-Einstein metrics satisfy
$g_{\pm} = \iota_{\pm}^* H$, where $\iota_{\pm}$ denotes the inclusion
$\smash{M_{\pm} \hookrightarrow \wt{\Sigma}}$. For an ambient metric in normal
form \eqref{equation:ambient-normal-form}, the Klein-Einstein metrics
assume the form
\begin{equation}\label{equation:Klein-Einstein-normal-form}
    g_{\pm} = \frac{1}{2 (\pm \rho)} g_{\rho} \mp \frac{1}{4 \rho^2} d\rho^2 \textrm{.}
\end{equation}

\subsection{The proof of Theorem \ref{theorem:Dirichlet-problem-G2}}

With the existence and uniqueness result for the ambient metric in
hand, as well as the above formulation of the relationship between
the ambient metric and the projective Thomas cone, we are all but
prepared to solve the (real-analytic) Dirichlet problem for a
$\G_2^*$ holonomy-reduced projective structure: A real-analytic
oriented generic $(2, 3, 5)$-distribution $(\Sigma, D)$ induces a
conformal structure $\mbc_D$ of signature $(2, 3)$ on $\Sigma$. So, by
Theorem \ref{theorem:ambient-existence-uniqueness} this conformal
structure determines an essentially unique real-analytic ambient
metric $\smash{(\wt{\Sigma}, H)}$ which by Theorem \ref{theorem:ambient-metric-Thomas-cone}\eqref{item:ambient-to-Thomas} can be regarded as the Thomas cone for
a projective structure on $\smash{\wt{\Sigma} / \bbR_+}$ together with a
holonomy reduction of the Thomas cone connection $\nabla$, and hence
(equivalently) of the tractor projective connection, to $\SO(3,
4)$. Via the Thomas cone identification, the discussion before Theorem
\ref{theorem:ambient-metric-Thomas-cone} shows that a solution to the
$\G_2^*$ Dirichlet problem corresponds in the ambient setting to a
holonomy reduction of $\nabla^H$ to $\G_2^*$, that is, a
$\nabla^H$-parallel split cross product $\btimes$, for which the
induced zero locus geometry is exactly $(\Sigma, D)$. The last
remaining tasks, then, are to translate the initial data in the
Dirichlet problem (namely, a $(2, 3, 5)$-distribution) into the
language of the ambient and Thomas cone settings, and to show that one
can always use this to produce such a $\btimes$. The first half is essentially the content of Theorem \ref{HSthm}. A general tractor extension result \cite[Theorem 1.4]{GrahamWillse} resolves the second half. We translate that result into projective language for even-dimensional projective structures.

\begin{theorem}\label{theorem:Graham-Willse}
Let $(M, \mbp)$ be a real-analytic projective structure of even dimension $n \geq 4$, $\mcT$ its standard tractor bundle, and $\nabla^{\mcT}$ its normal connection. Let $H$ be a (necessarily indefinite) $\nabla^{\mcT}$-parallel tractor metric such that the zero locus $M_0 = \set{x \in M: H_{AB} X^A X^B = 0}$ is nonempty, and let $\mbc$ be the conformal structure that $H$ induces there, $\mcT_0 \subset \mcT$ its standard conformal tractor bundle, and $\nabla^{\mcT_0}$ the normal conformal tractor connection.

Suppose $\chi_0 \in \Gamma(\bigotimes^r \mcT_0^*)$ is a real-analytic $\nabla^{\mcT_0}$-parallel conformal tractor tensor. Then, there is a connected open subset $U \supset M_0$ of $M$ and a real-analytic $\nabla^{\mcT}$-parallel projective tractor tensor $\chi \in \Gamma(\bigotimes^r \mcT^* \vert_{U})$ such that $\chi \vert_{M_0} = \chi_0$, and any two such extensions agree on some open set containing $M_0$.
\end{theorem}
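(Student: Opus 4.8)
The plan is to transport the statement into the Fefferman--Graham ambient picture via the Thomas cone correspondence of Theorem \ref{theorem:ambient-metric-Thomas-cone}, apply the ambient extension result \cite[Theorem 1.4]{GrahamWillse}, and translate the conclusion back. First I would invoke part \eqref{item:Thomas-to-ambient} of Theorem \ref{theorem:ambient-metric-Thomas-cone}: after replacing $M$ with an open collar of $M_0$ and regarding the $\nabla^{\mcT}$-parallel tractor metric $H$ as a homogeneous metric on the Thomas cone $\wh{M}$, the pair $(\wh{M}, H)$ is an ambient metric for $(M_0, \mbc)$, and this identification carries the normal projective tractor connection $\nabla^{\mcT}$ to the Levi-Civita connection $\nabla^H$. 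Under this identification the standard tractor bundle $\mcT$ corresponds to the homogeneous degree-$(-1)$ vector fields on $\wh{M}$, so any $\nabla^{\mcT}$-parallel projective tractor tensor $\chi \in \Gamma(\bigotimes^r \mcT^*)$ corresponds precisely to a $\nabla^H$-parallel homogeneous tensor field on $\wh{M}$, and conversely. In particular \emph{parallel} translates to \emph{parallel} verbatim, since the two connections literally coincide under the correspondence, and real-analyticity is preserved.

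Next I would match the boundary data. By Theorem \ref{HtoEin}(3) the standard conformal tractor bundle $\mcT_0$ over $M_0$ is canonically the restriction $\mcT\vert_{M_0}$, and $\nabla^{\mcT_0}$ is the restriction of $\nabla^{\mcT}$; on the ambient side $M_0$ is realized as the zero locus $\ol{\Sigma} = \set{\rho = 0}$, where the conformal tractor data is exactly the ambient tangent data along $\ol{\Sigma}$ equipped with its Levi-Civita connection, this being the standard ambient realization of conformal tractors (cf.\ \cite{CapGo-ambient,FeffermanGraham}). The weight alignment of Proposition \ref{wtsalign}, namely $\mcE_0[w] \cong \mcE(w)\vert_{M_0}$, reconciles the conformal and projective density conventions, so that the given $\nabla^{\mcT_0}$-parallel tensor $\chi_0$ becomes, on the nose, parallel ambient-boundary data of exactly the type required by \cite[Theorem 1.4]{GrahamWillse}.

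With the data so arranged I would apply \cite[Theorem 1.4]{GrahamWillse}, which in the real-analytic category produces a $\nabla^H$-parallel ambient tensor extending $\chi_0$ on a neighborhood of $\ol{\Sigma}$, unique up to agreement on such a neighborhood. Translating back through the Thomas cone correspondence yields a real-analytic $\nabla^{\mcT}$-parallel projective tractor tensor $\chi$ on a connected open set $U \supset M_0$ with $\chi\vert_{M_0} = \chi_0$, and the uniqueness clause of \cite[Theorem 1.4]{GrahamWillse} descends directly to the asserted local uniqueness. The main obstacle, and the step deserving the most care, is verifying the \emph{compatibility} of the two a priori distinct identifications of the conformal tractor bundle along $M_0$: the one from Theorem \ref{HtoEin}(3) (as $\mcT\vert_{M_0}$ sitting inside the projective tractor bundle) and the one implicit in \cite{GrahamWillse} (as ambient tensors along $\ol{\Sigma}$). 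Concretely, one must check that the homogeneity weights and the normalizations line up so that \emph{restriction to $M_0$ in the projective tractor sense} coincides with \emph{evaluation along $\ol{\Sigma}$ in the ambient sense}; once this is pinned down, both parallelism and real-analyticity transfer automatically, because $\nabla^{\mcT}$ and $\nabla^H$ are the same connection under the correspondence.
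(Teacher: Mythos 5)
Your proposal is correct and is essentially the proof the paper intends: the paper states this theorem as a direct translation of \cite[Theorem 1.4]{GrahamWillse} through the Thomas cone correspondence of Theorem \ref{theorem:ambient-metric-Thomas-cone}, with the boundary identifications supplied by Theorem \ref{HtoEin}(3) and Proposition \ref{wtsalign}, exactly as you describe. The compatibility check you flag at the end is indeed the only point requiring care, and it is resolved by the same identifications the paper already set up for Theorem \ref{theorem:Dirichlet-problem-SO}.
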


\begin{proof}[Proof of Theorem \ref{theorem:Dirichlet-problem-G2}]
Denote by $\mbc_D$ the conformal structure that $D$ induces on
$\Sigma$. Theorem \ref{HSthm} yields the
corresponding split-generic $3$-form $\Phi_0$, on the conformal tractor
bundle $\mcT_0$ of $\mbc_D$, that is parallel with respect to the normal
conformal tractor connection; by naturality $\Phi_0$ is
real-analytic. Then, Theorem \ref{theorem:Dirichlet-problem-SO} gives
that there is a projective structure $\mbp$, say, with normal tractor
connection $\nabla^{\mcT}$, and a $\nabla^{\mcT}$-parallel tractor
metric $H$ on a collar $M$ of $\Sigma$ for which the geometry on the
zero locus is $(\Sigma, \mbc_D)$. Next, Theorem
\ref{theorem:Graham-Willse} guarantees the existence of a parallel
split-generic tractor $3$-form $\Phi$ on $M$ such that $\Phi
\vert_{\Sigma} = \Phi_0$ (after possibly replacing $M$ with a smaller
collar of $\Sigma$). Raising an index of $\Phi$ using $H$ gives a
parallel split tractor cross product $\btimes$ on $(M, \mbp)$; by
construction, the geometry $\btimes$ determines on the zero locus is $(\Sigma, D)$. Tracing the uniqueness statements in
the involved theorems yields the claimed uniqueness.
\end{proof}

\begin{remark}\label{remark:even-case}
Theorems \ref{theorem:Dirichlet-problem-SO} and \ref{theorem:Graham-Willse} can be extended to the case where $(M_0, \mbc)$ is an even-dimensional conformal structure of dimension $n \geq 4$, but in that case the involved constructions are only guaranteed to work to a finite order that depends on $n$, and hence the precise existence and uniqueness statements are rather more subtle than in the odd-dimensional case.
\end{remark}

\begin{remark}\label{remark:generality-of-structures}
The Klein-Einstein condition appears to impose severe restrictions on
strictly N(P)K structures $(M_{\pm}, g_{\pm}, J_{\pm})$, and hence on
those structures that admit bounding $(2, 3, 5)$-distributions. This
is true in at least a na\"{\i}ve sense: Applying the Cartan-K\"{a}hler
Theorem gives that, locally, $6$-dimensional strictly N(P)K structures
depend, modulo diffeomorphism, on $2$ arbitrary functions of $5$ real
variables, the same generality as for K\"{a}hler structures in this
dimension; see \cite[\S4.3 and Remark 23]{B} for the definite and indefinite strictly NK cases.
On the other hand, Corollary
\ref{corollary:bijective-correspondences} shows that near a point on
the projective infinity, a (real-analytic) Klein-Einstein strictly
N(P)K structure is mutually determined by the $(2, 3, 5)$-distribution
it defines there, and a na\"{\i}ve count shows that locally $(2, 3,
5)$-distributions (and hence such strictly N(P)K structures) are
considerably less general: Modulo diffeomorphism, they depend on just
$1$ function of $5$ variables.
\end{remark}

\section{Examples}\label{section:examples}

Section \ref{gDp} suggests a method for producing explicit examples of projective structures $(M, \mbp)$ with parallel split tractor cross products $\btimes$ for which the zero locus $M_0$ is nonempty:
\begin{enumerate}
    \item Select a (real-analytic, oriented) $(2, 3, 5)$-distribution $(\Sigma, D)$.
    \item\label{item:construct-conformal-structure} Compute the Nurowski conformal structure $\mbc_D$ that $D$ induces.
    \item\label{item:construct-conformal-cross-product} Compute the parallel split-generic conformal tractor cross product $\btimes$ (or equivalently, the parallel split-generic conformal tractor $3$-form $\Phi_0$) on $\Sigma$, which $D$ determines up to constant scale. (The cross product can be normalized using a choice of oriented conformal tractor volume form, by demanding that it coincide with the volume form determined pointwise by \eqref{equation:cross-to-volume}.)
    \item\label{item:construct-ambient-metric} Compute the (essentially unique) real-analytic ambient metric $\smash{(\wt{\Sigma}, \wt{g}_D)}$ of $\mbc_D$.
    \item Set $\smash{M := \wt{\Sigma} / \bbR_+}$; the Levi-Civita connection of $\smash{\wt{g}_D}$ descends to a connection $\nabla^{\mcT}$ on $\smash{T\wt{\Sigma} / \bbR_+ \to M}$, which we may view as the standard projective tractor bundle for the projective structure $\mbp$ that $\nabla^{\mcT}$ determines.
    \item\label{item:construct-projective-cross-product} Compute the parallel extension of $\btimes$ (or $\Phi_0$) to a parallel split-generic projective tractor cross product $\btimes$ (respectively, to a parallel split-generic projective tractor $3$-form $\Phi$).
\end{enumerate}

Theorem \ref{theorem:Dirichlet-problem-G2} ensures that this construction always yields such a triple $(M, \mbp, \btimes)$, and Corollary \ref{corollary:bijective-correspondences} shows that essentially all such triples $(M, \mbp, \btimes)$ that have nonempty zero locus (and that are real-analytic) arise this way. Computing the indicated data explicitly for a general $(2, 3, 5)$-distribution $D$, however, is generally difficult: Step \eqref{item:construct-ambient-metric} amounts to solving a typically intractable system of partial differential equations---indeed, explicit ambient metrics have only been produced for a limited number of classes of conformal structures, and for only a few families of Nurowski conformal structures. The other parts of the procedure are variously less formidable: Step \eqref{item:construct-conformal-structure} amounts to carrying out Cartan's normalization procedure for these geometries (giving a $\mfg_2^*$-valued Cartan connection) and exploiting the inclusion $\mfg_2^* \hookrightarrow \mfso(3, 4)$, or, in the case that $D$ is given in so-called Monge normal form (see below), simply computing using a formula of Nurowski. Step \eqref{item:construct-conformal-cross-product} is first a matter of computing the normal conformal Killing form associated to $D$: Proposition \ref{proposition:D-characterization-omega} shows that simply forming the wedge product of the vectors in an oriented local basis and lowering indices using the conformal structure gives this form up to a positive real-valued function (in five local coordinates), and hence solving for the normal conformal Killing form amounts locally to solving an overdetermined PDE for this function (which always admits a solution). With that solution in hand, one can recover the corresponding parallel section of the conformal tractor bundle $\Lambda^3 \mcT_0^*$ of $\mbc_D$ by applying to that solution the so-called BGG splitting operator for that bundle; this operator was recorded for this purpose in \cite[(15)]{HammerlSagerschnig}, and we reproduce it in Appendix \ref{appendix:BGG-splitting-operator}. Finally, Step \eqref{item:construct-projective-cross-product} amounts to solving a system of $35$ ordinary differential (parallel transport) equations in $\rho$.

\subsection{A Monge quasi-normal form for $(2, 3, 5)$-distributions}

Any ordinary differential equation $z' = F(x, y, y', y'', z)$, where $y$ and $z$ are regarded as functions of $x$ can be encoded on the jet space $J^{2, 0}_{xypqz} \cong \bbR^5 \supseteq \dom F$ (where $p$ and $q$ are jet coordinates respectively corresponding to $y'$ and $y''$) as the $2$-plane distribution
\begin{equation}\label{equation:Monge-normal-form}
    D_F := \ker \set{dy - p\,dx, dp - q\,dx, dz - F\,dx} \textrm{.}
\end{equation}
Checking shows that this is a $(2, 3, 5)$-distribution if and only if $\smash{\partial_q^2 F}$ vanishes nowhere, so any such smooth function $F(x, y, p, q, z)$ specifies such a distribution. In fact, \cite[\S76]{Goursat} shows that every $(2, 3, 5)$-distribution is locally equivalent at each point of the underlying manifold to $D_F$ for some function $F$.

Nurowski has a practical (albeit complicated) formula \cite[(54)]{NurowskiDifferential} that gives for general $F$ (such that $\partial_q^2 F$ vanishes nowhere) an explicit coordinate expression for a representative of the conformal structure induced by $D_F$; it is polynomial in the $4$-jet of $F$.

\subsection{A family with homogeneous curved orbits}

\begin{example}\label{example:qm-class}
We apply the construction at the beginning of the section to describe a $1$-parameter family of deformations of the flat model described in Subsection \ref{modelS} for which the geometries induced on the curved orbits are all still homogeneous.

In \cite{Cartan}, Cartan solved the equivalence problem for $(2, 3,
5)$-distributions. Moreover, he showed that if such a distribution $D$ is not locally equivalent to the flat model for that geometry, then the infinitesimal symmetry algebra\footnote{The infinitesimal symmetry algebra of a distribution $(M, D)$ is the vector space of all vector fields on $M$ whose flows preserve $D$, that is, the vector fields $U$ such that $\mcL_U V \in \Gamma(D)$ for all vector fields $V \in \Gamma(D)$.} $\mfinf(D)$ of $D$ has dimension at most $7$ and gave (in the
complex setting) an
explicit coframe description of the local distributions for which equality holds.\footnote{Strictly speaking, he established this bound under
  the modest assumption of ``constant root type'', which in particular
  holds for any homogeneous distribution. This restriction was
  recently lifted in \cite[Theorem 5.5.2]{KruglikovThe}.}
We construct parallel projective split tractor cross products $(M, \mbp,
\btimes)$ for (real versions of) these distributions using
a well-known realization amenable to our purposes: Up to local
equivalence (and again a suitable notion of complexification) the
distributions with $7$-dimensional symmetry algebra are exhausted by
the distributions defined on $\bbR^5_+ := \set{(x, y, p, q, z) : q > 0}$ via \eqref{equation:Monge-normal-form} by $D_m := D_{q^m}$ and $D_{\log q}$,
where $m \not\in
\set{-1, 0, 1, \tfrac{1}{3}, \tfrac{2}{3}, 2}$. For $m \in \set{-1,
  \frac{1}{3}, \frac{2}{3}, 2}$, $D_m$ is locally equivalent to the
flat model for that geometry; for $m \in \set{0, 1}$, $D_m$ is not a
$(2, 3, 5)$-distribution. Some distinct values of $m$ yield locally
equivalent distributions $D_m$, but varying $m$ over the
half-open interval $[\frac{1}{2}, 1)$ exhausts all such distributions
  without any such redundancy, and includes the flat model at $m =
  \frac{2}{3}$; see \cite{Kruglikov} and the minor correction thereto
  in \cite{DoubrovGovorov} for details. It turns out that the
  conformal structures of the distributions $D_m$ and $D_{\log q}$ are
  all almost Einstein (in fact, almost Ricci-flat) \cite{WillseHeisenberg}, which leaves their
  ambient metrics more amenable to explicit computation. We exploit
  this to give explicit data for the parallel projective tractor cross
  products determined by the distributions $D_m$; the distribution
  $D_{\log q}$ can be handled similarly, but we do not do so here. By Theorem
  \ref{theorem:Dirichlet-problem-G2} a real-analytic $(2, 3,
  5)$-distribution determines (at least in a collar along the
  underlying manifold) the real-analytic parallel projective tractor
  cross product, so for the locally flat distributions $D_m$, $m \in
  \set{-1, \tfrac{1}{3}, \tfrac{2}{3}, 2}$ the corresponding parallel
  projective tractor cross product $(M, \mbp, \btimes)$ is locally
  equivalent to the flat model in Subsection \ref{modelS}. In
  particular, we may view the family of resulting structures as one of smooth
  deformations of that model. The ambient metrics associated to these
  distributions were first given in coordinates in
  \cite{NurowskiExplicit}.

We describe this family of geometries in terms of a well-adapted global frame (or rather, a corresponding family of global frames) on $\bbR^5_+$. All of the claims can be verified by direct (if tedious) computation, in particular using the data \eqref{equation:example-qm-connection-form} for the representative connection of the underlying projective structure.

Fix $m \in \bbR \setminus \set{0, 1}$. The infinitesimal symmetry algebra $\mfinf(D_m)$ of $D_m$ satisfies \cite[\S 5]{Kruglikov}
\begin{equation}
    \mfinf(D_m) \supseteq \ab{\xi_1, \ldots, \xi_7} \textrm{,}
\end{equation}
where
\begin{gather*}\label{equation:example-qm-symmetries}
           \xi_1 := \partial_x,
    \qquad \xi_2 := \partial_y,
    \qquad \xi_3 := x \partial_y + \partial_p,
    \qquad \xi_4 := y \partial_y + p \partial_p + q \partial_q + m z \partial_z, \\
    \qquad \xi_5 := \partial_z,
    \qquad \xi_6 := x \partial_x + 2 y \partial_y + p \partial_p + z \partial_z, \\
    \qquad \xi_7 := q^{m - 1} \partial_x + \left(p q^{m - 1} - \tfrac{1}{m} z\right) \partial_y + \left(1 - \tfrac{1}{m}\right) q^m \partial_p
                            + (m - 1) \int q^{2 m - 2} dq \cdot \partial_z \textrm{.}
\end{gather*}
If $D_m$ is not flat, that is, if $m \not\in \set{-1, \frac{1}{3}, \frac{2}{3}, 2}$, then equality holds.
The generators $\xi_1, \ldots, \xi_5$ together span a subalgebra $\mfs_m$ of $\mfinf(D_m)$ that acts infinitesimally transitively on $\bbR^5_+$. Then, analyzing the flows of the generators $\xi_i$ enables us to identify $\bbR^5_+$ with the connected, simply connected Lie group $S_m$ with Lie algebra $\mfs_m$; it is isomorphic to the matrix group
\[
    \left\{
        \left(
            \begin{array}{ccccc}
                  a_5 & \cdot &   a_4 &   a_3 &   a_1 \\
                \cdot & a_5^m & \cdot & \cdot &   a_2 \\
                \cdot & \cdot &   a_5 & \cdot &   a_3 \\
                \cdot & \cdot & \cdot &     1 & \cdot \\
                \cdot & \cdot & \cdot & \cdot &     1 \\
            \end{array}
        \right)
        : a_1, a_2, a_3, a_4 \in \bbR; a_5 > 0 \right\}
    \textrm{.}
\]

Consider the left-invariant frame $(E_a)$ of $TS_m$ given by
\begin{align*}
    E_1 &:=  \frac{q}{15 \sqrt{10} m^4} \partial_y \\
    E_2 &:= -\frac{(3 m + 14) (m + 1)}{30 m^2} (\partial_x + p \partial_y + q \partial_p + q^m \partial_x) + \frac{q (m - 1) }{45 \sqrt{10} m^5} \partial_y \\
    E_3 &:= -\frac{1}{2 \sqrt{5} m^2} [(m + 2) \partial_x + (m + 2) p \partial_y + (m + 1) q \partial_p + 2 q^m \partial_z] \\
    E_4 &:=  \frac{m - 1}{15 m^3}(\partial_x + p \partial_y + q \partial_p + q^m \partial_z) + \sqrt{10} q \partial_q \\
    E_5 &:= -\frac{1}{5 m^2} (\partial_x + p \partial_y + q \partial_p + q^m \partial_z) \textrm{.}
\end{align*}
With respect to this basis, the (left-invariant) distribution $D_m$ is
\[
    D := \ab{E_4, E_5} \textrm{,}
\]
the derived $3$-plane distribution is
\[
     [D, D] := \ab{E_3, E_4, E_5} \textrm{,}
\]
and the conformal structure $c_D$ it induces admits the representative
\[
    2 e^1 e^4 + 2 e^2 e^5 - (e^3)^2 \textrm{.}
\]

Consider the $6$-manifold $M := S_m \times \bbR$, identify the vector fields $E_a \in \Gamma(TS_m)$, $a \in \set{1, \ldots, 5}$, respectively with $(E_a, 0) \in \Gamma(TM)$, let $\rho$ denote the standard coordinate on $\bbR$, and denote $E_6 := \partial_{\rho} \in \Gamma(TM)$. Take $\mbp$ to be the projective class on $M$ containing the connection $\nabla$ whose nonzero connection forms with respect to the global frame $(E_{\alpha}) = (E_1, \ldots, E_6)$ of $M$ (and its dual coframe $(e^{\alpha})$) are given in \eqref{equation:example-qm-connection-form} in Appendix \ref{appendix:qm-class-data}.

Consider the weighted $2$-form
\[
    \omega := \sqrt{2} e^1 \wedge e^2 + \sqrt{2} [- (m + 1) (m - 2) e^1 \wedge e^5 + e^4 \wedge e^5] \cdot \rho - e^3 \wedge d\rho \in \Gamma(\Lambda^2 T^* M [3])
\]
(here written with respect to the scale determined by $\nabla$). Computing gives that
\[
    \tfrac{1}{3} d\omega = - e^1 \wedge e^3 \wedge e^4 - e^2 \wedge e^3 \wedge e^5 + \sqrt{2} e^4 \wedge e^5 \wedge d\rho \textrm{.}
\]
Now, computing gives that the projective tractor $3$-form
\[
    \Phi_{ABC}
        :=
            \left(
                \begin{array}{c}
                                   \omega _{ bc} \\
                    \tfrac{1}{3} (d\omega)_{abc} \\
                \end{array}
            \right) \in \Gamma(\Lambda^3 \mcT^*) \textrm{,}
\]
is parallel and split-generic.

The corresponding operator $\bbJ: \mcT \to \mcT[1]$ is
\[
    \bbJ^A_{\phantom{A} B}
        =
            \left(
                \begin{array}{cc}
                    0 & \chi_b    \\
                    0 & J^a_{\phantom{a} b} \\
                \end{array}
            \right) \textrm{,}
\]
where (in the scale determined by $\nabla$)
\begin{align*}
       J &= - E_3 \otimes d\rho - \sqrt{2} E_4 \otimes e^2 + \sqrt{2} E_5 \otimes e^1 \\
            &\qquad\qquad\qquad + \rho [- \sqrt{2} E_1 \otimes e^5 + \sqrt{2} (m + 1) (m - 2) E_2 \otimes e^1 \\
            &\qquad\qquad\qquad\qquad\qquad\qquad + \sqrt{2} E_2 \otimes e^4 + \sqrt{2} (m + 1) (m - 2) E_4 \otimes e^5 + 2 \partial_{\rho} \otimes e^3] \\
    \chi &= - e_3 \textrm{.}
\end{align*}

In the scale $\nabla$, the parallel tractor metric $\Phi$ determines on $\mcT$ is (given by the normal form of Subsection \ref{subsection:ambient-normal-form})
\begin{equation}\label{equation:example-qm-H}
    H =
        \left(
            \begin{array}{cc}
                2  \rho & d\rho    \\
                  d\rho & g_{\rho} \\
            \end{array}
        \right) \textrm{,}
\end{equation}
where
\[
    g_{\rho} = 2 e^1 e^4 + 2 e^2 e^5 - (e^3)^2 - 2 (m + 1) (m - 2) (e^5)^2 \cdot \rho \textrm{.}
\]

The strictly nearly $(\mp 1)$-K\"{a}hler structure $(g_{\pm}, J_{\pm})$ that the scale $\tau = \pm 1$ determines on $M_{\pm}$ is given by (cf. \ref{equation:Klein-Einstein-normal-form})
\[
    g_{\pm} = \frac{1}{2 (\pm \rho)} [2 e^1 e^4 + 2 e^2 e^5 - (e^3)^2 - 2 (m + 1) (m - 2) (e^5)^2 \cdot \rho] \mp \frac{1}{4 \rho^2} d\rho^2
\]
and
\begin{multline}
    J_{\pm} = (\pm \rho)^{- 1 / 2} \left( \tfrac{1}{\sqrt{2}} E_3 \otimes d\rho + E_4 \otimes e^2 - E_5 \otimes e^1 \right) \\ \pm (\pm \rho)^{1 / 2} [E_1 \otimes e^5 - (m + 1) (m - 2) E_2 \otimes e^1 \\ \qquad\qquad\qquad - E_2 \otimes e^4 - (m + 1) (m - 2) E_4 \otimes e^5 - \sqrt{2} \partial_{\rho} \otimes e^3] \textrm{,}
\end{multline}
and the K\"{a}hler form is
\begin{multline*}
    \omega_{\pm}
        =
              (\pm \rho)^{- 3 / 2} \left(   \tfrac{1}{2         }                 e^1 \wedge e^2
                                          - \tfrac{1}{2 \sqrt{2}}                 e^3 \wedge d\rho \right)
          \pm (\pm \rho)^{- 1 / 2} \left[ - \tfrac{1}{2         } (m + 1) (m - 2) e^1 \wedge e^5
                                          + \tfrac{1}{2}                          e^4 \wedge e^5   \right] \textrm{.}
\end{multline*}

The infinitesimal symmetries $\xi_a$ given in
\eqref{equation:example-qm-symmetries} are (interpreted as vector
fields $(\xi_a, 0)$ on $M$) symmetries of $(M, \mbp, \Phi)$ except for
$\xi_6 := x \partial_x + 2 y \partial_y + p \partial_p + z \partial_z$;
however, $\smash{\wt{\xi}_6 := \xi_6 + 2 \rho \partial_{\rho}}$ is a symmetry of
$(M, \mbp, \Phi)$. Define $\smash{\wt{\xi}_a = (\xi_a, 0)}$ for $a \in \set{1, 2,
  3, 4, 5, 7}$.

The integral curves of $\smash{\wt{\xi}_6}$ include $t \mapsto ((0, 0, 0, 1, 0),
C e^{2t})$, $C \in \bbR$, so the infinitesimal symmetry algebra
$\mfinf(M, \mbp, \Phi)$\footnote{This infinitesimal symmetry algebra is defined as the vector space of all vector fields on $M$ whose flows preserve both $\mbp$ and $\Phi$.} acts transitively on each of the
three curved orbits; in particular, the underlying structures,
$(M_{\pm}, g_{\pm}, J_{\pm})$ and $(M_0, D_m)$, are homogeneous. In
fact, $\smash{\wt{\mfs}_m := \ab{\wt{\xi}_1, \ldots \wt{\xi}_6}}$ is a subalgebra
of $\smash{\ab{\wt{\xi}_1, \ldots, \wt{\xi}_7} \cong \ab{\xi_1, \ldots, \xi_7}}$, so
we may regard the structures $(g_{\pm}, J_{\pm})$ as left-invariant
structures on the connected, simply connected Lie group $\smash{\wt{S}_m}$
with Lie algebra $\smash{\wt{\mfs}_m}$.

\end{example}

\appendix

\section{The splitting operator for $\Lambda^3 \mcT_0^*$}\label{appendix:BGG-splitting-operator}

On a $5$-dimensional conformal structure $(M, c)$, the BGG splitting operator $L_0: \Gamma(\mcE_{[ab]}[3]) \to \Gamma(\mcE_{[ABC]})$ is given in any scale (say, that given by the representative metric $g \in c$) by \cite[(15)]{HammerlSagerschnig}
\[
    L_0 : (\omega_0)_{bc}
            \mapsto
                \left(
                    \begin{array}{c}
                        (\omega_0)_{bc} \\
                        (\omega_0)_{bc, a}
                            \vert -\tfrac{1}{4} (\omega_0)_{kc,}^{\phantom{kc,} k} \\
                        - \tfrac{1}{15} (\omega_0)_{bc, k}^{\phantom{bc, k} k}
                            - \tfrac{2}{15} (\omega_0)_{k [b, c]}^{\phantom{k [b, c]} k}
                            - \tfrac{1}{10} (\omega_0)_{k [b, \phantom{k} c]}^{\phantom{k [b,} k}
                            - \tfrac{4}{5} {\sf P}^k_{\phantom{k} [b} (\omega_0)_{c] k}
                            - \tfrac{1}{5} {\sf P}^k_{\phantom{k} k} (\omega_0)_{bc} \\
                    \end{array}
                \right) \textrm{.}
\]
Here, ${\sf P}$ is the conformal Schouten tensor of $g$, and indices are raised using $c$, regarded as section of $\Gamma(S^2 T^* M[2])$.

\section{The connections in Example \ref{example:qm-class}}
\label{appendix:qm-class-data}
In this appendix we specify the connections used in the Example \ref{example:qm-class} by giving their connection forms $\nu_a^b$:
\begin{align}
    \nu_1^2 &=   \tfrac{  \sqrt{ 5}}{3 \sqrt{2}} (3 m^2 + 3 m + 2) (m + 1)                   e^1 \notag \\
    \nu_1^4 &= - \tfrac{  \sqrt{ 5}}{3 \sqrt{2}} (3 m^2 + 3 m + 2) (m + 1)                   e^5 \notag \\
    \nu_1^6 &= -                                                                             e^4 \notag \\
    \nu_2^6 &= -                                                                             e^5 \notag \\
    \nu_3^1 &=   \tfrac{         1 }{  \sqrt{2}}                                             e^5 \notag \\
    \nu_3^2 &=   \tfrac{         1 }{2 \sqrt{2}}                   (m + 1) (7 m + 6)         e^1
               - \tfrac{         1 }{  \sqrt{2}}                                             e^4 \notag \\
    \nu_3^4 &= - \tfrac{         1 }{2 \sqrt{2}}                   (m + 1) (7 m + 6)         e^5 \notag \\
    \nu_3^6 &=                                                                               e^3 \notag \\
    \nu_4^2 &=            \sqrt{10}                                (m + 1)                   e^1
               +          \sqrt{ 2}                                                          e^3 \notag \\
    \nu_4^3 &=            \sqrt{ 2}                                                          e^5 \label{equation:example-qm-connection-form} \\
    \nu_4^4 &= -          \sqrt{10}                                (m + 1)                   e^5 \notag \\
    \nu_4^6 &= -                                                                             e^1 \notag \\
    \nu_5^1 &= -          \sqrt{10}                                                          e^1
               -          \sqrt{ 2}                                                          e^3 \notag \\
    \nu_5^2 &= - \tfrac{  \sqrt{10} (m - 1)^2}{3 m}                                          e^1
               +          \sqrt{10}                                                          e^2
               -        2 \sqrt{10}                                (m + 1)           (m - 2) e^5 \cdot \rho
               -                                                   (m + 1)           (m - 2) d\rho \notag \\
    \nu_5^3 &= -          \sqrt{ 2}                                (m + 1)           (m - 2) e^1
               -          \sqrt{ 2}                                                          e^4 \notag \\
    \nu_5^4 &= -          \sqrt{ 2}                                (m + 1)           (m - 2) e^3
               +          \sqrt{10}                                                          e^4
               + \tfrac{  \sqrt{10} (m - 1)^2}{3 m}                                          e^5 \notag \\
    \nu_5^5 &= -          \sqrt{10}                                                          e^5 \notag \\
    \nu_5^6 &= -                                                                             e^2 \notag \\
    \nu_6^2 &= -                                                   (m + 1)           (m - 2) e^5 \notag \textrm{.}
\end{align}

\end{document}